\documentclass[11pt]{article}

\usepackage{amsmath,amsthm,amsfonts,amssymb,csquotes,titlesec,float,caption,wrapfig,mathrsfs}
\usepackage{graphicx}
\usepackage[american]{babel}
\usepackage[table]{xcolor}
\usepackage[hidelinks]{hyperref}
\usepackage{tikz-cd}
\usepackage{tikz}
\usepackage{verbatim}
\usepackage{accents}
\usepackage{marvosym}
\usepackage{pdflscape}
\usepackage[citestyle=alphabetic,bibstyle=alphabetic,backend=bibtex,maxbibnames=10,minbibnames=5]{biblatex}
\usepackage{enumerate}
\usepackage{thmtools}
\usepackage{stmaryrd}
\usepackage[left=1in,top=1in,right=1in]{geometry}
\usepackage{subcaption}
\usepackage{zref-clever}
\zcsetup{cap=true}
\renewcommand\thmcontinues[1]{Continued}
\usetikzlibrary{calc,decorations.pathreplacing,decorations.pathmorphing,shapes.misc,shapes.geometric}

\newtheorem{thm}{Theorem}[section]
\zcRefTypeSetup{thm}{
    Name-sg = Theorem,
    name-sg = Theorem,
    Name-pl = Theorems,
    name-pl = Theorems,
}

\newtheorem{lem}[thm]{Lemma}
\newtheorem{lemma}[thm]{Lemma}
\newtheorem{prop}[thm]{Proposition}
\newtheorem{qs}[thm]{Question}
\newtheorem{cor}[thm]{Corollary}
\newtheorem{conj}[thm]{Conjecture}

\theoremstyle{remark} 
\newtheorem{rem}[thm]{Remark}
\zcRefTypeSetup{rem}{
    Name-sg = Remark,
    name-sg = Remark,
    Name-pl = Remarks,
    name-pl = Remarks,
}
\zcRefTypeSetup{lem}{
    Name-sg = Lemma,
    name-sg = Lemma,
    Name-pl = Lemmas,
    name-pl = Lemmas,
}
\zcRefTypeSetup{lemma}{
    Name-sg = Lemma,
    name-sg = Lemma,
    Name-pl = Lemmas,
    name-pl = Lemmas,
}
\zcRefTypeSetup{prop}{
    Name-sg = Proposition,
    name-sg = Proposition,
    Name-pl = Propositions,
    name-pl = Propositions,
}
\zcRefTypeSetup{cor}{
    Name-sg = Corollary,
    name-sg = Corollary,
    Name-pl = Corollaries,
    name-pl = Corollaries,
}
\zcRefTypeSetup{conj}{
    Name-sg = Conjecture,
    name-sg = Conjecture,
    Name-pl = Conjectures,
    name-pl = Conjectures,
}
\zcRefTypeSetup{qs}{
    Name-sg = Question,
    name-sg = Question,
    Name-pl = Questions,
    name-pl = Questions,
}
\newtheorem{example}[thm]{Example}

\zcRefTypeSetup{example}{
    Name-sg = Example,
    name-sg = Example,
    Name-pl = Examples,
    name-pl = Examples,
}
\zcRefTypeSetup{notation}{
    Name-sg = Notation,
    name-sg = Notation,
    Name-pl = Notations,
    name-pl = Notations,
}

\theoremstyle{definition} 
\newtheorem{df}[thm]{Definition} 

\zcRefTypeSetup{df}{
    Name-sg = Definition,
    name-sg = Definition,
    Name-pl = Definitions,
    name-pl = Definitions,
}
\zcRefTypeSetup{proposal}{
    Name-sg = Proposal,
    name-sg = Proposal,
    Name-pl = Proposals,
    name-pl = Proposals,
}

\titleformat*{\section}{\normalsize \bfseries \filcenter}
\titleformat*{\subsection}{\normalsize \bfseries }
\newtheorem{mainthm}{Theorem}
\zcRefTypeSetup{mainthm}{
    Name-sg = Theorem,
    name-sg = Theorem,
    Name-pl = Theorems,
    name-pl = Theorems,
}
\newtheorem{maincor}[mainthm]{Corollary}
\zcRefTypeSetup{maincor}{
    Name-sg = Corollary,
    name-sg = Corollary,
    Name-pl = Corollaries,
    name-pl = Corollaries,
}

\zcRefTypeSetup{equation}{
    Name-sg =,
    name-sg =,
    Name-pl =,
    name-pl =,
    Name-sg-ab =,
    name-sg-ab =,
    Name-pl-ab =,
    name-pl-ab =,
}

\AddToHook{env/lem/begin}{\zcsetup{countertype={thm=lem}}}
\AddToHook{env/lemma/begin}{\zcsetup{countertype={thm=lemma}}}
\AddToHook{env/prop/begin}{\zcsetup{countertype={thm=prop}}}
\AddToHook{env/qs/begin}{\zcsetup{countertype={thm=qs}}}
\AddToHook{env/cor/begin}{\zcsetup{countertype={thm=cor}}}
\AddToHook{env/conj/begin}{\zcsetup{countertype={thm=conj}}}
\AddToHook{env/rem/begin}{\zcsetup{countertype={thm=rem}}}
\AddToHook{env/example/begin}{\zcsetup{countertype={thm=example}}}
\AddToHook{env/notation/begin}{\zcsetup{countertype={thm=notation}}}
\AddToHook{env/df/begin}{\zcsetup{countertype={thm=df}}}
\AddToHook{env/proposal/begin}{\zcsetup{countertype={thm=proposal}}}
\AddToHook{env/maincor/begin}{\zcsetup{countertype={mainthm=maincor}}}

\NewCommandCopy{\cref}{\zcref}
\NewDocumentCommand{\Cref}{s O{} m}{\IfBooleanTF{#1}
        {\zcref*[S,#2]{#3}}
        {\zcref[S,#2]{#3}}}

\makeatletter
\def\namedlabel#1#2{\begingroup
   \def\@currentlabel{#2}\label{#1}\endgroup
}
\makeatother

\DeclareFontFamily{U}{mathx}{}
\DeclareFontShape{U}{mathx}{m}{n}{<-> mathx10}{}
\DeclareSymbolFont{mathx}{U}{mathx}{m}{n}
\DeclareMathAccent{\widehat}{0}{mathx}{"70}
\DeclareMathAccent{\widecheck}{0}{mathx}{"71}

\renewbibmacro{in:}{}

\tikzset{
  fuzz/.style={
    postaction={draw,decorate,decoration={border,amplitude=0.1cm,angle=90,segment length=.1cm}},
  },
}

\title{\normalsize \textbf{Monodromy action of mirror stops for toric Calabi--Yau surfaces}}
\author{Michela Barbieri, Andrew Hanlon, Jeff Hicks}
\date{}
\hypersetup{
  pdftitle={Monodromy action of mirror stops for toric Calabi-Yau surfaces},
  pdfauthor={Michela Barbieri, Andrew Hanlon, Jeff Hicks}
}

\newcommand{\eps}{\varepsilon}

\newcommand{\RR}{\mathbb R}
\newcommand{\ZZ}{\mathbb Z}
\newcommand{\CC}{\mathbb C}
\renewcommand{\AA}{\mathbb A}
\newcommand{\NN}{\mathbb N}
\newcommand{\PP}{\mathbb P}
\newcommand{\LL}{\mathbb L}

\newcommand{\into}{\hookrightarrow}

\renewcommand{\Im}{\text{Im}}

\newcommand{\stp}{\Lambda}

\newcommand{\X}{\mathcal X}

\newcommand{\cW}{\mathcal W}

\newcommand{\Os}{\mathcal{O}}
\newcommand{\Es}{\mathcal{E}}
\newcommand{\Fs}{\mathcal{F}}
\newcommand{\Ms}{\mathcal{M}}

\newcommand{\Leg}{\Lambda}

\DeclareMathOperator{\id}{id}

\DeclareMathOperator{\cone}{cone}

\DeclareMathOperator{\Hom}{Hom}

\DeclareMathOperator{\Ext}{Ext}
\DeclareMathOperator{\Coh}{Coh}

\DeclareMathOperator{\Pic}{Pic}
\DeclareMathOperator{\st}{\; |\; }
\DeclareMathOperator{\Fuk}{Fuk}

\DeclareMathOperator{\Ob}{Ob}

\DeclareMathOperator{\Int}{Int}

\DeclareMathOperator{\Area}{Area}
\DeclareMathOperator{\Aut}{Aut}
\DeclareMathOperator{\Cent}{Cent}
\DeclareMathOperator{\Vor}{Vor}
\DeclareMathOperator{\Conf}{Conf}
\DeclareMathOperator{\uConf}{uConf}

\DeclareMathOperator{\Lag}{Lag}
\DeclareMathOperator{\Corr}{Corr}
\DeclareMathOperator{\Ham}{Ham}
\DeclareMathOperator{\Tot}{Tot}

\newcommand{\XA}{X}
\newcommand{\XB}{\check X}
\newcommand{\JA}{{J_X}}
\newcommand{\JB}{{J_{\check X}}}
\newcommand{\omegaA}{\omega_X}
\newcommand{\omegaB}{\omega_{\check X}}
\newcommand{\Db}{D^b}
\newcommand{\n}{{n-1}}
\newcommand{\nn}{n}
\newcommand{\FIPS}{\mathbf{FIPS}}
\newcommand{\Ret}{\mathrm{Ret}}

\newcommand{\tPsi}{\tilde\Psi}

\newcommand{\Addresses}{{
  \bigskip
  \footnotesize

  \noindent M.~Barbieri, \textsc{Department of Mathematics, University College London}\par\nopagebreak
  \noindent \textit{E-mail address}: \texttt{michela.barbieri.21@ucl.ac.uk}

  \medskip

  \noindent A.~Hanlon, \textsc{Department of Mathematics, University of Oregon}\par\nopagebreak
  \noindent \textit{E-mail address}: \texttt{ahanlon@uoregon.edu}

  \medskip

  \noindent J.~Hicks, \textsc{School of Mathematics and Statistics,  University of St Andrews}\par\nopagebreak
  \noindent \textit{E-mail address}: \texttt{jeff.hicks@st-andrews.ac.uk}

  \medskip

}}

\begin{document}
\maketitle
\begin{abstract}
Mirror symmetry predicts an action by the fundamental group of a conjectural stringy K\"ahler moduli space on the derived category of an algebraic variety.
For a toric variety, a model for this space is understood, but constructing the action is still an open problem in general.
We propose that this action can be studied on the $A$-side via a moduli space of Legendrians isotopic to the FLTZ Legendrian.
For the $A_{\n}$ singularity, we construct an annular braid-group action on the corresponding partially wrapped Fukaya category by exact autoequivalences.
The standard braid subgroup recovers the Seidel--Thomas action on the derived category, while the additional annular generator corresponds to tensor product with $\mathcal O(-1)$.

We additionally extend the Floer-theoretic approach to homological mirror symmetry for toric varieties to the setting of semiprojective toric Deligne--Mumford stacks over an arbitrary field. 
\end{abstract}

\section{Introduction}
Homological mirror symmetry is a predicted duality between certain symplectic and complex manifolds called mirror pairs.
Broadly, a mirror pair consists of a symplectic manifold $(\XA, \omegaA)$, a complex manifold $(\XB, \JB)$, and a dictionary between certain symplectic invariants of $\XA$ and algebro-geometric invariants of $\XB$. 
For historical reasons arising from string theory, the symplectic geometric data is called the ``$A$-side'' while the algebro-geometric data is called the ``$B$-side''. 
In many cases, mirror symmetry is an involution in the sense that both sides of the correspondence carry mutually mirror symplectic and complex structures.
\[
  \begin{tikzpicture}
    \draw[rounded corners, fill=red!20]   (-7.5,0.5) rectangle (4.5,-0.5);
    \draw[rounded corners, fill=blue!20]  (-7.5,-1) rectangle (4.5,-2);
    \node at (-2,1) {$(\XA, \omegaA, \JA)$};
    \node at (2.5,1) {$(\XB, \omegaB, \JB)$};
    \node (v1) at (-2,0) {$\Fuk(\XA, \omegaA)$};
    \node (v4) at (2.5,0) {$\Fuk(\XB, \omegaB)$};
    \node (v3) at (-2,-1.5) {$\Db(\XA, \JA)$};
    \node (v2) at (2.5,-1.5) {$\Db(\XB, \JB)$};
    \draw[<->, thick]  (v1) edge (v2);
    \draw[<->, dashed]  (v3) edge (v4);
    \node at (-5.5,0) {Symplectic Invariants};
    \node at (-5.5,-1.5) {Algebraic Invariants};
    \draw[dotted] (-3.5,1.5) rectangle (-0.5,-2.5);
    \draw[dotted]  (1,1.5) rectangle (4,-2.5);
  \end{tikzpicture}
\]
In this paper, we will focus on the correspondence $\Fuk(\XA, \omegaA)\leftrightarrow \Db(\XB, \JB)$. Here, $\Fuk(\XA,\omegaA)$ denotes a Fukaya-type $A_\infty$ category of Lagrangians in $\XA$ (wrapped/partially wrapped as appropriate), and $\Db(\XB,\JB)$ denotes the bounded derived category of coherent sheaves on $\XB$.
A further expectation of the theory is that this duality is defined over families of symplectic and complex manifolds so that we have an identification of the moduli spaces of structures on $\XA$ and $\XB$:
\[
    \XA\in \mathcal M_\omega\leftrightarrow \mathcal M_{J}\ni \XB.
\] 
This already indicates that the symplectic story needs to be expanded, as the moduli space of complex structures itself naturally carries a complex structure, while the moduli space of symplectic structures on $\XA$ is a real manifold locally identified with $H^2(\XA, \RR)$.
To account for this discrepancy, the space $\mathcal M_\omega$ should be replaced with the conjectural ``stringy K\"ahler moduli space'' (SKMS), which complexifies the moduli space of symplectic structures on $\XA$. In examples, we can define proxies for the SKMS (see \cref{subsec:Bside} for an overview).

While $\Fuk(\XA, \omegaA)$ depends only on the symplectic structure on $\XA$, a choice of complex structure $\JA$ on $\XA$ enhances $\Fuk(\XA, \omegaA)$ with additional structure. At the most concrete level, the definition of $\Fuk(\XA, \omegaA)$ at the chain level requires a choice of almost complex structure. The conjectured existence of a stability condition on $\Fuk(\XA, \omegaA)$ arising from a $\JA$-holomorphic volume form on $\XA$ is an algebraic shadow of this additional data. On the $B$-side, we have a similar appearance of stability conditions on categories, arising from a choice of K\"ahler form $\omegaB$ on $\XB$.
This points to a subtle interplay between the complex (resp. K\"ahler) moduli space and the Fukaya category of $\XA$ (resp. derived category of $\XB$). 
This leads to the following question.

\begin{qs} [\cite{segal2011equivalences,donovan2019stringy} and others; see also the survey \cite{spenko2023survey}] \label{qs:localsystemoverstringy}
   Does there exist a local system of Fukaya categories (resp. derived categories of coherent sheaves) over $\mathcal M^{\XA}_{J}$ (resp. $\mathcal M^{\XB}_\omega$)?
\end{qs}
The difficulty of the question is compounded by a lack of rigorous definition of $\mathcal M^{\XB}_\omega$ and technical limitations in the definition of $\Fuk(\XA, \omegaA)$. Even defining a local system of derived categories is challenging \cite{pascaleff2025higherlocalsystemscategorified}.
In practice, we can use heuristics from mirror symmetry to guess $\mathcal M^{\XA}_J$, and then look for interesting actions of $\pi_1(\mathcal M^{\XA}_J, (\XA,\JA))$ on $\Fuk(\XA,\omegaA)$ and/or $\Db(\XB, \JB)$ via derived autoequivalences (up to natural isomorphism).
This provides an algebraic description of a local system of categories.
The seminal example of this action is given in
\cite{seidel2001braid} when $\XB$ is the $A_{\n}$-singularity.
\citeauthor{seidel2001braid} produced novel autoequivalences on the $B$-side---called spherical twists---by exhibiting a configuration of Lagrangian spheres on an $A$-side mirror and studying the symplectic Dehn twists around these spheres.
The braiding of symplectic Dehn twists manifests itself as a braid relation among these autoequivalences of $\Db(\XB)$.

When $\XB$ is a toric variety, there are explicit descriptions of a space of Laurent polynomials whose fundamental group should act on the derived category.
We will describe the space briefly here and refer to \cref{subsubsec:FIPS} and the survey \cite{spenko2023survey} for more details.
Namely, the $B$-side incarnation of \cref{qs:localsystemoverstringy} can be interpreted as the following conjecture.

\begin{conj}[Toric HMS monodromy conjecture] \label{conj:toricmon} There is an action 
\begin{equation} \label{eq:toricmonconj} \pi_1 \left( \frac{\AA^d \setminus V(E_A)}{T} \right) \to \mathrm{Auteq}\left(\Db(\XB) \right) 
\end{equation}
where $d$ is the number of rays in the fan of $\XB$, $T$ is an algebraic torus of rank equal to $\dim(\XB)$, and $V(E_A)$ is a hypersurface cut out by the principal $A$-determinant of a matrix $A$ determined by the toric GIT problem. 
\end{conj}
However, we note that \cite[Remark 2.8]{donovan2015mixed} points out that the space on the left side of \eqref{eq:toricmonconj}, which we will denote by $\FIPS$ as in \eqref{def:fips}, should merely map to, but not coincide with, the SKMS.
In this paper, we propose that another stand-in for the toric SKMS is the moduli space of piecewise Legendrians isotopic to the FLTZ stop mirror to $\XB$ (see \cref{def:FLTZ}).
Although this space of Legendrians is much larger, we note that the fundamental group on the left side of \eqref{eq:toricmonconj} is already difficult to compute in general, which is part of what makes \cref{conj:toricmon} challenging (see \cite{michelathesis}), and we hope that the Legendrian moduli space at least has combinatorially describable generators.
The main contribution of this paper is to verify that proposal for the $A_{\n}$ singularity by exhibiting a family of Fukaya categories over such a moduli space of Legendrian stops naturally inducing a braid group action.

\subsection{Results} 
Let $\XB_{\nn}$ be the $A_{\nn-1}$ singularity, understood as the toric stack $[\CC^2/ \ZZ_\nn]$, where the action of a primitive $\nn$-th root of unity $\zeta$ is given by $(z_1, z_2) \mapsto (\zeta z_1, \zeta^{\nn-1} z_2)$. 
We study a moduli of stops version of categorical monodromy for this example.
On the $A$-side, we work with $\XA=T^*T^2$ and the FLTZ stop $\stp_\nn\subset Y=\partial_\infty \XA$.
We use the Floer-theoretic approach to toric HMS in \cref{app:HMS} to identify the resulting partially wrapped Fukaya category with $\Db(\XB_{\nn})$ (see \cref{subsec:Aside} for a discussion of other approaches to toric HMS).  
Rather than varying a Landau--Ginzburg potential, we vary the stop itself inside the component of the stop moduli space consisting of Legendrians isotopic to $\stp_\nn$.
We model the relevant part of this moduli space on  $\mathcal N$,  the space of embedded graphs with unit-area faces in the cylinder, which comes with a lifting map $\mathcal N$ to $\mathcal L_{\stp_\nn}^{emb}(Y)$ (see \cref{def:lemb}).

The first main result is an informal version of \cref{thm:monodromyAction} proved in \cref{sec:monodromyAction}.
\begin{mainthm}[Informal version of \cref{thm:monodromyAction}]
    \label{thm:main}
A smooth path $\phi^t$ in $\mathcal N$ determines an exact Lagrangian correspondence between the stopped cotangent bundles $(\XA,\Leg_{\phi^0})$ and $(\XA,\Leg_{\phi^1})$.
These correspondences depend only on the endpoint-fixed homotopy class of the path and are compatible with concatenation.
Consequently, $\pi_1(\mathcal N,\phi_\nn)$ acts on $H^0\mathcal W(\XA,\stp_\nn)$.
\end{mainthm}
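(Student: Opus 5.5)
The plan is to realize a path $\phi^t$ in $\mathcal N$ as a Legendrian isotopy $\Leg_{\phi^t}$ in $Y=\partial_\infty \XA$ through the lifting map $\mathcal N\to\mathcal L^{emb}_{\stp_\nn}(Y)$. Then I would feed that isotopy into the standard mechanism for moving stops: an isotopy of stops is generated by a contact isotopy of $Y$, and such an isotopy extends to a Hamiltonian flow of $\XA$ that is conical at infinity.

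\textbf{Step 1: from a path to a correspondence.} First I check that the lift of $\phi^t$ is a smooth family of embedded (piecewise) Legendrians. The embeddedness, together with the unit-area condition on faces, is exactly what should keep the isotopy inside the embedded locus. Next, using a parametric Legendrian isotopy extension theorem adapted to piecewise-smooth (arboreal/FLTZ-type) skeleta, I produce a compactly supported contact isotopy $\psi^t$ of $Y$ with $\psi^t(\Leg_{\phi^0})=\Leg_{\phi^t}$. I extend it homogeneously to a Hamiltonian isotopy $\Psi^t$ of $T^*T^2$ away from the zero section, cut it off near the zero section, and let $\Psi^1$ map $(\XA,\Leg_{\phi^0})$ to $(\XA,\Leg_{\phi^1})$. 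The correspondence is then the graph of $\Psi^1$ inside $\XA^-\times\XA$. It is exact because $\Psi^1$ is Hamiltonian, and it is conical with boundary contained in the product stop. By the functoriality of partially wrapped categories under such correspondences (Gao, or Ganatra--Pardon--Shende for stop isotopies), this graph induces an equivalence $\mathcal W(\XA,\Leg_{\phi^0})\to\mathcal W(\XA,\Leg_{\phi^1})$.

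\textbf{Step 2: homotopy invariance.} Given an endpoint-fixed homotopy $\phi^{s,t}$, I apply the parametric version of Step 1 over the square to get a two-parameter family $\Psi^{s,t}$. Then $\Psi^{s,1}$ is a path of stop-preserving (at $t=1$) maps from $\Psi^{0,1}$ to $\Psi^{1,1}$, with the stop fixed at both ends. I would show that the graphs differ by a Hamiltonian isotopy of $\XA^-\times\XA$ that preserves the product stop, so they are isomorphic as objects. This gives naturally isomorphic functors on $H^0$. Concatenation compatibility reduces to the fact that composing graphs of symplectomorphisms is embedded and equals the graph of the composite, combined with the functoriality of geometric composition of correspondences.

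\textbf{Step 3: the action.} Restricting to loops based at $\phi_\nn$ (so $\Leg_{\phi_\nn}=\stp_\nn$) gives a homomorphism $\pi_1(\mathcal N,\phi_\nn)\to \mathrm{Auteq}(H^0\mathcal W(\XA,\stp_\nn))$.

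\textbf{Main obstacle.} I expect the hardest part to be the isotopy extension and its parametric version for singular (non-smooth) Legendrians. Along the family, vertices of the embedded graph move, so near the corners one has to check that the lifts vary through a family that is locally contactomorphic to a fixed model. My first attempt would be to use the explicit description of $\mathcal N$: build $\psi^t$ locally as a cotangent lift of a diffeomorphism of $T^2$ (or of the cylinder) that carries the graph $\phi^0$ to $\phi^t$, with the unit-area faces giving Moser-type control. Only afterwards would I correct to an actual contact isotopy of $Y$.
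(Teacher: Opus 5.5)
Your Step 1 fails at its central claim: that there is a contact isotopy $\psi^t$ of $Y$ with $\psi^t(\Leg_{\phi^0})=\Leg_{\phi^t}$.

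Paths in $\mathcal N$ are paths of \emph{degenerate} graph embeddings. Edges may shrink to constant maps, and the combinatorial type of $\phi^t\colon G^t\to M$ changes along the path, as in Figure~\ref{fig:graphIsotopy}. For example, the braid loop $\tau_1$ of Figure~\ref{fig:graphLoop} passes through a graph with two $4$-valent vertices, where the rotating rung hits the feet of its neighbours. After that, its endpoints slide onto those neighbouring rungs. Since $\Leg_{\phi^t}$ is homeomorphic to $\Im(\phi^t)$, the topological type of the stop changes along the path. So no ambient isotopy of $Y$, contact or not, can carry $\Leg_{\phi^0}$ onto $\Leg_{\phi^t}$ for all $t$, and no isotopy extension theorem can supply one, however well adapted to arboreal singularities.

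Choosing better representatives does not help. Suppose a loop at $\phi_\nn$ were realized by a continuous family of homeomorphisms $\Im(\phi^0)\to\Im(\phi^t)$. Each circle of the ladder bounds one of the two annular faces, and these faces are pinned by $\partial M$. So the induced permutation of bounded faces would come from a self-homeomorphism of the ladder preserving each circle with its orientation, and would therefore be a cyclic rotation. But for $\nn\geq 3$ the half-twists $\tau_i$ induce transpositions. Your approach could therefore handle at best loops like $\rho$. It misses exactly the generators that become spherical twists, and the parametric version in Step 2 fails for the same reason.

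The missing idea is to transport not the graph but an $A$-thickening of its complement. In each face one takes a disk (a collar, for the two annular faces) of area $A$ times the face area. These regions move by a genuine isotopy of fixed topology, even through combinatorial changes. Area preservation makes the boundary flux exact, which produces Hamiltonians $H_A^t$ on $M$ that carry the thickenings along and approximate the graph flux primitive on $\Im(\phi^t)$.

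The strict contact lifts of these Hamiltonians to $Y_0\cong S^1_{q_1}\times M$ never carry $\Leg_{\phi^0}$ exactly onto $\Leg_{\phi^1}$. They do satisfy $d_{Haus}(\tilde\Psi_A^1(\Leg_{\phi^0}),\Leg_{\phi^1})\to 0$ as $A\to 1$, with the $q_1$-coordinate controlled by a Stokes estimate on thin strips. One then wraps by a small Reeb time $\rho$. The condition $\Phi^\rho(\Leg_{\phi^1})\cap\Leg_{\phi^1}=\emptyset$ is open in the Hausdorff topology, so the graph of $w^\rho\circ\Phi_A$ avoids the product stop for $A$ close to $1$.

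Homotopy invariance and concatenation are then proved inside the filtrations $U_{A,\rho}$. Concatenation needs $\Leg_{\phi_1^1}\in\mathcal L^{2\rho}$ in order to switch off the middle wrapping, using that strict contactomorphisms commute with the Reeb flow.

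This also fixes a second error in your Step 1. An object of $\mathcal W(\XA^-\times\XA,\Leg_{\phi^0}\times\Leg_{\phi^1})$ must be \emph{disjoint} from the product stop at infinity. The graph of a map sending $\Leg_{\phi^0}$ onto $\Leg_{\phi^1}$ meets the stop along $\{(y,\psi^1(y))\mid y\in\Leg_{\phi^0}\}$, not "boundary contained in" it as you wrote. The Reeb push-off is precisely what makes the graph an admissible object.
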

The precise theorem constructs the underlying correspondences and proves the stop-avoidance and composition properties needed for this action.

Our second main result identifies the annular braid group inside this graph space.
\begin{mainthm}[\cref{cor:braidLegendrian}]
    \label{thm:braidInclusion}
There is an inclusion
\[
\nu:\mathcal B_{\nn,1}\into \pi_1(\mathcal N,\phi_\nn)
\]
induced by explicit annular braid loops in the graph model, and composing with $\Leg_{(-)}$ gives loops of Legendrian stops in $\mathcal L_{\stp_\nn}^{emb}(Y)$.
\end{mainthm}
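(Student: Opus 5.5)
The plan is to construct a map from an annular configuration space of points into $\mathcal N$. Then $\nu$ is the induced map on fundamental groups, and injectivity should follow from a retraction-type argument.

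First, we recall that $\mathcal B_{\nn,1}$ is the fundamental group of the unordered configuration space $\uConf_\nn(\CC^*)$ of $\nn$ points in the punctured plane, or equivalently in the open annulus $S^1\times\RR$. We identify the cylinder carrying the graphs of $\mathcal N$ with this annulus. To a graph $\phi\in\mathcal N$ we associate the unordered set of its $\nn$ faces. We record each face by a distinguished point, for instance its barycenter with respect to the area form, or a chosen vertex in the dual picture. Since all faces have unit area and the cylinder is fixed, this gives a continuous map $c:\mathcal N\to \uConf_\nn(S^1\times\RR)$. Conversely, given a configuration, we take the Voronoi diagram (the $\Vor$ construction) and rescale each cell by an isotopy to unit area, which produces a graph. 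This defines a map $s:\uConf_\nn\to\mathcal N$ on a suitable open set of generic configurations, and we arrange that $c\circ s\simeq\id$.

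Second, we take the basepoint $\phi_\nn$ to be the standard FLTZ graph, with $\nn$ faces in a row around the cylinder. We then write down explicit loops in $\mathcal N$: the elementary exchanges $\sigma_i$ of adjacent faces $i,i+1$, and the annular generator $\tau$ that rotates all faces once around the core circle. We set $\nu(\sigma_i)$ and $\nu(\tau)$ to be these loops and check that they satisfy the defining relations of $\mathcal B_{\nn,1}$. We verify the relations most cleanly by noting that the loops are exactly $s$ applied to the standard generating loops in $\uConf_\nn$. The relations then hold automatically, because $s_*$ is a homomorphism on $\pi_1$.

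Third, injectivity follows from $c_*\circ s_*=\id$ on $\pi_1(\uConf_\nn,\ast)=\mathcal B_{\nn,1}$, so $s_*=\nu$ is injective. The final clause needs no further argument: $\Leg_{(-)}$ is the lifting map $\mathcal N\to\mathcal L^{emb}_{\stp_\nn}(Y)$ of \cref{def:lemb}, so composing our loops with it gives loops of embedded Legendrians.

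The main obstacle is making $s$ globally defined and continuous. Voronoi diagrams can degenerate: a cell can become non-simply-connected by wrapping around the cylinder, or vertices of valence $\ge 4$ can appear. The unit-area normalization must also vary continuously. If this fails globally, the first remedy is to restrict to a regular neighborhood of the standard loops. For injectivity it then suffices that $c$ is defined on all of $\mathcal N$ and that $c\circ\nu$ represents the standard generators, since then $c_*\circ\nu=\id$. We would therefore concentrate the effort on showing that $c$ is well defined on all of $\mathcal N$, meaning that faces never collide and that their recorded points are distinct. This reduces to the fact that disjoint unit-area faces have distinct interior points, which we ensure by choosing the recorded point inside the face, for instance via a continuous choice of interior point such as a point of maximal distance to the boundary, suitably smoothed.
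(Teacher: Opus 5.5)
Your architecture matches the paper's: a face-point map $c=\Cent^+$, a Voronoi-type section $s$, and $c\circ s\simeq\id$ forcing injectivity. However, the two steps that carry the content are asserted rather than constructed, and the substitutes you offer do not work.

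\emph{The section $s$.} The ordinary Voronoi diagram of $\nn$ sites on the cylinder has only $\nn$ faces. Some of them run into the boundary circles, and the extreme ones can wrap around the cylinder. There are no separate annular faces, so the diagram does not even lie in the component of $\phi_\nn$. Separately, ``rescale each cell by an isotopy to unit area'' is not a construction. The paper fixes both problems at once. It adds the two boundary circles as extra sites and takes the power diagram whose weights $w_x$ solve the semi-discrete optimal transport problem, with unit mass at each $x_i$ and the remaining mass on the boundary circles \cite{aurenhammer1998minkowski}. For every configuration this gives convex faces of the prescribed areas, varying continuously in $x$. Convexity is also what proves $c\circ s\simeq\id$. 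One interpolates the weights to $0$, keeping all cells of positive area; at weight $0$ each $x_i$ lies in its own convex cell, and the straight-line homotopy to the centroid stays in that cell.

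\emph{The retraction $c$.} Barycenters of non-convex faces can leave the face. The point of maximal distance to the boundary is non-unique and jumps, for instance on a dumbbell with equal lobes. ``Suitably smoothed'' cannot mean averaging, since averages also leave non-convex faces. A continuous interior-point selection on Jordan faces that extends the centroid on convex ones is a genuine theorem, which the paper imports from \cite{belegradek2025point}. The extension property is exactly what makes $c\circ s\simeq\id$ checkable.

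Your fallback does not repair this. If $s$ is defined only near the generating loops, you get a homomorphism from a free group. Then $c_*$ sending the loops to the standard generators only shows that the subgroup they generate surjects onto $\mathcal B_{\nn,1}$. The braid relations among these loops in $\pi_1(\mathcal N)$ are not established, so $\nu$ is not yet defined on $\mathcal B_{\nn,1}$ and ``$c_*\circ\nu=\id$'' has no meaning. You need $s$, or explicit null-homotopies in $\mathcal N$, over the relation $2$-cells, and that is what the global power-diagram map supplies.

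Finally, the last clause is not free. Your $s$ produces unpointed graphs, whereas $\Leg_{(-)}$ is defined on the pointed liftable space. The paper places the marked point on the bottom annular cycle, where the forgetful map is a trivial $S^1$-bundle and hence splits on $\pi_1$. It then uses that the loops satisfy the period condition \eqref{eq:liftingCondition} on the core cycle, which also requires holding the annular face areas fixed.
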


The proof combines explicit braid loops with a weighted Voronoi construction, which assigns to a configuration of points a canonical area-one planar graph, and then carries these graphs to Legendrian stops by the lift construction.

Combining Theorems A and B with the HMS equivalence \cref{thm:hms} gives the following consequence.

\begin{maincor}
     The action of $\mathcal B_{\nn,1}$ on $H^0\mathcal W(\XA,\stp_\nn)$ induces an action of $\mathcal B_{\nn,1}$ on $\Db(\XB_{\nn})$ by exact autoequivalences. Its restriction to the subgroup $\mathcal B_{\nn}\subset \mathcal B_{\nn,1}$ recovers the Seidel--Thomas braid action, and the extra generator $\rho$ corresponding to annular rotation acts by tensoring with $\mathcal O(-1)$.
\end{maincor}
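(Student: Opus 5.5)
The corollary follows by combining \cref{thm:main}, \cref{thm:braidInclusion} and the equivalence \cref{thm:hms}; the real work is identifying generators. Precomposing the action of $\pi_1(\mathcal N,\phi_\nn)$ from \cref{thm:monodromyAction} with the inclusion $\nu$ of \cref{cor:braidLegendrian} gives a homomorphism $\mathcal B_{\nn,1}\to \mathrm{Auteq}(H^0\mathcal W(\XA,\stp_\nn))$. Each element acts through an exact Lagrangian correspondence, so we obtain an $A_\infty$ functor on $\mathcal W(\XA,\stp_\nn)$. The inverse loop gives a quasi-inverse, since correspondences are compatible with concatenation and depend only on homotopy class. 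Hence each image is an exact autoequivalence. Transporting along the HMS equivalence $\mathcal W(\XA,\stp_\nn)\simeq \Db(\XB_\nn)$ of \cref{thm:hms} then gives the action on $\Db(\XB_\nn)$. Well-definedness is automatic, because the braid relations already hold in $\pi_1(\mathcal N,\phi_\nn)$.

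It then suffices to identify the images of a generating set: the standard generators $\sigma_1,\dots,\sigma_{\nn-1}$ of $\mathcal B_\nn$ and the rotation $\rho$.

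\textbf{Rotation $\rho$.} The loop $\nu(\rho)$ rotates the configuration of $\nn$ points once around the cylinder. Its net effect should be a global symplectomorphism of $T^*T^2$ preserving $\stp_\nn$: a fiberwise shear, i.e.\ the cotangent lift of (or translation by) an integral linear map in the fiber direction. Under the FLTZ/coherent-constructible dictionary, fiberwise translation by the covector corresponding to the relevant character is mirror to tensoring with a line bundle. I would compute its effect on the images of $\Os$ and on the generators of \cref{thm:hms} in the appendix, and match this with $\Os(-1)$ rather than $\Os(1)$ by tracking orientation conventions.

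\textbf{Generators $\sigma_i$.} The loop $\nu(\sigma_i)$ swaps two adjacent vertices through the weighted Voronoi family. Along the way a face degenerates and the Legendrian passes through a configuration where two strands of the stop interact. I would show that the induced correspondence is the graph of a Dehn twist along a Lagrangian sphere $S_i$, or equivalently that the functor is the spherical twist about the object whose endomorphisms are $H^*(S^2)$. Concretely:
\begin{enumerate}
\item Localize the isotopy to a neighborhood of a Legendrian Reidemeister-type move.
\item Identify the local model with the wrap-once/stop-removal description of a twist: the functor fits in an exact triangle $\Hom(S_i,-)\otimes S_i\to -\to T(-)$.
\item Match $S_i$ under \cref{thm:hms} with $\Os_{C_i}(-1)$, where $C_i$ is the $i$-th exceptional curve, pushed forward to the stack $[\CC^2/\ZZ_\nn]$. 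Equivalently, use the fractional-brane/McKay objects that appear in Seidel--Thomas.
\end{enumerate}
This recovers the Seidel--Thomas generators.

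\textbf{Main obstacle.} The hardest step is this local identification of the correspondence from a $\sigma_i$-loop with a spherical twist. I would first compute it on a generating collection of linking disks/cotangent fibers, namely the objects mirror to line bundles. On these I would check that the twist triangle holds on morphisms, using Floer complexes computed from the explicit stop before and after the move. I would then appeal to generation to conclude equality of functors up to natural isomorphism. This suffices because both sides are exact, and functors agreeing on generators together with their morphisms are isomorphic.
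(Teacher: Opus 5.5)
Your reduction is fine and matches the paper. You precompose the $\pi_1(\mathcal N,\phi_\nn)$-action with $\nu$, transport along \cref{thm:hms}, and then identify the images of $\tau_1,\dots,\tau_{\n}$ and $\rho$. The gap is in identifying $\mathcal F_{\tau_i}$.

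Your main mechanism, the graph of a Dehn twist along a Lagrangian sphere $S_i$, cannot occur here, for two reasons:
\begin{itemize}
\item $T^*T^2$ contains no Lagrangian $2$-spheres at all. The intersection form on $H_2(T^*T^2)\cong H_2(T^2)$ vanishes, while a Lagrangian sphere in a symplectic $4$-manifold has self-intersection $-2$.
\item $\Phi_{\tau_i}$ is generated by a cylindrical Hamiltonian. So $\Gamma^\rho(\Phi_{\tau_i})$ is just the diagonal in the fully wrapped category of $\XA^-\times\XA$, and the twist is entirely a stop phenomenon.
\end{itemize}
Also, no face degenerates along $\tau_i$: areas are preserved, and only the combinatorial type of the graph changes.

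The missing idea is the kernel-level computation of \cref{thm:sphericalTwists}. Over $\XA_{\mathrm{int}}$ the correspondence is a pushoff of the diagonal, so the stop-crossing exact triangle presents it as $[\mathcal E_i\to\Delta]$. Here $\mathcal E_i$ is read off from the intersections of $\Phi^\rho(\tPsi^1_{\tau_i}(\stp_\nn))$ with $\stp_\nn$. There is exactly one, on the Legendrian arc over the $0$-stratum separating the two braided faces. Hence $\mathcal E_i=K_i\times K_i$, with $K_i$ the linking disk of that arc, and $\mathcal F_{\tau_i}\cong T_{K_i}$ follows as an isomorphism of functors.

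Your fallback (check the twist triangle on line-bundle generators, then invoke generation) has two problems. It presupposes that you already know the twisted object. And agreement on generators and their morphisms at the $H^0$ level does not by itself give a natural isomorphism without $A_\infty$ natural-transformation data.

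The mirror identification is also missing. ``$\Os_{C_i}(-1)$ pushed forward to $[\CC^2/\ZZ_\nn]$'' does not make sense: the stack has no exceptional curves. Those sheaves live on the minimal resolution, which is related to the stack by the McKay equivalence, not by a pushforward. The right targets are the fractional branes $\Os_0(j)$, and you give no way to match $K_i$ with one of them. The paper does this in \cref{prop:horizontalLinkingDisks} using the local Bondal--Thomsen mutation at the $0$-stratum $i$. The Koszul complex identifies the mutated object $\cone(B\oplus C\leftarrow D)$ with $I_0(-i)$. So the linking disk is $\cone(I_0(-i)\to\Os(-i))\simeq\Os_0(-i)$, and a cyclic relabeling gives the Seidel--Thomas twists.

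For $\rho$ (a $1/\nn$ rotation, not a full one), your description is inaccurate: an integral translation is trivial on $T^2$, and nothing moves in the fiber direction. This is repairable, though:
\begin{itemize}
\item The $\rho$-loop of stops is $t\mapsto \mathrm{Tr}_{(0,t/\nn)}(\stp_\nn)$, traced out by translations of the base torus, and $\mathrm{Tr}_{(0,1/\nn)}$ preserves $\stp_\nn$ exactly.
\item Since the mollifier is symmetric, $\mathrm{Tr}_{(0,1/\nn)}(L_{cyl}(F))=L_{cyl}(F+\langle(0,1/\nn),\cdot\rangle)$.
\item $\langle(0,1/\nn),\cdot\rangle$ is the support function of $-D_{(1,\nn)}$.
\end{itemize}
This gives $-\otimes\Os(-1)$ directly, once you check that $\mathcal F_\rho$ agrees with pushforward by this translation. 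The paper instead identifies the isotopy with the flow of $H^F_{cyl}$ and cites the line-bundle monodromy theorem extended to stacks.
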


The identification of the braid generators with Seidel--Thomas twists is proved in \cref{sec:applications}. The description of $\rho$ is a generalization of the toric line bundle monodromy action in \cite{hanlon2019monodromy, hanlon2022aspects} to toric Deligne--Mumford stacks.
This generalization of the line bundle monodromy action, in turn, relies on extending the Floer-theoretic proof of toric HMS to Deligne--Mumford stacks, as outlined in \cref{app:HMS}.

We also use the same stop moduli picture to read off changes in the Bondal--Thomsen collection (\cref{subsec:BTcollection}), to describe equivalences between VGIT chambers (\cref{subsec:VGITEquivalence}), and to study partial resolutions of the $A_{\n}$ singularity (\cref{subsec:partialResolutions}).

\subsection{Related results}
\label{subsec:previousResults}
This paper adds to the considerable body of mathematics analyzing the mirror monodromy action.
Even the study of the toric case (\cref{conj:toricmon}) has received considerable attention for which we do not attempt to give a comprehensive overview.
We again note the braid group action in \cref{cor:action} was first constructed in \cite{seidel2001braid}.
The largest class of toric varieties for which \cref{conj:toricmon} is proven is the class of quasi-symmetric GIT quotients \cite{halpern2020combinatorial}.

When restricting to the context of stringy K\"{a}hler moduli space actions compatible with homological mirror symmetry for toric varieties, the following works are perhaps the most closely related to this paper.
\begin{itemize}
    \item The simplest part of the conjectural action \eqref{eq:toricmonconj} comes from a real torus in $\FIPS$.
    The action of the fundamental group of this real torus is mirror to tensoring by line bundles as shown in \cite{hanlon2019monodromy} (see also \cite{treumann2019kasteleyn,hanlon2022aspects,shende2022toric,bose2025contact}).
    In many ways, our results can be seen as generalizations of the explicit construction of Hamiltonians exhibiting this action, which can be equivalently reformulated as isotopies of stops.
        \item \v{S}penko and Van den Bergh have proven the restriction of \cref{conj:toricmon} to toric boundary divisors in \cite{spenko2024hmssymmetriestoricboundary}. 
    They construct the action first on the $A$-side by considering families of fibers of Hori--Vafa potentials and their wrapped Fukaya categories and then apply homological mirror symmetry \cite{gammage2022mirror} to pass to the $B$-side.
        We note that they do not pass to the FLTZ stop and extending their approach to obtain \cref{conj:toricmon} itself would require a technical analysis of partially wrapped Fukaya categories associated to Hori--Vafa potentials of a rather different nature than that required here for families of Legendrian stops.

    \item In the quasi-symmetric setting, \cite{zhou2020variationgitvariationlagrangian,huang2022variation} give an $A$-side description of the action in \cite{halpern2020combinatorial} using the microlocal sheaf model for the Fukaya category of $(\XA, \stp_{\Sigma})$.
    Notably, they produce families of (piecewise linear) FLTZ skeleta that induce the action.
    However, we note that their arguments rely on the window equivalences of \cite{segal2011equivalences,halpern2015derived, ballard2019variation} and, in contrast to the results here, do not produce the action from the topology of the moduli space.

    For some examples, a similar construction, again using window technology in the proofs, is given in \cite{donovan2021mirror} where the autoequivalences are put in the context of perverse schobers \cite{kapranov2014perverse,bondal2018perverse}, which are meant to fill in the local system of categories over the discriminant locus.
\end{itemize}

\subsection{Conjectures and Questions}
There are several directions that we leave unexplored.
For example, we work with the space of stops with embedded Lagrangian projection to make a combinatorial comparison to the space of embedded graphs, but there is no geometric reason to constrain to this set.
In fact, we conjecture:
\begin{conj} \label{conj:fipsisleg}
    Let $\Sigma$ be a fan so that $\XB_{\Sigma}$ is a toric Calabi--Yau.
    The Fayet--Iliopoulos parameter space \cref{def:fips} is homotopy equivalent to $\mathcal L_{\stp_\Sigma}(Y)$.
\end{conj}
\cref{conj:fipsisleg} has a parallel combinatorial version in dimension two;
we could not find a proof of this (seemingly simpler) statement in the literature: 
\begin{conj}
    The moduli space $\mathcal N_\nn(\RR^2)$ of planar graphs with $\nn$ bounded faces is homotopy equivalent to $\uConf_\nn(\RR^2)$, the configuration space of $\nn$ unlabeled points in the plane.
        \label{conj:graphs}
\end{conj}
The complex structure moduli space is believed to be related to the Bridgeland stability manifold \cite{bridgeland2009spaces}. The relationship to Bridgeland stability conditions has been made explicit in the setting of 3-fold flops \cite{donovan2019stringy} and Dynkin diagrams \cite{bridgeland2009stability}, the latter covering the setting of this paper.

\begin{qs}
    Can the moduli space of mostly Legendrian stops be related in any way to the space of stability conditions on the partially wrapped Fukaya category?
    We note that a generic choice of mostly Legendrian stop gives a preferred choice of generators of the partially wrapped Fukaya categories supported on (small negative pushoffs of) cotangent fibers.
    For the FLTZ stop, these generators coincide with the Bondal--Thomsen line bundles. 
    Thus, it is also natural to ask whether there is a (geometric) stability condition on the partially wrapped Fukaya category for which the Bondal--Thomsen generators are stable.
\end{qs}

\begin{rem}
    There is precedent for studying homological mirror symmetry where the $B$-side is not K\"ahler (and does not even admit a symplectic structure). For instance, HMS for the Hopf surface is studied in \cite{ward2021homologicalmirrorsymmetryelliptic}.
    In that work, the $A$-side category can be understood to be a generalization of the partially wrapped Fukaya category to the non-exact setting. 
    In particular, the moduli space of stops is still a reasonable object.
    Should we interpret the stop moduli space in the non-K\"ahler setting as a stringy ``symplectic'' moduli space on the mirror?
\end{rem}

\subsection{Outline}
The remainder of the paper is organized as follows.
\Cref{sec:SKMSreview} is an in-depth literature review covering both $A$-side and $B$-side perspectives on the toric SKMS; it also contains a worked example of how these different viewpoints fit together in the case of the $A_\n$ singularity. We use some notation from this review throughout the paper.

The new mathematical arguments are contained in \cref{sec:stopModuliSpace,sec:monodromyAction,sec:applications}.
In \cref{sec:stopModuliSpace}, we provide the geometric heart of this paper: a definition of the moduli space of Legendrian stops and the identification of the braid group inside the fundamental group of this moduli space. \Cref{sec:monodromyAction} contains a rather technical construction that allows us to lift paths in this moduli space to symplectomorphisms of the corresponding stopped symplectic manifolds. While important to the main result, this section can be safely skipped by readers who are primarily interested in the geometric and algebraic aspects of the main theorem. 

The last section, \cref{sec:applications}, contains examples and applications; in particular, we examine how the action of the fundamental group of the moduli space of Legendrian stops on the $A$-side translates through the mirror to the spherical and line bundle twists, provide a dictionary to the diagrammatic methods of the Bondal--Thomsen collection, and extend the result to obtain equivalences between VGIT chambers and actions on partial resolutions of the $A_\n$ singularity.

Finally, in \cref{app:HMS}, we extend the Floer-theoretic proof of homological mirror symmetry for toric varieties to toric Deligne--Mumford stacks. We use only the simplest example of this result in this paper, the $A_\n$ quotient stack, but have included the full result as it may be of future use to the community.

\subsection{Acknowledgements}
We are grateful to Ed Segal for encouraging us to produce an $A$-side moduli mirror whose monodromy induces variation of GIT.
We thank Charlotte Bartram, Sheel Ganatra, Oleg Lazarev, Nick Sheridan, and \v{S}pela \v{S}penko, for useful conversations, as well as the organizers of ``Higher categorical methods in algebra and geometry'' at which we were originally inspired to work on this problem. 
We also thank Michael Wemyss for asking us about the partially resolved case, which led to \cref{subsec:partialResolutions}.
Finally, we thank Charlie Egan and Louis Theran for introducing us to Voronoi diagrams as a method to obtain a planar graph from a configuration of points.
Some figures were created using PyPlot \cite{pyplot}.

AH was supported by the National Science Foundation through awards DMS-2549013 (previously as DMS-2412043) and DMS-2549038.
JH was supported by EPSRC grants EP/V049097/1 and EP/Z53528X/1.

\section{A review of the Stringy K\"ahler moduli of toric mirrors}
\label{sec:SKMSreview}

This section is a literature review and a source of motivation for the constructions in \cref{sec:stopModuliSpace,sec:monodromyAction}. The proofs later in the paper do not rely on the heuristic open Gromov--Witten discussion below, except through the standard background and the precise references explicitly cited there.

The homological mirror symmetry framework for toric varieties is well established \cite{abouzaid2006homogeneous,fang2011categorification,kuwagaki2020nonequivariant}. An advantage of this setting is that the complex moduli of the toric DM stack $\XB_\Sigma$ are trivial, as are the symplectic moduli of the symplectic mirror $\XA$.
However, the stringy K\"ahler moduli space $\mathcal M^{\XB}_\omega$ of the toric variety may be non-trivial.

There is no rigorous definition of the stringy K\"ahler moduli space $\mathcal M^{\XB}_\omega$ of a $B$-side model \cite{bridgeland2009spaces}; in this section, we provide an extended review of the literature with an emphasis on the moduli space of $A$-side Landau--Ginzburg superpotentials on the mirror (\cref{subsec:Aside}) and its relationship to variation of GIT (\cref{subsec:Bside}). In \cref{subsec:A1,subsec:braidAn} we give two worked-out examples of the construction of the SKMS of the $A_1$ singularity and the braid group action on the derived category of the $A_{\n}$ singularity.

\subsection{Moduli of \texorpdfstring{$A$}{A}-side Landau--Ginzburg models} 
\label{subsec:Aside}
We first give an overview of the symplectic motivation.
For this section, a smooth toric variety is determined by the data of a smooth fan $\Sigma\subset N_{\RR} = N \otimes \RR$. At the level of precision employed in this literature review, one may replace a toric variety with a smooth toric stack (see \cref{app:HMS}). We will use $\nn$ to denote the rank of $N$.
A mirror to a toric variety $\XB_\Sigma$ defined from this data is a symplectic Landau--Ginzburg model $(\XA,\omegaA, W_\Sigma^{\XA})$, where $\XA$ is symplectomorphic to $T^*T^{n}$ with the standard symplectic form and $W_\Sigma^{\XA}\colon \XA\to \CC$ is a superpotential determined by the data of the fan $\Sigma$.
We note that the superpotential is not unique and, by asking that $W_\Sigma^{\XA}$ be holomorphic, the choice of superpotential can be understood to reflect the variation of the complex structure of $\XA$.
There are several recipes for constructing a potential $W_\Sigma^{\XA}$,
but one method that makes all choices explicit arises from considering SYZ mirror symmetry and open Gromov--Witten theory. When constructed in this fashion, the choices are determined by the data of a K\"ahler structure on the mirror (see \cite[Section 2.2]{auroux2008special} for a summary of this philosophy).
More specifically, a choice of K\"ahler form on $\XB$ determines a moment map $\XB\to \Delta\subset \RR^n$ with generic fibers being $n$-tori, $T^n$.
To our knowledge, this construction has not been rigorously worked out in the literature (see \cite[Section 4.2]{auroux2007mirror} for a discussion on the current state of this viewpoint).
For us, the following heuristic suffices: we think of the complex structure induced on $\XA$ as identifying $\XA$ with the \emph{subset} $\operatorname{Log}^{-1}(\Delta)\subset (\CC^*)^n$ resembling an affinoid domain.
The open Gromov--Witten potential $W_\Sigma^{\XA}$ also depends on the choice of $\omegaB$. This is a formal power series with \emph{real} coefficients which provides an $\omegaB$-weighted count of pseudoholomorphic disks in $\XB$ with boundary on SYZ tori. Ignoring possible issues of convergence, the resulting function $W_\Sigma^{\XA} \colon \XA\to \CC$ is holomorphic with respect to the complex structure on $\XA$ (also determined by $\omegaB$). In practice, these power series are difficult to compute when $\XB$ is not Fano\footnote{These potentials have been computed for toric CY surfaces \cite[Theorem 4.2]{lau2012mirror}, which is the case that we are interested in.}.

It is expected that equipping $\XB$ with a complexified K\"ahler form corresponds to modifying the open Gromov--Witten potential so that it is a formal power series with \emph{complex} coefficients. It is unclear to us how --- or even if --- a complexified K\"ahler form on $\XB$ corresponds to a modification of the complex structure on $\XA$. However, since the $A$-model is determined by $W_\Sigma^{\XA}$ (and only uses the complex structure as auxiliary data), one viewpoint is that the moduli space of suitably complexified open Gromov--Witten potentials $\mathcal M^{\XA}_{J}$ is the appropriate $A$-side analogue to the stringy K\"ahler moduli space $\mathcal M^{\XB}_{\omega}$.
Without dwelling on the mathematical gaps in the story, let us describe some expected features of this moduli space in a worked example (\cref{exm:VGITP1}) and give a purely algebraic prescription for this moduli space in the setting of toric varieties (\cref{subsubsec:FIPS}).
\subsubsection{A local system of categories over the SKMS: the projective line}
\label{exm:VGITP1}
We briefly recall the construction of \cite[Section 1.1]{hanlon2019monodromy}. Consider $\XB_\Sigma=\PP^1$. The open Gromov--Witten potential is $W_{\Sigma}^{\XA}(z)=z+\frac{\exp(-\alpha)}{z}$, where $\alpha$ corresponds to the symplectic area of $\PP^1$. For every $c\in\CC^*$ we have a complexified potential:

   \begin{align*}
 W_{\Sigma,c}^{\XA}\colon \CC^*\to&\CC\\
 z\mapsto &z+\frac{c}{z}.
\end{align*}
We therefore have a ``local system'' of symplectic Landau-Ginzberg models $(\CC^*, W_{\Sigma, c})$ over $\CC^*$. By associating to each symplectic LG model a Fukaya category, we expect to build a local system of categories over $\CC^*$. 
 The model for the Fukaya category with the cleanest description in this setting is the Fukaya--Seidel category, whose objects are Lagrangian submanifolds of $\CC^*$ that fiber over the positive real axis under $W_{\Sigma}^{\XA}$ outside of a compact set. By definition, a generating set for the Fukaya--Seidel category is prescribed by choosing paths in the base $\CC$ from the critical values of the potential $W_{\Sigma}^{\XA}$ which eventually agree with the positive real axis, and taking the Lagrangian lifts (thimbles) of these paths. 
 If we take the loop $c=\exp(i\theta)$ for $\theta \in [0, 2\pi]$, the critical values $\pm 2\sqrt{c}$ twist around each other, and we can ``follow'' the Lagrangians through this loop (see \Cref{fig:FSLoop}). 
\begin{figure}[h!]
\centering
\scalebox{.6}{
\begin{tikzpicture}\begin{scope}[]

    \draw[fill=gray!20]  (0,0) node (v1) {} ellipse (2 and 2);
    \draw[dashed]  (v1) ellipse (0.5 and 0.5);
    \node at (0,-0.5) {$\times$};
    \node at (0,0.5) {$\times$};
    \draw[red] (0,-0.5) .. controls (0,-1) and (1,0) .. (1.5,0);
    \draw[blue] (0,0.5) .. controls (0,1) and (1,0) .. (1.5,0);
    \draw (1.5,0) -- (2,0);
    \draw[green] (0, -0.5) -- (0,0.5);
    \end{scope}
    
    \begin{scope}[shift={(9,0)}]
    
    \draw[fill=gray!20]  (0,0) node (v1) {} ellipse (2 and 2);
    \draw[dashed]  (v1) ellipse (0.5 and 0.5);
    \node at (0,-0.5) {$\times$};
    \node at (0,0.5) {$\times$};
    \draw (1.5,0) -- (2,0);
    \draw[blue] (0,-0.5) .. controls (0,-1) and (-1,-0.5) .. (-1,0) .. controls (-1,0.5) and (-0.5,1) .. (0,1) .. controls (0.5,1) and (1,0) .. (1.5,0);
    \draw[red] (0,0.5) .. controls (0,1) and (1,0) .. (1.5,0);
    \draw[green] (0, -0.5) -- (0,0.5);
    \end{scope}
    
    \begin{scope}[shift={(4.5,0)}]
    
    \draw[fill=gray!20]  (0,0) node (v1) {} ellipse (2 and 2);
    \draw[dashed]  (v1) ellipse (0.5 and 0.5);
    \node at (0.5,0) {$\times$};
    \node at (-0.5,0) {$\times$};
    \draw[red] (0.5,0) .. controls (1,0) and (1,0) .. (1.5,0);
    \draw (1.5,0) -- (2,0);
    \draw[blue] (-0.5,0) .. controls (-1,0) and (-0.5,1) .. (0,1) .. controls (0.5,1) and (1,0) .. (1.5,0);
    \end{scope}
    \end{tikzpicture}
    
    
    
     }
\caption{A loop of potentials $W_{\Sigma}^{\XA}(z)=z+{\exp(i\theta)}z^{-1}$ corresponding to an autoequivalence of Fukaya--Seidel categories.
The figures represent the critical values of the potential in $\CC$.
The red and blue paths lift to Lagrangian submanifolds, and are twisted as we rotate the parameter $\exp(i\theta)\in S^1$. The green path corresponds to a Lagrangian sphere in $\XA$; this autoequivalence can also be described in terms of the spherical twist around this object.}
\label{fig:FSLoop}
\end{figure}
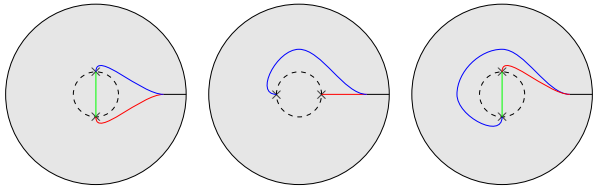

 Under the HMS dictionary, the resulting autoequivalence of the Fukaya--Seidel category is identified with $- \otimes \mathcal O(-1)$. 
This story was generalized in \cite{hanlon2019monodromy}, where it was shown that monodromy arising from rotation of the arguments of the monomials in the Hori--Vafa potential could be used to construct the mirror action to tensor product by a line bundle for the monomial-admissible Fukaya--Seidel category.
 
Something unique to this example (and not present in the case of HMS for $\PP^n$) is that there is a Lagrangian $S^1$ which parameterizes the unit circle in $\CC^*$. The autoequivalence of the Fukaya--Seidel category built by monodromy over the SKMS can be obtained by taking the symplectic Dehn twist around this Lagrangian sphere. This induces a spherical twist functor on the Fukaya--Seidel category.

\subsubsection{Hori--Vafa potential and $\FIPS$}
\label{subsubsec:FIPS}
In practice, homological mirror symmetry for toric varieties does not use the open Gromov--Witten potential, but rather one of several different substitutes --- see \cref{tab:FukayaSeidelCategories} for a summary of these alternative approaches as well as the current state of HMS for each model. 

One of the most commonly used substitutes --- the Hori--Vafa potential \cite{hori2000mirror}--- allows one to define a purely formal stand-in for the SKMS via ``complex variation'' of the coefficients of the superpotential.
For a fan $\Sigma$, it is defined as 
\[W_{\mathrm{HV}, \Sigma}(z):= \sum_{\rho\in \Sigma(1)} c_\rho z^\rho.\]
Here, we have used the identification of characters on the $A$-side with cocharacters on our input $B$-side.
\begin{rem}
 The Hori--Vafa potential can be viewed as a ``first-order'' approximation of the open Gromov--Witten potential.
 To see this relation, we observe that there is a connection between the 1-dimensional cones of $\Sigma$ and homology classes $[\rho]\in H_2(\XB_\Sigma, T^n; \ZZ)$ (where $T^n$ is the orbit of $1$ in the algebraic torus).
  When $\XB_\Sigma$ is Fano and we let $c_\rho=\exp(-\omega([\rho]))$, the Hori--Vafa and open Gromov--Witten potentials agree (subject to the domain of definition).
  In many other cases, the appearance of higher-order terms in the open Gromov--Witten potential does not meaningfully alter the symplectic geometry of the symplectic Landau--Ginzburg model. For this reason, the literature frequently glosses over any potential discrepancies between these two potentials in the non-Fano case.
\end{rem} 
To construct a ``Stringy K\"ahler Moduli space'' for this model, one takes the space of ``valid'' potentials $W_{\mathrm{HV}, \Sigma}(z)$. Let $d$ denote the number of $1$-dimensional cones of $\Sigma$, and let $n=\operatorname{rank}(N)$; equivalently, in a linear toric GIT presentation with torus rank $r$, one has $n=d-r$. 
The $d$ coefficients $c_\rho\in \CC$ can vary, but not all coefficient choices are allowed. In particular, we must avoid those lying on $V(E_A)$, where $E_A$ is the principal $A$-determinant as defined and studied in \cite{gelfand1994discriminants}, a higher-dimensional generalization of a discriminant. 
\begin{landscape}
\begin{table}[p]
\centering
        \rowcolors{2}{gray!50}{white}
\begin{tabular}{p{3cm}|p{3.5cm}|p{5cm}|p{9cm}}
    \textbf{Potential Data} & \textbf{State of HMS} & \textbf{Moduli of Potentials} & \textbf{Local Systems over Moduli space}\\ \hline
 OGW Potential & No $A$-model defined yet in this setting. & This model most accurately captures the physical definition of the SKMS of $\XB_\Sigma$. & No defined $A$-model \\
 Hori--Vafa potential& After suitable deformation to a tropical limit, we have HMS  \cite{abouzaid2006homogeneous}. & By definition, this is the $\FIPS$, which has a completely combinatorial definition & Due to the rigidity of the data used to define FS-category, it is difficult to construct parallel transport maps. Parallel transports are known to exist on the fiber category \cite{spenko2024hmssymmetriestoricboundary}.\\
 Monomial Admissibility Data  & HMS proven \cite{hanlon2019monodromy}.& Moduli space by definition is the torus $(S^1)^{|\Sigma(1)|}$ & There exists a local system of categories over the moduli space, with autoequivalences mirror to twists by line bundles.\\
 FLTZ Skeleton (microlocal)  & HMS is proven \cite{fang2011categorification,kuwagaki2020nonequivariant} &
 Unclear how to construct the moduli space from data
    & Under additional hypothesis, non-characteristic deformations of skeleta correspond to window derived equivalences \cite{zhou2020variationgitvariationlagrangian,huang2022variation}\\
 FLTZ Skeleton  (partially wrapped) & HMS is proven  \cite{hanlon2022aspects}. &\cref{def:legModuliSpace} & For variation of Legendrian stops \cite{sylvan2019orlovviterbofunctorspartially}. For FLTZ stops, \cref{thm:monodromyAction}\\
 SYZ in complement of smooth anticanonical & HMS not yet proved in this setting  & No interesting moduli of LG potential in this setting, but now have non-trivial moduli of complex structure  & In some settings, the $A$-side mirror contains Lagrangian spheres \cite{gross2018homological}, giving rise to spherical autoequivalences of the Fukaya category \cite[Section 1.3]{seidel2001braid} via symplectic Dehn twist.
\end{tabular}
\caption{Different frameworks for HMS for toric varieties.}
\label{tab:FukayaSeidelCategories}
\end{table}
\end{landscape}
The Fayet--Iliopoulos Parameter Space ($\FIPS$) is defined as the parameter space over which these potentials live:
\begin{equation}
    \label{def:fips}
 \FIPS = \frac{\CC^d \backslash V(E_A)}{(\CC^*)^n}
      = \frac{\CC^d \backslash V(E_A)}{(\CC^*)^{d-r}}
\end{equation}
where by construction there is a natural action of $(\CC^*)^n=(\CC^*)^{d-r}$ via $A$ on the coefficients, which we quotient out.

It is worth noting that $\FIPS$ does not always coincide with the understanding of the stringy K\"ahler Moduli space as the complex structure moduli space; see \cite[Remark 2.8]{donovan2015mixed}. 
The space $\FIPS$ is a smooth Deligne--Mumford stack.

\subsection{\texorpdfstring{$B$}{B}-side moduli via Toric Variation of GIT}
\label{subsec:Bside}
Given the patchwork landscape of different $A$-side Landau--Ginzburg models and approaches to defining the SKMS, we turn to the $B$-side to look for corroborating evidence. 
Despite our lack of a formal definition of the $B$-side SKMS, we can recover a shadow of this conjectural space through toric variation of GIT.

Toric geometry is equivalent to the geometry of \textit{linear toric Geometric Invariant Theory (GIT)} problems, the simplest kinds of GIT problems. Linear toric GIT problems are given by an algebraic torus $T \cong (\CC^*)^r$ acting on a vector space $V \cong \CC^d$, where $r$ is the rank and $d$ is the dimension. 
After choosing bases for $T$ and $V$, these actions are always given by
$$
(\lambda_1, \dots, \lambda_r) \cdot (z_1, \dots, z_d) = ({\lambda_1}^{q_{11}}{\lambda_2}^{q_{21}}\dots {\lambda_r}^{q_{r1}} z_1, \dots, {\lambda_1}^{q_{1d}}{\lambda_2}^{q_{2d}}\dots {\lambda_r}^{q_{rd}} z_d)
$$
where $q_{ij} \in \ZZ$ are integers for $i \in \{1, \dots,r\}, j \in \{1,\dots,d\}$.
We obtain an $r \times d$ integer matrix $Q = (q_{ij})$, called the \textit{weight matrix} whose columns $\{q_k\}_{k=1}^d$ are called \textit{weight vectors}.
GIT is an algebraic framework for building well-behaved quotients. In most cases, to obtain well-behaved quotients, we will have to remove some bad orbits (the unstable locus) with non-generic behaviour before taking the quotient.
As a result, GIT quotients are not unique and depend on a choice of stability condition specifying the unstable locus we remove.
In our setup, the stability condition is determined by a character $\theta$ of $T$, making our ``stability space'' the character lattice $M =  \hom(T,\CC^*) \cong \ZZ^r$. There is an entirely combinatorial process which computes the unstable locus $U_{\theta} \subset \CC^d$ to be removed before forming the (stack) quotient $[(\CC^d \setminus U_{\theta}) / (\CC^*)^r]$; see, for instance, \cite[Chapter 14]{cox2024toric}, or \cite[Section 4]{coates2018crepant} for a clear and brief introduction.

The stability space $M$ has a wall-and-chamber decomposition. The chambers of the space correspond to stability conditions with the same unstable loci (and hence yield the same GIT quotients). 
A fundamental result is that these chambers are the interiors of the top-dimensional cones of a toric fan, often called the secondary or GKZ fan \cite[Chapter 14]{cox2024toric}.
The GIT quotients corresponding to characters interior to the maximal cones of the GKZ fan are smooth toric Deligne--Mumford stacks in the sense of \cite{borisov2005orbifold}.
These are the only GIT quotients we will discuss, and from now on, when we say GIT quotients, we are referring to these.

\begin{rem}
 The stability condition space for toric linear GIT is naturally discrete, but admits a straightforward enhancement to a real manifold via symplectic geometry.
 Linear toric GIT can be recast as symplectic reduction \cite{kempf1979length}, see also the survey \cite{woodward2010moment}.
 In this framework, the standard coordinatewise $(\CC^*)^d$-action on $\CC^d$ restricts to a Hamiltonian action of $(S^1)^d$, and the GIT action corresponds to a Hamiltonian action of $(S^1)^r$.
  These Hamiltonian actions give us moment map projections $\pi_1$ and $\pi_2$: 
    \[\begin{tikzcd}V \arrow{r}{\pi_1} \arrow[bend left]{rr}{\pi_2} &  \RR_{\geq 0 }^d \arrow{r}{p} & \Delta\end{tikzcd}\]
 where (under mild conditions on the action) $\Delta \subset M_\RR=M\otimes_\ZZ \RR$ is the image of $p$ inside our continuous real enhancement of the GIT parameter space.
  Associated to a generic point $q\in \Delta$, the symplectic reduction/quotient $\XB_q:=\pi_2^{-1}(q)/(S^1)^r$ carries a Hamiltonian $(S^1)^{d-r}$ action whose moment polytope is $p^{-1}(q)$.
 The combinatorial type of $p^{-1}(q)$ varies with $q$, as drawn in \cref{fig:symplecticReduction}.
 The different chambers of the GIT problem correspond to regions over which the combinatorial type of $p^{-1}(q)$ is constant.
\end{rem}
\begin{figure}
    \centering

\begin{tikzpicture}

\draw [decorate, decoration={ticks, segment length=1cm}](-2,-1) -- (8,-1);
\draw[<->] (-2.5,-1) -- (8.5,-1);
\draw (3,-0.6) -- (3,-1.4);
\node[right] at (8.5, -1) {$M_\RR$};

\begin{scope}[shift={(0.5,1)}, rotate=90, yscale=-1]

\fill[gray!20] (-1,1.5) -- (0,1.5) -- (0,1.5) -- (0.75,0) -- (-1,0) -- cycle;
\draw [help lines, step=0.5cm] (-1,0) grid (1,2);
\draw (-1,1.5) -- (0,1.5) -- (0,1.5) -- (0.75,0);
\end{scope}

\begin{scope}[shift={(-2.5,1)},rotate=90, yscale=-1]

\fill[gray!20] (-1,1.5) -- (0,1.5) -- (0,1.5) -- (0.75,0) -- (-1,0) -- cycle;
\draw [help lines, step=0.5cm] (-1,0) grid (1,2);
\draw (-1,1.5) -- (0,1.5) -- (0,1.5) -- (0.75,0);
\end{scope}

\begin{scope}[shift={(3.5,1)},rotate=90, yscale=-1]

\fill[gray!20] (-1,1.5) -- (-0.25,1.5) -- (0,1.25) -- (0.65,0) -- (-1,0) -- cycle;
\draw [help lines, step=0.5cm] (-1,0) grid (1,2);
\draw (-1,1.5) -- (-0.25,1.5) -- (0,1.25) -- (0.65,0);
\end{scope}

\begin{scope}[shift={(6.5,1)},rotate=90, yscale=-1]

\fill[gray!20] (-1,1.5) -- (-0.5,1.5) -- (0,1) -- (0.5,0) -- (-1,0) -- cycle;
\draw [help lines, step=0.5cm] (-1,0) grid (1,2);
\draw (-1,1.5) -- (-0.5,1.5) -- (0,1) -- (0.5,0);
\end{scope}

\node at (0,-1.5) {$q<0$};
\node at (6,-1.5) {$q>0$};
\end{tikzpicture} \caption{Variation of GIT can also be understood in terms of varying the moment map parameter in symplectic reduction.
Here, we look at the variation of symplectic reduction for the $A_1$ singularity.}
\label{fig:symplecticReduction}
\end{figure}
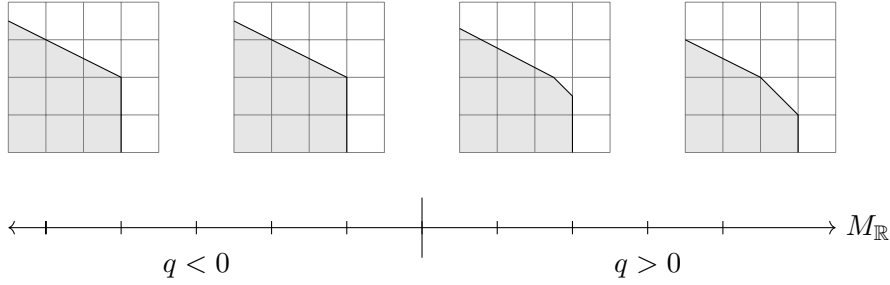
\subsubsection{Tropicalizations of the Hori--Vafa potential and VGIT chambers}
The Hori--Vafa potential can also be read from the presentation of a toric Calabi--Yau variety via linear toric GIT. 
Consider the dual weight matrix $Q^\vee : \ZZ^r \to \ZZ^d$; the cokernel $A: \ZZ^d \to N $ (where $N$ has rank $d-r$ and may have torsion) gives both the toric fans of the quotients and the form of the superpotential $W$. We ignore any torsion part of $N$, consider the remaining cokernel map $A$, and by picking bases, we can view $A$ as a $(d-r) \times d$ matrix.
We can describe $A$ by its set of column vectors $\{\rho_i\}$, which correspond to the rays generating the 1-dimensional cones of $\Sigma$. 
The Hori--Vafa potential is now given by
$$W_A(z):= \sum_i c_{\rho_i} z^{\rho_i}.$$

The Calabi--Yau property implies that the convex hull $P\subset \RR^{d-r}$ of the rays $\{\rho_i\}$ lies in an affine hyperplane. The compact polytope  $P$ is called the \textit{primary polytope}, and coherent triangulations of $P$ give the fans of the GIT quotients by taking cones over the simplices of the triangulation \cite[Chapter 7, Theorem 1.7]{gelfand1994discriminants}.
By the same argument, one sees that the Hori--Vafa potential factors as $W_A(z_1, \ldots, z_\nn)=z_1\cdot f_A(z_2, \ldots, z_\nn)$.
The Newton polytope of $f_A$ is $P$. When the tropicalization of $f_A$ is a smooth tropical hypersurface, the tropical hypersurface is combinatorially dual to a triangulation of $P$;
similarly, every choice of coherent triangulation for $P$ specifies a choice of coefficients for $f_A$ with a dually-tessellated tropicalization.
\subsubsection{Derived autoequivalences on the  \texorpdfstring{$B$}{B}-side}
The previous section suggests that the VGIT parameter space is a ``tropicalization'' of the SKMS. 
Indeed, the $\FIPS$ naturally embeds into the \textit{secondary toric stack}\footnote{One way to see this is that the Newton polytope of the principal $A$-determinant $E_A$ is the secondary polytope, whose normal fan is the secondary fan.}, which is a toric Deligne--Mumford stack (in the sense of \cite{borisov2005orbifold}) whose fan is given by the secondary fan.
Under this tropicalization map, fixed points in the toric variety associated to the secondary fan correspond, via the orbit-cone correspondence, to maximal cones (or chambers) of the secondary fan, and hence to GIT quotients.
The intersections of arbitrarily small neighborhoods around these fixed points with $\FIPS$ are called \emph{large complex structure limits} (LCSLs), with one such LCSL for each GIT quotient.
This gives us a ``clean statement'' of our expectation that there exists a local system of categories over the SKMS: the fundamental groupoid of $\FIPS$, with basepoints chosen at the LCSLs of the GIT quotients, acts on the derived categories of the quotients via line bundle twists and certain equivalences known as \emph{window equivalences}\footnote{Also referred to as grading restriction rules.}, $\Db(\XB) \to \Db(\XB')$.
\begin{figure}[h!]
    \centering
    \includegraphics[width=0.6\linewidth]{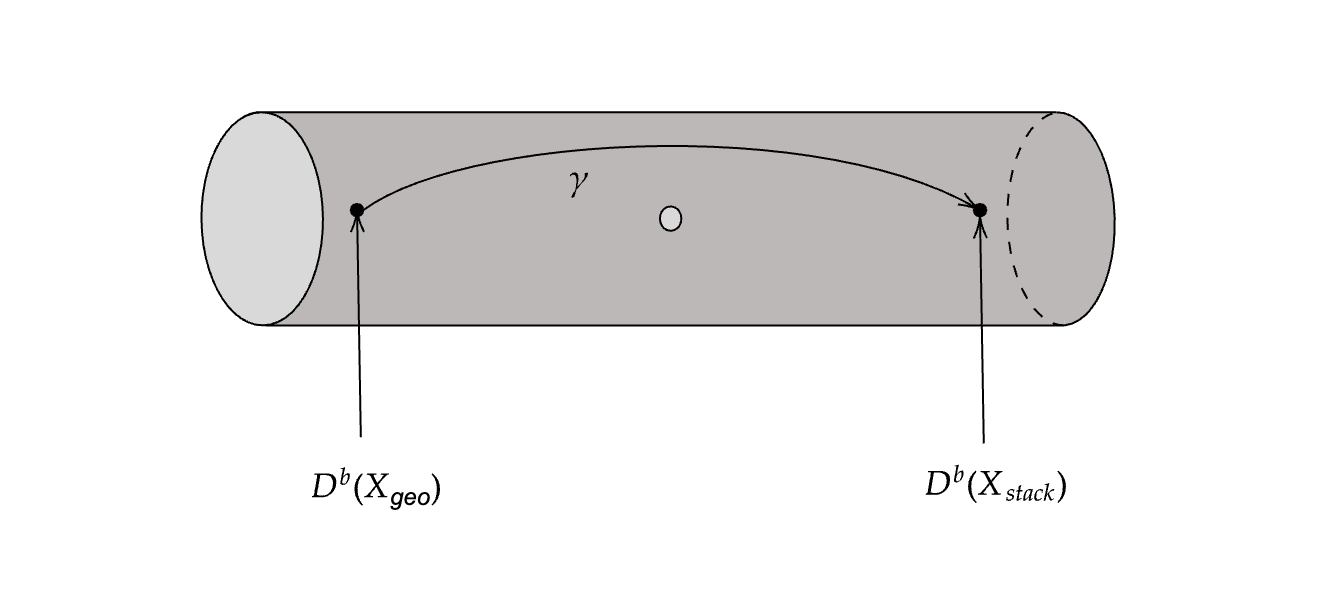}
    \caption{A picture of $\FIPS \cong \CC^* \setminus \{1\}$ for the $A_1$ singularity.
 The path $\gamma$ can be chosen to represent a ``window'' equivalence $\Db(\XB_{\mathrm{geo}}) \cong \Db(\XB_{\mathrm{stack}})$ as in \cite{segal2011equivalences}.
 There are countably infinitely many of these, and the choice is non-canonical.}
\end{figure}
These window equivalences arise via the Artin stack $[V/T]$, as in the framework of \cite{halpern2015derived}, with explicit constructions developed in \cite{segal2011equivalences,halpern2016autoequivalences,ballard2019variation,halpern2020combinatorial}.

In fact, \cite{halpern2016autoequivalences} shows that composing a window equivalence with the inverse of another corresponds to a \textit{spherical twist about a spherical functor}, where a spherical functor is a generalization of the spherical objects introduced in \cite{seidel2001braid}.

\begin{df}\cite[Chapter 8]{huybrechts2006fourier}
    \label{sphericalobject}
 An object $\Es^{\bullet} \in \Db(\XB)$ is called \textit{spherical} if 
    \begin{enumerate}
        \item $\Es^{\bullet} \otimes \omega_{\XB}\cong \Es^{\bullet}$, where $\omega_{\XB}$ is the canonical bundle, and
        \item \[
        \Hom_{\Db(\XB)}(\Es^{\bullet},\Es^{\bullet}[i]) \cong 
        \begin{cases}
 k & \text{if } i = 0 \text{ or } i = \dim(\XB) \\
            0 & \text{otherwise}.
        \end{cases}
        \]
    \end{enumerate}
\end{df}
We define the \textit{spherical twist} $T_{\Es^{\bullet}} : \Db(\XB) \to \Db(\XB)$ about a spherical object to be the cone on the evaluation map:
\begin{equation}
    \label{sphericalobjecttwist}
 T_{\Es^{\bullet}} (\Fs^{\bullet}) = \operatorname{Cone}\left( \mathbf{R}\!\operatorname{Hom}(\Es^{\bullet}, \Fs^{\bullet})\otimes \Es^{\bullet} \to \Fs^{\bullet}\right)
\end{equation}
where $\mathbf{R}\!\operatorname{Hom}(\Es^{\bullet}, \Fs^{\bullet})$ denotes the derived Hom complex.\footnote{There are some technical issues with the definition of the spherical twist because taking cones is not functorial. We bypass any such issues because in our geometric context, where our triangulated category is the derived category of a DM stack, we can describe any spherical twist (even about a spherical functor) as a Fourier-Mukai transform.}
By \cite[Proposition 8.6]{huybrechts2006fourier}, the spherical twist $T_{\Es^{\bullet}}:\Db(\XB) \to \Db(\XB)$ about any spherical object $\Es^{\bullet}\in \Db(\XB)$ is an autoequivalence. The conjecture may be rephrased as saying that the fundamental groupoid of $\FIPS$, with basepoints chosen at the relevant large complex structure limits, acts on $\Db(\XB)$ via spherical twists.
While we do not give any details here, these spherical twists are explicitly determined by the GIT data, and the expected representation can be described quite precisely.
However,  \cref{conj:toricmon} remains unproven even in the toric or abelian GIT setting.

That said, partial results are known.
For quasi-symmetric GIT problems (a special case of Calabi--Yau where $E_A = 0$ defines a hyperplane arrangement), \cref{conj:toricmon} was proven in \cite{halpern2020combinatorial}.
In addition, \cite{alexthesis} verified the conjectural representation when restricted to large radius paths in two-dimensional examples.
The first author's thesis \cite{michelathesis} studies \cref{conj:toricmon} for toric Calabi--Yau 3-folds of Picard rank 2 from this $B$-side perspective.
When the action of the torus $T$ on the vector space $V$ lands in $SL(V)$ (i.e., the weight vectors sum to $0$), the GIT quotients are Calabi--Yau (they have trivial canonical line bundle) and their derived categories are equivalent \cite{halpern2015derived}.

\subsection{Worked Example: SKMS, VGIT, and $\FIPS$ for the \texorpdfstring{$A_1$}{A1} singularity}
\label{subsec:A1}
 Suppose we have $\CC^*$ acting on $\CC^3$ with weight matrix $Q=(-1,2,-1)$, i.e.,
    $$
    \lambda \cdot (x,y,z) = (\lambda^{-1}x, \lambda^2 y, \lambda^{-1}z).
    $$
 The stability space  $M \cong \ZZ$ is an integer lattice of rank 1. We have two GIT quotients:
    \begin{align*}
        \XB_{stack} &= \CC^3 \backslash \{ y=0\} / \CC^* = \left[ \CC^2_{x,z} / \ZZ_2 \right] \\
        \XB_{geo} &= \CC^3 \backslash \{ x=z=0\} / \CC^* = \Tot\, \mathcal{O}(-2)_{\PP^1_{x,z}} = K_{\PP^1}
    \end{align*}
 where $\XB_{stack}$ denotes the GIT quotient for $\theta >0$, while $\XB_{geo}$ denotes the GIT quotient for $\theta<0$.
    $\theta = 0$ is our wall. $\XB_{stack}$ is a smooth stack associated to the $A_1$ surface singularity, and $\XB_{geo}$ is the crepant resolution of the $A_1$ surface singularity.

\subsubsection{$A$-side moduli}
We provide a non-rigorous but instructive example motivating the expectation that the Stringy K\"ahler moduli space is covered by charts corresponding to different toric stacks on the $A$-side. 
The toric K\"ahler form on $\XB_{geo}$ is determined completely by the value $\alpha:=\omegaB(\PP^1)\in \RR_{>0}$, and the open Gromov--Witten potential is 
\begin{equation}
W_\Sigma^{\XA}=x(1+y)(1+\exp(\alpha)y).
\label{eq:ogwPotential}
\end{equation}
While $W_{\Sigma}^{\XA}$ is a priori only defined on $\XA$, it can be extended to a holomorphic function on all of $(\CC^*)^2\supset \XA$. 
\begin{figure}
    \centering
    \includegraphics[scale=.2]{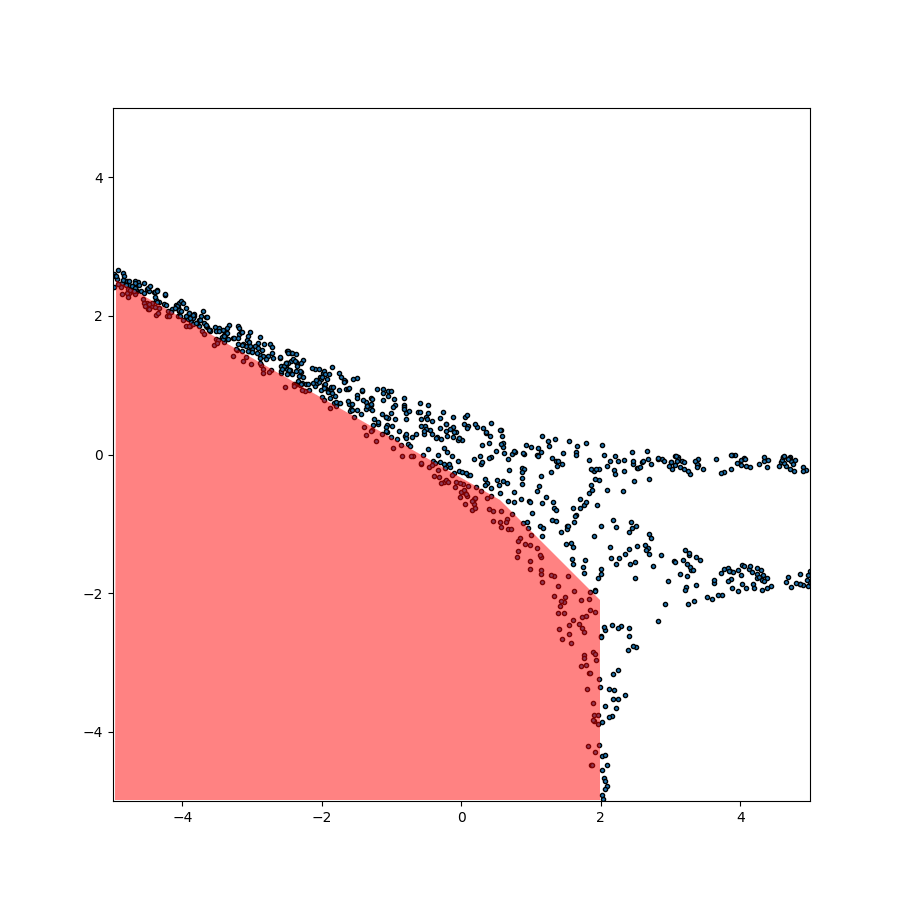}
    \caption{Tropicalization of the fiber of the open Gromov--Witten potential. The region highlighted in red approximates the moment polytope for $K_{\PP^1}$ defined by the symplectic form such that $\omega_\alpha(\PP^1)=\alpha$.}
    \label{fig:tropicalization}
\end{figure}
The relationship between $\XA$ and the potential $W_\Sigma^{\XA}$ can be loosely understood via tropicalization. Observe in \Cref{fig:tropicalization} that the bottom left-hand connected component of the complement of the amoeba approximates the moment polytope for $(\XB,\omegaB)$.
The preimage of this region under the Log map is (via our loose interpretation of \cite[Section 4.2]{auroux2007mirror}) the space $\XA$.
Continuing this line of reasoning, we may view the fiber of $W_{\Sigma}^{\XA}$ as living ``at the boundary'' of $\XA$. 
As $|\alpha|\to 0$, we see several kinds of degeneration occurring:
\begin{itemize}
    \item The moment map of $K_{\PP^1}$, as drawn in \cref{fig:symplecticReduction}, degenerates as the edge corresponding to the $\PP^1$ toric orbit shrinks to zero length.
    \item The zero set of $W_\Sigma^{\XA}$ develops a double root. 
    \item The combinatorial type of the tropicalization of $W_{\Sigma}^\XA$ changes.
\end{itemize}

Note that, at this juncture, only positive values of $\alpha$ arise from the mirror symmetry construction. Negative values of $\alpha$ are recovered by considering the OGW invariants of toric stacks. In this example, a negative value of $\alpha$ arises from the toric stack $\XB_{stack}$. Again, the open Gromov--Witten invariants come from counts of pseudoholomorphic curves with boundary on Lagrangian tori; to the best of our knowledge, the symplectic machinery has not been developed to provide a count of these curves. In the closed Gromov--Witten invariant setting, the agreement of GW invariants on $\XB_{geo}$ and $\XB_{stack}$ has been checked \cite[Corollary 3.4]{bryan2009crepant}, and corresponds to setting the parameter $\exp(\alpha)$ from $W_\Sigma^{\XA}$ to $-1$.

As before, the symplectic viewpoint is agnostic to the sign of $\alpha$, and we may consider any non-zero (even complex) value of $\alpha$.
The natural coordinate to parameterize this space is $c\in \CC^*\setminus \{1\}$, which is equal to $\exp(\alpha)$ on the portion of the stringy K\"ahler moduli space parameterized by OGW invariants of $K_{\PP^1}$ or $\CC^2/\ZZ_2$. Observe that the removed point $c=1$ corresponds to when $\alpha=0$. 
We can then study the ``variation of Landau--Ginzburg model'' via tropicalization of the fiber, as drawn in \cref{fig:variationOfTropicalization}.

\begin{figure}
\centering
\input{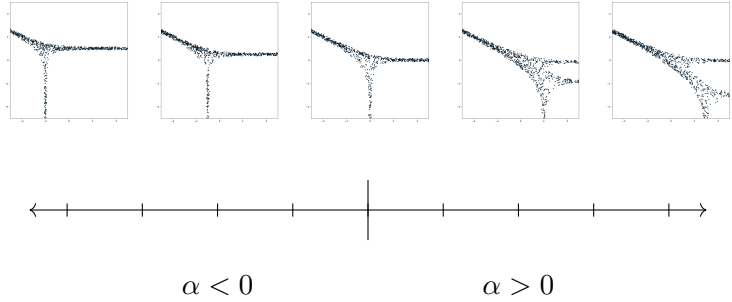} \caption{Variation of the tropicalization.
Suggestively, we draw the real parameter space for $\alpha$ to match that of the VGIT parameter space from \cref{fig:symplecticReduction}.
In the left domain, the ``large connected component'' of the amoeba complement does not vary, and agrees with the moment polytope for the symplectic orbifold $\CC^2/\ZZ_2$ from \cref{fig:symplecticReduction}.
On the right-hand side, the ``large connected component'' of the amoeba complement has a side length given by $\alpha$.
}
\label{fig:variationOfTropicalization}
\end{figure}

\begin{rem}
 When $\XB_\Sigma$ is a toric Calabi--Yau, there is another choice of $A$-side mirror \cite{chan2012syz} (for the setting of the $A_{\n}$ singularity, this is described in \cite{chan2017open}).
 If $W(x_1, \ldots, x_\nn)$ is the open Gromov--Witten potential, the mirror can also be presented as the symplectic manifold 
    \[\{(u,v, x_1, \ldots, x_{\nn})\in \CC_u\times \CC_v\times (\CC^*)^{\nn} \mid uv= W(x_1, \ldots, x_{\nn})\},\]
 with Landau--Ginzburg potential $u$.
 When presented in this manner, one can see in the $A_{\n}$ example that $\XA$ contains an $A_{\n}$ configuration of Lagrangian 2-spheres;
 more generally, the work of \cite{gross2018homological} constructs a similar configuration of Lagrangian 3-spheres in the setting of mirror symmetry for toric Calabi--Yau 3-folds.
 \label{rem:nonToricAnticanonical}
\end{rem}

\subsubsection{$B$-side moduli}
On the $B$-side, we understand from variation of GIT that the $[\AA^2/\ZZ_2]$ stack and its resolution $K_{\PP^1}$ fit into the same moduli space.

\begin{figure}[h!]
        \centering
        \begin{tikzpicture}

\draw [decorate, decoration={ticks, segment length=1cm}](-2,-1) -- (8,-1);
\draw[<->] (-2.5,-1) -- (8.5,-1);
\node[right] at (8.5, -1) {$M$};
\draw (3,-0.6) -- (3,-1.4);
\node at (1,-2) {$\theta<0$};
\node at (5,-2) {$\theta>0$};
\node at (6,-0.5) {$X_+$};
\node at (0,-0.5) {$X_-$};
\begin{scope}[rotate=-90, xscale=-1, shift={(1,-1)}]
\fill[fill=gray!20] (0,2) -- (0,1) -- (1,1.5) -- (1,2) -- cycle;
\draw [help lines, step=0.5cm] (-1,0) grid (1,2);
\draw[->] (0,1) -- (0,1.5);
\draw[->] (0,1)  -- (1,1.5);

\end{scope}

\begin{scope}[rotate=-90, xscale=-1, shift={(-5,5)}]
\fill[fill=gray!20] (6,2) -- (6,1) -- (7,1.5) -- (7,2) -- cycle;
\draw [help lines, step=0.5cm] (4.99,0) grid (7,2);
\draw[->] (6,1) -- (6,1.5);
\draw[->] (6,1) -- (7,1.5);
\draw[->] (6,1) -- (6.5,1.5);

\end{scope}

\end{tikzpicture}         \caption{Secondary fan for the $A_1$ singularity GIT problem.
 Above each chamber, we have added the fan associated with the GIT quotient.
 Note that we take the toric \emph{stack} associated to the fan.}
    \end{figure}
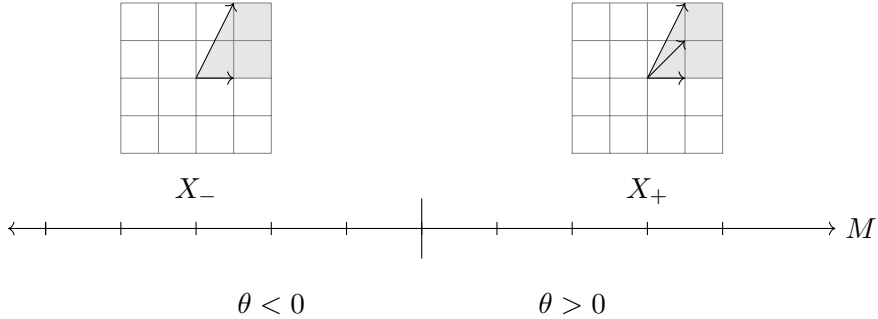

 The dual of the weight matrix has cokernel given by the matrix
    $A = 
    \begin{pmatrix}
        1 & 1 & 1 \\
        0 & 1 & 2
    \end{pmatrix}
    $.
 Hence our mirror is $((\CC^*)^2,W)$, where $W(x,y)=x(a+by+cy^2)$.
In this case, the exceptional locus is $V(E_A)=V(abc(b^2-4ac))$, and 
    $$
 \FIPS = \frac{(\CC^*)^3_{a,b,c} \backslash V(b^2-4ac)}{(\CC^*)^2} \cong \{u\in \CC^* \mid u\neq 1\} = \CC^* \setminus \{1\},
    $$
where $u=4ac/b^2$ is invariant under the $(\CC^*)^2$-action.

\subsection{Worked Example: Braid group action on the derived category of the $A_{\n}$ singularity}
\label{subsec:braidAn}
In fact, for all $A_{\n}$ we can construct a linear toric GIT problem such that the GIT quotients are resolutions of the $A_{\n}$ surface singularity. The problem is rank $\n$ and dimension $\nn+1$: $(\CC^*)^{\n}\curvearrowright \CC^{\nn+1}$ (for details, see \cite{donovan2015mixed}). The stability lattice is hence rank $\n$, and the number of quotients will be equal to the number of possible partitions of the set $\{ 1,\dots,\nn\}$. The GIT quotient $\XB_{stack}$ corresponding to the coarsest partition $\{ 1,\dots,\nn\}$ is isomorphic to the stack $[\CC^2 \: \big/_{1,\n} \: \ZZ_{\nn}]$, and the GIT quotient $\XB_{geo}$ corresponding to the finest set partition $\{ \{1 \} ,\dots, \{\nn\}\}$ is the classical geometric (non-stacky) resolution obtained via repeated blow ups. All the other quotients will be stacky and in some sense intermediate between $\XB_{stack}$ and $\XB_{geo}$.
The Hori--Vafa mirror model to the $A_{\n}$ singularity is $((\CC^*)^2, W: (\CC^*)^2 \to \CC)$ with superpotential
\begin{equation}
W(x,y) = x\sum_{i=0}^{\nn} a_i y^i.
\label{Ansuperpotential}
\end{equation}
In this example, the discriminant locus of $W(x, y)$ is the discriminant $\Delta_{\nn}$ of a generic single-variable polynomial of degree $n$.
These are the coefficients for which the polynomial $\sum_{i=0}^{\nn} a_i y^i$ has a double root.
As a result, there is a homotopy equivalence $\FIPS\to \uConf_{\nn}(\CC)$ which sends each potential $W$ to its set of $\nn$ distinct complex roots.
The fundamental group $\pi_1(\uConf_\nn(\CC))$ is isomorphic to the braid group $\mathcal B_\nn$ on $\nn$ strands.
In their seminal paper \cite{seidel2001braid}, Seidel and Thomas proved, by analogy with the symplectic side, that $\mathcal B_\nn$ acts (faithfully) on the $B$-side of the $A_{\n}$ example via \textit{spherical twists} about \textit{spherical objects}.

In \cite{seidel2001braid}, they define an $(A_{\n})$-configuration to be a collection of spherical objects $\Es_1, \dots, \Es_{\n}$ such that
$$
\text{dim}_{\CC} \Hom_{\Db(\XB)}(\Es_i, \Es_j) = 
    \begin{cases}
            1 & |i-j| = 1, \\
            0 & |i-j| \geq 2.
    \end{cases}
$$
Their theorem states that the twists $T_{\Es_i}$ of an $(A_{\n})$-configuration satisfy the braid relations for the braid group $\mathcal B_\nn$ up to graded natural isomorphism:
\begin{align*}
 T_{\Es_i} T_{\Es_{i+1}} T_{\Es_i} = T_{\Es_{i+1}} T_{\Es_i} T_{\Es_{i+1}} \hspace{0.5cm} &\text{for } i = 1, \dots, n-2,\\
 T_{\Es_i} T_{\Es_j} = T_{\Es_j} T_{\Es_i} \hspace{2cm} &\text{for } |i-j| \geq 2.
\end{align*}
This indexing is intentional: the singularity is of type $A_{\n}$, while the braid group is on $\nn$ strands because it comes from the configuration space of the $\nn$ roots of a degree-$\nn$ polynomial.
It turns out that the GIT quotients for the $A_{\n}$ singularity have $(A_{\n})$-configurations of spherical objects.
Say we look at the stacky quotient $\XB_{stack} \cong [\CC^2 /_{1,\n} \ZZ_\nn]$.
We note that $\Pic(\XB_{stack}) \cong \ZZ_\nn$ and consider line bundle twists of the structure sheaf of the origin $\Os_0 (i)$, for $i \in \{0,1,\dots ,\n\}$.

 We claim that $\Os_0(1), \dots, \Os_0(\n)$ form an $(A_{\n})$-configuration.
We can show this by computing $\Hom_{D^b(\XB_{stack})}(\Os_0 (i),\Os_0 (j)) \cong \Hom_{D^b(\XB_{stack})}(\Os_0,\Os_0(j-i))$, where $i,j\in\{1,\dots,\n\}$.
This is the same as computing Ext groups $\Ext^k(\Os_0,\Os_0(j-i))$, and in this case it is sufficient\footnote{The local-to-global Ext spectral sequence
\cite[p.~85]{huybrechts2006fourier} degenerates in this example: the relevant sheaf Ext groups are supported on the residual gerbe over the origin, and over \(\CC\) the stabilizer \(\ZZ_\nn\) is linearly reductive, so taking invariants is exact and there is no higher cohomology.}
to compute local Ext. For the quotient stack $[\CC^2/_{1,\n}\ZZ_\nn]$, the weighted Koszul resolution of $\Os_0$ is:

\begin{equation}
    0\to \Os(-\nn)\to \Os(-1)\oplus \Os(-(\n))\to \Os \to 0,
    \label{eq:Koszul}
\end{equation}
Applying $\mathcal{H}om_{\Coh(\XB_{stack})}( - , \Os_0 (j-i))$ gives the complex:
$$
0 \leftarrow \Os_0 (j-i) \leftarrow \Os_0(j-i+1) \oplus \Os_0(j-i+\n) \leftarrow \Os_0 (j-i+\nn) \leftarrow 0.
$$
We then take global sections, which in this case means taking $\ZZ_{\nn}$-invariant sections (i.e., degree $0$ modulo $\nn$ components).
The case $i=j$ shows that $\Os_0(i)$ are spherical objects. For $|j-i|=1$ we see a 1-dimensional Hom space, and for $|j-i| \geq 2$ there are no sections.
This is consistent with what we observed at the end of the previous section on paths and loops in $\FIPS$.

\begin{rem}
\label{rem:stabilityConditions}
In \cite{thomas2002stability}, Thomas describes a connected component $Y$ of the space of Bridgeland stability conditions on the $A$-side of $A_{\n}$ mirror examples and shows that $Y$ is the universal cover of the configuration space $\uConf_{\nn}(\CC)$ of $n$ distinct points in $\CC$.
This gives a geometric origin for the braid group actions.
\end{rem}

\section{Legendrian lifts of embedded Lagrangian graphs}
\label{sec:stopModuliSpace}
As \cref{tab:FukayaSeidelCategories} indicates, it is technically difficult to address variation of symplectic Landau-Ginzburg potentials directly in the context of homological mirror symmetry.
Instead, we follow the approach that has been the most computationally successful to date and work with the following substitute for the Fukaya-Seidel category: the partially wrapped Fukaya category stopped at a skeleton of a generic superpotential fiber.
In this setup, the data $(\XA, W_\Sigma^{\XA} \colon {\XA} \to \CC)$ is replaced by $(\XA, \stp_\Sigma)$, where $\stp_\Sigma \subset \partial \XA$ is a mostly Legendrian subset (the FLTZ stop) of the contact boundary of $\XA$.
For a class of toric varieties including Fano $\XB$, $\Lambda_\Sigma$ can be realized as the Lagrangian core of a fiber of $W_\Sigma^{\XA}$ in \cite{gammage2022mirror,zhou2018lagrangian} --- see \Cref{fig:coamoeba} for a sketch (see also \cite[Section 2.2]{ganatra2024homologicalmirrorsymmetrybatyrev} for an overview of the symplectic aspects of the construction).
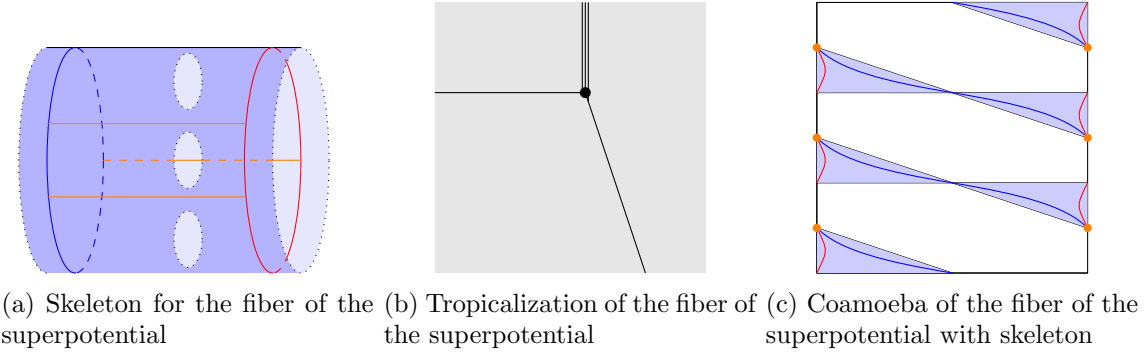
\begin{figure}
   \centering
   \begin{subfigure}{.3\linewidth}
        \centering
        
\begin{tikzpicture}[scale=.75]

    \draw[fill=blue!30,dotted]  (-4,6) ellipse (0.5 and 2);
    \fill[blue!30]  (-4,8) rectangle (0.5,4);
    \begin{scope}[]
    
    \clip  (0,8) rectangle (1.5,4);
    \draw[dashed, red]  (0,6) ellipse (0.5 and 2);
    \end{scope}\begin{scope}[]
    
    \draw[fill=blue!10,dotted]  (0.5,6) ellipse (0.5 and 2);
    \clip (0.5,6) ellipse (0.5 and 2);
    \draw[red]  (0,6) ellipse (0.5 and 2);
    \end{scope}
    \draw[dotted, fill=blue!10]  (-1.5,6) ellipse (0.25 and 0.5);
    \draw[dotted, fill=blue!10]  (-1.5,4.6) ellipse (0.25 and 0.5);
    \draw[dotted, fill=blue!10]  (-1.5,7.4) ellipse (0.25 and 0.5);
    \draw (-4,8) -- (0.5,8) (-4,4);\begin{scope}[]
    
    \clip  (0,8) rectangle (-3.5,4);
    \draw[dashed, blue]  (-3.5,6) ellipse (0.5 and 2);
    \draw[red]  (0,6) ellipse (0.5 and 2);
    \end{scope}\begin{scope}[]
    
    \clip  (-3.5,4) rectangle (-4.5,8);
    \draw[blue]  (-3.5,6) ellipse (0.5 and 2);
    \end{scope}
    \draw[orange] (-4,6.65) -- (-0.5,6.65) (-4,5.35) -- (-0.5,5.35);
    \draw[dashed, orange] (-3,6) -- (-1.75,6);
    \draw[orange] (-1.75,6) -- (-1.25,6);
    \draw[dashed, orange] (-1.25,6) -- (0,6);
    \draw[orange] (0,6) -- (0.5,6);

    \end{tikzpicture}         \caption{Skeleton for the fiber of the superpotential}
   \end{subfigure}
   \begin{subfigure}{.3\linewidth}
        \centering
        \begin{tikzpicture}[scale=.8]
        \fill[gray!20]  (-11.5,8) rectangle (-7,3.5);
        \draw (-11.5,6.5) -- (-9,6.5) -- (-8,3.5);
        \draw (-9.05,8) -- (-9.05,6.5) (-9,8) -- (-9,6.5) (-8.95,8) -- (-8.95,6.35);
        \node[scale=.4, fill=black, circle] at (-9, 6.5) {};
\end{tikzpicture}         \caption{Tropicalization of the fiber of the superpotential}
   \end{subfigure}
   \begin{subfigure}{.3\linewidth}
        \centering
        \begin{tikzpicture}[scale=.6]

\begin{scope}[]
    \begin{scope}[]
    \draw  (3,9.5) rectangle (9,3.5);
    \draw (3,9.5) -- (3,3.5) -- (3,4.5)  -- (6,3.5) (3,6.5) -- (9,4.5) (3,8.5) -- (9,6.5)  (6,9.5)-- (9,8.5);
    \draw (3,7.5) -- (9,7.5) (3,5.5) -- (9,5.5) (3,3.5) -- (9,3.5);
    \begin{scope}[]
    
    \begin{scope}[shift={(0,2)}]
    
    \fill[blue!20] (3,6.5) -- (3,5.5) -- (6,5.5) -- cycle;
    \draw[blue] (3,6.5) .. controls (3.5,6) and (4.5,5.75) .. (6,5.5);
    \draw[red] (3,6.5) .. controls (3.25,6) and (3.25,6) .. (3,5.5);
    \node[scale=.3, fill=orange, circle] at (3,6.5) {};
    \end{scope}
    \begin{scope}[shift={(0,0)}]
    
    \fill[blue!20] (3,6.5) -- (3,5.5) -- (6,5.5) -- cycle;
    \draw[blue] (3,6.5) .. controls (3.5,6) and (4.5,5.75) .. (6,5.5);
    \draw[red] (3,6.5) .. controls (3.25,6) and (3.25,6) .. (3,5.5);
    \node[scale=.3, fill=orange, circle] at (3,6.5) {};
    \end{scope}
    \begin{scope}[shift={(0,-2)}]
    
    \fill[blue!20] (3,6.5) -- (3,5.5) -- (6,5.5) -- cycle;
    \draw[blue] (3,6.5) .. controls (3.5,6) and (4.5,5.75) .. (6,5.5);
    \draw[red] (3,6.5) .. controls (3.25,6) and (3.25,6) .. (3,5.5);
    \node[scale=.3, fill=orange, circle] at (3,6.5) {};
    \end{scope}
    \end{scope}
    \begin{scope}[scale=-1, shift={(-12,-13)}]
    
    \begin{scope}[shift={(0,2)}]
    
    \fill[blue!20] (3,6.5) -- (3,5.5) -- (6,5.5) -- cycle;
    \draw[blue] (3,6.5) .. controls (3.5,6) and (4.5,5.75) .. (6,5.5);
    \draw[red] (3,6.5) .. controls (3.25,6) and (3.25,6) .. (3,5.5);
    \node[scale=.3, fill=orange, circle] at (3,6.5) {};
    \end{scope}
    \begin{scope}[shift={(0,0)}]
    
    \fill[blue!20] (3,6.5) -- (3,5.5) -- (6,5.5) -- cycle;
    \draw[blue] (3,6.5) .. controls (3.5,6) and (4.5,5.75) .. (6,5.5);
    \draw[red] (3,6.5) .. controls (3.25,6) and (3.25,6) .. (3,5.5);
    \node[scale=.3, fill=orange, circle] at (3,6.5) {};
    \end{scope}
    \begin{scope}[shift={(0,-2)}]
    
    \fill[blue!20] (3,6.5) -- (3,5.5) -- (6,5.5) -- cycle;
    \draw[blue] (3,6.5) .. controls (3.5,6) and (4.5,5.75) .. (6,5.5);
    \draw[red] (3,6.5) .. controls (3.25,6) and (3.25,6) .. (3,5.5);
    \node[scale=.3, fill=orange, circle] at (3,6.5) {};
    \end{scope}
    \end{scope}
    
    \end{scope}
    
    \end{scope}

\end{tikzpicture}                         \caption{Coamoeba of the fiber of the superpotential with skeleton}
   \end{subfigure}
   \caption{Homological mirror symmetry for the $A_2$ singularity.
   We have taken the Hori--Vafa potential $W=x(1+y^3)$, and we have drawn the fiber $W^{-1}(1)$.
   The FLTZ skeleton replaces the fiber of the superpotential with a mostly Legendrian stop.}
   \label{fig:coamoeba}
\end{figure}

One could imagine constructing a map
\begin{align*}
\text{skel} \colon \mathcal M^{\XA}_{\JA}\to &\mathcal L(Y)\\
W_\Sigma^\XA\mapsto & \stp_{\Sigma}
\end{align*}
that sends each LG superpotential to a Legendrian skeleton for the superpotential fiber in a consistent manner.
We could then construct a ``local system'' of partially wrapped Fukaya categories over $\mathcal L(Y)$ and subsequently pull it back to a local system of categories over the stringy K\"ahler moduli space or $\FIPS$.
At present, the construction of the Legendrian skeleton depends on many choices, and it is not clear how to make these choices consistently except in the simplest examples. 

We therefore examine $\mathcal L(Y)$ directly, without making reference to $\mathcal M^{\XA}_{\JA}$. Note that $\mathcal L(Y)$ is much larger than $\mathcal M_{J}^{\XA}$ (in the same way that the space of all symplectic forms on $\XB$ is much larger than the set of all K\"ahler forms on $\XB$).

The remainder of this section is dedicated to setting up a method to understand (part of) $\mathcal L(Y)$ in our example of interest by lifting graphs on a cylinder to Legendrians.

\subsection{Space of graph embeddings}
\label{subsec:graph}
We now set up a framework for discussing paths of embedded graphs where the combinatorial type of the graph is allowed to change, as in \Cref{fig:graphIsotopy}.
These will later parameterize certain cell complexes whose top-dimensional cells are Lagrangian or Legendrian submanifolds.
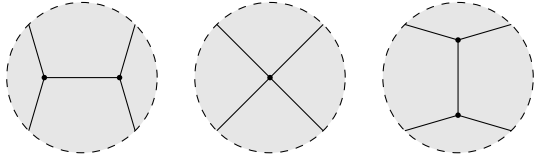
\begin{figure}
\centering
\begin{tikzpicture}
\begin{scope}[]

\draw[dashed, fill=gray!20]  (0,0) ellipse (1 and 1);
\draw (45:1) -- (0,0);
\draw (135:1) -- (0,0);
\draw (225:1) -- (0,0);
\draw (315:1) -- (0,0);
\node[circle, fill=black, scale=.2] at (0,0) {};
\end{scope}
\begin{scope}[shift={(2.5,0)}]

\draw[dashed,  fill=gray!20]  (0,0) ellipse (1 and 1);
\draw (45:1) -- (0,0.5);
\draw (135:1) -- (0,0.5);
\draw (225:1) -- (0,-0.5);
\draw (315:1) -- (0,-0.5);
\node[circle, fill=black, scale=.2] at (0,0.5) {};
\node[circle, fill=black, scale=.2] at (0,-0.5) {};
\draw (0,0.5) -- (0,-0.5);
\end{scope}
\begin{scope}[shift={(-2.5,0)}]

\draw[dashed, fill=gray!20]  (0,0) ellipse (1 and 1);
\draw (45:1) -- (0.5,0);
\draw (135:1) -- (-0.5,0);
\draw (225:1) -- (-0.5,0);
\draw (315:1) -- (0.5,0);
\draw (-0.5,0) -- (0.5,0);
\node[circle, fill=black, scale=.2] at (-0.5,0) {};
\node[circle, fill=black, scale=.2] at (0.5,0) {};
\end{scope}
\end{tikzpicture} \caption{An isotopy of embedded graphs giving a path in $\mathcal N(M)$.}
\label{fig:graphIsotopy}
\end{figure}

\begin{df} \label{df:graphembed}
Let $M$ be a manifold and let $G$ be a finite graph, regarded as a CW complex.
Choose a parametrization of each edge $e$ by $I=[0,1]$, and write $\underline e\colon I\to M$ for the restriction of a continuous map $\phi\colon G\to M$ to that edge.
Let $E_0(\phi)\subset E(G)$ be the set of edges on which $\underline e$ is constant, and let $\sim_{\phi}$ be the equivalence relation on $V(G)$ generated by the endpoints of the edges in $E_0(\phi)$.
A degenerate graph embedding is such a map $\phi\colon G\to M$ satisfying the following properties:
 \begin{itemize}
   \item Each edge restriction $\underline e$ is either an embedding or the constant map;
   \item for vertices $v,w\in V(G)$, one has $\phi(v)=\phi(w)$ if and only if $v\sim_{\phi} w$;
   \item if $\underline e$ is non-constant, then $\underline e((0,1))$ is disjoint from $\phi(V(G))$;
   \item if $e\neq f$ are non-constant edges, then $\underline e((0,1))\cap \underline f((0,1))=\varnothing$; and
   \item no cycle is completely collapsed: for every cycle $c=e_1, e_2, \ldots, e_k$, at least one map $\underline{e_i}$ is non-constant.
 \end{itemize}
When $\dim(M)=2$, we additionally require that no non-constant edge is self-adjacent: if $x$ lies in the interior of a non-constant edge, then a sufficiently small disk $D_x\subset M$ has $D_x\setminus\Im(\phi)$ with two components, and these two local components lie in distinct connected components of $M\setminus\Im(\phi)$.
\end{df}

Equivalently, a degenerate graph embedding is an ordinary graph embedding after contracting the constant edges, with the additional requirement that no cycle of $G$ is entirely contracted.
The set $\{\phi \colon G\to M\}$ of degenerate graph embeddings can be equipped with the subspace topology by considering it as a subset of all continuous maps from the CW complex of $G$ to $M$.
We then denote by 
\[\mathcal N(M):=\left(\bigsqcup_{G}\operatorname{DegEmb}(G,M)\right)\big/ \sim\]
where $\operatorname{DegEmb}(G,M)$ is the smooth space of degenerate graph embeddings of fixed combinatorial type, and two degenerate graph embeddings $\phi\colon G\to M$ and $\phi'\colon G'\to M$ are declared equivalent if their images agree in $M$.\footnote{In fact, after picking a metric on $M$, each equivalence class has a preferred representative whose domain is minimal under the relation of edge contraction and where every edge has a constant speed parameterization.} 
To avoid excessive notation, we will from now on write $\phi\in \mathcal N(M)$ to refer to the equivalence class given by some choice of parameterization $\phi \colon G\to M$.
We now restrict to the setting where $\dim(M)=2$, $M$ is compact (possibly with boundary), and we have chosen an area form $\omega_M\in \Omega^2(M)$.
The faces of a degenerate graph embedding $\phi \colon G\to M$ are the connected components of the complement of the image of $\phi$ and are denoted by $F(\phi)$.
Note that we do not require our faces to be simply connected.
We let $\mathcal N_{1}(M)$ denote the moduli space of degenerate graph embeddings with 1 marked point.

We will use this quotient model throughout.
Concretely, when we say that a map $K\to \mathcal N(M)$ is continuous, we mean that every point of $K$ has a neighborhood on which the map can be represented by a continuous family in some $\operatorname{DegEmb}(G,M)$ after passing, if necessary, to a common subdivision of the source graphs.
Thus all continuity statements below are checked locally on fixed combinatorial models and then descend to $\mathcal N(M)$ by the continuity of the quotient map.
To establish further notation, we will write $\phi^t \colon G^t\to M$ to denote a path $I\to \mathcal N(M)$, understanding that the combinatorial type of $G^t$ may vary with the parameter $t$.
Given an element $\phi\in \mathcal N(M)$, let $\mathcal N_\phi(M)$ (resp. $\mathcal N_{\phi,1}(M)$) denote the pointed space corresponding to the connected component of  $\mathcal N(M)$ (resp. $\mathcal N_1(M)$) containing the embedding $\phi$.

As we forbid collapsing of cycles, given an isotopy of embedded graphs $\phi^t \colon G^t\to M$, for each $t\in I$ we obtain an induced map on the set of faces $F(\phi^t) \colon F(\phi^0)\to F(\phi^t)$.
We say that the isotopy is area-preserving if 
    \[\Area(f)= \Area(F(\phi^t)(f))\]   
for all $f\in F(\phi^0)$ and $t\in I$.
This allows us to specialize the moduli space of graphs to those which preserve area: for a fixed $\phi \colon G\to M$, let $\mathcal N_{\phi,1}^{\Area}(M)$ denote the moduli space of pointed graph embeddings $\psi$ for which there is an area-preserving isotopy from $\phi$ to $\psi$.

We will now further restrict to a particular set of examples.
From here on, let $M= [R^-, R^+]\times S^1$ with coordinates $(r,q_2)$, where $R^-\ll 0 \ll R^+$.
Equip $M$ with the standard area form $dr \wedge dq_2$, and let $\phi_\nn \colon (G_\nn, \bullet)\to M$ denote the embedded graph with vertical lines at $r=0, r=\nn$, and $n$ evenly spaced horizontal line segments $[0, \nn]\times \{k/\nn\}$ for $k=0, \dots, \n$ (see \Cref{fig:graphGn} for this graph when $n=4$).
This graph has $n$ simply connected faces, each of area one.

\begin{figure}[!h]
    \centering
    \begin{tikzpicture}[xscale=-1.5, yscale=1.5,  rotate=90]

\fill[gray!20]  (-3,-0.5) rectangle (1,-4.5);
\draw[red] (-3,-1.5) -- (1,-1.5) ;
\draw[blue] (-3,-2.5) -- (1,-2.5);
\draw[orange] (-3,-1.5) -- (-3,-2.5) (-2,-1.5) -- (-2,-2.5) (-1,-1.5) -- (-1,-2.5) (0,-1.5) -- (0,-2.5);

\draw[>>->>] (-3,-5) -- node[below, rotate=270] {$q_2\in S^1$} (1,-5);
\draw (-3.5,-0.5) -- node[below]{ $r\in [-R, R]$} (-3.5,-4.5);
\node[circle, fill=black, scale=.4] at (-3,-2.5) {};
\end{tikzpicture}     \caption{The basic graph embedding $\phi_\nn \colon G_\nn\to M$ when $n=4$. We have also indicated a marked point. The area of each closed square is one.}
    \label{fig:graphGn}
\end{figure}

\begin{prop}
Consider the configuration space $\uConf_\nn(M)$ of $n$ unlabeled points in $M$.
There is an inclusion
\[\pi_1(\uConf_\nn(M))\into \pi_1(\mathcal N^{\Area}_{\phi_\nn}(M)).\]
    \label{prop:subgroup}
\end{prop}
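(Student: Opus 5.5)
Our plan is to construct maps in both directions at the level of spaces,
\[
\Phi\colon \uConf_\nn(M)\to \mathcal N^{\Area}_{\phi_\nn}(M), \qquad \Psi\colon \mathcal N^{\Area}_{\phi_\nn}(M)\to \uConf_\nn(M),
\]
with $\Psi\circ\Phi$ homotopic to the identity. Then $\Phi_*$ is injective on $\pi_1$, and that is the claimed inclusion. We will ignore the marked point: the marked-point version fibers over the unmarked one, and we can fix the marked point on a boundary-adjacent vertex far from the configuration.

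\emph{Step 1: the map $\Phi$.} This is a weighted Voronoi construction, as previewed in the introduction. To a configuration $\{p_1,\dots,p_\nn\}$ in the interior of $M$ we assign a power (weighted Voronoi) diagram: weights $w_i$ are chosen so that every bounded cell has area exactly one. The boundary of the union of the cells, together with the cell walls, is an embedded graph. To match $\phi_\nn$, whose faces are $\nn$ unit squares and whose complement is the two end annuli, we work on a truncated region. We take the cells of the diagram on $\RR\times S^1$, each intersected with a large fixed "budget" region, and tune the weights so that each cell has area one while the outer face is the complement.

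The existence and uniqueness of such weights is the standard convex-optimization result for capacity-constrained power diagrams (Aurenhammer--Hoffmann--Aronov): the weights minimize a strictly convex functional whose gradient is the vector of area defects. By the implicit function theorem, the weights, and hence the graph, depend continuously on the points. Combinatorial changes in the diagram are precisely edge contractions and expansions, so the resulting family is continuous in the quotient topology of $\mathcal N(M)$. Area is preserved by construction. Finally, $\Phi$ lands in the component of $\phi_\nn$: the configuration of $\nn$ points evenly spaced along the core $q_2$-circle, with equal weights, produces a graph isotopic through unit-area graphs to $\phi_\nn$, via a straight-line homotopy of the weights and positions.

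\emph{Step 2: the map $\Psi$.} Each $\psi$ in the component has $\nn$ bounded faces of area one; the collapse condition forbids them from disappearing, and area-preservation forbids them from merging. Send $\psi$ to a choice of one point per bounded face, for instance the barycenter for the area form. We will first check that the barycenter lies in the face; this holds for convex faces, but in general it can fail. So instead we define $\Psi$ only up to homotopy. The space of choices of one interior point in each of the $\nn$ disjoint open faces is a product of connected open surfaces, and each face varies continuously in Hausdorff sense along a path in $\mathcal N$. Loops in $\mathcal N$ therefore induce well-defined braids up to the point-pushing ambiguity inside each face.

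\emph{Step 3: the composite $\Psi\circ\Phi$.} For a power diagram, $p_i$ lies in its own cell whenever the cell is nonempty. This is true for Voronoi cells, but for weighted cells it can fail. We will fix this by perturbing the construction: we use a small-weight regime after rescaling the configuration spread in the annulus, or we simply use the generator of the cell in place of its barycenter. With this fixed, $\Psi\circ\Phi$ sends $p_i$ to a point in the same cell, and straight-line or cell-internal homotopy gives $\Psi\circ\Phi\simeq \id$.

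\emph{Main obstacle.} The main obstacle is Step 2 and its interaction with Step 3. Faces need not be convex or simply connected, and the "point in the face" map must be made continuous across combinatorial changes. We would handle this by showing that the space of pairs $(\psi,\text{marking of one point per bounded face})$ fibers over $\mathcal N^{\Area}_{\phi_\nn}(M)$ with connected fibers. On $\pi_1$, this yields a map to the braid group $\pi_1(\uConf_\nn(M))$ that is a left inverse to $\Phi_*$.
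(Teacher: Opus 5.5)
The genuine gap is in Step 3, and this is exactly where the argument needs an extra idea. You correctly observe that a site $p_i$ need not lie in its own unit-area power cell, but neither of your remedies works. The weights are dictated by the area constraint (up to a common additive constant), so there is no small-weight regime to retreat to. ``Using the generator instead of the barycenter'' is not a function of the graph. Worse, when $p_i\notin f_i$ the pair $(\Phi(p),p)$ is not a point of your space of markings, so nothing compares $\Psi_*\Phi_*$ with the identity.

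The missing idea, which the paper uses, is to define the left inverse on the larger space $\mathcal N_{\phi_n}(M)$ with \emph{no} area constraint. There you can homotope $\Vor^{w_x}$ to the unweighted diagram $\Vor^0$ by linearly interpolating the weights to $0$. The interpolation stays in the graph space because the set of weights for which every cell has positive area is convex: after lifting to the universal cover, the cell inequalities are jointly convex in the point and the weights. For $\Vor^0$ each site lies in its own convex cell, so the straight-line homotopy from the sites to the centroids never creates a collision, which gives $\Cent\circ\Vor^0\simeq\id$. Area preservation is needed only for the map whose injectivity you want, not for this homotopy.

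Step 2 also needs more than you give. Even granting that $E\to\mathcal N$ is a fibration, connected fibers only make $\pi_1(E)\to\pi_1(\mathcal N)$ surjective. To obtain a homomorphism $\pi_1(\mathcal N)\to\pi_1(\uConf_n(M))$ you need weakly contractible fibers and an actual fibration statement. The first holds here because the bounded faces in this component are open disks, which also removes your ``point-pushing ambiguity''. For the second: $E$ is open in $\mathcal N\times\uConf_n(M)$, so $E\to\mathcal N$ is a microfibration, and a microfibration with weakly contractible fibers is a Serre fibration and a weak equivalence (Weiss). With that, and with $E$ built over the unconstrained space, your route is a legitimate substitute for the paper's. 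The paper instead invokes Belegradek's extension theorem for centers of Jordan domains to get a continuous selection $\Cent^+$ that agrees with the centroid on convex faces.

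Finally, your claim in Step 1 that the diagram changes only by edge contractions and expansions needs an argument. On a cylinder of circumference $1$ with unit-area cells, power cells can wrap around. For $n=2$ and sites $(r,q_2)=(0,0),(1,0)$, the $q_2$-terms cancel, so the power bisector is the full circle $\{r=c\}$ and both cells are annuli. The resulting graph is disconnected, hence outside the component of $\phi_2$, and this persists on an open set of configurations. The power-diagram step in the paper does not visibly address this either, so any version of the argument needs additional input there.
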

\begin{proof}
The proof is based on the following simpler map.
Consider the subset $\mathcal N_{\phi_\nn}^{c}(M)$ of $\mathcal N_{\phi_\nn}(M)$ where every simply connected face is convex with respect to the flat metric on $M$. 
Then we have maps
\begin{equation}\label{eq:centVor}
\begin{tikzcd}
   & \uConf_\nn(M)\\
   \mathcal N_{\phi_\nn}^{c}(M)\arrow{ur}{\Cent} & \uConf_\nn(M) \arrow{l}{\Vor^0} \arrow{u}{\id} \\
   \end{tikzcd}
\end{equation}
where $\Cent$ sends an embedded graph with convex simply connected faces to the centroids of its simply connected faces, and $\Vor^0$ sends a configuration of points to the corresponding Voronoi diagram.
The triangle commutes up to homotopy: observe that the linear interpolation
\begin{align*}
   H^t :\uConf_\nn(M)\times I \to& \uConf_\nn(M) \\
    ((x_1, \ldots, x_\nn),t)\mapsto& \bigl(t(\Cent\circ \Vor^0)_i +(1-t)x_i\bigr)_{i=1}^\nn
\end{align*}
is well defined as $H^t(x)_i$ always belongs to the face $f_i$ due to the convexity of the faces, and therefore $H^t(x)_i=H^t(x)_j$ implies $i=j$.
To extend this proof to the moduli space of planar graphs with faces of area one, we fit \cref{eq:centVor} as face $A$ of the following diagram, which contains maps we have yet to define.
\begin{equation}
 \label{eq:extendedVorCent}
 \begin{tikzpicture}

    \node (NphiM) at (-2,8.5) {$\mathcal N_{\phi_\nn}(M)$};
    \node (UConfnM) at (6,8.5) {$\uConf_\nn(M)$};
    \node (NphicM) at (1.5,5) {$\mathcal N_{\phi_\nn}^{c}(M)$};
    \node (UConfnnM_R2C3) at (6,5) {$\uConf_\nn(M)$};
    \node (NphiAreaM) at (-2,-0.5) {$\mathcal N_{\phi_\nn}^{\Area}(M)$};
    \node (NphiCAreaM) at (1.5,-0.5) {$\mathcal N_{\phi_\nn}^{c,\Area}(M)$};
    \node (UConfnnM_R4C3) at (6,-0.5) {$\uConf_\nn(M)$};

    \draw[->] (NphiM) -- node[fill=white] {$\Cent^+$} (UConfnM);
    \draw[->] (NphicM) -- node[fill=white] {$\Cent$} (UConfnM);
    \draw[->] (NphicM) -- (NphiM);
    \draw[<-] (NphicM) -- node[fill=white] {$\Vor^0$} (UConfnnM_R2C3);
    \draw[->] (UConfnnM_R2C3) -- node[fill=white] {$\id$} (UConfnM);
    \draw[->] (NphiAreaM) -- node[fill=white] {$j$} (NphiM);
    \draw[->] (NphiCAreaM) -- (NphicM);
    \draw[->] (NphiCAreaM) -- node[fill=white] {$i$} (NphiAreaM);
    \draw[->] (UConfnnM_R4C3) -- node[fill=white] {$\id$} (UConfnnM_R2C3);
    \draw[<-] (NphiCAreaM) -- node[fill=white] {$\Vor^{w_x}$} (UConfnnM_R4C3);
    \node at (4.5,6.5) {$A$};
    \node at (1.5,7) {$C$};
    \node at (4,2) {$B$};
\end{tikzpicture}
\end{equation}
We will prove that all the inner faces of the diagram commute up to homotopy.
The proposition follows from the homotopy commutativity of the outer face.
The key tool is a generalization of the Voronoi diagram.
Let $W= \RR^\nn_{\geq 0}\times \RR^2_{\geq 0}$ be a weight space.
The power diagram\footnote{Also called the Laguerre-Voronoi diagram, Dirichlet cell complex, radical Voronoi tessellation or sectional Dirichlet tessellation.} determined by weights $w\in W$
\[
\Vor^w:\uConf_\nn(M)\to \mathcal N(M)
\]
is defined in the following way.
The assignment $\Vor^w(x)$ is the graph whose faces $f_i$ are characterized by
\[
f_i^\circ=\left\{x\in M\ \middle|\ 
\begin{array}{l}
\forall j\neq i,\ d(x, x_i)^2 - w_i\leq d(x, x_j)^2-w_j,\\[2pt]
d(x,x_i)^2-w_i\leq d(x,\{R^+\}\times S^1)^2-w_N,\\[2pt]
d(x,x_i)^2-w_i\leq d(x,\{R^-\}\times S^1)^2-w_S
\end{array}
\right\}.
\]
These generalize Voronoi diagrams in the sense that when $w=0$, we recover the standard Voronoi construction.
For each $x\in \uConf_\nn(M)$, define $W^+_{adm}(x)$ to be the set of weights so that every face of $w$-power diagram has positive area. 
\begin{lemma}
The space $W^+_{adm}(x)$ is convex. As a consequence, for any $w_0, w_1\in W^+_{adm}(x)$, the path $w_s:=(1-s)w_0+sw_1$ gives a path $\Vor^{w_s}(x)\in \mathcal N_{\phi_\nn}^c(M)$.
\end{lemma}
\begin{proof}
Lift the cylinder to its universal cover and choose, for each labeled cell of the two endpoint power diagrams, a lift of an interior point belonging to the corresponding lifted site.  For a fixed choice of lifted sites, each inequality defining a power cell is affine in the spatial variable and in the weight vector, since the quadratic terms in the squared distances cancel.  If $y_0$ and $y_1$ are interior points of the same labeled cell for weights $w_0$ and $w_1$, then $y_s=(1-s)y_0+sy_1$ satisfies the strict inequalities for $w_s=(1-s)w_0+sw_1$.  Hence the labeled cell has non-empty interior, and therefore positive area, for every $s\in[0,1]$.  This argument applies to each labeled face, including the two boundary cells, so $w_s\in W^+_{adm}(x)$ throughout the segment.
\end{proof}
Observe that all maps $\Vor^w$ are homotopic via linear interpolation of the $w$-coordinate.
We are now ready to describe the objects and morphisms in this diagram:
\begin{itemize}
     \item The space $\mathcal N_{\phi_\nn}^{c,\Area}(M)$ is the space of degenerately embedded graphs with convex simply connected faces of area one.
          Given $x\in \uConf_\nn(M)$, we solve the semi-discrete optimal transport problem by yielding a transport map $T\colon M\to M$ that minimizes the transport cost between the measure (dependent on $x$)
          \[
     \frac{1}{2}(R^+-R^- - \nn)(\chi_{\{R^-\}\times S^1}+\chi_{\{R^+\}\times S^1}) + \sum_{i=1}^\nn \delta_{x_i}
     \]
     and the standard area form $dr\wedge dq_2$. Here $\delta_{x_i}$ is the Dirac measure, and $\chi_{\{R^\pm\}\times S^1}$ is the measure which integrates over the boundary circles.
     By general principles of the semi-discrete optimal transport problem \cite{cuesta1993characterization}, the solution is unique (up to a set of measure zero).
     Furthermore, by \cite{aurenhammer1998minkowski}, the regions $T^{-1}(x_i)$ correspond (up to a set of measure zero) to the faces of a power diagram $\Vor^{w_x}$ for the subsets $\{x_i, \{R^-\}\times S^1, \{R^+\}\times S^1\}$ for a certain set of weights $w_x=\{w_i, w_N, w_S\}$, which also depend continuously on the $x_i$.
     In this sense, the weighted Voronoi graph is the minimizing planar graph selected by the transport problem.
     It is immediate from the construction of the power diagram that the diagram varies continuously in both the weight parameters $w_x$ and points $x_i$.
     We define the map $\Vor^{w_x} \colon \uConf_\nn(M)\to \mathcal N_{\phi_\nn}^{c,\Area}(M)$ to assign to each point configuration this power diagram.
     As noted previously, linearly interpolating the weights $w_x \to 0$ gives homotopy commutativity of the square $B$.
     \item The extension theorem for centers of Jordan domains from \cite[Theorem 1.1]{belegradek2025point} lets us extend the centroid choice on convex faces to a center choice on arbitrary Jordan faces in this component, giving a map
     \[
     \Cent^+ \colon \mathcal N_{\phi_\nn}(M)\to \uConf_\nn(M)
     \]
     with the property that $\Cent^{+}(\phi)\subset M\setminus \Im(\phi)$, i.e., the points belong to the faces.
     Furthermore, this map extends the centroid map in the sense that it agrees with the centroid map when restricted to $\mathcal N_{\phi_\nn}^{c}(M)$.
     This gives us commutativity of $C$.
\end{itemize}
The unlabeled face commutes by construction.

After picking basepoints and applying $\pi_1$ to the outer face of \cref{eq:extendedVorCent}, we obtain
\[
\Cent^+_*\circ j_*\circ i_*\circ \Vor^{w_x}_*=\id_{\pi_1(\uConf_\nn(M))},
\]
so $j_*\circ i_*\circ \Vor^{w_x}_*$ is injective.  
\end{proof}

\begin{rem}
In fact, it is possible to prove a homotopy equivalence between $\mathcal N_{\phi_\nn}(M)$ and the smaller space $\mathcal N_{\phi_\nn}^{\Area}(M)$. However, we do not need it in our construction.
\end{rem}
It will be practical to identify the subgroup $\pi_1(\uConf_\nn(M))\subset \pi_1(\mathcal N^{\Area}_{\phi_\nn}(M))$. 
Let $\tau_1\in \pi_1(\mathcal N_{\phi_\nn}^{\Area}(M))$ denote the loop drawn in \cref{fig:graphLoop} and let $\rho\in \pi_1(\mathcal N_{\phi_\nn}^{\Area}(M))$ be the loop induced by a $1/n$-rotation of the $q_2$ coordinate. Denote by $\tau_k=\rho^{k-1} \tau_1 \rho^{-(k-1)}$ (corresponding to the twist of the $k$-th and $(k+1)$-th faces).
\begin{figure}[!h]
    \centering
    \begin{subfigure}{.9\linewidth}
    \centering
    \scalebox{.3}{
    \begin{tikzpicture}[xscale=-1.5, yscale=1.5,  rotate=90]
\fill[gray!20]  (-3,-0.5) rectangle (1,-4.5);
\draw[red] (-3,-1.5) -- (1,-1.5) ;
\draw[blue] (-3,-2.5) -- (1,-2.5);
\draw[orange] (-3,-1.5) -- (-3,-2.5) (-2,-1.5) -- (-2,-2.5) (-1,-1.5) -- (-1,-2.5) (0,-1.5) -- (0,-2.5);
\end{tikzpicture}     \begin{tikzpicture}[xscale=-1.5, yscale=1.5,  rotate=90]

\fill[gray!20]  (-3,-0.5) rectangle (1,-4.5);
\draw[red] (-3,-1.5) -- (1,-1.5) ;
\draw[blue] (-3,-2.5) -- (1,-2.5);
\draw[orange] (-3,-1.5) -- (-3,-2.5) (-1.5,-1.5) -- (-2.5,-2.5) (-1,-1.5) -- (-1,-2.5) (0,-1.5) -- (0,-2.5);
\end{tikzpicture}     \begin{tikzpicture}[xscale=-1.5, yscale=1.5,  rotate=90]

\fill[gray!20]  (-3,-0.5) rectangle (1,-4.5);
\draw[red] (-3,-1.5) -- (1,-1.5) ;
\draw[blue] (-3,-2.5) -- (1,-2.5);
\draw[orange] (-3,-1.5) -- (-3,-2.5) (-1,-1.5) -- (-3,-2.5) (-1,-1.5) -- (-1,-2.5) (0,-1.5) -- (0,-2.5);
\end{tikzpicture}     \begin{tikzpicture}[xscale=-1.5, yscale=1.5,  rotate=90]

\fill[gray!20]  (-3,-0.5) rectangle (1,-4.5);
\draw[red] (-3,-1.5) -- (1,-1.5) ;
\draw[blue] (-3,-2.5) -- (1,-2.5);
\draw[orange] (-3,-1.5) -- (-3,-2.5) (-1,-2) -- (-3,-2) (-1,-1.5) -- (-1,-2.5) (0,-1.5) -- (0,-2.5);
\end{tikzpicture}     \begin{tikzpicture}[xscale=-1.5, yscale=1.5,  rotate=90]

\fill[gray!20]  (-3,-0.5) rectangle (1,-4.5);
\draw[red] (-3,-1.5) -- (1,-1.5) ;
\draw[blue] (-3,-2.5) -- (1,-2.5);
\draw[orange] (-3,-1.5) -- (-3,-2.5) (-1,-2.5) -- (-3,-1.5) (-1,-1.5) -- (-1,-2.5) (0,-1.5) -- (0,-2.5);
\end{tikzpicture}     \begin{tikzpicture}[xscale=-1.5, yscale=1.5,  rotate=90]

\fill[gray!20]  (-3,-0.5) rectangle (1,-4.5);
\draw[red] (-3,-1.5) -- (1,-1.5) ;
\draw[blue] (-3,-2.5) -- (1,-2.5);
\draw[orange] (-3,-1.5) -- (-3,-2.5) (-1.5,-2.5) -- (-2.5,-1.5) (-1,-1.5) -- (-1,-2.5) (0,-1.5) -- (0,-2.5);
\end{tikzpicture}     \begin{tikzpicture}[xscale=-1.5, yscale=1.5,  rotate=90]
\fill[gray!20]  (-3,-0.5) rectangle (1,-4.5);
\draw[red] (-3,-1.5) -- (1,-1.5) ;
\draw[blue] (-3,-2.5) -- (1,-2.5);
\draw[orange] (-3,-1.5) -- (-3,-2.5) (-2,-1.5) -- (-2,-2.5) (-1,-1.5) -- (-1,-2.5) (0,-1.5) -- (0,-2.5);
\end{tikzpicture}     }
    \caption{A loop in $\mathcal N^{\Area}_{\phi_4}(M)$ corresponding to a generator of the braid group, $\tau_1$.}
       \label{fig:graphLoop}
\end{subfigure}
\begin{subfigure}{.9\linewidth}
    \centering
    \scalebox{.3}{
            \begin{tikzpicture}[xscale=-1.5, yscale=1.5,  rotate=90]

    \fill[gray!20]  (-3,-0.5) rectangle (1,-4.5);
    \node[circle, fill=black, scale=.4] at (-3,-2.5) {};
    \clip  (1,-3) rectangle (-3,-1);
    \begin{scope}[]
    \draw[red] (-4,-1.5) -- (1,-1.5) ;
    \draw[blue] (-4,-2.5) -- (1,-2.5);
    \draw[orange] (-3,-1.5) -- (-3,-2.5) (-2,-1.5) -- (-2,-2.5) (-1,-1.5) -- (-1,-2.5) (0,-1.5) -- (0,-2.5);
    \draw[orange] (-4,-2.5) -- (-4,-1.5);
    \end{scope}

    \end{tikzpicture}     \begin{tikzpicture}[xscale=-1.5, yscale=1.5,  rotate=90]

\fill[gray!20]  (-3,-0.5) rectangle (1,-4.5);
\node[circle, fill=black, scale=.4] at (-3,-2.5) {};
\clip  (1,-3) rectangle (-3,-1);
\begin{scope}[shift={(0.166,0)}]
\draw[red] (-4,-1.5) -- (1,-1.5) ;
\draw[blue] (-4,-2.5) -- (1,-2.5);
\draw[orange] (-3,-1.5) -- (-3,-2.5) (-2,-1.5) -- (-2,-2.5) (-1,-1.5) -- (-1,-2.5) (0,-1.5) -- (0,-2.5);
\draw[orange] (-4,-2.5) -- (-4,-1.5);
\end{scope}

\end{tikzpicture}     \begin{tikzpicture}[xscale=-1.5, yscale=1.5,  rotate=90]

\fill[gray!20]  (-3,-0.5) rectangle (1,-4.5);
\node[circle, fill=black, scale=.4] at (-3,-2.5) {};
\clip  (1,-3) rectangle (-3,-1);
\begin{scope}[shift={(0.333,0)}]
\draw[red] (-4,-1.5) -- (1,-1.5) ;
\draw[blue] (-4,-2.5) -- (1,-2.5);
\draw[orange] (-3,-1.5) -- (-3,-2.5) (-2,-1.5) -- (-2,-2.5) (-1,-1.5) -- (-1,-2.5) (0,-1.5) -- (0,-2.5);
\draw[orange] (-4,-2.5) -- (-4,-1.5);
\end{scope}

\end{tikzpicture}     \begin{tikzpicture}[xscale=-1.5, yscale=1.5,  rotate=90]

\fill[gray!20]  (-3,-0.5) rectangle (1,-4.5);
\node[circle, fill=black, scale=.4] at (-3,-2.5) {};
\clip  (1,-3) rectangle (-3,-1);
\begin{scope}[shift={(0.5,0)}]
\draw[red] (-4,-1.5) -- (1,-1.5) ;
\draw[blue] (-4,-2.5) -- (1,-2.5);
\draw[orange] (-3,-1.5) -- (-3,-2.5) (-2,-1.5) -- (-2,-2.5) (-1,-1.5) -- (-1,-2.5) (0,-1.5) -- (0,-2.5);
\draw[orange] (-4,-2.5) -- (-4,-1.5);
\end{scope}

\end{tikzpicture}     \begin{tikzpicture}[xscale=-1.5, yscale=1.5,  rotate=90]

\fill[gray!20]  (-3,-0.5) rectangle (1,-4.5);
\node[circle, fill=black, scale=.4] at (-3,-2.5) {};
\clip  (1,-3) rectangle (-3,-1);
\begin{scope}[shift={(0.6666,0)}]
\draw[red] (-4,-1.5) -- (1,-1.5) ;
\draw[blue] (-4,-2.5) -- (1,-2.5);
\draw[orange] (-3,-1.5) -- (-3,-2.5) (-2,-1.5) -- (-2,-2.5) (-1,-1.5) -- (-1,-2.5) (0,-1.5) -- (0,-2.5);
\draw[orange] (-4,-2.5) -- (-4,-1.5);
\end{scope}

\end{tikzpicture}     \begin{tikzpicture}[xscale=-1.5, yscale=1.5,  rotate=90]

\fill[gray!20]  (-3,-0.5) rectangle (1,-4.5);
\node[circle, fill=black, scale=.4] at (-3,-2.5) {};
\clip  (1,-3) rectangle (-3,-1);
\begin{scope}[shift={(0.83333,0)}]
\draw[red] (-4,-1.5) -- (1,-1.5) ;
\draw[blue] (-4,-2.5) -- (1,-2.5);
\draw[orange] (-3,-1.5) -- (-3,-2.5) (-2,-1.5) -- (-2,-2.5) (-1,-1.5) -- (-1,-2.5) (0,-1.5) -- (0,-2.5);
\draw[orange] (-4,-2.5) -- (-4,-1.5);
\end{scope}

\end{tikzpicture}             \begin{tikzpicture}[xscale=-1.5, yscale=1.5,  rotate=90]

    \fill[gray!20]  (-3,-0.5) rectangle (1,-4.5);
    \node[circle, fill=black, scale=.4] at (-3,-2.5) {};
    \clip  (1,-3) rectangle (-3,-1);
    \begin{scope}[]
    \draw[red] (-4,-1.5) -- (1,-1.5) ;
    \draw[blue] (-4,-2.5) -- (1,-2.5);
    \draw[orange] (-3,-1.5) -- (-3,-2.5) (-2,-1.5) -- (-2,-2.5) (-1,-1.5) -- (-1,-2.5) (0,-1.5) -- (0,-2.5);
    \draw[orange] (-4,-2.5) -- (-4,-1.5);
    \end{scope}

    \end{tikzpicture}     }
    \caption{A loop in $\mathcal N^{\Area}_{\phi_4}(M)$ corresponding to the generator $\rho$.}
       \label{fig:graphLoop2}
\end{subfigure}
\caption{Generators of the braid group via variation of embedded graphs.}
\end{figure}
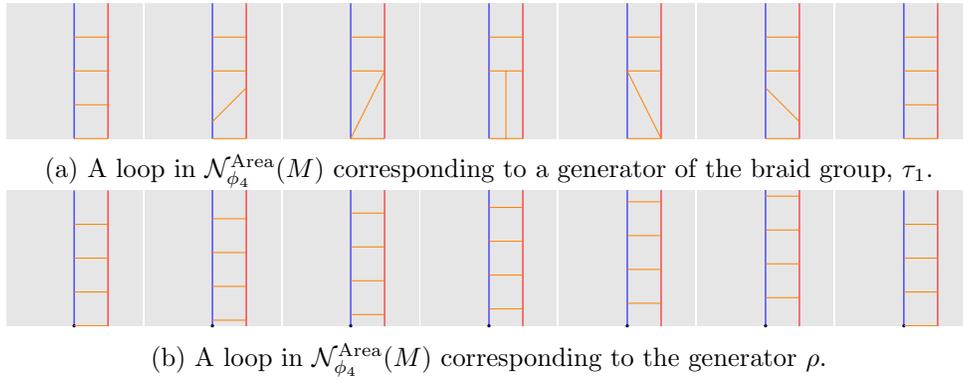

\begin{prop}
\label{prop:Bn1AsConf}
    The elements $\tau_1, \ldots, \tau_{\n}$ and $\rho$ lie in $\pi_1(\uConf_{\nn}(M)) \subset \pi_1(\mathcal N^{\Area}_{\phi_{\nn}}(M))$.
        Moreover, these are identified with the usual generators of $\pi_1(\uConf_\nn(M))\cong \mathcal B_{\nn,1}$,
    where $\mathcal B_{\nn,1}$ is the braid group on $\nn$ uncolored strands and one colored strand, and $\tau_\nn=\rho^{\n}\tau_1\rho^{-(\n)}$ is the cyclic translate used to keep the annular indexing symmetric.
    \end{prop}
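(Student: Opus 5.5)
The plan is to identify each of the explicit graph loops with the image, under the map $j_*\circ i_*\circ\Vor^{w_x}_*$ from the proof of \cref{prop:subgroup}, of a standard loop in $\uConf_\nn(M)$. We then read off the group-theoretic identification by applying $\Cent^+_*$. Since $\Cent^+_*\circ j_*\circ i_*\circ \Vor^{w_x}_*=\id$, it suffices to do two things for each loop $\gamma\in\{\tau_1,\rho\}$. First, exhibit a loop $c_\gamma$ in $\uConf_\nn(M)$ with $j_*i_*\Vor^{w_x}_*[c_\gamma]=[\gamma]$. Second, check that $c_\gamma$ is homotopic to $\Cent^+\circ\gamma$. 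For $\tau_k=\rho^{k-1}\tau_1\rho^{-(k-1)}$ the statement then follows formally, since the subgroup is closed under products.

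First we recall the standard presentation. Since $M$ is a closed annulus, $\uConf_\nn(M)\simeq\uConf_\nn(\CC^*)$. Its fundamental group is the annular braid group $\mathcal B_{\nn,1}$, i.e. braids on $\nn$ strands together with one fixed strand through the puncture. Take the base configuration to be the $\nn$ centroids $x_k=(\nn/2,\,(k-\tfrac12)/\nn)$ of the square faces of $\phi_\nn$, equally spaced around the circle. In this picture the generators are as follows. The $\sigma_k$ are half-twists exchanging adjacent points $x_k,x_{k+1}$ along the circle. The element $\rho$ is the rigid $1/\nn$ rotation in the $q_2$ direction. The relation $\sigma_\nn=\rho^{\n}\sigma_1\rho^{-(\n)}$ is the standard cyclic presentation of $\mathcal B_{\nn,1}$ (e.g. via Kent--Peifer).

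Next, the rotation. For $\rho$, the loop in \cref{fig:graphLoop2} is a rigid translation of $\phi_\nn$ in $q_2$. It stays in $\mathcal N^{c,\Area}_{\phi_\nn}(M)$: the faces remain unit squares, hence convex. Its centroid loop is exactly the rotation of the configuration, so $\Cent^+\circ\rho$ is the generator $\rho$. Moreover, $\Vor^{w_x}$ is equivariant under translation in $q_2$, so $\Vor^{w_x}$ of the rotating configuration is the translated power diagram. At the base configuration this power diagram is homotopic, through area-one convex graphs, to $\phi_\nn$ itself; by symmetry, $\phi_\nn$ should in fact be the transport solution for suitable weights. Hence the identification holds for $\rho$.

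The main obstacle is $\tau_1$. The loop in \cref{fig:graphLoop} stays in area-one graphs, but its faces pass through non-convex (triangular-then-reglued) shapes, and the edge rotates through a half-turn. We must verify that the induced loop of face-centers under $\Cent^+$ is the half-twist $\sigma_1$ and not its inverse or a full twist. I would compute this directly on the fixed combinatorial models of the loop. In every frame the two affected faces are convex: two triangles, or two quadrilaterals divided by a segment through the center of the $2\times1$ rectangle. So $\Cent^+$ agrees with the honest centroid throughout, by the extension property. Tracking the centroids, the separating segment rotates by $\pi$ about the rectangle's center and the two centroids swap by a half-turn about that point, which gives $\sigma_1$. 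The hard part is then surjectivity of the identification, i.e. that $\tau_1$ itself lies in the image subgroup. For this I would use the homotopy commutativity of face $C$ together with a homotopy inside $\mathcal N^{c,\Area}_{\phi_\nn}$ between the loop $\tau_1$ and $\Vor^{w_x}$ of its centroid loop. Both are area-one convex diagrams with the same centroids, so a straight-line interpolation of vertex positions, followed by area correction via the transport weights, should give the required homotopy. If that fails, I would instead argue that $\Vor^{w_x}\circ\Cent$ is homotopic to the identity on $\mathcal N^{c,\Area}_{\phi_\nn}$.
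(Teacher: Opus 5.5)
You identify $\pi_1(\uConf_\nn(M))$ with $\mathcal B_{\nn,1}$ as the paper does, and your centroid computations are right. For $\rho$: the loop stays among convex graphs with area-one faces, and $\Vor^{w_x}$ is $q_2$-equivariant. For $\tau_1$: every stage cuts the rectangle $[0,\nn]\times[0,2/\nn]$ (up to a $q_2$-translation) by a line through its center. Both pieces are therefore convex, contrary to your aside about non-convex shapes, so $\Cent^+=\Cent$ along the loop. The two centrally symmetric centroids make a half-turn, which gives $\Cent^+_*j_*[\tau_1]=\sigma_1$.

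The paper's own proof works at this level. It identifies $\pi_1(\uConf_\nn(M))\cong\mathcal B_{\nn,1}$ through $M^\circ\cong\RR^2\setminus\{0\}$ and the fibration $\Conf_{\nn,1}(\RR^2)\to\RR^2$. It then reads $\tau_i$ and $\rho$ off as the half-twists and the $2\pi/\nn$ rotation of a circle of points around the puncture.

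One of your asides is false: $\phi_\nn$ is not a power diagram for any weights. In $\Vor^w$, the interface between a point cell and a boundary-circle cell is the parabolic arc $r=a_i\pm b_i\,d(q_2,q_2^i)^2$ with $b_i>0$, never a circle $r=\mathrm{const}$. The basepoint path you need still exists: linearly interpolate the upper and lower boundary profiles of the bounded cells of $\Vor^{w_x}(x^0)$. This keeps those cells convex and of area one, and commutes with the $1/\nn$ rotation. So your argument for $\rho$ goes through.

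The genuine gap is the step you flag as hard: showing that $[\tau_1]$ itself lies in the image of $i_*\Vor^{w_x}_*$ in $\pi_1(\mathcal N^{\Area}_{\phi_\nn}(M))$. Your stated criterion $j_*i_*\Vor^{w_x}_*[c_\gamma]=j_*[\gamma]$ only lives in $\pi_1(\mathcal N_{\phi_\nn}(M))$, which is weaker unless $j_*$ is injective. Moreover, $\Cent^+_*j_*$ is only known to be a left inverse, so knowing $\Cent^+_*j_*[\tau_1]=\sigma_1$ gives no membership. You need an actual based homotopy in $\mathcal N^{\Area}_{\phi_\nn}(M)$ from $\tau_1$ to $\Vor^{w_x}\circ\Cent\circ\tau_1$, conjugated by the basepoint path.

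Your justification for such a homotopy does not work. The two loops do not in general have the same centroids, since $\Cent\circ\Vor^{w_x}\neq\id$: cells of the transport power diagram need not have their sites as centroids, or even contain them. That is exactly why the paper routes through $\Vor^0$ and the squares $A$ and $B$.

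The proposed interpolation is also not meaningful here. No stage of $\tau_1$ is of the form $\Vor^w(y)$, because its interfaces with the annular faces are the straight circles $r=0$ and $r=\nn$. So the weight-based area correction does not act on it, and there is no canonical matching of vertices and edges between the two families. Nothing keeps a straight-line interpolation convex or embedded.

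Your fallback, $\Vor^{w_x}\circ\Cent\simeq\id$ on $\mathcal N^{c,\Area}_{\phi_\nn}(M)$, would make $\Cent$ a homotopy equivalence $\mathcal N^{c,\Area}_{\phi_\nn}(M)\simeq\uConf_\nn(M)$. That is a statement of the same nature as \cref{conj:graphs}, which the paper leaves open, so it cannot serve as a backstop.

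As it stands, you have membership for $\rho$ but not for $\tau_1$, and hence not for the $\tau_k$. Closing the gap requires an explicit based homotopy in $\mathcal N^{\Area}_{\phi_\nn}(M)$. For example, you could build a two-parameter family of convex area-one graphs deforming each stage of $\tau_1$ onto the corresponding power diagram.
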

\begin{proof}
Let $M^\circ=(R^-,R^+)\times S^1$ be the interior of the cylinder.
Choose a collar of $\partial M$ and an isotopy of embeddings $r_t:M\to M$, $t\in[0,1]$, with $r_0=\id_M$ and $r_t(M)\subset M^\circ$ for every $t>0$.
Applying $r_t$ pointwise gives a deformation retraction of unordered configuration spaces, so the inclusion
\[
\uConf_\nn(M^\circ)\longrightarrow \uConf_\nn(M)
\]
is a homotopy equivalence.
Choose a homeomorphism $h:M^\circ\stackrel{\sim}{\longrightarrow}\RR^2\setminus\{0\}$.
This induces a homeomorphism
\[
\uConf_\nn(M^\circ)\cong \uConf_\nn(\RR^2\setminus\{0\}).
\]
Now let $\Conf_{\nn,1}(\RR^2)$ denote the configuration space of $\nn$ unlabeled points together with one distinguished colored point in the plane, and let
\[
p:\Conf_{\nn,1}(\RR^2)\to \RR^2
\]
record the colored point.
Since the base $\RR^2$ is contractible, the inclusion of any fiber is a homotopy equivalence.
Using the fiber over $0$, we get
\[
\uConf_\nn(M)\simeq \uConf_\nn(M^\circ)\cong \uConf_\nn(\RR^2\setminus\{0\})\simeq \Conf_{\nn,1}(\RR^2),
\]
and therefore
\[
\pi_1(\uConf_\nn(M))\cong \pi_1(\Conf_{\nn,1}(\RR^2))\cong \mathcal B_{\nn,1}.
\]

To identify generators, choose the base configuration in $M$ so that under $h$ it becomes $\nn$ equally spaced points on a small circle around the origin.
Then the loop $\tau_i$ becomes the standard half-twist exchanging the $i$-th and $(i+1)$-st neighboring points inside a disk disjoint from the puncture, while $\rho$ becomes the rigid rotation of the entire configuration by angle $2\pi/\nn$ around the puncture.
In \cite[Proposition 1]{MANFREDINI1997123}, these are exactly the standard generators for the annular braid group: the half-twists $b_i$ for $1\le i<\nn$ together with the annular rotation.
The subgroup generated by $\tau_1,\dots,\tau_{\n}$ is therefore the standard copy of $\mathcal B_{\nn}$ inside $\mathcal B_{\nn,1}$, and adjoining the cyclic translate $\tau_\nn$ does not enlarge this subgroup.
The extra loop $\rho$ records the motion of the uncolored strands around the distinguished colored strand.
\end{proof}

\subsection{Lagrangian projection for mostly Legendrian subsets}
In this section, we fix notation for Legendrian submanifolds and describe their front and Lagrangian projections, which are useful for 2-dimensional visualizations.
We then use Lagrangian projections to lift certain degenerate embeddings of graphs to Legendrians.

Let $(q_1, q_2)$ be standard flat coordinates on $T^2$ and let $Y$ be the contact manifold given by the unit cosphere bundle $S^*T^2\subset T^*T^2$ with respect to the standard metric on $T^2$.
Then $Y\cong T^2\times S^1$, which we give coordinates $(q_1, q_2, \theta)$, where $\theta$ is the coordinate on the $S^1$ factor measured as the angle against the covector $dq_1$.
The contact form on this space is  
\[
\lambda_{std}:=\cos(\theta)dq_1 + \sin(\theta)dq_2.
\]
A 1-submanifold $\Leg\subset Y$ is called Legendrian if $\lambda_{std}|_\Leg=0$.
A mostly 1-Legendrian subset (\cite[Definition 1.7]{ganatra2024sectorial}) is a degenerate graph embedding $\Leg \colon G\to Y$ where every non-constant edge is a Legendrian 1-submanifold.
\begin{df}
\label{def:legModuliSpace}
   Let $\mathcal L_\Leg(Y)$ denote the subspace of 
    $\mathcal N_\Leg(Y)$ consisting of those graphs which are mostly Legendrian subsets of $Y$.
\end{df}
If $(\dot q_1, \dot q_2)\neq (0,0)$ along a Legendrian submanifold of $Y$, the Legendrian can be conveniently described by its projection to the base $T^2$.
This is called the \emph{front projection}.
One recovers the Legendrian $\Leg$ from its front projection $p_{front}(\Leg)\subset T^2$ by specifying an orientation on each edge of the projection;
the Legendrian condition then uniquely determines the lift (see \eqref{eq:Legendrian}).
Suggestively, we draw front projections as (possibly immersed) graphs in $T^2$ with ``hairs'' indicating the cooriented unit conormals, as in \Cref{fig:frontProjectionI}.
If $(\dot q_1, \dot q_2)=(0,0)$ along an edge, we draw multiple covectors to capture the $\theta$-component of the parameterization as in \cref{fig:frontProjectionII}.
This projection is frequently used in the mirror symmetry literature (see, for instance, \cite[Figure 2]{fang2014coherent}).
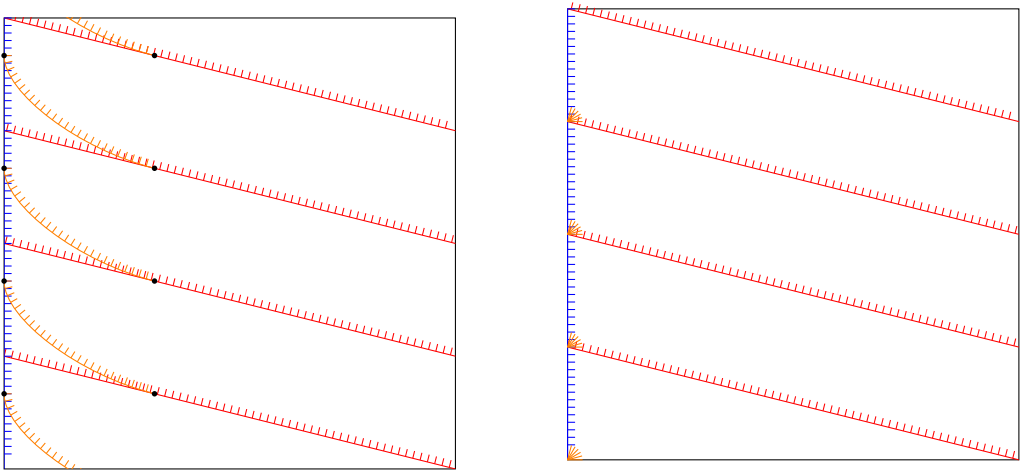
\begin{figure}
   \centering
   \begin{subfigure}{.45\linewidth}
   \centering
   
        \begin{tikzpicture}
    \begin{scope}[]

    \draw  (-3,-3) rectangle (3,3);
    \clip  (-3,-3) rectangle (3,3);
    \draw[red, fuzz] (-3,-1.5) -- (3,-3) (-3,0) -- (3,-1.5) (-3,1.5) -- (3,0) (-3,3) -- (3,1.5);
    \draw[blue, fuzz] (-3,3) -- (-3,-3);

    \begin{scope}[shift={(-3,0)}]

    \draw[fuzz, orange] (0,1) .. controls (0,0.5) and (1,-0.25) .. (2,-0.5);
    \end{scope}

    \begin{scope}[shift={(-3,-3)}]

    \draw[fuzz, orange] (0,1) .. controls (0,0.5) and (1,-0.25) .. (2,-0.5);
    \end{scope}

    \begin{scope}[shift={(-3,3)}]

    \draw[fuzz, orange] (0,1) .. controls (0,0.5) and (1,-0.25) .. (2,-0.5);
    \end{scope}

    \begin{scope}[shift={(-3,1.5)}]

    \draw[fuzz, orange] (0,1) .. controls (0,0.5) and (1,-0.25) .. (2,-0.5);
    \end{scope}

    \begin{scope}[shift={(-3,-1.5)}]

    \draw[fuzz, orange] (0,1) .. controls (0,0.5) and (1,-0.25) .. (2,-0.5);
    \end{scope}

    \end{scope}

    \node[circle, fill=black, scale=.2] at (-1,-2) {};
    \node[circle, fill=black, scale=.2] at (-3,-0.5) {};
    \node[circle, fill=black, scale=.2] at (-1,-0.5) {};
    \node[circle, fill=black, scale=.2] at (-3,1) {};
    \node[circle, fill=black, scale=.2] at (-1,1) {};
    \node[circle, fill=black, scale=.2] at (-3,2.5) {};
    \node[circle, fill=black, scale=.2] at (-1,2.5) {};
    \node[circle, fill=black, scale=.2] at (-3,-2) {};
    \end{tikzpicture}    \caption{The front projection of a mostly 1-Legendrian of $Y_0$, with vertices of the graph indicated by black nodes.
   Observe that even though the image of the front projection is only an immersed graph in $T^2$ (the projections of the red and blue edges to $T^2$ intersect one another), its lift to $Y_0$ is embedded. }
   \label{fig:frontProjectionI}
   \end{subfigure}\;\;\;\;\;
   \begin{subfigure}{.45\linewidth}
   \begin{tikzpicture}
\begin{scope}[]

\draw  (-3,-3) rectangle (3,3);
\draw[red, fuzz] (-3,-1.5) -- (3,-3) (-3,0) -- (3,-1.5) (-3,1.5) -- (3,0) (-3,3) -- (3,1.5);
\draw[blue, fuzz] (-3,3) -- (-3,-3);

\begin{scope}[shift={(-3,0)}]
\foreach \i in {0,...,5}
{
        \pgfmathtruncatemacro{\y}{15*\i}
        \draw[orange] (0,0)-- (\y:.2) ;
}

\node at (0,0) {};
\end{scope}

\begin{scope}[shift={(-3,-3)}]
\foreach \i in {0,...,5}
{
        \pgfmathtruncatemacro{\y}{15*\i}
        \draw[orange] (0,0)-- (\y:.2) ;
}

\node at (0,0) {};
\end{scope}

\begin{scope}[shift={(-3,1.5)}]
\foreach \i in {0,...,5}
{
        \pgfmathtruncatemacro{\y}{15*\i}
        \draw[orange] (0,0)-- (\y:.2) ;
}

\node at (0,0) {};
\end{scope}

\begin{scope}[shift={(-3,-1.5)}]
\foreach \i in {0,...,5}
{
        \pgfmathtruncatemacro{\y}{15*\i}
        \draw[orange] (0,0)-- (\y:.2) ;
}

\node at (0,0) {};
\end{scope}

\end{scope}

\end{tikzpicture}    \caption{The front projection of a mostly 1-Legendrian of $Y_0$.
   Vertices are unmarked. 
   By specifying multiple covectors at a single point, we describe the Legendrian along edges where the tangent space is parallel to the $\theta$-coordinate (see the orange ``spokes'' of covectors joining the red and blue cycles)}
   \label{fig:frontProjectionII}
   \end{subfigure}
   \caption{Two examples of front projections.}
\end{figure}

Consider now a different contact form for the same contact structure on this space:
\[\lambda=s(\theta)dq_1 + r(\theta)dq_2\]
where 
\begin{align*}
 r|_{(-\pi/2+\eps, \pi/2-\eps)} = \tan(\theta), && r|_{S^1\setminus(-\pi/2+\eps/2, \pi/2-\eps/2)}=\sin(\theta), && s(\theta):=r(\theta)\cdot \cot(\theta).
\end{align*}
We will restrict ourselves to the subset $Y_0:=\{(q_1, q_2, r) \st r=r(\theta), \theta\in (-\pi/2+\eps,\pi/2-\eps)\}$.
On this subset, the contact form is given by $\lambda=dq_1+rdq_2$.
If a submanifold is contained in $Y_0$ and is parameterized by $(q_1(t), q_2(t), r(t))$, the Legendrian condition simplifies to 
\begin{equation}
\label{eq:Legendrian}
 \dot q_1 = - r\dot q_2.
\end{equation}

On $Y_0$, we can also consider the projection to the $(q_2, r)$ coordinates, giving us the \emph{Lagrangian projection}:
\begin{align*}
 p_{lag} \colon Y_0\to & S^1\times \RR\\
 (q_1, q_2,r)\mapsto& (q_2,r).
\end{align*}
In the examples we consider, it is easier to visualize the Legendrians via this projection, as the images are embedded graphs rather than immersed ones.
For the Lagrangian projection, we now regard the same cylinder with the coordinates reversed and write $M=S^1\times (R^-, R^+)$, with coordinates $(q_2,r)$ and area form $dr\wedge dq_2$, where $R^-=-\cot\eps$ and $R^+=\cot\eps$.

\begin{df}
\label{def:lemb}
Let $\mathcal L^{emb}_\Lambda(Y)$ denote the set of mostly Legendrians of $Y_0$ based at $\Lambda$ which are homeomorphic to their image under $p_{lag}$.
\end{df}

As with the front projection, the relation \cref{eq:Legendrian} allows us to recover $\Leg$ from $p_{lag}\circ \Leg$ (up to a translation in the $q_1$ coordinate) by fixing a basepoint $z_0 \in \Im(p_{lag}\circ \Leg)$ and defining $q_1(z_1)=\int_{z_0}^{z_1} -r\,dq_2$. 
There is a necessary and sufficient condition determining whether a degenerate embedded graph $\phi \colon G\to M$ can be lifted to a Legendrian.
\begin{prop}
   \label{lem:liftingGraph}
 Let $\phi \colon (G,\bullet)\to M$ be a degenerate embedded pointed graph.
 Suppose for every cycle $c$ in $G$ we have 
\begin{equation}
   \int_{c} r\,dq_2 \in \ZZ.
\label{eq:liftingCondition}
\end{equation}
Then there exists a unique mostly Legendrian $\Leg \colon (G,\bullet)\to Y_0$ satisfying
\begin{align*}
 p_{lag}\circ \Leg = \phi  && \text{and} && q_1(\Leg(\bullet))=0.
\end{align*}
\end{prop}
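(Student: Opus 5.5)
We construct the lift edge by edge by integrating the Legendrian condition \eqref{eq:Legendrian}, and use the integrality hypothesis \eqref{eq:liftingCondition} to glue the pieces. The key point is that $q_1$ lives in the circle $\RR/\ZZ$, not in $\RR$. On $Y_0$ the contact form is $dq_1 + r\,dq_2$, so a lift of $\phi$ is a map $\Leg=(q_1,\phi)\colon G\to Y_0$ with $q_1\colon G\to \RR/\ZZ$, and the Legendrian condition on each non-constant edge reads $dq_1 = -\phi^*(r\,dq_2)$. Hence $q_1$ is a primitive of the closed (on the $1$-dimensional complex $G$, every) $1$-form $-\phi^*(r\,dq_2)$, taken with values in $\RR/\ZZ$.

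\emph{Step 1: define $q_1$ on vertices.} For each vertex $v$, choose an edge path $\gamma_v$ in $G$ from $\bullet$ to $v$. Set
\[
q_1(v) := -\int_{\gamma_v}\phi^*(r\,dq_2) \mod \ZZ .
\]
Constant edges contribute zero to this integral. The value is independent of the choice of $\gamma_v$: two choices differ by a cycle $c$, and $\int_c r\,dq_2\in\ZZ$ by \eqref{eq:liftingCondition}. Here we use that $H_1(G)$ is generated by cycles of the graph, so checking cycles suffices.

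\emph{Step 2: extend over edges.} On each non-constant edge $e$, define $q_1$ along $\underline e$ by the same integral from its initial vertex. This agrees at the terminal vertex by Step 1. On constant edges, $q_1$ is constant, and this is consistent because the endpoints are identified under $\phi$ and the integral along a constant edge vanishes. Every non-constant edge is then Legendrian by construction, and $p_{lag}\circ\Leg=\phi$, $q_1(\Leg(\bullet))=0$ hold by definition.

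\emph{Step 3: verify that $\Leg$ is a degenerate graph embedding.} We check the conditions of \cref{df:graphembed}. Non-constant edges of $\Leg$ are exactly those of $\phi$, since $\underline e$ constant forces $q_1$ constant along $e$. Injectivity statements lift from $\phi$, since $p_{lag}$ composed with $\Leg$ equals $\phi$ and $\phi$ already separates interiors of edges and distinct vertex classes. No cycle is collapsed because this property is inherited from $\phi$. Embeddedness of each non-constant edge follows likewise from that of $\underline e$. The two-dimensional no-self-adjacency clause does not apply, since $Y$ is three-dimensional. Hence $\Leg$ is a mostly Legendrian subset.

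\emph{Step 4: uniqueness.} Any other lift $\Leg'$ has $q_1'-q_1$ locally constant on the connected components of $G$ containing $\bullet$, because both satisfy $dq_1=-r\,dq_2$ along edges and are constant on constant edges. This difference vanishes at $\bullet$. I assume $G$ is connected, as the pointed setting implicitly requires; for disconnected $G$, uniqueness would fail up to translations on each component, and I would state that caveat.

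The only genuinely substantive step is the well-definedness in Step 1, which is where \eqref{eq:liftingCondition} is used. The main obstacle is the bookkeeping needed to reduce arbitrary closed loops, including those traversing constant edges or having degenerate vertex identifications, to sums of cycles of $G$. I would handle this by working with simplicial $1$-cycles of $G$ itself, rather than loops in the image.
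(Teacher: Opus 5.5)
Your proposal is correct and follows the paper's proof. Like the paper, you define $q_1$ by integrating $-r\,dq_2$ along source paths from $\bullet$, use the integrality hypothesis on cycles to get a well-defined $\RR/\ZZ$-valued function, read off the Legendrian condition along each non-constant edge, and fix the translation ambiguity by requiring $q_1(\bullet)=0$.

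Your Step 3 check of the degenerate-embedding axioms and your explicit uniqueness argument, including the connectedness caveat, are more careful than the paper, which simply asserts both. In the uniqueness step you assume without comment that a competing lift is constant over constant edges of $\phi$. This holds because such an edge, if non-constant, would be tangent to $\partial_{q_1}$ and hence not Legendrian.
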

\begin{proof}
To construct the lift, it suffices to give the $q_1$ coordinate.
For any $x\in G$, pick a path $\gamma_x:[0,1]\to G$ with $\gamma_x(0)=\bullet$ and $\gamma_x(1)=x$.
Then we define 
\[q_{1}(x)= \int_{\phi \circ \gamma_x} -r\,dq_2.\]
For any different choice of path $\gamma_x'$, the closed loop $\gamma_x^{-1}\cdot \gamma_x'$ represents a class in $H_1(G)$, so the condition \eqref{eq:liftingCondition} ensures that the two resulting values of $q_1(x)$ differ by an integer.
Therefore, we have a well-defined function $q_1 \colon G\to \RR/\ZZ$ which does not depend on the choice of path $\gamma_x$.
If $e\colon [0,1]\to G$ is a non-constant oriented edge and we choose, for each $s\in [0,1]$, the path $\gamma_{e(s)}$ to be a fixed path from $\bullet$ to $e(0)$ followed by $e|_{[0,s]}$, then
\[
q_1(e(s))=q_1(e(0))+\int_{\phi\circ e|_{[0,s]}}-r\,dq_2.
\]
Differentiating with respect to $s$ gives
\[
\frac{d}{ds}q_1(e(s))=-r(e(s))\frac{d}{ds}q_2(e(s)),
\]
so \cref{eq:Legendrian} holds along $e$.
Hence each non-constant edge lifts to a Legendrian curve, and the resulting graph is mostly Legendrian.
The marked point fixes the remaining translation ambiguity by imposing $q_1(\bullet)=0$, so the lift is unique as a pointed Legendrian.
\end{proof}

Observe that if a cycle $c$ bounds a face $f\in F(\phi)$, then by Stokes' theorem,
\[\int_{c} r\,dq_2 = \int_f dr \wedge dq_2 = \Area(f).\]
So, the condition in \cref{lem:liftingGraph} is equivalent to checking that all the faces of $\phi \colon G\to M$ have integer area and that \cref{eq:liftingCondition} is satisfied on cycles generating the image of the map $H_1(G)\to H_1(M)$.

\begin{example}
The graph from \Cref{fig:graphGn} lifts to a mostly Legendrian as every face has area one and $r\dot q_2$ integrates to $0$ over the blue cycle.
 In fact, the lift of \Cref{fig:graphGn} has Legendrian front projection drawn in \Cref{fig:frontProjectionII}.
\end{example}

By carrying out \cref{lem:liftingGraph} in families, we obtain a lift of moduli spaces of embedded graphs.
For later use, define
\[
\mathcal N^{\Area,\mathrm{lift}}_{\phi,1}(M)
:=
\left\{
\psi\in \mathcal N^{\Area}_{\phi,1}(M)\ \middle|\ 
\int_c r\,dq_2\in \ZZ \text{ for every cycle }c\subset G
\right\},
\]
and let $\mathcal N^{\Area,\mathrm{lift}}_{\phi}(M)$ denote its image under the map forgetting the marked point.

\begin{cor}
    \label{cor:liftingSpace}
 Suppose that $\phi \colon (G, \bullet)\to M$ lifts to $\Leg \colon (G, \bullet)\to Y_0$.
 Then there exists a lifting map $\Leg_{(-)} \colon \mathcal N^{\Area,\mathrm{lift}}_{\phi,1}(M)\to \mathcal L_{\Leg}^{emb}(Y_0)$.
\end{cor}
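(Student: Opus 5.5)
The plan is to define $\Leg_{(-)}$ pointwise with \cref{lem:liftingGraph} and then check three things: the values lie in the target space, the map is continuous, and it is based at $\Leg$.

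\textbf{Definition.} Given $\psi\colon (G',\bullet)\to M$ in $\mathcal N^{\Area,\mathrm{lift}}_{\phi,1}(M)$, the integrality condition \eqref{eq:liftingCondition} holds by definition of this space. So \cref{lem:liftingGraph} gives a unique pointed mostly Legendrian lift $\Leg_\psi\colon (G',\bullet)\to Y_0$ with $p_{lag}\circ\Leg_\psi=\psi$ and $q_1(\Leg_\psi(\bullet))=0$. Uniqueness shows the lift is independent of the representative in the equivalence class of $\psi$:
\begin{itemize}
\item subdividing or contracting constant edges does not change the $q_1$ function, since $q_1$ is defined by path integrals on the image;
\item two representatives with the same image yield the same image in $Y_0$.
\end{itemize}
Thus $\psi\mapsto\Leg_\psi$ descends to the quotient model. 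Since $p_{lag}\circ\Leg_\psi=\psi$ is a degenerate graph embedding, $\Leg_\psi$ is injective on the image and homeomorphic onto $\psi(G')$. Hence $\Leg_\psi$ is a degenerate graph embedding into $Y_0$ satisfying the embedding condition of \cref{def:lemb}.

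\textbf{Continuity.} Following the convention stated after \cref{df:graphembed}, it suffices to work locally on a fixed combinatorial model $\operatorname{DegEmb}(G',M)$, after common subdivision. On such a model the formula
\[
q_1(x)=\int_{\psi\circ\gamma_x}-r\,dq_2,
\]
with the paths $\gamma_x$ fixed in $G'$ independently of $\psi$, depends continuously on $\psi$ in the relevant topology. One caveat is that the $C^0$ topology alone does not control line integrals. I would therefore use the smooth structure on $\operatorname{DegEmb}(G,M)$ mentioned in the text, or equivalently restrict to the paths and families of piecewise smooth embeddings actually used later. In that setting $q_1$ varies continuously, and so does the lift $(q_1,q_2,r)$.

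\textbf{Path-connectedness to $\Leg$.} The target $\mathcal L^{emb}_\Leg(Y_0)$ consists of mostly Legendrians based at $\Leg$, so the image must lie in the component containing $\Leg=\Leg_\phi$. Let $\psi^t$ be an area-preserving path from $\phi$ to $\psi$ within the lift locus. Applying the lift to it gives a path $\Leg_{\psi^t}$ of mostly Legendrians from $\Leg_\phi$ to $\Leg_\psi$.

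There is one subtlety: an area-preserving isotopy from $\phi$ keeps face areas integral, but the integrals over cycles generating $H_1(M)$ need not stay integral along the path. Since the target is defined by membership in the lift locus, I would take $\mathcal N^{\Area,\mathrm{lift}}$ to mean the component of the lift locus containing $\phi$. Alternatively, one can observe that these integrals vary continuously, so integrality is preserved along a path that stays in the lift locus.

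\textbf{Main obstacle.} I expect the hard step to be continuity across changes of combinatorial type, where edges collapse to vertices. There I would verify that:
\begin{itemize}
\item the integral along a constant edge is zero, so the lifts glue compatibly on common subdivisions;
\item the $\theta$-spokes of collapsed edges are handled by the degenerate-graph formalism, since constant edges in $M$ lift to constant edges in $Y_0$.
\end{itemize}
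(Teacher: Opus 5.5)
Your proposal is correct and follows essentially the paper's proof. You define $\Leg_\psi$ pointwise by \cref{lem:liftingGraph}, then check continuity on a fixed subdivided combinatorial chart using paths from the marked point chosen independently of the parameter; the edgewise values glue modulo $\ZZ$ by \eqref{eq:liftingCondition}, and the marked-point normalization makes overlapping local lifts agree. Your extra checks (descent to the quotient, embeddedness, landing in the component of $\Leg$) and your caveat that compact-open continuity alone does not control the line integrals are fair refinements of points the paper passes over quickly. Two small remarks: the worry about periods of the core cycle is moot, since an area-preserving isotopy also fixes the areas of the two annular faces, and these (with the fixed boundary circle) determine those periods; and the $\theta$-spokes of the FLTZ stop are the non-constant segments $q_2=k/n$, $r\in[0,n]$ of the Lagrangian projection, not collapsed edges.
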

\begin{proof}
We must show that the lift from \cref{lem:liftingGraph} varies continuously in families.
By the quotient-topology convention above, it is enough to work locally on a parameter space $K$ where a map $K\to \mathcal N^{\Area,\mathrm{lift}}_{\phi,1}(M)$ is represented by a continuous family of pointed degenerate embeddings
\[
\phi_k \colon (G,\bullet)\to M
\]
for one fixed subdivided graph $G$.
Because every $\phi_k$ lies in $\mathcal N^{\Area,\mathrm{lift}}_{\phi,1}(M)$, the integer period condition \cref{eq:liftingCondition} holds for all $k$.

Choose once and for all, for each vertex $x\in G$, a combinatorial path $\gamma_x$ in $G$ from the marked point $\bullet$ to $x$.
For every parameter $k\in K$, define
\[
q_{1,k}(x):=\int_{\phi_k\circ \gamma_x} -r\,dq_2 \in \RR/\ZZ.
\]
Since the $1$-form $-r\,dq_2$ is smooth on $M$ and the maps $\phi_k\circ \gamma_x$ vary continuously in the compact-open topology, the value $q_{1,k}(x)$ depends continuously on $k$ for each vertex $x$.
Choose an orientation on each edge $e\colon [0,1]\to G$ with initial vertex $v_e=e(0)$.
For $x=e(s)$ on such an edge, define
\[
q_{1,k}(x):=q_{1,k}(v_e)+\int_{\phi_k\circ e|_{[0,s]}}-r\,dq_2 \in \RR/\ZZ.
\]
For fixed $e$, the right-hand side depends continuously on $(k,s)$ because it is the sum of the continuous vertex value $q_{1,k}(v_e)$ and a path integral over the continuously varying family of edge segments $\phi_k\circ e|_{[0,s]}$.
At $s=1$, this agrees with the vertex value $q_{1,k}(e(1))$ modulo $\ZZ$ by the integer period condition \cref{eq:liftingCondition}.
Hence these edgewise formulas glue to a continuous map
\[
K\times G\longrightarrow Y_0,\qquad (k,x)\longmapsto \Leg_{\phi_k}(x),
\]
whose restriction over each $k$ satisfies \cref{eq:Legendrian} on every non-constant edge and is therefore precisely the lift supplied by \cref{lem:liftingGraph}.
The construction is independent of the chosen representative family, because two local lifts differ by a $q_1$-translation and the marked point normalization forces that translation to be zero.
Hence the local constructions glue on overlaps and descend to a continuous map
\[
\Leg_{(-)}: \mathcal N^{\Area,\mathrm{lift}}_{\phi,1}(M)\to \mathcal L_{\Leg}^{emb}(Y_0).
\]
\end{proof}

Note that the marking makes this map canonical. The marking fixes the $q_1$-translation ambiguity of the Legendrian lift so that the target is the embedded-projection subspace rather than a lift only defined up to deck transformation.

\begin{prop}
\label{prop:lagProjectionLeftInverse}
The Lagrangian projection induces a continuous map
\[
\mathrm{pr}_{lag}\colon \mathcal L_{\Leg}^{emb}(Y_0)\to \mathcal N^{\Area,\mathrm{lift}}_{\phi}(M),\qquad \Lambda\mapsto p_{lag}\circ \Lambda.
\]
Moreover, $\mathrm{pr}_{lag}\circ \Leg_{(-)}$ is the forgetful map
\[
\mathcal N^{\Area,\mathrm{lift}}_{\phi,1}(M)\to \mathcal N^{\Area,\mathrm{lift}}_{\phi}(M)
\]
that drops the marked point.
\end{prop}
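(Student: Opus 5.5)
The proof has three parts: show the projected graph lands in the right space, show the map is continuous, and show the composite with the lift is the forgetful map.

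\emph{The projection lands in the target.} Let $\Lambda\colon G\to Y_0$ be an element of $\mathcal L^{emb}_{\Leg}(Y)$. By \cref{def:lemb}, $p_{lag}$ restricted to $\Im(\Lambda)$ is a homeomorphism onto its image. Hence $p_{lag}\circ\Lambda$ is again a degenerate graph embedding in the sense of \cref{df:graphembed}: edges that are constant stay constant, non-constant edges stay embedded, and the vertex identifications are unchanged. The non-self-adjacency condition is inherited because the face structure is read off from this embedded image.

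Next we check the period condition \eqref{eq:liftingCondition}. Along each non-constant Legendrian edge, \eqref{eq:Legendrian} says $dq_1=-r\,dq_2$. So for any cycle $c$ in $G$,
\[
\int_{p_{lag}\circ\Lambda(c)} r\,dq_2=-\int_{\Lambda(c)}dq_1 .
\]
The right side lies in $\ZZ$, since $q_1$ is a circle-valued coordinate on $T^2$ and $c$ is closed.

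It remains to show $p_{lag}\circ\Lambda$ lies in the component $\mathcal N^{\Area}_{\phi}$. Here I would use that $\Lambda$ is connected to the base Legendrian by a path in $\mathcal L^{emb}_{\Leg}(Y)$. Its projection is a path of embedded graphs starting at $\phi$. Along this path the face areas equal periods of $r\,dq_2$ by Stokes. These periods are integers and vary continuously, so they are constant. Hence the projected path is area-preserving, and it also keeps the period condition on the cycles generating $H_1(G)\to H_1(M)$.

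\emph{Continuity.} By the quotient-topology convention, it suffices to work locally with a continuous family $\Lambda_k\colon G\to Y_0$ on a fixed subdivided graph. Post-composition with the smooth map $p_{lag}$ is continuous in the compact-open topology. Equivalence classes are determined by images, and $p_{lag}$ is injective on each image, so the local maps descend to $\mathcal N(M)$.

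\emph{Composite with the lift.} Let $\psi\in\mathcal N^{\Area,\mathrm{lift}}_{\phi,1}(M)$. By \cref{cor:liftingSpace} and \cref{lem:liftingGraph}, $\Leg_\psi$ is constructed by keeping the $(q_2,r)$-coordinates of $\psi$ and adding the $q_1$-coordinate. Therefore $p_{lag}\circ\Leg_\psi$ equals the underlying unpointed graph $\psi$.

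The main obstacle is the component bookkeeping, namely the area-preservation and period argument above. It relies on periods being locally constant along paths in $\mathcal L^{emb}$, including across changes of combinatorial type. There one uses that faces are never collapsed, so the face bijection $F(\phi^t)$ is defined and the Stokes identity applies throughout.
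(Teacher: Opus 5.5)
Your proof is the paper's proof, step for step: project a path in $\mathcal L^{emb}_{\Leg}(Y)$, use Stokes plus integrality of the periods to get constant face areas and the period condition, read continuity off the quotient-topology model, and note that $\Leg_\psi$ is built to project back to $\psi$. Your identity $\int r\,dq_2=-\int dq_1\in\ZZ$ just spells out the paper's remark that each projected graph ``lifts by construction.''

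The one place where you go beyond the paper, non-self-adjacency, does not work as written. The condition cannot be ``inherited'' from $\Lambda$, because $\Lambda$ lives in the three-dimensional $Y$, where the definition of degenerate graph embeddings imposes no such requirement. For example, a short dangling Legendrian fiber arc can be grown from the base Legendrian entirely inside $\mathcal L^{emb}_{\Leg}(Y)$, and its projection is a self-adjacent edge. So this condition has to be built into the definition of $\mathcal L^{emb}$ rather than derived. The paper's proof passes over the same point silently.
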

\begin{proof}
Let $\Lambda^1\in \mathcal L_{\Leg}^{emb}(Y_0)$ and choose a path $\Lambda^t$ in $\mathcal L_{\Leg}^{emb}(Y_0)$ from the base Legendrian $\Leg$ to $\Lambda^1$.
Because each $\Lambda^t$ has embedded Lagrangian projection, $\phi^t:=p_{lag}\circ \Lambda^t$ is a path of embedded graphs in $M$.
For every bounded face $f_t$ of $\phi^t$, let $c_t=\partial f_t$ be the positively oriented boundary cycle.
As in the discussion following \cref{lem:liftingGraph}, Stokes' theorem and the Legendrian condition give
\[
\Area(f_t)=\int_{c_t} r\,dq_2\in \ZZ.
\]
The left-hand side depends continuously on $t$, hence is constant; thus every bounded face of $\phi^t$ has the same area as the corresponding face of $\phi^0$ for all $t$.
Moreover, every $\phi^t$ lifts by construction, so \cref{eq:liftingCondition} holds for each $t$.
Thus the projected path is area-preserving and liftable, so $\phi^1$ belongs to $\mathcal N^{\Area,\mathrm{lift}}_{\phi}(M)$.

Continuity of $\mathrm{pr}_{lag}$ is immediate from the quotient-topology model for $\mathcal L_{\Leg}^{emb}(Y_0)$ and the continuity of the pointwise projection $p_{lag} \colon Y_0\to M$.
Finally, if $\psi\in \mathcal N^{\Area,\mathrm{lift}}_{\phi,1}(M)$, then the lift $\Leg_{\psi}$ was defined precisely so that $p_{lag}\circ \Leg_{\psi}$ is the underlying unpointed graph.
Hence $\mathrm{pr}_{lag}\circ \Leg_{(-)}$ is the forgetful map.
\end{proof}
\begin{cor}
    \label{cor:braidLegendrian}
    There is an inclusion
    \[
    \mathcal B_{\nn, 1}\into \pi_1(\mathcal N^{\Area,\mathrm{lift}}_{\phi_\nn,1}(M),\phi_\nn).
    \]
    Composing with $\Leg_{(-)}$ represents these classes by loops in $\mathcal L_{\Leg_{\phi_\nn}}^{emb}(Y)$.
\end{cor}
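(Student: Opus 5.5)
The plan is to rerun the argument of \cref{prop:subgroup} with the weighted Voronoi map refined so that it lands in the smaller space $\mathcal N^{\Area,\mathrm{lift}}_{\phi_\nn,1}(M)$. We would then invoke \cref{prop:Bn1AsConf} to identify the resulting subgroup with $\mathcal B_{\nn,1}$, and finally apply \cref{cor:liftingSpace} to push the loops to Legendrians.

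\emph{Step 1: enforce the lifting condition.} By the discussion after \cref{lem:liftingGraph}, a graph whose bounded faces all have integer area lifts as soon as $\int_c r\,dq_2\in\ZZ$ for one cycle $c$ generating the image of $H_1(G)\to H_1(M)$. We take $c$ to be the boundary of the southern boundary cell. With the orientation chosen so that this integral equals $R^-$ plus the area of that cell, we need the southern cell to have area $-R^-$; the northern cell then has area $R^+-\nn$. For $\phi_\nn$ these choices give the values $0$ and $\nn$ on the two horizontal cycles, which are integers.

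We therefore change the source measure in the optimal transport problem. The symmetric boundary masses $\tfrac12(R^+-R^--\nn)$ are replaced by the asymmetric masses $-R^-$ on $\{R^-\}\times S^1$ and $R^+-\nn$ on $\{R^+\}\times S^1$. The citations of \cite{cuesta1993characterization,aurenhammer1998minkowski} still apply, so they give a continuous power diagram $\Vor^{w_x}(x)$ with convex unit-area bounded cells. Every other cycle differs from $c$ by a sum of bounded face boundaries, so it also has integer period. This defines a map $\uConf_\nn(M)\to \mathcal N^{c,\Area,\mathrm{lift}}_{\phi_\nn}(M)$. The lemma on convexity of $W^+_{adm}(x)$ again interpolates the weights to $0$, so square $B$ still commutes up to homotopy. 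Lifting the homotopy is not needed, since it only enters after forgetting to $\mathcal N$.

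\emph{Step 2: marked points and basepoints.} To land in the pointed space we must choose a marked point continuously. We would take the point of the southern boundary cycle lying over $q_2=0$, subdividing $G$ there if necessary; for the rotation loop $\rho$ this choice returns to itself because the graph does. The step I expect to be the main obstacle is showing that this point is well defined and continuous. This requires the southern cell to meet each vertical line $\{q_2\}\times[R^-,R^+]$ in a single interval (``vertical convexity''). I would try to prove this from the explicit inequality $(r-R^-)^2-w_S\le d(x,x_i)^2-w_i$, comparing the $r$-derivatives of both sides. If that fails, I would instead use the left inverse $\Cent^+$ to select a continuous section over the relevant loops.

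We also connect $\Vor^{w_x}$ of the chosen base configuration to $\phi_\nn$ by an explicit straight-line path in the liftable, area-preserving, pointed space, using it to change basepoints. Injectivity then follows exactly as in \cref{prop:subgroup}. The composite of the forgetful map with $j$ and $\Cent^+$ gives a left inverse on $\pi_1$, so
\[
\pi_1(\uConf_\nn(M))\into \pi_1(\mathcal N^{\Area,\mathrm{lift}}_{\phi_\nn,1}(M),\phi_\nn).
\]
Composing with the isomorphism $\pi_1(\uConf_\nn(M))\cong\mathcal B_{\nn,1}$ of \cref{prop:Bn1AsConf} gives the inclusion. Moreover, the loops $\tau_i$ and $\rho$ of \cref{fig:graphLoop,fig:graphLoop2} already lie in the pointed liftable space, since they preserve face areas and the horizontal cycle integrals.

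\emph{Step 3: Legendrian loops.} Finally, \cref{cor:liftingSpace} supplies the continuous map $\Leg_{(-)}\colon \mathcal N^{\Area,\mathrm{lift}}_{\phi_\nn,1}(M)\to \mathcal L^{emb}_{\Leg_{\phi_\nn}}(Y_0)$. Composing it with the inclusion $Y_0\subset Y$ turns each class into a loop of mostly Legendrians in $\mathcal L^{emb}_{\Leg_{\phi_\nn}}(Y)$. By \cref{prop:lagProjectionLeftInverse}, $\mathrm{pr}_{lag}$ recovers the underlying unpointed graph loops, so these Legendrian loops represent exactly the classes constructed above.
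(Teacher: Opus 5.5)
Your argument is correct, but it reaches the pointed, liftable space by a different route than the paper.

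\emph{The paper's route.} The paper takes the subgroup $B\subset\pi_1(\mathcal N^{\Area}_{\phi_\nn}(M))$ from \cref{prop:subgroup,prop:Bn1AsConf} as a black box. It asserts, without argument, that the forgetful map from graphs marked on the bottom cycle to $\mathcal N^{\Area}_{\phi_\nn}(M)$ is a trivial $S^1$-fibration, so that a section gives a split injection on $\pi_1$. It then notes that the braid loops lie in the liftable locus. This is short, and it would lift any subgroup of $\pi_1(\mathcal N^{\Area}_{\phi_\nn}(M))$. But it rests on a global trivialization, which is not obvious: a general bottom cycle can cross $q_2=0$ many times.

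\emph{Your route.} You build the lifting condition and the marking into the power-diagram map itself. Then $\uConf_\nn(M)$ maps directly into $\mathcal N^{\Area,\mathrm{lift}}_{\phi_\nn,1}(M)$, and injectivity comes from the same left inverse $\Cent^+\circ j\circ\mathrm{forget}$. This has two advantages. You only need a section over the image of $\Vor^{w_x}$, not over the whole component. And the homotopies realizing the braid relations automatically stay in the pointed liftable space. Your asymmetric masses $-R^-$ and $R^+-\nn$ also fix a normalization slip. With the symmetric masses $\tfrac12(R^+-R^--\nn)$, the annular faces of $\Vor^{w_x}(x)$ do not have the areas of those of $\phi_\nn$ unless $R^++R^-=\nn$.

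\emph{Two simplifications.}
\begin{enumerate}
\item The step you flag as the main obstacle is immediate. For fixed $q_2$, each inequality defining the southern cell is affine in $r$ with positive slope. Indeed $(r-R^-)^2-(r-r_i)^2=(r_i-R^-)(2r-R^--r_i)$, and the northern comparison is $(R^+-R^-)(2r-R^+-R^-)$. Hence the cell meets each vertical line in an interval $[R^-,h(q_2)]$, and the bottom cycle is the graph of the continuous function $h$. You should first restrict to configurations in the interior, as in \cref{prop:Bn1AsConf}, so that $r_i>R^-$.
\item Liftability is automatic on the whole of $\mathcal N^{\Area}_{\phi_\nn}(M)$. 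Area-preserving isotopies preserve the area of the southern annulus, so with your orientation $\int_c r\,dq_2=R^-+\Area(\text{southern annulus})$ is constant. The real content of your Step 1 is therefore only that the annular areas match those of $\phi_\nn$. This is also why the paper's ``lands in the liftable locus'' step is harmless.
\end{enumerate}
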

\begin{proof}
    By \cref{prop:subgroup,prop:Bn1AsConf}, the classes of the explicit loops $\tau_1,\dots,\tau_{\n},\rho$ generate a subgroup
    \[
    B\subset \pi_1(\mathcal N^{\Area}_{\phi_\nn}(M))
    \]
    canonically identified with $\mathcal B_{\nn,1}$.
    We choose the marked-point section on the bottom annular cycle and use it to lift $B$ to the pointed graph space.
    Consider the subset $C\subset \mathcal N^{\Area}_{\phi_\nn, 1}(M)$ of graphs where the marked point belongs to the ``bottom'' cycle of $G_\nn$, that is, the cycle which is the boundary of the bottom annulus.
    The forgetful map $C\to \mathcal N^{\Area}_{\phi_\nn}(M)$ is a trivial $S^1$ fibration, so choosing the marked point on the bottom cycle gives a section and hence a split injection on $\pi_1$.
    The explicit braid loops lie in the liftable locus, so this split injection lands in $\pi_1(\mathcal N^{\Area,\mathrm{lift}}_{\phi_\nn,1}(M),\phi_\nn)$.
    Composing these loops with $\Leg_{(-)}$ gives the stated loops of Legendrian stops.
\end{proof}
While we only consider Legendrian stops that are the lifts of embedded graphs, it is natural to conjecture the following.
\begin{conj}
The spaces $\mathcal L^{emb}_{\Leg_\nn}(Y)$ and $\mathcal L_{\Leg_\nn}(Y)$ are homotopy equivalent.
\end{conj}

\section{Induced action on partially wrapped Fukaya categories}
\label{sec:monodromyAction}
Over each point of the form $\Lambda=\Leg_{\phi}$ with $\phi\in\mathcal N^{\Area,\mathrm{lift}}_{\phi_\nn,1}(M)$, we have an associated partially wrapped Fukaya category $\mathcal W(\XA, \Lambda)$.
In this section, we will build geometric equivalences between these categories using Lagrangian correspondences.
The ambient contact boundary remains $Y=\partial_\infty \XA$ throughout, while $Y_0\subset Y$ denotes the standard coordinate chart used for our lifted graph model.
For brevity of notation, we fix $\nn$ in this section and denote $\mathcal N:=\mathcal N^{\Area,\mathrm{lift}}_{\phi_\nn, 1}(M)$.
Let $\mathcal N^I$ be the space of smooth paths in $\mathcal N$ (recall that these are smooth paths factoring through $\operatorname{DegEmb}(G, M)$ for some combinatorial type $G$).
For $\rho>0$, write $\Phi^\rho$ for the time-$\rho$ Reeb flow on $Y$ and define
\[
\mathcal L^{\rho}:=
\left\{
\Lambda \in \mathcal L_{\stp_\nn}(Y)\ \middle|
\Phi^{\rho'}(\Lambda)\cap \Lambda = \emptyset \text{ for all $0<\rho'\le \rho$}
\right\}.
\]
Observe that $\bigcup_{\rho}\mathcal L^{\rho}=\mathcal L_{\stp_\nn}(Y)$.

The entirety of this section will prove the following.

\begin{thm}
\label{thm:monodromyAction}
Using the construction below, we define a continuous map
    \begin{align*}
        \Corr_{A,\rho}\colon \mathcal N^I\to \Lag(\XA^-\times \XA)
    \end{align*}
    to the space of cylindrical Lagrangian submanifolds, smoothly depending on the parameters $(A,\rho)\in (0,1)\times \RR_{>0}$. 
    We additionally construct filtrations 
    \begin{itemize}
    \item $U^\rho\subset \mathcal N^I$ with $U^\rho\subset U^{\rho'}$ whenever $\rho>\rho'$ and   
    \item $U_{A,\rho}\subset U^\rho$ with $U_{A,\rho}\subset U_{A', \rho}$ whenever $A<A'$.
    \end{itemize}
    The maps $\Corr_{A,\rho}$ and filtrations satisfy the following properties:
    \begin{enumerate}[(a)]
    \item The filtrations are exhaustive,\label{item:exhaust}
    \item There is compatibility with the lifting map $\Leg_{(-)}: \mathcal N\to \mathcal L_{\Leg_\nn}^{emb}(Y_0)$ in the sense that for $\phi^t \in U_{A,\rho}$
    \[\Corr_{A,\rho}(\phi^t)\in \Ob(\mathcal W(\XA^-\times \XA,\Leg_{\phi^0}\times \Leg_{\phi^1})). \] \label{item:compatibility}
    \end{enumerate}
    The next items provide compatibility between the different choices of $\rho, A$. Pick $I_A\times I_\rho\subset (0,1)\times \RR_{>0}$ parameters we would like to interpolate over. 
    \begin{enumerate}[(a)]
        \setcounter{enumi}{2} 
    \item Given a smooth path $\phi^t\in \bigcap_{(A,\rho)\in I_A\times I_\rho}U_{A,\rho}$ and any smooth path $(A_s, \rho_s): I\to I_A\times I_\rho$, $\Corr_{A_s, \rho_s}(\phi^t)$ parameterizes a Hamiltonian isotopy of Lagrangian submanifolds.
     \label{item:preferredContinuation}
    \item Whenever $\phi^{t, s}\in U_{A,\rho}$ is a smooth homotopy of smooth paths relative endpoints, $\Corr_{A,\rho}(\phi^{t, 0})\sim \Corr_{A,\rho}(\phi^{t, 1})$ (where $\sim$ represents Hamiltonian isotopy in $\XA^- \times \XA$ avoiding the product stop $(\Leg_{\phi^0}\times \Leg_{\phi^1})$). \label{item:hamiltonianIsotopy}
    \item Whenever $\phi_{0}^t, \phi_{1}^t\in \mathcal N^I$ satisfy $\phi_0^{1}=\phi_1^{0}$ and can therefore be concatenated, there is a preferred Hamiltonian isotopy between $\Corr_{A,\rho}(\phi_0^t\cdot \phi_1^t)$ and $\Corr_{A,\rho}(\phi_0^t)\circ \Corr_{A,\rho}(\phi_1^t)$, where the former operation is concatenation of paths and the latter is geometric composition in the middle factor. 
    Furthermore, after choosing $\rho$ sufficiently small so that $\Leg_{\phi_1^1}\in \mathcal L^{2\rho}$ and then choosing $A$ sufficiently close to $1$, this isotopy provides an equivalence of objects in $\Ob(\mathcal W(\XA^-\times \XA,\Leg_{\phi_0^0}\times \Leg_{\phi_1^1})).$
    \label{item:composition}
    \end{enumerate}
\end{thm}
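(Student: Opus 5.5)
The plan is to realize each path of stops as a contact isotopy of $Y$, extend it to a Hamiltonian symplectomorphism of $\XA=T^*T^2$ supported near infinity, and take the graph of that symplectomorphism as the correspondence. Concretely, for a smooth path $\phi^t$ in $\mathcal N$ we first use \cref{cor:liftingSpace} to obtain a smooth path of mostly Legendrians $\Leg_{\phi^t}\subset Y_0$.

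\textbf{Step 1: a contact Hamiltonian on $Y_0$.} Within a fixed combinatorial type, the path $\phi^t$ is an area-preserving isotopy of embedded graphs in $M$. We extend it to an isotopy of $M$ generated by a Hamiltonian function $H_t(q_2,r)$; this is possible because all faces keep area one and the liftability periods are constant. In the contactization chart $\lambda=dq_1+r\,dq_2$, the function $H_t$ lifts to a contact Hamiltonian on $Y_0=\RR/\ZZ\times M$. Its flow covers the Hamiltonian flow on $M$ and moves $q_1$ by the action correction, so it carries $\Leg_{\phi^0}$ to $\Leg_{\phi^t}$. The marked-point normalization $q_1(\bullet)=0$ fixes the remaining Reeb translation.

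Changes of combinatorial type (the degenerate edges) are handled by working locally in $\operatorname{DegEmb}(G,M)$ for a common subdivided graph. At a collapsed edge we still ask the ambient flow to carry the image set. This is possible because collapse happens at a vertex, where the flow may be chosen to contract a segment, provided we allow the flow only to move the stop by a set-valued isotopy. This is the point where the parameter $\rho$ enters. We do not require exact transport of the stop; we require only that $\Phi_t(\Leg_{\phi^0})$ lie in a positive Reeb pushoff of size less than $\rho$ of $\Leg_{\phi^t}$. We then define $U^\rho$ to be the set of paths for which such a flow exists with $\Leg_{\phi^t}\in\mathcal L^{\rho}$ uniformly.

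\textbf{Step 2: the correspondence.} We cut off the contact Hamiltonian, extend it homogeneously to the symplectization end $\{|p|\ge A^{-1}\}$, and interpolate it to zero using a bump function depending on $A$. This gives a compactly supported-to-cylindrical Hamiltonian diffeomorphism $\Psi_{A,\rho}$. We set $\Corr_{A,\rho}(\phi^t)$ to be the graph of $\Psi_{A,\rho}$ precomposed with a small negative Reeb pushoff by $\rho$.

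The $\rho$-pushoff guarantees that, at infinity, the graph lands in the complement of $\Leg_{\phi^0}\times\Leg_{\phi^1}$; this is item \ref{item:compatibility}. Define $U_{A,\rho}$ to be the set of paths for which the cutoff region does not create intersections with the stop. Exhaustiveness, item \ref{item:exhaust}, holds because every Legendrian lies in some $\mathcal L^\rho$ and $A\to1$ pushes the cutoff arbitrarily far out. All choices depend smoothly on $(A,\rho)$, so item \ref{item:preferredContinuation} follows: a smooth change of the generating Hamiltonian yields a Hamiltonian isotopy of graphs.

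\textbf{Step 3: homotopy invariance and composition.} For item \ref{item:hamiltonianIsotopy}, a homotopy $\phi^{t,s}$ relative to endpoints gives a two-parameter family of Hamiltonians. Hence the graphs over $s$ form a Hamiltonian isotopy. Because the endpoints are fixed, its boundary values remain in the complement of the fixed product stop, which is exactly stop-avoidance. For item \ref{item:composition}, concatenation corresponds to composing the flows. The geometric composition of graphs is the graph of the composite, and the reparametrization isotopy identifies this with the correspondence of the concatenated path. The only subtlety is that two $\rho$-pushoffs compose into a $2\rho$-pushoff, which is why we require $\Leg_{\phi_1^1}\in\mathcal L^{2\rho}$ and $A$ near $1$. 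With those conditions, the isotopy avoids the stop, and stop-removal-free invariance in $\mathcal W$ gives the equivalence of objects.

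\textbf{Main obstacle.} I expect the main obstacle to be Step 1 across changes of combinatorial type. There, no genuine contact isotopy transports the stop, so the $\rho$-fattening and the filtration $U^\rho$ must be arranged carefully to keep the construction continuous on $\mathcal N^I$. My first attempt would be to generate the flow from power-diagram weights, since these vary continuously in weight space by the convexity lemma.
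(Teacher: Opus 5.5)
\textbf{There is a genuine gap in your Step 1, at exactly the point where the theorem has content.}

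The paths you need, for example the braid loop $\tau_1$, pass through degenerate graphs in which an edge has length zero: an I--H move through a four-valent vertex. A flow of diffeomorphisms cannot contract a segment. So there is no Hamiltonian isotopy $\Psi^t$ of $M$ with $\Psi^t(\Im(\phi^0))=\Im(\phi^t)$ across that time.

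Your $\rho$-relaxation does not loosen anything. On $Y_0$ the Reeb field of $dq_1+r\,dq_2$ is $\partial_{q_1}$, which preserves $p_{lag}$. Asking that $\tilde\Psi^t(\Leg_{\phi^0})$ lie in a Reeb pushoff of $\Leg_{\phi^t}$ therefore still forces $\Psi^t(\Im(\phi^0))\subset\Im(\phi^t)$. At the degenerate time this would be an injective map from a graph with two trivalent vertices into one with a single four-valent vertex, which is impossible. Generating the flow from power-diagram weights does not help, since those diagrams undergo the same combinatorial changes.

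Consequences for your filtrations and items:
\begin{itemize}
\item Your $U^\rho$ excludes precisely the braid loops. It also imposes the condition at every $t$, which is unnecessary, because stop-avoidance only involves the time-one boundary contactomorphism.
\item Your $U_{A,\rho}$ carries no information. If $A$ is a cutoff radius in the collar, the cutoff never meets the stop at infinity, so the filtration in $A$ is vacuous.
\item Hence (a) and (b) are not established, and your exhaustiveness argument (``$A\to 1$ pushes the cutoff out'') does not address the real issue.
\end{itemize}

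\textbf{The missing idea is to relax in the Lagrangian-projection direction, and only at the endpoint.}

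The paper thickens the \emph{faces} rather than the graph.
\begin{itemize}
\item Each face $f_t$ does vary continuously through changes of combinatorial type. It receives a conformal sub-disk $V^t_{A,f}$ of area $A\cdot\Area(f_t)$.
\item Because these areas are constant, the boundary flux is exact. This gives a Hamiltonian $H^t_A$ whose flow carries $V^0_{A,f}$ onto $V^t_{A,f}$.
\item That flow pushes $\Im(\phi^0)$ into the thin complement $K^t_A$, which retracts onto $\Im(\phi^t)$ with fibres of length $\ell_A\to 0$.
\item $H^t_A$ is normalized to approximate the graph flux primitive $\underline H^t$. Comparing the $q_1$-evolution of the strict contact lift with \cref{lem:variationLegendrianLift}, via a Stokes estimate on the retraction strip, gives $d_{Haus}(\tilde\Psi^1_A(\Leg_{\phi^0}),\Leg_{\phi^1})\to 0$ as $A\to 1$. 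This is convergence, never equality.
\end{itemize}

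The wrapping parameter $\rho$ then supplies an open margin rather than a relaxation of transport.
\begin{itemize}
\item Since $\Leg_{\phi^1}\in\mathcal L^\rho$, any set Hausdorff-close to $\Leg_{\phi^1}$ has time-$\rho$ Reeb pushoff disjoint from $\Leg_{\phi^1}$ (\cref{lem:fixedRhoHausdorff}).
\item $\Gamma(w^\rho\circ\Phi)$ avoids $\Leg_{\phi^0}\times\Leg_{\phi^1}$ exactly when $\Phi^\rho(\tilde\Psi^1(\Leg_{\phi^0}))\cap\Leg_{\phi^1}=\emptyset$.
\item These two facts are how $U_{A,\rho}$ is defined (a Hausdorff threshold $\delta(\Leg_{\phi^1},\rho)$ for all $A'\in[A,1)$) and how (a) and (b) are proved.
\end{itemize}

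The same openness over $J=[\rho,2\rho]$, together with the fact that strict contactomorphisms commute with the Reeb flow, is what makes your $2\rho$ heuristic in (e) rigorous. Once this input is in place, your arguments for (c)--(e) are essentially those of the paper.
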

\begin{cor}
For any smooth path $\phi^t\in\mathcal N^I$ with endpoints $\phi^0,\phi^1$, the geometric composition with the correspondence $\Corr_{A,\rho}(\phi^t)$ gives an exact functor
\[
\mathcal F_{A,\rho}(\phi^t): H^0\mathcal W(\XA,\Leg_{\phi^0})\longrightarrow H^0\mathcal W(\XA,\Leg_{\phi^1}),
\]
well-defined up to natural isomorphism for $A$ sufficiently close to $1$ and $\rho$ sufficiently small.
These functors assemble over the subgroupoid whose morphisms are represented by smooth paths in $\mathcal N^I$.
In particular, for any basepoint $\phi\in\mathcal N$ there is a group homomorphism
\[
\pi_1(\mathcal N,\phi)\longrightarrow \Aut(H^0\mathcal W(\XA,\Leg_{\phi})).
\]
\end{cor}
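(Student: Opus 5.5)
The corollary should follow formally from \cref{thm:monodromyAction}, combined with the standard fact that a cylindrical Lagrangian correspondence which is an object of the product partially wrapped category induces a functor by geometric composition (e.g.\ via the Ganatra--Pardon--Shende K\"unneth description of $\mathcal W(\XA^-\times\XA,\Leg_{\phi^0}\times\Leg_{\phi^1})$ as bimodules).

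\emph{Step 1: defining the functor.} Fix a smooth path $\phi^t$. By \ref{item:exhaust}, $\phi^t$ lies in some $U^\rho$, and then in $U_{A,\rho}$ for all $A$ close enough to $1$. By \ref{item:compatibility}, $\Corr_{A,\rho}(\phi^t)$ is then an object of the product stopped category. Its associated bimodule gives an $A_\infty$ functor $\mathcal W(\XA,\Leg_{\phi^0})\to \mathrm{Mod}\,\mathcal W(\XA,\Leg_{\phi^1})$. I would show that it lands in representables by testing it on cotangent fibres, which generate. Geometric composition of a fibre with $\Corr_{A,\rho}(\phi^t)$ is embedded and cylindrical, so it is representable. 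Passing to $H^0$ gives $\mathcal F_{A,\rho}(\phi^t)$, and exactness holds because the functor comes from a bimodule.

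\emph{Step 2: independence of choices.} Choose two admissible parameter pairs. For $A$ near $1$ and $\rho$ small, the parameter region is connected, so we can join the pairs by a path $(A_s,\rho_s)$ that stays inside $\bigcap U_{A,\rho}$. By \ref{item:preferredContinuation}, this gives a Hamiltonian isotopy of correspondences. Such an isotopy is stop-avoiding, since every intermediate correspondence is an object by \ref{item:compatibility}. Hence it yields a quasi-isomorphism of bimodules, and therefore a natural isomorphism of the induced $H^0$-functors. In the same way, \ref{item:hamiltonianIsotopy} shows that the functor depends only on the endpoint-fixed smooth homotopy class of the path. Here one first shrinks $\rho$ and enlarges $A$ so that the whole compact homotopy lies in a single $U_{A,\rho}$, which exhaustiveness together with compactness allows.

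\emph{Step 3: functoriality and the homomorphism.} For concatenation, \ref{item:composition} gives an equivalence in the product category between $\Corr(\phi_0\cdot\phi_1)$ and the geometric composition $\Corr(\phi_0)\circ\Corr(\phi_1)$. The composed functor is the convolution of bimodules. Identifying geometric composition with bimodule tensor product is the main obstacle, namely the functoriality of Lagrangian correspondences in the partially wrapped, stopped setting. I would first use the fact that both correspondences are graphs-like (built from a stop-avoiding isotopy) and try to reduce to the case where each correspondence is the graph of a contactomorphism-induced symplectomorphism. There, composition is literally composition of maps, and the induced functors are the pushforward equivalences of GPS, which compose strictly. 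For the constant path, the correspondence is isotopic to the diagonal, which gives the identity. Functoriality plus invertibility of the inverse path then yields autoequivalences. Since every class in $\pi_1(\mathcal N,\phi)$ has a smooth representative, we obtain the homomorphism $\pi_1(\mathcal N,\phi)\to\Aut(H^0\mathcal W(\XA,\Leg_\phi))$.
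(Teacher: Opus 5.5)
Your skeleton is the paper's. Item (a) puts $\phi^t$ in some $U_{A,\rho}$, and item (b) makes $\Corr_{A,\rho}(\phi^t)$ an object of $\mathcal W(\XA^-\times\XA,\Leg_{\phi^0}\times\Leg_{\phi^1})$. Items (c) and (d) give independence of $(A,\rho)$ and of the endpoint-fixed homotopy class, item (e) handles concatenation, and based loops give the homomorphism. The paper does not try to prove that a product-stopped correspondence induces a functor, nor that geometric composition in the middle factor matches composition of the induced functors. It imports both from the Ganatra--Pardon--Shende stopped-correspondence formalism for exact cylindrical Lagrangians in the product sector. One smaller point on Step 2: a single $(A,\rho)$ for a compact endpoint-fixed homotopy does not follow from pointwise exhaustiveness plus compactness, since the $U_{A,\rho}$ are not shown to be open. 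It follows because $\rho$ is fixed by the fixed endpoint $\Leg_{\phi^1}$, and because the Hausdorff estimates behind the convergence lemma are uniform over compact families of paths.

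There is a genuine gap where you go beyond citing that formalism. In Step 3 you propose reducing to graphs of symplectomorphisms and using that GPS pushforwards compose strictly. But $\Corr_{A,\rho}(\phi^t)=\Gamma(w^\rho\circ\Phi)$ is already such a graph; the obstruction is elsewhere. The map $w^\rho\circ\Phi$ is not a morphism of stopped sectors $(\XA,\Leg_{\phi^0})\to(\XA,\Leg_{\phi^1})$. It sends $\Leg_{\phi^0}$ to $\Phi^\rho(\tilde\Psi^1(\Leg_{\phi^0}))$, which is only arranged to be disjoint from $\Leg_{\phi^1}$. Moreover $\tilde\Psi^1(\Leg_{\phi^0})$ is only Hausdorff-close to $\Leg_{\phi^1}$; this is exactly the failure of \eqref{eq:desirable} to commute, forced by changes of combinatorial type along the path. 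So the isotopy is not stop-avoiding in the sense of carrying stop to stop, and there is no GPS pushforward between these two categories. Instead, $\mathcal F_{A,\rho}(\phi^t)$ is a pushforward to $\mathcal W(\XA,\tilde\Psi^1(\Leg_{\phi^0}))$ followed by a genuine stop-change functor into $\mathcal W(\XA,\Leg_{\phi^1})$. Nothing makes such stop-change functors compose strictly.

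Step 1 has the same problem. $w^\rho\Phi(T^*_qT^2)$ is embedded and cylindrical, but it is an object of $\mathcal W(\XA,\Leg_{\phi^1})$ only if its ideal boundary misses $\Leg_{\phi^1}$. That holds for generic $q$ but does not follow from item (b). Even then, identifying the module induced by the correspondence with the Yoneda module of the composition is a correspondence-functoriality statement, not a consequence of embeddedness. The repair is what the paper does: take existence of the functor and its compatibility with geometric composition from the product-sector formalism. Use items (b) and (e) only to check that the correspondences, and the isotopy relating $\Corr_{A,\rho}(\phi_0^t\cdot\phi_1^t)$ to $\Corr_{A,\rho}(\phi_0^t)\circ\Corr_{A,\rho}(\phi_1^t)$, stay inside $\mathcal W(\XA^-\times\XA,\Leg_{\phi_0^0}\times\Leg_{\phi_1^1})$.
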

\begin{proof}
By the stopped-correspondence formalism for exact cylindrical Lagrangians in product sectors, together with \cref{item:compatibility,item:composition} (see, for instance, \cite[Theorem 1.4 and Proposition 1.37]{ganatra2024sectorial}), the correspondence $\Corr_{A,\rho}(\phi^t)$ defines the functor.
The smoothness and well-definedness of the construction are recorded in \cref{prop:corrChoiceIndependence}.
\Cref{item:hamiltonianIsotopy} shows that it depends only on the endpoint-fixed homotopy class of $\phi^t$, and \cref{item:composition} identifies
\[
\mathcal F_{A,\rho}(\phi_0^t\cdot \phi_1^t)\cong \mathcal F_{A,\rho}(\phi_1^t)\circ \mathcal F_{A,\rho}(\phi_0^t)
\]
up to natural isomorphism for $A$ sufficiently close to $1$ and $\rho$ sufficiently small (by \cref{item:exhaust}); furthermore, replacing $(A, \rho)$ with choices closer to $(1, 0)$ defines an equivalent functor.
Taking based loops gives the stated homomorphism on $\pi_1(\mathcal N,\phi)$.
\end{proof}

Of these items, the most delicate are \cref{item:exhaust} and \cref{item:compatibility}, which motivate the whole construction.
We now describe the role that $(A,\rho)$ play in the construction.
The map $\Corr_{A,\rho}$ will be defined as the composition of several maps, each smoothly dependent on the parameters $A\in (0, 1)$ and $\rho\in \RR_{>0}$.
The first stage of the construction assigns to a path of graphs a time-dependent Hamiltonian on $M$:
\begin{align*}
(H_A)_* \colon \mathcal N^I\to \Ham^I(M),
\end{align*}
where $\Ham(M)$ is the space of Hamiltonian functions $H \colon M\to \RR$ and $\Ham^I(M)$ is its free path space. For a path $\phi^t\in\mathcal N^I$, the output $(H_A)_*(\phi^t)$ is the time-dependent Hamiltonian produced from the infinitesimal flux of the isotopy at each time $t$.
This is then upgraded to the composition:
\begin{equation}
\label{eq:composition}
\begin{tikzcd}
    \mathcal N^I \arrow{r}{(H_A)_*}& \Ham^I(M)\arrow{r}{C} &  P \mathcal C\arrow{r}{\Gamma^\rho} &  \Lag(\XA^-\times \XA)
\end{tikzcd}
\end{equation}
where $\mathcal C$ is the space of contactomorphisms $\tilde \Psi^t \colon Y\to Y$ that preserve the contact form $(\tilde{\Psi}^t)^*\lambda=\lambda$, $P\mathcal C$ is the path space (based at the identity), and $\Lag(\XA^-\times \XA)$ is the space of cylindrical Lagrangian submanifolds of $\XA^-\times \XA$.
Given any basepoint $\phi^0\in \mathcal N$, we now have a diagram which does not commute on the nose:
\begin{equation}
\begin{tikzcd}
P(\mathcal N, \phi^0)\arrow{r}{C\circ (H_{A})_*}  \arrow{dr}[swap]{\Leg_{\phi^1}} &P \mathcal C\arrow{d}{\Psi^1(\Leg_{\phi^0})}\\
& \mathcal L^{emb}_{\stp_\nn}(Y)
\end{tikzcd}\label{eq:desirable}
\end{equation}
where $P(\mathcal N, \phi^0)$ is the path space of $\mathcal N$ based at $\phi^0$. 
If the diagram did strictly commute, we could apply, for instance, \cite[Theorem 1.4]{ganatra2024sectorial} to obtain equivalences between the categories we construct.
However, such an equality cannot hold, as the combinatorial type of $\phi^t \colon G^t\to M$ changes discontinuously over the $t$ parameter (so the image of $\Leg_{\phi^1}$ may not be homeomorphic to $\Leg_{\phi^0}$; on the other hand the image of $\Psi^1(\Leg_{\phi^0})$ is obtained by a flow and is therefore homeomorphic to $\Leg_{\phi^0}$).

The parameters $(A,\rho)$ allow us to control the failure of \cref{eq:desirable} as follows (see \Cref{fig:fudged}).
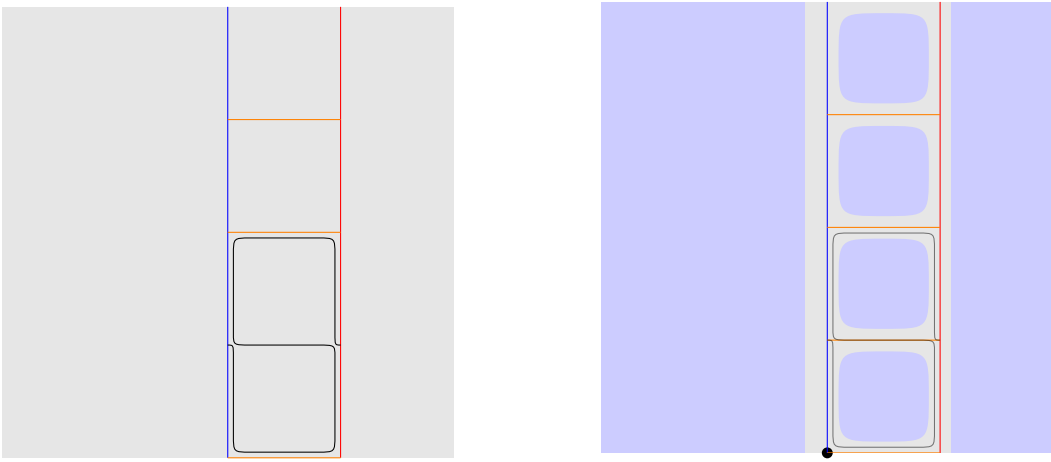
\begin{figure}[h]
\centering\begin{subfigure}{.5\linewidth}
    \centering
\begin{tikzpicture}[xscale=-1.5, yscale=1.5,  rotate=90]
\fill[gray!20]  (-3,-0.5) rectangle (1,-4.5);
\draw[red] (-3,-1.5) -- (1,-1.5) ;
\draw[blue] (-3,-2.5) -- (1,-2.5);
\draw[orange] (-3,-1.5) -- (-3,-2.5)  (-1,-1.5) -- (-1,-2.5) (0,-1.5) -- (0,-2.5);
\draw[black] (-2,-2.5) .. controls (-2,-2.45) and (-2,-2.45) .. (-2.1,-2.45) .. controls (-2.2,-2.45) and (-2.7,-2.45) .. (-2.8,-2.45) .. controls (-2.95,-2.45) and (-2.95,-2.45) .. (-2.95,-2.3) .. controls (-2.95,-2.2) and (-2.95,-1.8) .. (-2.95,-1.7) .. controls (-2.95,-1.55) and (-2.95,-1.55) .. (-2.8,-1.55) .. controls (-2.7,-1.55) and (-2.2,-1.55) .. (-2.1,-1.55) .. controls (-2,-1.55) and (-2,-1.55) .. (-2,-1.7) .. controls (-2,-1.8) and (-2,-2.2) .. (-2,-2.3) .. controls (-2,-2.45) and (-2,-2.45) .. (-1.9,-2.45) .. controls (-1.8,-2.45) and (-1.3,-2.45) .. (-1.2,-2.45) .. controls (-1.05,-2.45) and (-1.05,-2.45) .. (-1.05,-2.3) .. controls (-1.05,-2.2) and (-1.05,-1.8) .. (-1.05,-1.7) .. controls (-1.05,-1.55) and (-1.05,-1.55) .. (-1.2,-1.55) .. controls (-1.3,-1.55) and (-1.8,-1.55) .. (-1.9,-1.55) .. controls (-2,-1.55) and (-2,-1.55) .. (-2,-1.5);
\end{tikzpicture} \caption{Our isotopies do not send loops of graphs to loops of Legendrians, but come close.
Here is a hypothetical Lagrangian projection of $\tilde \Psi^1(\Leg_{\phi^0})$, where $\phi^t$ is the loop from \Cref{fig:graphLoop}.}
\label{fig:fudged}
\end{subfigure}
\begin{subfigure}{.45\linewidth}
    \centering
    \begin{tikzpicture}[xscale=-1.5, yscale=1.5,  rotate=90]

\fill[gray!20]  (-3,-0.5) rectangle (1,-4.5);
\node[circle, fill=black, scale=.4] at (-3,-2.5) {};

\fill[fill=blue!20]  (1,-2.7) rectangle (-3,-4.5);

\fill[blue!20]  (1,-0.5) rectangle (-3,-1.4);
\clip  (1,-3) rectangle (-3,-1);
\begin{scope}[]
\draw[red] (-4,-1.5) -- (1,-1.5) ;
\draw[blue] (-4,-2.5) -- (1,-2.5);
\draw[orange] (-3,-1.5) -- (-3,-2.5) (-2,-1.5) -- (-2,-2.5) (-1,-1.5) -- (-1,-2.5) (0,-1.5) -- (0,-2.5);
\draw[orange] (-4,-2.5) -- (-4,-1.5);
\end{scope}

\begin{scope}[]

\fill[blue!20] (-2.5,-2.4) .. controls (-2.9,-2.4) and (-2.9,-2.4) .. (-2.9,-2) .. controls (-2.9,-1.6) and (-2.9,-1.6) .. (-2.5,-1.6) .. controls (-2.1,-1.6) and (-2.1,-1.6) .. (-2.1,-2) .. controls (-2.1,-2.4) and (-2.1,-2.4) .. (-2.5,-2.4);

\end{scope}

\begin{scope}[shift={(3,0)}]

\fill[blue!20] (-2.5,-2.4) .. controls (-2.9,-2.4) and (-2.9,-2.4) .. (-2.9,-2) .. controls (-2.9,-1.6) and (-2.9,-1.6) .. (-2.5,-1.6) .. controls (-2.1,-1.6) and (-2.1,-1.6) .. (-2.1,-2) .. controls (-2.1,-2.4) and (-2.1,-2.4) .. (-2.5,-2.4);

\end{scope}

\begin{scope}[shift={(2,0)}]

\fill[blue!20] (-2.5,-2.4) .. controls (-2.9,-2.4) and (-2.9,-2.4) .. (-2.9,-2) .. controls (-2.9,-1.6) and (-2.9,-1.6) .. (-2.5,-1.6) .. controls (-2.1,-1.6) and (-2.1,-1.6) .. (-2.1,-2) .. controls (-2.1,-2.4) and (-2.1,-2.4) .. (-2.5,-2.4);

\end{scope}

\begin{scope}[shift={(1,0)}]

\fill[blue!20] (-2.5,-2.4) .. controls (-2.9,-2.4) and (-2.9,-2.4) .. (-2.9,-2) .. controls (-2.9,-1.6) and (-2.9,-1.6) .. (-2.5,-1.6) .. controls (-2.1,-1.6) and (-2.1,-1.6) .. (-2.1,-2) .. controls (-2.1,-2.4) and (-2.1,-2.4) .. (-2.5,-2.4);

\end{scope}
\draw[ black, opacity=.5] (-2,-2.5) .. controls (-2,-2.45) and (-2,-2.45) .. (-2.1,-2.45) .. controls (-2.2,-2.45) and (-2.7,-2.45) .. (-2.8,-2.45) .. controls (-2.95,-2.45) and (-2.95,-2.45) .. (-2.95,-2.3) .. controls (-2.95,-2.2) and (-2.95,-1.8) .. (-2.95,-1.7) .. controls (-2.95,-1.55) and (-2.95,-1.55) .. (-2.8,-1.55) .. controls (-2.7,-1.55) and (-2.2,-1.55) .. (-2.1,-1.55) .. controls (-2,-1.55) and (-2,-1.55) .. (-2,-1.7) .. controls (-2,-1.8) and (-2,-2.2) .. (-2,-2.3) .. controls (-2,-2.45) and (-2,-2.45) .. (-1.9,-2.45) .. controls (-1.8,-2.45) and (-1.3,-2.45) .. (-1.2,-2.45) .. controls (-1.05,-2.45) and (-1.05,-2.45) .. (-1.05,-2.3) .. controls (-1.05,-2.2) and (-1.05,-1.8) .. (-1.05,-1.7) .. controls (-1.05,-1.55) and (-1.05,-1.55) .. (-1.2,-1.55) .. controls (-1.3,-1.55) and (-1.8,-1.55) .. (-1.9,-1.55) .. controls (-2,-1.55) and (-2,-1.55) .. (-2,-1.5);
\end{tikzpicture}
 \caption{A controlled thickening of $\phi^1$ is defined by the blue region; throughout, the ``thickening'' of the graph refers to the complementary region.
Each blue region has area $A$, and $\tilde \Psi^1(\Leg_{\phi^0})$ belongs to the complement.}
\label{fig:thickening}
\end{subfigure}
\caption{Fudging the commutativity of the diagram \eqref{eq:desirable}.}
\end{figure}
\begin{itemize}
    \item The parameter $A$ describes how close the diagram \cref{eq:desirable} is to commuting.
    In our setting the stop attached to $\phi$ is the Legendrian lift $\Leg_{\phi}$, and \emph{the main content of the proof of \cref{thm:monodromyAction}} is the endpoint estimate
    \begin{equation}
        \label{eq:hausdorffDistance}
        \tag{Item b$'$}
        \lim_{A\to 1} d_{Haus}(\Leg_{\phi^{1}},(C\circ (H_A)_*)^1(\Leg_{\phi^0})) = 0.
    \end{equation}
    where $d_{Haus}$ is the Hausdorff distance on $Y_0$.
    \item The parameter $\rho$ controls the time-$\rho$ Reeb flow $\Phi^\rho\colon Y \to Y$.
    Instead of asking for \cref{eq:desirable} to commute, we observe that for any mostly Legendrian stop, there exists $\rho_0$ such that
    \[\Phi^\rho(\Lambda)\cap \Lambda=\emptyset\] for all $0<\rho<\rho_0$.
    More generally, for each compact interval $J\subset(0,\rho_0]$ there exists $\eps(\Lambda,J)>0$ such that
    \[
    d_{Haus}(\Lambda,\Lambda')<\eps(\Lambda,J)\implies \Phi^{\rho'}(\Lambda')\cap \Lambda=\emptyset
    \quad\text{for all }\rho'\in J.
    \]
\end{itemize}
This is the reason for the definition of $\mathcal L^\rho$ given at the start of the section.
For later reference, we record the actual filtrations:
\begin{align*}
U^\rho
&:=
\{\phi^t\in \mathcal N^I \mid \Leg_{\phi^1}\in \mathcal L^{\rho}\},\\
U_{A,\rho}
&:=
\left\{
\phi^t\in U^\rho \ \middle|\ 
d_{Haus}\bigl((C\circ(H_{A'})_*)^1(\Leg_{\phi^0}),\Leg_{\phi^1}\bigr)
<
\delta(\Leg_{\phi^1},\rho)
\text{ for every $A'\in[A,1)$}
\right\},
\end{align*}
where $\delta(\Lambda,\rho)$ denotes the Hausdorff-stability constant from \cref{lem:fixedRhoHausdorff} applied to the pair $(\Lambda,\Lambda)$; this is well defined because $\Lambda\in \mathcal L^\rho$ implies $\Phi^\rho(\Lambda)\cap \Lambda=\emptyset$.
The threshold condition immediately gives $U_{A,\rho}\subset U_{A',\rho}$ whenever $A<A'$.
Then \cref{eq:hausdorffDistance} proves that these filtrations are exhaustive (\cref{item:exhaust}) while being a strong enough bound to produce \cref{item:compatibility}.
The remainder of the section constructs the maps from \cref{eq:composition}, proves \cref{eq:hausdorffDistance} (and that it implies \cref{item:compatibility}), and proves \cref{item:preferredContinuation,item:composition}.
Each portion will describe one of the maps $H_A, C$ or $\Gamma^\rho$, and the various properties of those maps which give us \cref{item:compatibility,item:preferredContinuation,item:composition}.

\subsection{Controlled graph thickenings}
To avoid having to deal with changes in combinatorial types of graph, our argument relies on a thickening of the graph instead. 
Each element $V\in \text{Thick}_{\phi_\nn}(M)$ is a subset $V\subset M$ with $\nn$ disjoint simply connected components with smooth boundary, and two connected components with topology $I\times S^1$ containing the boundary components of $M$.
Once a graph $\phi$ is fixed, we write $V=\bigsqcup_{f\in F(\phi)} V_f$ when we decompose over the connected components.
We say that $V$ is a thickening of $\phi\colon G\to M$ if $V_f\subset f$ for every face $f\in F(\phi)$.
It is an $A$-thickening if $\Area(V_f)=A\cdot \Area(f)$ for every $f\in F(\phi)$.

\begin{lem}[Controlled thickenings]
\label{lem:controlledThickenings}
Fix a compact family of smooth graph paths in $\mathcal N^I$.
For $A$ sufficiently close to $1$, there are $A$-thickenings $V_A^t\subset M$, smoothly depending on $(A,t)$, such that:
\begin{enumerate}
\item $V_A^t$ has one component $V_{A,f}^t$ in each face of $M\setminus\Im(\phi^t)$, including the two annular faces;
\item $\Area(V_{A,f}^t)=A\Area(f_t)$;
\item $K_A^t:=M\setminus\Int(V_A^t)$ admits a facewise retraction $\Ret_A^t:K_A^t\to\Im(\phi^t)$ and a homotopy $\Ret_{A,s}^t$, $s\in[0,1]$, from the identity to $\Ret_A^t$;
\item there are functions $\ell_A\to0$ such that all retraction fibers have diameter at most $\ell_A$ and length at most $\ell_A$;
\item for every $R>0$ there is a constant $L_R$, independent of $A$, such that if $y_A(t)\in K_A^t$ is any path with $|\dot y_A|\le R$, then the strip $S_A(t,s):=\Ret_{A,s}^t(y_A(t))$ satisfies
\[
|\partial_tS_A|\le L_R,
\qquad
\operatorname{length}\bigl(S_A(t,\cdot)\bigr)=\int_0^1|\partial_sS_A(t,s)|\,ds\le\ell_A .
\]
\end{enumerate}
\end{lem}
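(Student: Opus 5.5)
The thickenings will come from a smooth "distance-like" function whose sublevel sets are collars of the graph. The family will vary smoothly in $t$ even when the combinatorial type of $\phi^t$ changes, because the construction will only involve the faces, and faces have constant area and never collapse.

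\textbf{Step 1: a defining function.} Fix the compact family of smooth graph paths. On each closed face $\bar f_t$ I construct a continuous function $\beta^t_f\colon \bar f_t\to[0,1]$ with the following properties:
\begin{itemize}
\item $\beta^t_f$ vanishes exactly on $\partial f_t$;
\item it is smooth and has no critical points on $\{0<\beta^t_f<\eta\}$ for some uniform $\eta>0$;
\item it depends smoothly on $t$.
\end{itemize}
Concretely, I would take a regularized distance to $\Im(\phi^t)$. This means mollifying $d(\cdot,\Im\phi^t)$ in the sense of a Whitney-type regularized distance, and capping it at a small constant in the interior. The faces are Jordan domains, or annuli for the two boundary faces, with piecewise smooth boundary whose corners have angles bounded away from $0$ uniformly over the compact family. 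The non-self-adjacency condition in the definition of degenerate graph embeddings rules out cusps in which a face touches itself, which is what makes this uniformity plausible.

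Changes of combinatorial type happen by collapsing an edge to a vertex. Across such a change the regularized distance still varies continuously, and it varies smoothly away from the graph. I would then smooth in $t$ by convolution, keeping the non-vanishing of $\nabla\beta$ in the collar.

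\textbf{Step 2: defining the thickening.} Set $V^t_{A,f}=\{\beta^t_f\ge c_f(A,t)\}$, where $c_f$ is chosen so that $\Area(V^t_{A,f})=A\Area(f_t)$. The area function $c\mapsto \Area\{\beta\ge c\}$ is strictly decreasing and smooth in the collar range. By the implicit function theorem, $c_f(A,t)$ is smooth in $(A,t)$, and $c_f\to0$ as $A\to1$. This gives items (1) and (2). For $A$ close to $1$ the level $c_f$ lies in the regular collar, so each $V_{A,f}^t$ is a smoothly bounded disk or annulus.

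\textbf{Step 3: the retraction.} Define $\Ret^t_{A,s}$ on $K_A^t=\{\beta\le c_f\}$ by flowing along the normalized gradient $-\nabla\beta/|\nabla\beta|^2$ until $\beta=0$. The time-$s$ map is obtained by reparametrizing so that $s=1$ lands on $\Im\phi^t$. This gives item (3).

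\textbf{Step 4: the estimates.} Item (4) follows because the fibers are gradient trajectories crossing a collar of width $O(c_f)$. With a uniform lower bound on $|\nabla\beta|$ and uniform corner angles, the length of each fiber is $O(c_f)=:\ell_A\to0$.

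For item (5), I write
\[
\partial_t S_A = D\Ret_{A,s}^t\,\dot y_A + \partial_t\Ret_{A,s}^t .
\]
The first term is bounded by $R$ times a uniform bound on the derivative of the flow. The second term is bounded by the $t$-derivative of $\beta$, which is uniform by compactness. Both bounds are independent of $A$, because the flow is that of a fixed vector field and only the stopping level depends on $A$.

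\textbf{Main obstacle.} The hardest part will be the uniformity of $|\nabla\beta|$ and of $D\Ret$ near vertices, and especially at the moments when an edge collapses to a higher-valence vertex. There the face has a corner that is sharp but stays bounded away from a cusp. I would handle this by choosing $\beta$ in a local model near each vertex, an angular sector, and gluing it to the edge collar models with a partition of unity. The key check is that the gradient flow retraction onto the graph remains Lipschitz uniformly in $t$ through the transition.
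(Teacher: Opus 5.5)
Your construction takes a different route from the paper's. The paper takes $V^t_{A,f}$ to be the image of a concentric disk $\{|z|\le r_{A,f}(t)\}$ under a Riemann map $u_{f,t}$ normalized at a continuously chosen interior point. It gets continuity in $t$ from Carath\'eodory--Rad\'o and retracts radially in the conformal coordinate. Granting a uniform lower bound on $|\nabla\beta|$, your sublevel sets and gradient flow plausibly give items (1)--(4).

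The gap is item (5). In Step~4 you bound the derivative of the retraction ``because the flow is that of a fixed vector field''. But $X=-\nabla\beta/|\nabla\beta|^2$ is singular at the vertices, where $\beta$ is only Lipschitz, so compactness gives no control of the flow map run all the way down to $\{\beta=0\}$. For the natural choices of $\beta$ this derivative is in fact unbounded at every convex corner, e.g.\ at the right-angled corners of the faces of $\phi_\nn$. To see this:
\begin{itemize}
\item In a sector model $\beta=r\,g(\theta)$ the flow satisfies $d\theta/d\log r=g'(\theta)/g(\theta)$. So the ridge $\theta_*$, where $g$ is maximal, repels trajectories as $r\to0$.
\item A trajectory starting on $\{\beta=c\}$ at angular offset $\delta$ from the ridge lands on an edge at distance $\asymp c\,|\delta|^{1/k}$ from the vertex, where $k=-g''(\theta_*)/g(\theta_*)$.
\item Smoothing the kink of $d(\cdot,\Im\phi^t)$ along the bisector at angular width $w$ gives $k$ of order $1/w$. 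Even a sine profile on a sector of opening $\Theta<\pi$ gives $k=(\pi/\Theta)^2>1$.
\item Then $\Ret^t_A$ is only $1/k$-H\"older across the ridge, and $|\partial_tS_A|$ is unbounded for a path $y_A$ crossing it at speed $R$, for every $A$.
\end{itemize}
Gluing sector models by a partition of unity does not cure this by itself. You must choose the angular profile so that the ridge does not repel. For instance, take $g$ constant on a subsector around its maximum and flat to infinite order at the ends of that subsector; then the subsector retracts radially into the vertex and the landing map is Lipschitz. You also need such models uniformly near edges about to collapse, where two corners at distance $\ell(t)\to0$ merge into one.

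\textbf{The angle bound.} Non-self-adjacency does not give the uniform angle bound you invoke. It only says that the two sides of an edge lie in distinct faces. It does not prevent two distinct edges from meeting tangentially at a vertex, and a smooth path in $\operatorname{DegEmb}(G,M)$ can produce this, e.g.\ when a short edge collapses between two edges that become tangent. At such a zero-angle corner no retraction whatsoever satisfies (5). For small $x$, a segment of length $\asymp x^2$ crossing the horn at distance $x$ from the tip lies in $K^t_A$. Its image under $\Ret^t_A$ is a connected subset of $\Im(\phi^t)$ containing both endpoints, hence of length $\gtrsim x$. So a path crossing it at speed $R$ forces $|\partial_tS_A|\gtrsim R/x$ somewhere. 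The absence of cusps, uniformly over the compact family, is therefore a hypothesis the lemma needs; it must be imposed rather than derived.

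\textbf{Smoothing in $t$.} Convolving $\beta^t_f$ in $t$ at a fixed scale is not defined on the moving domains $\bar f_t$. After any extension it also destroys the property $\{\beta=0\}=\partial f_t$, on which Step~3 relies. The time regularization must either be at a scale vanishing at the graph (a space--time regularized distance), or be $A$-dependent with $c_f(A,t)$ kept above the smoothing error, as in the paper.

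\textbf{The paper's construction.} The paper's conformal radial retraction has the same defect at convex corners. At a corner of opening $\pi\alpha<\pi$ the boundary correspondence $\theta\mapsto u_{f,t}(e^{i\theta})$ is only $\alpha$-H\"older, so the radial retraction is not Lipschitz across the fiber ending at the corner. An explicit Lipschitz corner model is therefore the missing ingredient in either approach. Your gradient-flow framework, where you control the angular profile, is a natural place to build it.
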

\begin{proof}
By the quotient topology discussion in \cref{subsec:graph}, a compact family is covered by finitely many charts on which the graphs are represented on a fixed subdivided source.
On such a chart, the boundary of each face is a continuous family of piecewise smooth Jordan curves, smooth on each open edge-stratum and with fixed cyclic ordering of the boundary arcs after passing to a common subdivision.
Thus each bounded face is a continuous family of piecewise smooth disks, and each annular face is a continuous family of piecewise smooth annuli with one fixed boundary component in $\partial M$.

For each bounded face, fix the point-selection map from \cite{belegradek2025point}.
This gives a continuous interior point
\[
  \Cent_f^+(\phi^t)\in f_t .
\]
We use this selection only at the continuous level.
The Riemann map normalized by this interior point is continuous in the parameter by the Carath\'eodory--Rad\'o continuity theorem for Jordan domains; on compact subdisks this convergence is $C^\infty$ in the source variable by the Cauchy integral formula.
We choose such a Riemann map
\[
  u_{f,t}\colon \Int(D^2)\to f_t
\]
with $u_{f,t}(0)=\Cent_f^+(\phi^t)$.
By Carath\'eodory's theorem we regard $u_{f,t}$ as extending continuously to $\overline{D^2}$; this extension is used only to define the radial retraction to $\partial f_t$.
The residual rotation of $D^2$ does not affect the images of concentric disks or the radial retraction.
A similar construction works for the annular faces. 
Let $r_{A,f}(t)$ be the unique radius satisfying
\[
\Area\bigl(u_{f,t}(\{|z|\le r_{A,f}(t)\})\bigr)=A\Area(f_t),
\]
and set
\[
V_{A,f}^t:=u_{f,t}(\{|z|\le r_{A,f}(t)\}).
\]
Before smoothing, the radial level curves are smooth for each fixed $t$, but only vary continuously in the graph-path parameter. We therefore smooth the family in $t$ on each compact source chart, using a mollifier radius depending on $A$.
We then make the conformal radial correction given by the implicit function theorem to restore the exact area condition.
Because both operations occur inside the omitted collar, the thickenings remain in their faces, the radial retraction estimates persist after replacing $\ell_A$ by another function tending to $0$, and the resulting thickenings, retractions, and homotopies vary smoothly on compact parameter ranges with $A<1$.
The maps $\Ret_A^t$ and $\Ret_{A,s}^t$ are induced by radial projection in the resulting disk and annulus coordinates.
Sufficiently close smoothing choices are connected by the same construction with one additional parameter, and hence give Hamiltonian-isotopic data in the continuation arguments below.
Since the omitted collars shrink radially as $A\to 1$, the boundary estimates above give a number $\ell_A\to0$ bounding the diameters and lengths of the radial retraction paths, while the homotopies $\Ret_{A,s}^t$ have uniformly bounded $C^1$ dependence on the path parameter for $A$ close to $1$.
If $y_A(t)\in K_A^t$ has $|\dot y_A|\le R$, this $C^1$ bound gives $|\partial_t S_A|\le L_R$ for some $L_R$ independent of $A$, while $\int_0^1|\partial_sS_A(t,s)|\,ds\le \ell_A$.
\end{proof}

\subsection{The graph flux primitive}
Given a smooth path $\phi^t\in\mathcal N^I$ the infinitesimal flux primitive along the graph is
\begin{align}
\underline H^t \colon \Im(\phi^t)\to& \RR \\
x\mapsto& -r\dot q_2(\bullet)+\frac{d}{d\tau}\left.\left(\int_{[t,\tau]\times [\bullet, x]}(\phi^\sigma)^* \omega\right)\right|_{\tau=t},
\label{eq:normalization}
\end{align}
where $[\bullet, x]$ is any path in the source graph from the marked point to the point mapping to $x$.
The next lemma shows that this definition is intrinsic.

\begin{lem}
\label{lem:graphFluxPrimitiveWellDefined}
Given a smooth path $\phi^t\in\mathcal N^I$, the formula \eqref{eq:normalization} defines a piecewise smooth function $\underline H^t \colon \Im(\phi^t)\to \RR$ which is independent of the chosen source path $[\bullet,x]$.
It is invariant under subdivision of the source graph, so it is well defined on the quotient-topology path space $\mathcal N^I$.
Moreover, if $e \colon [0,1]\to G$ is an oriented edge, then along the corresponding arc of $\Im(\phi^t)$ we have
    \[
    \frac{d}{ds}\underline H^t(\phi^t(e(s)))=\omega\!\left(\partial_t\phi^t(e(s)),\partial_s\phi^t(e(s))\right).
    \]
\end{lem}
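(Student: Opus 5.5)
The proof reduces to a single fact: for fixed $t$ and $\tau$, the integral $\int_{[t,\tau]\times\gamma}(\phi^\sigma)^*\omega$ over the strip swept by a source path $\gamma$ depends on $\gamma$ only through an integral of $r\,dq_2$ over a closed cycle. We work on a chart where $\phi^\sigma$, $\sigma$ near $t$, is represented on one fixed subdivided source graph $G$, as allowed by the quotient-topology convention of \cref{subsec:graph}.

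\textbf{Path independence.} Take two source paths $\gamma,\gamma'$ from $\bullet$ to $x$, and let $c=\gamma^{-1}\cdot\gamma'$, a cycle in $G$. Additivity of the integral over the domain gives
\[
\int_{[t,\tau]\times\gamma'}(\phi^\sigma)^*\omega-\int_{[t,\tau]\times\gamma}(\phi^\sigma)^*\omega=\int_{[t,\tau]\times c}(\phi^\sigma)^*\omega .
\]
Write $\omega=dr\wedge dq_2=d(r\,dq_2)$ and apply Stokes on the cylinder $[t,\tau]\times c$, whose boundary is $\{\tau\}\times c-\{t\}\times c$. The difference becomes
\[
\int_{\phi^\tau\circ c}r\,dq_2-\int_{\phi^t\circ c}r\,dq_2 .
\]
Every $\phi^\sigma$ lies in $\mathcal N=\mathcal N^{\Area,\mathrm{lift}}_{\phi_\nn,1}(M)$, so each of these periods is an integer by \eqref{eq:liftingCondition}. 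They depend continuously on $\sigma$, so they are constant and the difference vanishes identically in $\tau$. Its $\tau$-derivative at $\tau=t$ is therefore zero, and $\underline H^t$ is independent of the chosen path. The normalizing constant $-r\dot q_2(\bullet)$ is manifestly path independent.

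\textbf{Subdivision invariance.} Subdividing an edge replaces a source path by a reparametrization of the same path. The strip integral is invariant under orientation-preserving reparametrization, so the value is unchanged. Constant edges sweep degenerate strips and contribute nothing. Two local representatives of the same path in $\mathcal N^I$ differ by such moves, so $\underline H^t$ descends to the quotient path space. A second consequence is that $\underline H^t(x)$ depends only on the point $x=\phi^t(v)$ and not on the vertex $v$ representing it: two vertices identified by a collapsed edge are joined by a source path along which $\phi^\sigma$ is constant near $t$, so that path contributes a zero strip.

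\textbf{Derivative formula and smoothness.} Choose the source path to be a fixed path from $\bullet$ to $e(0)$ followed by $e|_{[0,s]}$. Only the last piece depends on $s$. Fubini gives
\[
\int_{[t,\tau]\times e|_{[0,s]}}(\phi^\sigma)^*\omega=\int_t^\tau\int_0^s\omega(\partial_\sigma\phi^\sigma,\partial_u\phi^\sigma)\,du\,d\sigma .
\]
Since the path is smooth, we may differentiate in $\tau$ at $\tau=t$ and then in $s$. This yields $\omega(\partial_t\phi^t(e(s)),\partial_s\phi^t(e(s)))$, the derivative formula in the statement. It also shows $\underline H^t$ is smooth on each open edge and continuous at vertices, hence piecewise smooth.

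\textbf{Main obstacle.} The delicate step is justifying the interchange of limits: the $\tau$-derivative of the strip integral, and the Stokes argument, at parameter values where the combinatorial type changes. There a face may degenerate and the chart must be chosen so that one source graph $G$ represents all $\phi^\sigma$ with $\sigma$ near $t$. We handle this with the common-subdivision convention, using the fact that smooth paths factor through a single $\operatorname{DegEmb}(G,M)$. On a fixed $\operatorname{DegEmb}(G,M)$ every integrand is smooth in $(\sigma,s)$, so Stokes and differentiation under the integral apply directly.
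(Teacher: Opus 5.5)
Your proposal is correct and follows essentially the same route as the paper: Stokes with $\omega=d(r\,dq_2)$ plus integrality and continuity of the periods $\int_c r\,dq_2$ for path independence, reparametrization and degenerate strips for subdivision invariance, and differentiation under the integral for the edgewise formula. One small slip is in your extra remark about vertices joined by collapsed edges: such an edge need not stay constant for $\sigma$ near $t$ (this is exactly what happens at the combinatorial transitions in the loop $\tau_1$). Its contribution still vanishes, because your own derivative formula gives $\int_0^1\omega(\partial_t\phi^t,\partial_u\phi^t)\,du=0$ whenever $\partial_u\phi^t\equiv 0$ at time $t$.
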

\begin{proof}
    If $\gamma,\gamma'$ are two source paths from $\bullet$ to $x$, their difference is a cycle $c:=\gamma^{-1}\cdot \gamma'$ in $G$.
    The corresponding values of \eqref{eq:normalization} differ by
    \[
    \frac{d}{d\tau}\left.\left(\int_{[t,\tau]\times c}(\phi^\sigma)^*\omega\right)\right|_{\tau=t}.
    \]
    Since $\omega=d(r\,dq_2)$, Stokes' theorem rewrites this as
    \[
    \frac{d}{d\tau}\left.\left(\int_{\phi^\tau(c)} r\,dq_2-\int_{\phi^t(c)} r\,dq_2\right)\right|_{\tau=t}.
    \]
    Every time-slice $\phi^\tau$ belongs to $\mathcal N$, so \cref{eq:liftingCondition} implies that $\int_{\phi^\tau(c)} r\,dq_2$ is an integer.
    As a continuous integer-valued function of $\tau$, it is locally constant.
    Hence the displayed derivative is zero, proving independence of the source path.

    Subdividing the source graph does not change the geometric strip in $M$ traced by the chosen path, and any inserted constant edge contributes zero to the integral.
    Thus \eqref{eq:normalization} is unchanged under subdivision.
    By the quotient-topology convention for $\mathcal N^I$, two local representatives agree after passing to a common subdivision, so the construction descends to the quotient.

    Finally, if $0\le s_0<s_1\le 1$, subtracting the formulas for $\underline H^t(\phi^t(e(s_1)))$ and $\underline H^t(\phi^t(e(s_0)))$ gives
    \[
    \underline H^t(\phi^t(e(s_1)))-\underline H^t(\phi^t(e(s_0)))
    =
    \frac{d}{d\tau}\left.\left(\int_{[t,\tau]\times e|_{[s_0,s_1]}}(\phi^\sigma)^*\omega\right)\right|_{\tau=t}.
    \]
    Differentiating under the integral sign yields
    \[
    \underline H^t(\phi^t(e(s_1)))-\underline H^t(\phi^t(e(s_0)))
    =
    \int_{s_0}^{s_1}\omega\!\left(\partial_t\phi^t(e(s)),\partial_s\phi^t(e(s))\right)\,ds,
    \]
    which is equivalent to the displayed edgewise derivative formula.
\end{proof}

\begin{lem}[Variation of the canonical Legendrian lift]
\label{lem:variationLegendrianLift}
Let $\phi^t\in\mathcal N^I$, and let $x^t\in\Im(\phi^t)$ be a smooth path.
Let $\tilde x^t\in\Leg_{\phi^t}$ be the corresponding path of Legendrian lifts, and choose a continuous real lift of its $q_1\in\RR/\ZZ$ coordinate.
On every interval where $x^t$ is represented by a smooth point of a single non-constant edge, one has
\[
\frac{d}{dt}q_1(\tilde x^t)
=
-\underline H^t(x^t)-\alpha(\dot x^t).
\]
For later use, if $\gamma_x\colon[0,t]\to M$ is the path $\gamma_x(\tau)=x^\tau$, then
\[
q_1(\tilde x^t)-q_1(\tilde x^0)
=
-\int_0^t \underline H^\tau(x^\tau)\,d\tau-\int_{\gamma_x}\alpha .
\]
\end{lem}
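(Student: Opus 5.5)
The plan is to compute $q_1(\tilde x^t)$ directly from the formula that defines the canonical lift, and then differentiate. Throughout I take $\alpha=r\,dq_2$, the primitive of $\omega=dr\wedge dq_2$ that appears in \cref{eq:Legendrian}, and I assume the conventions are such that the first term $-r\dot q_2(\bullet)$ of \eqref{eq:normalization} equals $-\alpha(\partial_t\phi^t(\bullet))$.

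Fix an interval on which $x^t=\phi^t(e(s(t)))$ for a single non-constant oriented edge $e$ and a smooth function $s(t)$. Choose a fixed source path $\gamma$ in $G$ from $\bullet$ to $e(0)$, and let $\gamma_{s}$ denote $\gamma$ followed by $e|_{[0,s]}$. By \cref{lem:liftingGraph} and the marked-point normalization $q_1(\Leg(\bullet))=0$, we have
\[
q_1(\tilde x^t)\equiv -\int_{\phi^t\circ\gamma_{s(t)}}\alpha \pmod{\ZZ}.
\]
The right-hand side is a smooth real-valued function of $t$. Any continuous real lift of the left-hand side therefore differs from it by a constant integer on the interval, so the two have the same derivative. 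This step disposes of the $\RR/\ZZ$ ambiguity.

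Next I differentiate the right-hand side. The derivative splits into two contributions.

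The first contribution comes from moving the endpoint of the source path. It equals $\alpha\bigl(\partial_s\phi^t(e(s))\bigr)\dot s$.

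The second contribution comes from varying the map $\phi^t$ while the source path $\gamma_{s}$ is held fixed. For this I use Stokes' theorem on the thin strip $[t,\tau]\times\gamma_s$. The integral over the two long sides is the difference $\int_{\phi^\tau\circ\gamma_s}\alpha-\int_{\phi^t\circ\gamma_s}\alpha$, and it equals
\[
\int_{[t,\tau]\times\gamma_s}(\phi^\sigma)^*\omega
\]
minus the contributions of the two short sides, which are traced out by the endpoints $\bullet$ and $x$. Differentiating at $\tau=t$ turns the short sides into boundary terms, and the result is
\[
\frac{d}{d\tau}\Big|_{\tau=t}\int_{[t,\tau]\times\gamma_s}(\phi^\sigma)^*\omega\;+\;\alpha\bigl(\partial_t\phi^t(x)\bigr)\;-\;\alpha\bigl(\partial_t\phi^t(\bullet)\bigr).
\]
The first and third terms together are exactly $\underline H^t(x^t)$ as defined in \eqref{eq:normalization}. \Cref{lem:graphFluxPrimitiveWellDefined} guarantees that this value does not depend on the choice of $\gamma$. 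It also provides the edgewise formula $\frac{d}{ds}\underline H^t=\omega(\partial_t\phi,\partial_s\phi)$, which I will use to cross-check the sign and orientation of the strip term.

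Adding the two contributions, and using $\dot x^t=\partial_t\phi^t+\dot s\,\partial_s\phi^t$, gives
\[
\frac{d}{dt}\int_{\phi^t\circ\gamma_{s(t)}}\alpha=\underline H^t(x^t)+\alpha(\dot x^t).
\]
Negating this yields the first claimed formula. The integrated formula then follows by integrating over $[0,t]$, noting that $\int_0^t\alpha(\dot x^\tau)\,d\tau=\int_{\gamma_x}\alpha$. If $x^t$ crosses between edges or passes through vertices at finitely many times, I would add up the integrals over the individual intervals. Both sides are continuous in $t$ (the left-hand side by \cref{cor:liftingSpace}), so the pieces glue.

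The main obstacle I expect is keeping the orientation and sign conventions consistent. Specifically, I need to check that the orientation of the strip $[t,\tau]\times[\bullet,x]$ used in \eqref{eq:normalization} matches the Stokes computation above, so that the marked-point term really comes out as $-\alpha(\partial_t\phi^t(\bullet))$. My first check would be to compare against the edgewise formula in \cref{lem:graphFluxPrimitiveWellDefined}: differentiating my expression in $s$ should reproduce $\omega(\partial_t\phi,\partial_s\phi)$.
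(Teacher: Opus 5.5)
Your proposal is correct and follows essentially the same route as the paper: it writes $q_1(\tilde x^t)=-\int_{\phi^t\circ\eta_t}\alpha$ along a fixed combinatorial path followed by $e|_{[0,s(t)]}$, applies Stokes on the strip $[t,t+h]\times\eta_t$ to identify the fixed-source-path variation with $-\underline H^t(x^t)-\alpha(\partial_t\phi^t(e(s)))$ (using the same reading $r\dot q_2(\bullet)=\alpha(\partial_t\phi^t(\bullet))$ that you flagged), adds the endpoint term $-\alpha(\partial_s\phi^t(e(s)))\dot s$, and extends across vertex times by continuity. Your sign and orientation bookkeeping checks out, and your explicit handling of the $\RR/\ZZ$ ambiguity, via a locally constant integer difference, is a small point the paper leaves implicit.
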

\begin{proof}
Choose a fixed source edge $e$ and write $x^t=\phi^t(e(s(t)))$ on such an interval.
Let $\eta_t$ be a source path from the marked point to $e(s(t))$, obtained by a fixed combinatorial path to $e(0)$ followed by the segment $e|_{[0,s(t)]}$.
By the definition of the Legendrian lift,
\[
q_1(\tilde x^t)=-\int_{\phi^t\circ\eta_t}\alpha .
\]
First suppose that $s(t)$ is constant.
For small $h$, let $S_h=[t,t+h]\times\eta_t$ be the strip swept out by the fixed source path.
Stokes' theorem gives
\[
\frac{d}{dh}\left.\left(-\int_{\phi^{t+h}\circ\eta_t}\alpha+\int_{\phi^t\circ\eta_t}\alpha\right)\right|_{h=0}
=
-\frac{d}{dh}\left.\int_{S_h}(\phi^\sigma)^*\omega\right|_{h=0}
-\alpha(\partial_t\phi^t(e(s)))+\alpha(\partial_t\phi^t(\bullet)).
\]
Using \cref{eq:normalization}, this is $-\underline H^t(x^t)-\alpha(\partial_t\phi^t(e(s)))$.
If $s(t)$ varies, the extra infinitesimal segment in the time-slice graph contributes $-\alpha(\partial_s\phi^t(e(s))\dot s)$.
Adding this term gives the differential formula.
The integral formula follows by integrating in $t$.
If the path passes through vertices, the formula is first proved away from the vertex times and then extended by continuity of the endpoint values.
\end{proof}

\subsection{Controlled Hamiltonian extension}
We now extend the graph flux primitive to compactly supported Hamiltonians on $M$.

\begin{prop}[Controlled Hamiltonian extension]
\label{prop:controlledHamiltonianExtension}
Fix a compact family of smooth graph paths in $\mathcal N^I$ and the thickenings from \cref{lem:controlledThickenings}.
For $A$ sufficiently close to $1$, there are Hamiltonians $H_A^t:M\to\RR$, smoothly depending on all parameters, such that:
\begin{enumerate}
\item $H_A^t=0$ on a fixed collar of $\partial M$;
\item the Hamiltonian flow $\Psi_A^t$ carries $V_{A,f}^0$ to $V_{A,f}^t$ for every face $f$;\label{prop:agreement}
\item
\[
\delta_A:=\sup_{\tau\in[0,1]}\sup_{x\in\Im(\phi^\tau)}|H_A^\tau(x)-\underline H^\tau(x)|\longrightarrow0
\qquad\text{as }A\to1;
\]
\item there is a constant $B$, depending on the compact family but not on $A$, such that $|dH_A^t|\le B$ on $K_A^t:=M\setminus\Int(V_A^t)$;\label{prop:boundedDerivative}
\end{enumerate}
\end{prop}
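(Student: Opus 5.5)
We will build $H_A^t$ face by face. Inside each thickening $V_{A,f}^t$ we use a Hamiltonian generating a prescribed area-preserving motion of $V_{A,f}^t$. On the complementary region $K_A^t$ we use a function pulled back from the graph flux primitive $\underline H^t$ via the retraction $\Ret_A^t$ of \cref{lem:controlledThickenings}. Finally we glue the two pieces across the boundaries $\partial V_{A,f}^t$ with a cutoff.

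\textbf{Interior piece.} The family $V_{A,f}^t$ has constant area $A\Area(f)$, because the faces $f_t$ themselves have constant area. It is also a smooth family of smoothly bounded disks (respectively annuli). Hence its boundary velocity normal field $\nu^t$ satisfies $\int_{\partial V_{A,f}^t}\omega(\partial_t,\cdot)=0$ for disks. For annuli, the same holds on each boundary circle after using the fixed boundary collar, and the annuli are handled by the flux condition coming from \eqref{eq:liftingCondition}. So there is a function $h_f^t$ on $\partial V_{A,f}^t$, unique up to a constant, with $dh_f^t(\partial_\theta)=\omega(\nu^t,\partial_\theta)$.

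We fix this constant by matching with the graph side: set $h_f^t = \underline H^t\circ \Ret_A^t$ at one boundary point. More precisely, compare $h_f^t$ with $\underline H^t\circ\Ret_A^t|_{\partial V_{A,f}^t}$. Both have derivatives given by the flux of nearby curves, and these differ only by the flux swept through the thin collar $f_t\setminus V_{A,f}^t$. By item (5) of \cref{lem:controlledThickenings}, the strips have length at most $\ell_A$ and $|\partial_tS_A|\le L_R$, so the discrepancy is $O(\ell_A)$ uniformly. We then extend $h_f^t$ to a Hamiltonian on a neighborhood of $\overline{V_{A,f}^t}$ whose flow moves $\partial V_{A,f}^t$ with normal velocity $\nu^t$. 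The standard Banyaga/Moser-type extension gives this, and it carries $V_{A,f}^0$ to $V_{A,f}^t$ (item \ref{prop:agreement}).

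\textbf{Collar piece.} On $K_A^t$, define $H_A^t:=\underline H^t\circ\Ret_A^t$, smoothed near vertices. We interpolate to the interior Hamiltonian with a cutoff supported in an $\ell_A$-thin band, and we subtract the constant on the annular faces so that $H_A^t=0$ on the fixed collar of $\partial M$. Item (3) then follows: on $\Im(\phi^\tau)$ we have $\Ret_A=\mathrm{id}$, so the only error is the gluing mismatch, which is $O(\ell_A)\to 0$. For item \ref{prop:boundedDerivative}, the tangential derivative along the graph is $\omega(\partial_t\phi,\partial_s\phi)$ by \cref{lem:graphFluxPrimitiveWellDefined}, which is bounded on the compact family. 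The transverse derivative through the collar is controlled by the $C^1$ bounds on $\Ret_{A,s}^t$ in item (5).

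\textbf{Main obstacle.} The gluing step is the hardest part, because the cutoff band has width about $\ell_A$. A naive cutoff contributes a term of size $|h_f-\underline H\circ \Ret|/\ell_A$ to $|dH|$, which is only $O(1)$ if the mismatch is $O(\ell_A)$. Establishing that sharp $O(\ell_A)$ mismatch, rather than merely $o(1)$, is therefore the crux. I would first try to prove it by writing the mismatch as the flux through the swept strip $S_A$, and bounding that flux by $\operatorname{length}\cdot|\partial_t S_A|\le L_R\ell_A$.

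Vertices, where $\Ret$ is only piecewise smooth, need a separate argument. I would try to treat them by a local smoothing whose error is absorbed into $\delta_A$.
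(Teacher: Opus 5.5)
Your plan follows the paper's construction step for step. Both take the primitive of the boundary flux on $\partial V_A^t$, which is exact because $\Area(V_{A,f}^t)=A\Area(f_t)$ is constant; for the annular faces this uses area preservation, equivalently \eqref{eq:liftingCondition}. Both fix the additive constant by comparison with $\underline H^t\circ\Ret_A^t$, interpolate across $K_A^t$ along the retraction fibres, extend into $V_A^t$ by a cutoff, and smooth near vertices. You are also right that the paper's justification of (4) is insufficient: $C^0$ convergence of the normalized boundary primitive cannot bound the derivative of an interpolation across a collar of vanishing width.

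The gap is that your proposed estimate does not fill this hole, because the mismatch is not only a swept-strip flux. Let $m=h_f^t-\underline H^t\circ\Ret_A^t$, let $\beta_\theta$ be the fibre from $p_\theta\in\partial V_{A,f}^t$ to $\Ret_A^t(p_\theta)$, and let $R^t(\theta_0,\theta)$ be the piece of collar between $\beta_{\theta_0}$ and $\beta_\theta$. Stokes gives, up to signs,
\[
m(\theta)-m(\theta_0)=\frac{d}{dt}\Area R^t(\theta_0,\theta)-\Phi(\beta_\theta)+\Phi(\beta_{\theta_0}),\qquad \Phi(\beta)=\int_0^1\omega(\partial_t\beta,\partial_s\beta)\,ds .
\]
Item (5) of \cref{lem:controlledThickenings} controls only the fibre fluxes $\Phi$. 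The first term measures how collar area is redistributed along $\partial f_t$. It is essentially an integral of $\partial_t$ of the collar width, and nothing you cite bounds that by a multiple of the width, so you only get $O(1)$ for it.

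Moreover, $O(\ell_A)$ is the wrong target, because $\ell_A$ is the maximal fibre length and conformal collars are far from uniform. Write $\epsilon=1-r_{A,f}(t)$. The fibre at a corner of interior angle $\alpha\pi$ has length $\asymp\epsilon^{\alpha}$, while mid-edge fibres have length $\asymp\epsilon$. Already for the unit squares of $\phi_\nn$, $\ell_A\gtrsim\epsilon^{1/2}$. Interpolating along the fibre through $p$ costs about $|m(p)|/\operatorname{length}(\beta_p)$, so what you need is the pointwise bound $|m(p)|\le C\operatorname{length}(\beta_p)$.

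The $\Phi$-terms satisfy this bound, but the redistribution term need not. A corner with $\alpha<1/2$ carries collar area $\asymp\epsilon^{2\alpha}\gg\epsilon$. A straight-edged triangular face has two such corners. Under a generic deformation with fixed total collar area, the area at the less sharp one changes at a rate $\gtrsim\epsilon^{2\alpha}$, with $\alpha$ its own angle parameter. Then $m$ at mid-edge points on the two sides of that corner differs by $\gtrsim\epsilon^{2\alpha}$. For any choice of constant, $|m|/\operatorname{length}(\beta)\gtrsim\epsilon^{2\alpha-1}\to\infty$ on one adjacent edge, so the fibrewise interpolation to $\underline H^t$ (yours and the paper's) has unbounded $|dH_A^t|$ there.

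To close the argument you need different collars or a weaker (4). With collars of nearly constant width $w$ (e.g.\ rounded inner parallel sets), the area between normal fibres is $w\cdot\mathrm{length}+O(w^2)$. The redistribution term is then $O(w)$, and your strip-flux bound does the job. Alternatively, aim for a scale-relative bound (that $|dH_A^t|$ times fibre length tends to $0$); this may suffice, since \cref{lem:convergenceContactLifts} uses (4) mainly through such products.

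There are two smaller issues.
\begin{itemize}
\item On annular faces, subtracting a constant would break continuity across graph edges. Instead, put $0$ on the $\partial M$ component of $\partial V_{A,f}^t$, whose additive constant is independent, and interpolate inside $V_{A,f}^t$, as the paper does.
\item Near convex vertices the radial retraction stretches tangential lengths by an unbounded factor, so $|d(\underline H^t\circ\Ret_A^t)|$ is itself unbounded there. Item (5) gives no spatial $C^1$ control of $\Ret_A^t$, so a vertex smoothing whose error is absorbed into $\delta_A$ does not by itself restore the derivative bound.
\end{itemize}
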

\begin{proof}
The compact graph family stays a positive distance from $\partial M$, so for $A$ close to $1$ the annular thickenings contain a fixed collar of $\partial M$.
The infinitesimal motion of $\partial V_{A,f}^t$ determines a closed $1$-form on that boundary by contraction with $\omega$.
Its period is the time derivative of $\Area(V_{A,f}^t)=A\Area(f_t)$, hence is zero on disk faces.
On an annular face, the fixed boundary component in $\partial M$ has zero period because it is stationary, and the zero derivative of the annular area forces the moving graph-side boundary component to have zero period.
Thus the boundary flux is exact on every component of $\partial V_A^t$.
Define $H_{A,\partial}^t$ to be the unique primitive on $\partial V_A^t$ with the following normalizations.
On the fixed boundary components in $\partial M$, we put $H_{A,\partial}^t=0$.
On each moving boundary component, fix the additive constant by minimizing the $L^2$ norm of $H_{A,\partial}^t-\underline H^t\circ\Ret_A^t$ in the conformal angular coordinate, equivalently by requiring its angular average to vanish.
Using the radial coordinates in \cref{lem:controlledThickenings}, extend $H_{A,\partial}^t$ across $K_A^t$ by a fixed cutoff interpolation to the graph primitive $\underline H^t$ along the retraction fibers.
Because these fibers shrink uniformly and the graph family is compact, the normalized boundary primitive converges in $C^0$ to $\underline H^t\circ\Ret_A^t$ and the interpolation gives a uniform bound for $|dH_A^t|$ on $K_A^t$.
Extend the function across $V_A^t$ by a fixed radial cutoff in the conformal coordinates, preserving the prescribed boundary values and making the function vanish on a fixed collar of $\partial M$.
Near graph vertices, apply the fixed smoothing convention obtained by convolution with a fixed mollifier in the fixed source-chart coordinates at a scale smaller than the conformal collar width.
This preserves the boundary flux and the stated estimates after increasing $B$.
Since no non-constant edge is self-adjacent, each local edge collar is bounded by two distinct face components, so the facewise boundary data glue across vertices also when an edge has zero length at an endpoint.
The resulting Hamiltonian flow carries $\partial V_{A,f}^0$ to $\partial V_{A,f}^t$, hence carries $V_{A,f}^0$ to $V_{A,f}^t$.
\end{proof}

Given $\phi^t\in \mathcal N^I$, denote by $(H_A)_*(\phi^t)$ the time-dependent Hamiltonian $t\mapsto H_A^t$.

\begin{lem}[Projection convergence]
\label{lem:projectionConvergence}
For the Hamiltonians above,
\[
\lim_{A\to1}d_{Haus}\bigl(\Psi_A^1(\Im(\phi^0)),\Im(\phi^1)\bigr)=0.
\]
The same statement holds at any fixed time $t\in[0,1]$.
\end{lem}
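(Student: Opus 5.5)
The idea is to sandwich both sets inside the complement of the thickening. Set $K_A^t:=M\setminus\Int(V_A^t)$. By \cref{prop:controlledHamiltonianExtension}\eqref{prop:agreement}, $\Psi_A^t$ carries each component $V_{A,f}^0$ onto $V_{A,f}^t$. Since $\Psi_A^t$ is a diffeomorphism of $M$ (it is supported away from a collar of $\partial M$), it therefore carries $K_A^0$ onto $K_A^t$. Also $\Im(\phi^0)\subset K_A^0$, because the thickening components lie inside the open faces. Hence
\[
\Psi_A^t(\Im(\phi^0))\subset K_A^t .
\]
By \cref{lem:controlledThickenings}, every point of $K_A^t$ lies on a retraction fiber of $\Ret_A^t$ of diameter at most $\ell_A$. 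So every point of $\Psi_A^t(\Im(\phi^0))$ is within $\ell_A$ of $\Im(\phi^t)$. This gives one half of the Hausdorff estimate, namely $\sup_{y\in\Psi_A^t(\Im\phi^0)}d(y,\Im\phi^t)\le\ell_A\to0$.

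For the reverse half we must show that every $x\in\Im(\phi^t)$ is close to $\Psi_A^t(\Im(\phi^0))$. The plan is a topological separation argument. Let $\Gamma:=\Psi_A^t(\Im(\phi^0))$. Because $\Psi_A^t$ is a homeomorphism of $M$ mapping $V_{A,f}^0\mapsto V_{A,f}^t$, and the $V_{A,f}^0$ lie in distinct components of $M\setminus\Im(\phi^0)$, the sets $V_{A,f}^t$ lie in distinct components of $M\setminus\Gamma$.

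Take $x\in\Im(\phi^t)$, say in the closure of an edge. By the non-self-adjacency condition in \cref{df:graphembed}, $x$ is adjacent to two distinct faces $f,g$ of $\phi^t$. The collars $f_t\setminus V_{A,f}^t$ consist of retraction fibers of length at most $\ell_A$, so there are points $a\in V_{A,f}^t$ and $b\in V_{A,g}^t$ within $\ell_A$ of $x$. Concatenating the two retraction fibers through $x$ gives a path from $a$ to $b$ of length at most $2\ell_A$. This path must meet $\Gamma$, since $a$ and $b$ lie in different components of $M\setminus\Gamma$. Hence $d(x,\Gamma)\le 2\ell_A$.

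Isolated vertices of zero-length configurations cause no trouble. Every point of $\Im(\phi^t)$ is in the closure of a non-constant edge, and the bound passes to closures. Combining the two halves gives $d_{Haus}\le 2\ell_A\to0$. The argument is identical at any fixed $t$, which gives the final sentence of the statement.

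I expect the main obstacle to be the reverse inclusion: making precise that distinct thickening components are separated by $\Gamma$ and that points of $\Im(\phi^t)$ on edges are flanked by two \emph{distinct} faces. This is exactly where non-self-adjacency is used. I would also check the annular faces, where one boundary component lies in $\partial M$ and the fibers run to the graph side only. Neither $\delta_A$ nor the derivative bound from \cref{prop:controlledHamiltonianExtension} should be needed here; those are reserved for controlling the $q_1$-coordinate in the Legendrian lift later.
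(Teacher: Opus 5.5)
Your proposal is correct and follows the same route as the paper: $\Psi_A^t(\Im(\phi^0))\subset K_A^t$ gives one half of the estimate, and the reverse half is a separation argument. In that argument, non-self-adjacency puts the two thickened sectors flanking an edge point into distinct faces, so the retraction fiber through the point must meet $\Psi_A^t(\Im(\phi^0))$; vertices are then handled by closure. The only differences are cosmetic. You push the separation forward through the homeomorphism $\Psi_A^t$ rather than pulling the extended fiber back, and you settle for $d(x,\Psi_A^t(\Im(\phi^0)))\le 2\ell_A$ instead of the paper's slightly sharper $\Ret_A^t\bigl(\Psi_A^t(\Im(\phi^0))\bigr)=\Im(\phi^t)$.
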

\begin{proof}
We prove the endpoint statement; the fixed-time variant is identical.
By \cref{prop:agreement}, the Hamiltonian flow carries $K_A^0$ to $K_A^1$.
Since $\Im(\phi^0)\subset K_A^0$, every point of $\Psi_A^1(\Im(\phi^0))$ lies in $K_A^1$ and is within the retraction-fiber diameter of $\Im(\phi^1)$.
For the reverse estimate, we claim that
\[
\Ret_A^1\bigl(\Psi_A^1(\Im(\phi^0))\bigr)=\Im(\phi^1).
\]
To show this, take a point in the interior of a non-constant edge of $\phi^1$.
The retraction fiber through this point crosses the corresponding thin edge-neighborhood in $K_A^1$ between the two local thickened sectors adjacent to the edge.
By the restriction above, these sectors belong to the time-one faces indexed by distinct face labels $f$ and $g$.
Extend the fiber slightly at its endpoints into $V_{A,f}^1$ and $V_{A,g}^1$.
The inverse image of this extended arc under $\Psi_A^1$ has endpoints in $V_{A,f}^0$ and $V_{A,g}^0$, which lie in distinct components of $M\setminus\Im(\phi^0)$.
It must therefore meet $\Im(\phi^0)$.
Since $\Psi_A^1(\Im(\phi^0))\subset K_A^1$, the original retraction fiber already meets $\Psi_A^1(\Im(\phi^0))$.
Vertex points follow by closure.
Thus every point of $\Im(\phi^1)$ is within the retraction-fiber diameter of $\Psi_A^1(\Im(\phi^0))$, and the two one-sided estimates prove the claim.
\end{proof}

\subsection{Construction of the contactomorphism lift \texorpdfstring{$C$}{C}} 
Throughout this section, we will frequently use the notation $x:=p_{lag}(\tilde x)$ for $\tilde x\in Y_0$.
Let $H^t\in \Ham(M)$ be a time-dependent Hamiltonian function which vanishes on a collar neighborhood of $\partial M$. 
Via the identification $Y_0\cong S^1_{q_1}\times M$, we pull $H^t$ back to a function on $Y_0$ by
\[
(q_1,q_2,\theta)\longmapsto H^t(q_2,r(\theta)).
\]
Because $H^t$ vanishes on a collar of $\partial M$, this pullback has support in a compact subset of $Y_0$.
Write $\alpha:=r\,dq_2$, so $\lambda=dq_1+\alpha$ and $\omega=d\alpha$.
Let $X_{H^t}$ be the Hamiltonian vector field on $M$ in the convention $\iota_{X_{H^t}}\omega=dH^t$.
The strict contact lift of $X_{H^t}$ is
\[
X_{H^t}-\bigl(H^t+\alpha(X_{H^t})\bigr)\partial_{q_1},
\]
since $\lambda(X_{H^t}-(H^t+\alpha(X_{H^t}))\partial_{q_1})=-H^t$ and $\iota_{X_{H^t}}\omega=dH^t$, so its Lie derivative on $\lambda$ is zero.
Because $H^t$ vanishes on a collar of $\partial M$, this vector field has compact support in $Y_0$ and extends by zero to all of $Y$.
Thus the resulting flow $\tilde \Psi^t_{H^t} \colon Y\to Y$ preserves the contact form $\lambda$, projects to the Hamiltonian flow $\Psi^t_H$ on $M$, and changes the $q_1$--coordinate by
    \begin{equation}
        \label{eq:q1EvolutionContact}
        \tilde \Psi^t_{H^t}(q_1,z)=\left(q_1-\int_0^t \left(H^\tau+\alpha(X_{H^\tau})\right)(\Psi^\tau_H(z))\,d\tau,\; \Psi^t_H(z)\right).
    \end{equation}
The normalization term in \cref{eq:normalization} is chosen so that the canonical Legendrian lift satisfies the variation formula in \cref{lem:variationLegendrianLift}.
If an ideal Hamiltonian extension satisfied $H^\tau|_{\Im(\phi^\tau)}=\underline H^\tau$ and carried $x^0$ to $x^\tau$, then \cref{eq:q1EvolutionContact} and \cref{lem:variationLegendrianLift} would agree along the lifted path.

Although the smoothed Hamiltonians $H_A^t$ constructed above only approximate this ideal extension, the error vanishes as $A\to 1$, which is enough to prove the required Hausdorff convergence of the Legendrian lifts.

\begin{lem}[Convergence of contact lifts]
\label{lem:convergenceContactLifts}
Let $\phi^t\in\mathcal N^I$ be a smooth path, and let $H_A^t$ be the Hamiltonians from \cref{prop:controlledHamiltonianExtension}.
For $t=1$,
\[
\lim_{A\to 1} d_{Haus}(\tilde \Psi^{1}_A(\Leg_{\phi^0}), \Leg_{\phi^1}) =0.
\]
The same conclusion holds at any fixed time $t\in[0,1]$.
\end{lem}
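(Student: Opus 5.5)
The plan is to compare, point by point, the contact flow $\tilde\Psi^t_A$ of $H_A^t$ (given by \cref{eq:q1EvolutionContact}) with the canonical lift along a nearby path of graph points. Write $Y_0\cong S^1_{q_1}\times M$ and use the product metric. The Hausdorff distance then splits into a $p_{lag}$-component and a $q_1$-component. The $p_{lag}$-component is already controlled by \cref{lem:projectionConvergence}: $d_{Haus}(\Psi_A^t(\Im(\phi^0)),\Im(\phi^t))\to0$. So the work is in the $q_1$-coordinate. Concretely, for each $\tilde x^0\in\Leg_{\phi^0}$ we want a point of $\Leg_{\phi^t}$ whose $q_1$-coordinate is close to that of $\tilde\Psi^t_A(\tilde x^0)$, and conversely.

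Fix $\tilde x^0$ with projection $x^0$, and set $y_A(\tau):=\Psi^\tau_A(x^0)\in K_A^\tau$. This path lies in $K_A^\tau$ because the flow preserves the thickenings (\cref{prop:controlledHamiltonianExtension}(\ref{prop:agreement})). By \cref{prop:controlledHamiltonianExtension}(\ref{prop:boundedDerivative}) we have $|\dot y_A|\le B$ uniformly in $A$. Form the strip $S_A(\tau,s)=\Ret^\tau_{A,s}(y_A(\tau))$ from \cref{lem:controlledThickenings}, and let $x^\tau:=S_A(\tau,1)=\Ret^\tau_A(y_A(\tau))\in\Im(\phi^\tau)$. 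This is a path of graph points, Lipschitz with constant $L_B$ independent of $A$.

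From \cref{eq:q1EvolutionContact}, the $q_1$-coordinate of $\tilde\Psi^t_A(\tilde x^0)$ is
\[
q_1(\tilde x^0)-\int_0^t H_A^\tau(y_A(\tau))\,d\tau-\int_{y_A}\alpha .
\]
From \cref{lem:variationLegendrianLift} (extended to piecewise-smooth/Lipschitz $x^\tau$ by approximation), the lift of $x^t$ in $\Leg_{\phi^t}$ has $q_1$-coordinate
\[
q_1(\tilde x^0)-\int_0^t\underline H^\tau(x^\tau)\,d\tau-\int_{x^\bullet}\alpha .
\]
Note that $x^0$ need not equal $\Ret^0_A(x^0)$, but $x^0\in\Im(\phi^0)$ is fixed by the retraction, so the starting points agree. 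The two expressions should then be compared term by term.

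For the Hamiltonian term, write $|H_A^\tau(y_A)-\underline H^\tau(x^\tau)|\le|H_A^\tau(y_A)-H_A^\tau(x^\tau)|+\delta_A$. The first summand is at most $B\ell_A$, since the segment $S_A(\tau,\cdot)$ lies in $K_A^\tau$, where $|dH_A|\le B$, and has length at most $\ell_A$. For the $\alpha$ term, apply Stokes to the strip $S_A$: the difference of the line integrals equals $\int S_A^*\omega$ plus the two end segments at $\tau=0,t$. The area of the strip is at most $\int_0^t|\partial_\tau S_A|\cdot\mathrm{length}(S_A(\tau,\cdot))\,d\tau\le L_B\ell_A$, and the end segments contribute $O(\ell_A)$ because $\alpha$ is bounded on $M$. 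Hence the $q_1$-discrepancy is $O(\delta_A+\ell_A)\to0$, and the planar distance $|y_A(t)-x^t|\le\ell_A$, uniformly in $\tilde x^0$. This shows that $\tilde\Psi^t_A(\Leg_{\phi^0})$ lies in a vanishing neighbourhood of $\Leg_{\phi^t}$.

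The reverse inclusion is the step I expect to be the main obstacle. Every point of $\Leg_{\phi^t}$ must be approached, and the $q_1$-lift must match, not just the projection. I would first use the argument of \cref{lem:projectionConvergence}: for $z\in\Im(\phi^t)$ in an open edge, the retraction fiber through $z$ meets $\Psi_A^t(\Im(\phi^0))$ at some $y_A(t)=\Psi_A^t(x^0)$. Running the above estimate backwards from this $x^0$ produces the graph point $x^t=\Ret^t_A(y_A(t))=z$, together with a $q_1$-match up to $O(\delta_A+\ell_A)$. The subtle point is that the $q_1$-value produced by \cref{lem:variationLegendrianLift} along the path $x^\tau$ is the canonical lift of $z$; this holds because $\Leg_{\phi^t}$ is embedded over $\Im(\phi^t)$, so the lift over $z$ is unique in $\RR/\ZZ$. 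Vertices then follow by closure. Finally, the bounds are uniform in $t$, so the fixed-time statement follows at once.
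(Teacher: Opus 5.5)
Your proposal is correct and follows the paper's proof step for step. The forward direction matches exactly: the same strip $S_A(\tau,s)=\Ret^\tau_{A,s}(y_A(\tau))$, the same comparison of the contact evolution formula with the variation formula for the canonical lift, and the same $B\ell_A+\delta_A$, strip-area and end-segment bounds. The reverse inclusion differs only slightly. You choose the point of $\Psi_A^t(\Im(\phi^0))$ lying on the retraction fiber over $z$, using the surjectivity claim inside the proof of the projection-convergence lemma, so the forward estimate lands on the lift of $z$ itself. The paper instead takes a merely nearby point and closes the gap with uniform continuity of the inverse of $p_{lag}$ on $\Leg_{\phi^t}$. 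Both versions work.
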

\begin{proof}
Fix $t\in[0,1]$; the endpoint statement is the case $t=1$.
We need to examine the case where our time-dependent Hamiltonian is 
$(H_A)_*(\phi^t)=\{H_A^\tau\}_{\tau\in[0,1]}$.
To reduce notation, write $\Psi^t_A:= \Psi^t_{(H_A)_*(\phi^t)}$ and $\tilde\Psi_A^t:=\tilde\Psi^t_{(H_A)_*(\phi^t)}$ in the remainder of this section.
Use a product Riemannian metric on $Y_0=S^1\times\overline M$; write $d_Y$, $d_M$, and $d_{Haus}$ for the induced distances and Hausdorff distance.
After increasing a constant if needed, there is $C_d>1$ such that
\begin{equation}
    \label{eq:metricComparisonY0}
    d_Y((q_1,z),(q_1',z'))\le C_d\left(|q_1-q_1'|+d_M(z,z')\right)
\end{equation}
for all $(q_1,z),(q_1',z')\in S^1\times \overline M$, where $|q_1-q_1'|$ denotes the distance between the $q_1$--coordinates in $\RR/\ZZ$.
When real lifts of the $q_1$--coordinates are chosen below, the absolute value of their real difference bounds this circle distance.
By \cref{lem:projectionConvergence}, the Hausdorff distance between the Lagrangian projections of these sets goes to zero, so it remains to control the $q_1$ coordinate.
Set
\[
\alpha_A(t):=d_{Haus}\bigl(\Psi_A^t(\Im(\phi^0)),\Im(\phi^t)\bigr),
\]
so $\alpha_A(t)\to 0$ as $A\to 1$.

Let
\[
K_A^\tau:=M\setminus \bigcup_f \Int(V_{A,f}(\phi^\tau)).
\]
The point of the thickening estimates is that the $q_1$ coordinate is controlled by the area of strips in $K_A^\tau$ and by the lengths of the radial retraction paths; both errors vanish as $A\to1$.

By \cref{prop:agreement}, the Hamiltonian flow carries $K_A^0$ to $K_A^\tau$, so $\Psi_A^\tau(\Im(\phi^0))\subset K_A^\tau$.
Let $\Ret_A^\tau\colon K_A^\tau\to \Im(\phi^\tau)$ be the facewise retraction from \cref{lem:controlledThickenings}, and let $\Ret_{A,s}^\tau$, $s\in[0,1]$, be the corresponding homotopy from the identity to $\Ret_A^\tau$.
Since the family $\phi^\tau$, $\tau\in[0,t]$, is compact, the retraction fibers shrink uniformly as $A\to 1$:
\[
\ell_A:=
\sup_{\tau\in[0,t]}\sup_{z\in K_A^\tau}
\operatorname{length}\bigl(s\mapsto \Ret_{A,s}^\tau(z)\bigr)
\longrightarrow 0 .
\]
In particular, $d_M(z,\Ret_A^\tau(z))\le \ell_A$ for every
$\tau\in[0,t]$ and $z\in K_A^\tau$.

We first prove a uniform one-sided estimate.  Fix $\tilde y_0\in \Leg_{\phi^0}$ and write
\[
\tilde y_A(\tau):=\tilde\Psi_A^\tau(\tilde y_0),
\qquad
y_A(\tau):=p_{lag}(\tilde y_A(\tau))=\Psi_A^\tau(p_{lag}(\tilde y_0)).
\]
Set
\[
x_A(\tau):=\Ret_A^\tau(y_A(\tau))\in \Im(\phi^\tau),
\]
and let $\tilde x_A(\tau)\in\Leg_{\phi^\tau}$ be the lifted path with
$\tilde x_A(0)=\tilde y_0$.
The calculation below is first made on intervals where $x_A(\tau)$ lies in a single smooth edge and then extended across vertex times by continuity of the endpoint values.

Consider the strip
\[
S_A\colon [0,t]\times[0,1]\to M,\qquad
S_A(\tau,s):=\Ret_{A,s}^\tau(y_A(\tau)).
\]
Let $\alpha:=r\,dq_2$, so that $\omega=d\alpha$.

Write
\[
\gamma_y(\tau):=y_A(\tau),\qquad
\gamma_x(\tau):=x_A(\tau),\qquad
\beta_\sigma(s):=S_A(\sigma,s)\quad(\sigma=0,t).
\]
Choose real lifts of the $q_1$--coordinates starting from the common value at
$\tau=0$.
\begin{align}
q_1(\tilde y_A(t))-q_1(\tilde x_A(t))
&=
-\int_0^t
\bigl(H_A^\tau(y_A(\tau))-\underline H^\tau(x_A(\tau))\bigr)\,d\tau \notag\\
&\quad+
\int_{\gamma_x}\alpha-\int_{\gamma_y}\alpha . \notag
\intertext{Here the $H_A$--term comes from the contact evolution formula
\eqref{eq:q1EvolutionContact}, and the $\underline H$--term comes from
\cref{lem:variationLegendrianLift}. The oriented boundary of the strip is}
\partial S_A
&=\gamma_y+\beta_t-\gamma_x-\beta_0 . \notag
\intertext{Since $\omega=d\alpha$, Stokes' theorem gives}
\int_{\gamma_x}\alpha-\int_{\gamma_y}\alpha
&=
-\int_{[0,t]\times[0,1]}S_A^*\omega
+
\int_{\beta_t}\alpha-\int_{\beta_0}\alpha . \notag
\intertext{Substituting this into the first line and taking absolute values gives}
\left|q_1(\tilde y_A(t))-q_1(\tilde x_A(t))\right|
&\le
\int_0^t
\left|H_A^\tau(y_A(\tau))-\underline H^\tau(x_A(\tau))\right|\,d\tau \notag\\
&\quad+
\left|\int_{[0,t]\times[0,1]}S_A^*\omega\right|
+
\left|\int_{\beta_t}\alpha\right|
+
\left|\int_{\beta_0}\alpha\right|.
\label{eq:q1RadialComparison}
\end{align}
The first term is controlled by the approximation of $H_A^\tau$ to the graph flux primitive and the uniform derivative bound. Indeed, after enlarging the constant in \cref{prop:boundedDerivative}, we have on $K_A^\tau$
\[
\left|H_A^\tau(y_A(\tau))-\underline H^\tau(x_A(\tau))\right|
\le
\int_0^1\left|dH_A^\tau(\partial_s\Ret_{A,s}^\tau(y_A(\tau)))\right|\,ds
+
\left|H_A^\tau(x_A(\tau))-\underline H^\tau(x_A(\tau))\right|
\le B\ell_A+\delta_A .
\]

It remains to bound the strip-area term and the two endpoint retraction integrals in \eqref{eq:q1RadialComparison}.
The vector fields $X_{H_A^\tau}$ are uniformly bounded on the compact sets $K_A^\tau$, again by \cref{prop:boundedDerivative} and compactness of $\overline M$.
Thus $|\dot y_A|\le R$ for some $R$ independent of $A$, and the strip estimate in \cref{lem:controlledThickenings} gives a constant $L_R$ such that $|\partial_\tau S_A|\le L_R$, while $\int_0^1|\partial_sS_A(\tau,s)|\,ds\le \ell_A$.
If $C_\omega$ is the operator norm of $\omega$ with respect to the chosen product metric, then
\[
\left|\int_{[0,t]\times[0,1]}S_A^*\omega\right|
\le C_\omega L_R t\,\ell_A .
\]
The initial retraction path is constant, because $y_A(0)\in\Im(\phi^0)$ and the retraction fixes $\Im(\phi^0)$.
If $C_\alpha:=\sup_{\overline M}|\alpha|$, the final retraction path has length at most $\ell_A$, and therefore
\[
\left|\int_{\beta_t}\alpha\right|
+
\left|\int_{\beta_0}\alpha\right|
\le C_\alpha \ell_A .
\]
Combining these estimates gives
\[
\left|q_1(\tilde y_A(t))-q_1(\tilde x_A(t))\right|
\le t\delta_A+(Bt+C_\omega L_Rt+C_\alpha)\ell_A\longrightarrow 0,
\]
uniformly in the initial point $\tilde y_0$.
Since $d_M(y_A(t),x_A(t))\le \ell_A$, \eqref{eq:metricComparisonY0} implies
\[
\sup_{\tilde y\in \tilde\Psi_A^t(\Leg_{\phi^0})}
d_Y\bigl(\tilde y,\Leg_{\phi^t}\bigr)\longrightarrow 0 .
\]

For the reverse Hausdorff estimate, fix $\tilde x\in\Leg_{\phi^t}$ and put $x=p_{lag}(\tilde x)$.
By the convergence of Lagrangian projections, choose $y_A\in \Psi_A^t(\Im(\phi^0))$ with
$d_M(y_A,x)\le \alpha_A(t)$, and let $\tilde y_A\in\tilde\Psi_A^t(\Leg_{\phi^0})$ be the point over $y_A$.
This point is unique because $p_{lag}$ restricts to a homeomorphism on $\tilde\Psi_A^t(\Leg_{\phi^0})$.
Set $\bar x_A:=\Ret_A^t(y_A)$ and let $\tilde{\bar x}_A\in\Leg_{\phi^t}$ be its lift.
The one-sided estimate just proved gives
$d_Y(\tilde y_A,\tilde{\bar x}_A)\to 0$ uniformly, while
\[
d_M(\bar x_A,x)\le \ell_A+\alpha_A(t)\longrightarrow 0 .
\]
Since $p_{lag}\colon \Leg_{\phi^t}\to \Im(\phi^t)$ is a homeomorphism from a compact space to its image, its inverse is uniformly continuous. Therefore
$d_Y(\tilde{\bar x}_A,\tilde x)\to 0$, uniformly in $\tilde x$.
Thus
\[
\sup_{\tilde x\in \Leg_{\phi^t}}
d_Y\bigl(\tilde x,\tilde\Psi_A^t(\Leg_{\phi^0})\bigr)\longrightarrow 0 .
\]
Together with the one-sided estimate, this proves the claim.
\end{proof}

\noindent \textbf{Proof of \cref{eq:hausdorffDistance}:}
Apply \cref{lem:convergenceContactLifts} with $t=1$.

\subsection{Constructing the correspondence \texorpdfstring{$\Gamma^\rho$}{Gamma rho}}
\label{subsubsec:buildingGamma}
Let $\tilde \Psi^t$ be a contact isotopy starting at the identity.
Take a collar neighborhood of the boundary of $\XA$ so that $\XA \cong \XA_{\mathrm{int}} \cup_{\partial \XA} (\partial \XA \times [0,1))$.
We write the collar coordinate as $s$, normalized in the outward direction: $s=0$ is the finite gluing locus with $\XA_{\mathrm{int}}$, while $s\to 1$ approaches the contact boundary, equivalently the positive end after completing. Thus the interpolation is the identity where it is glued to the interior, and its limiting contact map at infinity is $\tilde\Psi^1$.
Let $h_t\colon Y\to\RR$ be the contact Hamiltonian of the isotopy, so if
$X_t=\partial_t\tilde\Psi^t\circ(\tilde\Psi^t)^{-1}$ and $\lambda_Y$ denotes the contact form on $Y$, then
$h_t=\lambda_Y(X_t)$.
On the completed collar, write the Liouville form as $r\lambda_Y$.
Choose a smooth cutoff $\chi(r)$ which is $0$ near the finite gluing locus and $1$ on the cylindrical end, and define the time-dependent Hamiltonian
\[
K_t(r,y)=-\chi(r)r h_t(y),
\]
extended by zero over $\XA_{\mathrm{int}}$.
With the convention $\iota_{X_{K_t}}\omega=dK_t$, the time-one flow $\Phi$ of $K_t$ is the identity over $\XA_{\mathrm{int}}$ and, where $\chi=1$, is the standard symplectization lift of $\tilde\Psi^1$.
In particular $\Phi$ is cylindrical at infinity with boundary contactomorphism $\tilde\Psi^1$.
Moreover, Hamiltonian isotopy gives
\[
\Phi^*\lambda_\XA-\lambda_\XA
=
d\!\left(\int_0^1(\Phi_t)^*\bigl(K_t+\lambda_\XA(X_{K_t})\bigr)\,dt \right),
\]
and this primitive is constant on the cylindrical end.
Thus, suppressing the completion notation, we obtain an exact cylindrical symplectomorphism $\Phi\colon \XA \to \XA$.
Consider the graph of $\Phi$, which is a Lagrangian submanifold $\Gamma(\Phi) \subset \XA^- \times \XA$ parameterized by
\[
x \mapsto (x, \Phi(x)).
\]
Since $\Phi$ is exact, choose $f_\Phi\colon \XA\to \RR$ with
\[
df_\Phi=\Phi^*\lambda_\XA-\lambda_\XA.
\]
Pulling back the Liouville form $\lambda_{\XA^- \times \XA}=-\pi_1^*\lambda_\XA+\pi_2^*\lambda_\XA$ along the parameterization of $\Gamma(\Phi)$ gives
\[
\lambda_{\XA^- \times \XA}|_{\Gamma(\Phi)}
=-\lambda_\XA+\Phi^*\lambda_\XA
=df_\Phi.
\]
Thus $f_\Phi$ is an exact primitive for $\Gamma(\Phi)$.
Therefore, $\Gamma(\Phi)$ is exact and cylindrical. The chosen polarization gives the diagonal its canonical grading and orientation data, and the graph isotopy between the diagonal and $\Gamma(\Phi)$ induced by the chosen isotopy transports this brane structure to canonical grading and orientation data on $\Gamma(\Phi)$. With this primitive and brane structure, $\Gamma(\Phi)$ supports an object of $\mathcal W(\XA^- \times \XA)$.
For $\rho > 0$, let $w^\rho:\XA\to \XA$ denote the time-$\rho$ flow of the wrapping Hamiltonian, and set
\begin{equation}
\label{eq:gammarho}
\Gamma^\rho(\Phi):=\Gamma(w^\rho\circ \Phi).
\end{equation}

We will also write $\Gamma^\rho(\tilde \Psi^t):=\Gamma^\rho(\Phi)$ for the correspondence determined by a contact isotopy $\tilde \Psi^t$ with $\tilde \Psi^0=\mathrm{id}$.

\begin{lem}[Smoothness of the canonical data]
\label{lem:continuousAuxChoices}
After fixing the smoothing convention used in the proof of \cref{lem:controlledThickenings}, cutoff functions, mollifiers, and vertex smoothing convention used in the definition of $H_A$, the thickenings, retractions, Hamiltonians, contact lifts, exact symplectomorphisms, and wrapped graph correspondences above vary smoothly over compact families in $\mathcal N^I$ and compact parameter ranges in $(A,\rho)$.
\end{lem}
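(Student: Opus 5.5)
The plan is to follow the chain of constructions in the order they were built and check that each stage is smooth in all parameters jointly. The key point is that every stage is either an explicit formula in the output of the previous stage, or the flow of a smooth time-dependent vector field. Either kind of stage preserves smooth dependence on parameters.

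\emph{Localization.} Fix a compact family $K\subset\mathcal N^I$ and a compact range of $(A,\rho)$ with $A$ close enough to $1$ that \cref{lem:controlledThickenings} and \cref{prop:controlledHamiltonianExtension} apply. By the quotient-topology convention of \cref{subsec:graph}, I will cover $K$ by finitely many charts on which the paths are represented on a fixed subdivided source graph. I will check smoothness chart by chart. Then I will show the outputs agree on overlaps, because after a common subdivision they are given by the same formulas: the graph flux primitive is subdivision invariant by \cref{lem:graphFluxPrimitiveWellDefined}.

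\emph{Thickenings and Hamiltonians.} For the thickenings, retractions and Hamiltonians I will use the construction in \cref{lem:controlledThickenings}. After the fixed mollification in $t$ and the implicit-function-theorem area correction, it produces $V_A^t$ and the radial retractions depending smoothly on $(A,t)$ and on the chart parameters. The implicit function theorem applies because the $r$-derivative of the area of a conformal disk is positive. The Hamiltonians $H_A^t$ of \cref{prop:controlledHamiltonianExtension} are then assembled from four ingredients:
\begin{itemize}
\item the boundary flux primitive, which is the integral of a smooth closed $1$-form, with additive constant fixed by a smooth angular average;
\item fixed cutoff interpolations;
\item the fixed vertex mollifier;
\item $\underline H^t$, which by \cref{lem:graphFluxPrimitiveWellDefined} is an edgewise integral of $\omega(\partial_t\phi,\partial_s\phi)$.
\end{itemize}
Each ingredient is smooth in the parameters, so $H_A^t$ is too.

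\emph{Contact lifts, symplectomorphisms and correspondences.} The contact lift is the flow of the compactly supported vector field $X_{H^t}-(H^t+\alpha(X_{H^t}))\partial_{q_1}$. By smooth dependence of ODE solutions on parameters, $\tilde\Psi_A^t$ is smooth. The same argument applies to $K_t=-\chi(r)rh_t$, which is built from $h_t=\lambda_Y(X_t)$ and the fixed cutoff $\chi$; its time-one flow $\Phi$ is smooth. The wrapping flow $w^\rho$ is smooth in $\rho$, so $\Gamma^\rho(\Phi)=\Gamma(w^\rho\circ\Phi)$ is smooth. The same holds for the primitive $f_\Phi$, which is given by the displayed integral formula, and for the transported brane data.

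\emph{Main obstacle.} The hardest step is the input from the thickening stage. The Riemann maps $u_{f,t}$ depend on the boundary only continuously, through Carath\'eodory--Rad\'o. Smoothness is therefore only gained after the fixed mollification, and I must make sure that mollification is a fixed, parameter-independent convention. Vertex times, where the combinatorial type changes, also need care. I will handle them by working in source charts where the type is fixed after subdivision. I will then note that the mollifier scale is smaller than the collar width, so the smoothed data near a vertex is defined by one formula on both sides of the transition.
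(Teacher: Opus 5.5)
Your proposal is correct and follows essentially the same route as the paper's proof. Both work chart by chart on a fixed subdivided source, get smoothness of the thickenings and retractions from the fixed smoothing convention together with the implicit-function-theorem area correction, and assemble $H_A^t$ from fixed cutoffs, the normalized boundary primitive and the vertex mollifier; both then treat the strict contact lift, collar extension and time-$\rho$ wrapping as smooth operations (you make this explicit via smooth dependence of flows on parameters). The one point the paper states that you leave implicit is that the rotation ambiguity of the normalized Riemann maps affects none of the concentric domains, the radial retractions, or the conformal angular averages used to fix the additive constant, so no rotation needs to be chosen.
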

\begin{proof}
On each fixed source chart, the thickenings and retractions are the smoothed conformal level thickenings from the proof of \cref{lem:controlledThickenings}.
The rotation ambiguities in the disk and annular maps do not change the concentric domains, the radial retractions, or the conformal angular averages used to normalize $H_{A,\partial}^t$.
The exact-area corrections are smooth by the implicit function theorem, and the formulas in \cref{lem:controlledThickenings,prop:controlledHamiltonianExtension} use fixed cutoff functions in these radial coordinates.
This smoothness is asserted on compact parameter ranges with $A<1$; as $A\to1$, we only use the uniform vanishing of the corresponding retraction-fiber diameters and lengths.
The fixed vertex smoothing convention is local on the fixed source chart and depends smoothly on the input function.
The strict contact lift, the collar extension over $\XA$, and the time-$\rho$ wrapping are smooth operations on these data.
\end{proof}

\begin{prop}[Well-definedness of $\Corr_{A,\rho}$]
\label{prop:corrChoiceIndependence}
Using the data fixed above, the recipe \eqref{eq:composition} defines a smooth map on triples $(A,\rho,\phi^t)$ with $\phi^t$ a smooth path in $\operatorname{DegEmb}(G, M)$:
\[
(A,\rho,\phi^t)\longmapsto \Corr_{A,\rho}(\phi^t)\in \Lag(\XA^-\times \XA).
\]
\end{prop}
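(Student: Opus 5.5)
The plan is to prove the proposition by showing that each stage of \eqref{eq:composition} is a smooth map of the parameters once the auxiliary data have been fixed. The composite is then smooth because a composite of smooth maps is smooth.

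Fix the combinatorial type $G$ and a smooth path $\phi^t$ in $\operatorname{DegEmb}(G,M)$. Then every ingredient can be computed on a single fixed source chart, so no descent through the quotient topology of \cref{subsec:graph} is needed. The first step is the graph flux primitive. By \cref{lem:graphFluxPrimitiveWellDefined}, $\underline H^t$ is independent of the source path and of subdivision. On a fixed chart it is given by the integral formula \eqref{eq:normalization}, which depends smoothly on $\phi^t$ and $t$ because we may differentiate under the integral sign. The second step is the thickenings $V_A^t$ and the retractions. These are the smoothed conformal level sets of \cref{lem:controlledThickenings}. \Cref{lem:continuousAuxChoices} already records that they vary smoothly over compact families with $A<1$, using the fixed mollifier convention and the implicit function theorem for the exact-area correction. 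The third step is the Hamiltonian $H_A^t$ of \cref{prop:controlledHamiltonianExtension}. It is built from three pieces: the unique normalized primitive of the boundary flux, which is fixed by vanishing angular average; fixed cutoff interpolations along the retraction fibers; and a fixed vertex mollification. Each of these is linear, or smooth, in its input data, so $(A,\phi^t)\mapsto H_A^t$ is smooth.

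The later stages are standard. The strict contact lift $X_{H^t}-(H^t+\alpha(X_{H^t}))\partial_{q_1}$ is a smooth compactly supported time-dependent vector field. By smooth dependence of ODE solutions on parameters, its flow $\tilde\Psi^t$ depends smoothly on $(A,\phi^t)$. The contact Hamiltonian $h_t=\lambda_Y(X_t)$ and the cut-off Hamiltonian $K_t=-\chi(r)rh_t$ depend smoothly on these data, and therefore so does the time-one map $\Phi$. Composing with the time-$\rho$ wrapping flow $w^\rho$ is smooth in $\rho$. Taking graphs, $\Gamma(w^\rho\circ\Phi)$, gives a smooth family of embedded Lagrangians. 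The exact primitive $f_\Phi$ is the explicit integral $\int_0^1\Phi_t^*(K_t+\lambda_\XA(X_{K_t}))\,dt$, so it is also smooth and needs no choice of constant. The brane data are transported along the canonical graph isotopy from the diagonal, so they vary continuously in families and are therefore locally constant. One must also check that the family stays cylindrical, uniformly in the parameters. This holds because $\chi\equiv1$ outside a fixed compact set, where $\Phi$ is the symplectization lift of $\tilde\Psi^1$, and $w^\rho$ is cylindrical by construction.

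The main obstacle is that the paper describes the construction as depending on choices, so I must show it genuinely depends only on $(A,\rho,\phi^t)$. The ambiguities are the rotation of the Riemann maps, the point-selection map $\Cent^+_f$, and the additive constants of the primitives. My approach to each is as follows. The rotation ambiguity leaves concentric domains, radial retractions and angular averages unchanged, as noted in \cref{lem:continuousAuxChoices}. $\Cent^+_f$ is a fixed continuous function of the face, and only its continuity enters before the smoothing in $t$. The constants are fixed by the averaging normalization and by the marked-point normalization in \eqref{eq:normalization}. A further subtlety is that Carath\'eodory continuity gives only continuity in $t$, with smoothness restored by the mollification at an $A$-dependent scale. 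I would therefore state smoothness on compact ranges of $A<1$, exactly as in \cref{lem:continuousAuxChoices}, and not claim it uniformly as $A\to1$.
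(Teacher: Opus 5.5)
Your proposal is correct and follows the paper's route. The paper's proof is a one-line reduction to \cref{lem:continuousAuxChoices} together with the explicit formulas for the strict contact lift, the cylindrical extension $K_t=-\chi(r)\,r\,h_t$, and the wrapped graph $\Gamma(w^\rho\circ\Phi)$, which is exactly your stage-by-stage decomposition; your extra remarks (smooth ODE dependence, the explicit primitive $f_\Phi$, local constancy of the brane data, and restricting to compact ranges of $A<1$) simply spell out what that lemma and those formulas already use.
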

\begin{proof}
This follows from \cref{lem:continuousAuxChoices} and from the explicit formulas for the strict contact lift, the exact cylindrical extension, and the wrapped graph construction.
\end{proof}

\begin{lem}[Hausdorff openness of Reeb disjointness]
\label{lem:fixedRhoHausdorff}
Fix an auxiliary Riemannian metric on $Y$, and let $d_{Haus}$ denote the induced Hausdorff distance on compact subsets. Let $\Lambda_-,\Lambda_+\subset Y$ be compact and let $J\subset(0,\infty)$ be compact. If
\[
\Phi^\tau(\Lambda_-)\cap \Lambda_+=\emptyset
\qquad\text{for every $\tau\in J$,}
\]
then there exists $\delta=\delta(\Lambda_-,\Lambda_+;J)>0$ such that every compact $\Lambda_-'\subset Y$ with
\[
d_{Haus}(\Lambda_-',\Lambda_-)<\delta
\]
still satisfies
\[
\Phi^\tau(\Lambda_-')\cap \Lambda_+=\emptyset
\qquad\text{for every $\tau\in J$.}
\]
For $J=\{\rho\}$, we write $\delta(\Lambda_-,\Lambda_+,\rho)$.
\end{lem}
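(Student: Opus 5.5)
This is a compactness argument. Since $Y$ is compact and the Reeb flow $\Phi$ is smooth, the map $J\times Y\to Y$, $(\tau,y)\mapsto\Phi^\tau(y)$, is continuous on a compact space, and hence uniformly continuous. The hypothesis says that the compact set
\[
K:=\{\Phi^\tau(y)\st \tau\in J,\ y\in\Lambda_-\}
\]
is disjoint from the compact set $\Lambda_+$. So I would first set $\eta:=d(K,\Lambda_+)>0$, which is positive because $K$ and $\Lambda_+$ are disjoint compact sets.

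Next I use uniform continuity. It gives $\delta>0$ such that $d(y,y')<\delta$ implies $d(\Phi^\tau(y),\Phi^\tau(y'))<\eta$ for all $\tau\in J$. Concretely, I apply uniform continuity of $(\tau,y)\mapsto\Phi^\tau(y)$ on the compact product, with the product metric, restricted to pairs sharing the same $\tau$.

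Now take a compact $\Lambda_-'$ with $d_{Haus}(\Lambda_-',\Lambda_-)<\delta$. Each $y'\in\Lambda_-'$ then has some $y\in\Lambda_-$ with $d(y,y')<\delta$. If $\Phi^\tau(y')\in\Lambda_+$ for some $\tau\in J$, then
\[
d(\Phi^\tau(y),\Lambda_+)\le d(\Phi^\tau(y),\Phi^\tau(y'))<\eta,
\]
while $\Phi^\tau(y)\in K$. This contradicts the definition of $\eta$, which proves the claim. Only one direction of the Hausdorff bound is used: every point of $\Lambda_-'$ lies near $\Lambda_-$.

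The only point needing care is when $Y$ (or the relevant part) is not compact. Even then it suffices to restrict to a compact neighborhood $N$ of $\Lambda_-$. For $\delta$ small, $\Lambda_-'$ lies in $N$, and $\Phi$ is uniformly continuous on $J\times N$. The rest of the argument is unchanged.
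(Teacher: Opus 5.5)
Your proposal is correct and follows essentially the same route as the paper: you take the positive distance between the compact set $\bigcup_{\tau\in J}\Phi^\tau(\Lambda_-)$ and $\Lambda_+$, use uniform continuity of the flow on $J\times Y$ to get $\delta$, and use only the one-sided Hausdorff bound. The paper works with $m/2$ and a direct triangle inequality where you use $\eta$ and a contradiction, and your remark about the non-compact case is unnecessary here since $Y=S^*T^2$ is compact.
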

\begin{proof}
Since $\bigcup_{\tau\in J}\Phi^\tau(\Lambda_-)$ and $\Lambda_+$ are compact and disjoint, their distance
\[
m:=\mathrm{dist}\left(\bigcup_{\tau\in J}\Phi^\tau(\Lambda_-),\Lambda_+\right)
\]
is strictly positive. Because $J\times Y$ is compact, the Reeb flow is uniformly continuous on this set, so there exists $\delta>0$ such that
\[
d(y,y')<\delta \implies d(\Phi^\tau(y),\Phi^\tau(y'))<m/2
\qquad\text{for every $\tau\in J$.}
\]
If $y\in \Lambda_-'$, choose $x\in \Lambda_-$ with $d(x,y)<\delta$. Then for every $\tau\in J$,
\[
\mathrm{dist}(\Phi^\tau(y),\Lambda_+)
\ge \mathrm{dist}(\Phi^\tau(x),\Lambda_+)-d(\Phi^\tau(x),\Phi^\tau(y))
> m-m/2>0.
\]
Hence $\Phi^\tau(y)\notin \Lambda_+$ for all $y\in \Lambda_-'$ and all $\tau\in J$, proving the claim.
\end{proof}

\begin{lem}[Wrapped graph criterion for the product stop]
\label{lem:wrappedGraphCriterion}
Let $\tilde\Psi^t$ be a contact isotopy of $Y$ starting at the identity, and let $F \colon \XA\to \XA$ be the associated exact symplectomorphism constructed above.
Then, for compact $\Lambda_-,\Lambda_+\subset Y$, the wrapped graph $\Gamma^\rho(F)$ is disjoint at infinity from the product stop $\Lambda_-\times \Lambda_+$ if and only if
\[
\Phi^\rho(\tilde\Psi^1(\Lambda_-))\cap \Lambda_+=\emptyset.
\]
In particular, if the condition on the right-hand side holds, then $\Gamma^\rho(F)$ supports an object of
\[
\mathcal W(\XA^-\times \XA,\Lambda_-\times \Lambda_+).
\]
\end{lem}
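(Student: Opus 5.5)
The plan is to identify the ideal boundary of the graph $\Gamma^\rho(F)=\Gamma(w^\rho\circ F)$ inside the ideal boundary of the product $\XA^-\times\XA$, and then read off the disjointness condition directly. First I would record what $w^\rho\circ F$ does at infinity. By construction $F$ is cylindrical at infinity with boundary contactomorphism $\tilde\Psi^1$. The wrapping Hamiltonian is linear in the radial coordinate at infinity, so $w^\rho$ is cylindrical with boundary map the time-$\rho$ Reeb flow $\Phi^\rho$. Hence, on the cylindrical end $\{(r,y)\}$, for $r$ large
\[
w^\rho\circ F(r,y)=(e^{g(y)}r,\ \Phi^\rho\tilde\Psi^1(y))
\]
for some smooth function $g$, coming from the conformal factor of $\tilde\Psi^1$. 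Since $\tilde\Psi^t$ preserves $\lambda$ strictly, we in fact have $g=0$, although this is not needed.

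Next I would describe the ideal boundary of $\XA^-\times\XA$ in the convention of \cite{ganatra2024sectorial}. Near the corner stratum it is modeled on the join $Y*Y$. A point of the boundary either lies over the $\XA_{\mathrm{int}}\times Y$ or $Y\times\XA_{\mathrm{int}}$ pieces, or corresponds to $(y_1,y_2,[r_1:r_2])$ with $r_1,r_2\to\infty$. The product stop $\Lambda_-\times\Lambda_+$ sits in the corner stratum, which is thickened into the join direction in the usual way, i.e.\ $\Lambda_-\times\Lambda_+\times(0,\infty)$ in the ratio coordinate. Because $w^\rho\circ F$ is a diffeomorphism that sends the end to the end and bounded sets to bounded sets, the graph only escapes to infinity along the diagonal-type region where both factors go to infinity at comparable rates. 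Its ideal boundary is therefore $\{(y,\Phi^\rho\tilde\Psi^1(y),[1:e^{g(y)}])\}$, and it never meets the strata $\XA_{\mathrm{int}}\times Y$ or $Y\times\XA_{\mathrm{int}}$.

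With that description the equivalence is immediate. The boundary of the graph meets $\Lambda_-\times\Lambda_+\times(0,\infty)$ if and only if there is a $y\in\Lambda_-$ with $\Phi^\rho\tilde\Psi^1(y)\in\Lambda_+$, which is exactly the stated condition. For the ``in particular'' statement, I would combine three facts. The first is the exactness and brane data for $\Gamma(\Phi)$ already constructed above, which transfer to $\Gamma(w^\rho\circ\Phi)$ because $w^\rho$ is a Hamiltonian flow. The second is properness and cylindricity at infinity, which come from the displayed formula. The third is the disjointness just obtained. Together, and with the compactness of $\Lambda_\pm$, these give an object of the stopped category, via the standard criterion that an exact cylindrical Lagrangian disjoint at infinity from the stop defines an object.

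I expect the main obstacle to be the corner bookkeeping. The product $\XA^-\times\XA$ is not literally cylindrical near the corner, so one has to check that the graph is cylindrical for a suitable Liouville structure on the product, for instance after the standard smoothing of $r_1+r_2$ or $\max(r_1,r_2)$. One also has to check that the stop is taken to be the join-thickened $\Lambda_-\times\Lambda_+$. I would handle this by first deforming the product Liouville form, through Liouville forms that are equal away from the corner, to one for which $\Gamma$ is conical. I would then invoke the invariance of stopped categories under such deformations, and note that the ideal-boundary intersection is unchanged because it depends only on the pair $(y,\Phi^\rho\tilde\Psi^1(y))$.
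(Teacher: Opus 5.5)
Your proposal is correct and follows the same route as the paper. In both, $\Gamma^\rho(F)$ is cylindrical at infinity over the Legendrian graph $\{(y,\Phi^\rho\tilde\Psi^1(y))\}$, which lies in the $Y\times Y$ corner. Because the graph never reaches the strata where only one factor goes to infinity, the criterion follows directly. Your remarks on the conformal factor $g$ and on smoothing the product Liouville structure near the corner only spell out details that the paper asserts as ``exact and cylindrical by construction.''
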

\begin{proof}
Near infinity, $F$ is the symplectization of the contactomorphism $\tilde\Psi^1$, so $\Gamma(F)$ is cylindrical over the Legendrian graph
\[
\{(y,\tilde\Psi^1(y))\mid y\in Y\}\subset Y\times Y.
\]
The wrapping $w^\rho$ acts on this end by the time-$\rho$ Reeb flow in the second factor.
Moreover, because $F$ preserves the cylindrical end, points of $\Gamma(F)$ reach infinity in one factor if and only if they do so in the other, so the only possible intersection with the product stop occurs in the $Y\times Y$ corner.
The equivalence therefore follows immediately, and since $\Gamma^\rho(F)$ is exact and cylindrical by construction, this is exactly the condition needed for it to define an object of the stopped category.
\end{proof}

\noindent\textbf{Proof of \cref{item:compatibility}:}
Set
\[
\Lambda_0:=\Leg_{\phi^0},
\qquad
\Lambda_1:=\Leg_{\phi^1},
\]
and write $\tilde\Psi_A^t:=(C\circ (H_A)_*)(\phi^t)$ for the contact isotopy produced above. Let $F_A \colon \XA\to \XA$ be the associated exact symplectomorphism, so that by definition
\[
\Corr_{A,\rho}(\phi^t)=\Gamma^\rho(F_A).
\]

Assume $\phi^t\in U_{A,\rho}$. Since $\Lambda_1\in \mathcal L^\rho$, we have
\[
\Phi^\rho(\Lambda_1)\cap \Lambda_1=\emptyset.
\]
Let $\delta(\Lambda_1,\rho)$ be the constant from \cref{lem:fixedRhoHausdorff} applied to the pair $(\Lambda_1,\Lambda_1)$.
By the definition of $U_{A,\rho}$, we have
\[
d_{Haus}(\tilde\Psi_A^1(\Lambda_0),\Lambda_1)<\delta(\Lambda_1,\rho).
\]
Therefore \cref{lem:fixedRhoHausdorff} gives
\[
\Phi^\rho(\tilde\Psi_A^1(\Lambda_0))\cap \Lambda_1=\emptyset.
\]
Applying \cref{lem:wrappedGraphCriterion} with $\Lambda_-=\Lambda_0$ and $\Lambda_+=\Lambda_1$, we conclude that
\[
\Corr_{A,\rho}(\phi^t)\in \Ob\bigl(\mathcal W(\XA^-\times \XA,\Lambda_0\times \Lambda_1)\bigr)
\]
which proves \cref{item:compatibility}.

\noindent\textbf{Proof of \cref{item:preferredContinuation}:}
Let $\phi^t\in \bigcap_{(A,\rho)\in I_A\times I_\rho}U_{A,\rho}$ and $(A_s,\rho_s):I\to I_A\times I_\rho$.
By smooth dependence of the controlled thickenings, $H_A$, $C$, and $\Gamma^\rho$, the family
\[
s\longmapsto \Corr_{A_s,\rho_s}(\phi^t)
\]
is a smooth exact isotopy of cylindrical Lagrangians.
Because $\phi^t$ lies in every $U_{A_s,\rho_s}$, the proof of \cref{item:compatibility} shows that each member of this family is disjoint from the product stop.
Any exact isotopy of exact cylindrical Lagrangians is Hamiltonian, so this gives the required Hamiltonian isotopy.

\noindent\textbf{Proof of \cref{item:hamiltonianIsotopy}:}
Let $\phi^{t,s}\in U_{A,\rho}$ be a smooth homotopy of smooth paths relative endpoints.
Smooth dependence of the controlled thickenings, $H_A$, $C$, and $\Gamma^\rho$ gives a smooth exact cylindrical Lagrangian isotopy
\[
s\longmapsto \Corr_{A,\rho}(\phi^{t,s}).
\]
The endpoints of the graph path are fixed in $s$, so the product stop $\Leg_{\phi^0}\times\Leg_{\phi^1}$ is fixed.
Since every $\phi^{t,s}$ lies in $U_{A,\rho}$, the proof of \cref{item:compatibility} shows that every member of the isotopy is disjoint from this product stop.
Any exact isotopy of exact cylindrical Lagrangians is Hamiltonian, giving the claimed Hamiltonian isotopy in the stopped category.

\noindent\textbf{Proof of \cref{item:composition}:}
Let $\tilde \Psi^t_0,\tilde \Psi^t_1$ be the contact isotopies associated to $\phi_0^t,\phi_1^t$, and let $\Phi_0,\Phi_1 \colon \XA\to \XA$ be the corresponding Liouville-domain maps.
These isotopies start at the identity, but their endpoints need not.
Since our graphs are parameterized by
\[
x\longmapsto (x,\Phi(x)),
\]
geometric composition in the middle factor satisfies
\[
\Gamma(F_0)\circ \Gamma(F_1)=\Gamma(F_1\circ F_0).
\]
Thus the order of correspondences agrees with the order of path concatenation, but reverses the written order of the underlying endomorphisms of $\XA$.
\begin{align*}
    \Gamma^\rho(\tilde \Psi^t_0) \circ \Gamma^\rho(\tilde \Psi^t_1) &= \Gamma(w^\rho \circ \Phi_1 \circ w^\rho \circ \Phi_0).
\end{align*}
For $0\le s\le 1$, define
\[
K_s:=\Gamma(w^\rho \circ \Phi_1 \circ w^{s\rho} \circ \Phi_0).
\]
Then
\[
K_1=\Gamma^\rho(\tilde \Psi^t_0) \circ \Gamma^\rho(\tilde \Psi^t_1),
\qquad
K_0=\Gamma(w^\rho \circ \Phi_1 \circ \Phi_0).
\]
Since $s\mapsto w^{s\rho}$ is a Hamiltonian isotopy of $\XA$, the family $K_s$ is an exact isotopy of exact graphs in $\XA^-\times \XA$, hence a Hamiltonian isotopy.

We take $\phi_0^t\cdot\phi_1^t$ to be the usual smooth temporal concatenation obtained from a fixed reparameterization of the two half-intervals.
The conformal thickening at time $t$ depends only on the graph at that time, while the Hamiltonian extension depends linearly on the first time derivative of the graph path.
Under the fixed reparameterization, the Hamiltonian is therefore rescaled by the derivative of the reparameterization, so the exact symplectic isotopy for the concatenated path is the temporal concatenation of the two exact symplectic isotopies.
Its time-one map is $\Phi_1\circ\Phi_0$.
Then
\[
K_0=\Corr_{A,\rho}(\phi_0^t\cdot \phi_1^t).
\]
This proves the unstopped composition statement.

For the final stopped statement, let
\[
\Lambda_0:=\Leg_{\phi_0^0},
\qquad
\Lambda_1:=\Leg_{\phi_0^1}=\Leg_{\phi_1^0},
\qquad
\Lambda_2:=\Leg_{\phi_1^1}.
\]
Choose $\rho>0$ so small that $\Lambda_2\in\mathcal L^{2\rho}$.
By \cref{item:exhaust}, after taking $A$ sufficiently close to $1$ if necessary, the concatenated path $\phi_0^t\cdot\phi_1^t$ lies in $U_{A,\rho}$.

It remains to consider the isotopy obtained by turning off the middle wrapping. The corresponding boundary contactomorphisms before the final wrap form the compact family
\[
\Xi_{A,s}:=\tPsi_{1,A}^1\circ \Phi^{s\rho}\circ \tPsi_{0,A}^1,
\qquad s\in[0,1],
\]
so that the full wrapped graph criterion applies to $\Phi^\rho\circ \Xi_{A,s}$.
Since the contactomorphisms $\tPsi_{i,A}^1$ are strict, they commute with the Reeb flow.
Thus
\[
\Phi^\rho\circ\Xi_{A,s}(\Lambda_0)
=
\Phi^{(1+s)\rho}\bigl((\tPsi_{1,A}^1\circ\tPsi_{0,A}^1)(\Lambda_0)\bigr).
\]
As $A\to1$, the set $\tPsi_{0,A}^1(\Lambda_0)$ converges to $\Lambda_1$ and $\tPsi_{1,A}^1(\Lambda_1)$ converges to $\Lambda_2$ by \cref{eq:hausdorffDistance}.
The maps $\tPsi_{1,A}^1$ are uniformly continuous on the compact family under consideration, so the image of the first convergence under $\tPsi_{1,A}^1$ remains convergent after applying the second estimate.
Therefore
\[
d_{Haus}\bigl((\tPsi_{1,A}^1\circ\tPsi_{0,A}^1)(\Lambda_0),\Lambda_2\bigr)
<
\delta(\Lambda_2,\Lambda_2;[\rho,2\rho])
\]
for all $A$ sufficiently close to $1$.
By \cref{lem:fixedRhoHausdorff}, this implies
\[
\Phi^{(1+s)\rho}\bigl((\tPsi_{1,A}^1\circ\tPsi_{0,A}^1)(\Lambda_0)\bigr)\cap\Lambda_2=\emptyset
\qquad\text{for all }s\in[0,1],
\]
and then \cref{lem:wrappedGraphCriterion} shows that every intermediate correspondence in the preferred isotopy avoids the product stop $\Lambda_0\times \Lambda_2$.
Therefore the isotopy from $\Corr_{A,\rho}(\phi_0^t\cdot \phi_1^t)$ to $\Corr_{A,\rho}(\phi_0^t)\circ \Corr_{A,\rho}(\phi_1^t)$ takes place inside
\[
\Ob\bigl(\mathcal W(\XA^-\times \XA,\Lambda_0\times \Lambda_2)\bigr)
\]
as needed.\footnote{Technically speaking, these Lagrangians need to be equipped with grading/orientation data in the sense of \cite[Section 5.3]{ganatra2024microlocal}. However, as our Lagrangians are isotopic to conormals they have canonical grading/orientation data relative to the tautological polarization of $\XA$.} 

\section{Applications}
\label{sec:applications}

With the technical work of establishing a monodromy action via \cref{thm:monodromyAction} in hand, we now examine this action in the context of homological mirror symmetry. 
In particular, we demonstrate several ways in which it is explicitly computable.

\subsection{Isotopies of the FLTZ Legendrian mirror to the $A_{\n}$ singularity}
\label{sec:mainThm}
We first turn to our main example, the FLTZ stop mirror to the $A_{\n}$ singularity $\XB_{\nn}$.
Let $\stp_\nn$ be the FLTZ stop (\cref{def:FLTZ})
associated to the stacky fan given by the pair consisting of the fan $\Sigma_\nn$ in $\RR^2$ whose cones are
\[0, \RR_{\geq 0}\cdot (1, 0), \RR_{\geq 0}\cdot (1, n), \RR_{\geq 0}\cdot (1, 0) + \RR_{\geq 0}\cdot (1, \nn) \]
and the homomorphism $\beta \colon \ZZ^2 \to \ZZ^2$ satisfying $\beta(1,0) = (1,0)$ and $\beta(0,1) = (1,\nn)$.
Note that $\mathrm{Pic}(\XB_{\nn}) \cong \ZZ/\nn\ZZ$ generated by $\Os(1) := \Os(D_{(1,\nn)})$ where $D_{(1,\nn)}$ is the toric divisor corresponding to $\RR_{\geq 0}\cdot (1, \nn)$.
\begin{figure}
    \centering
    \begin{tikzpicture}
\begin{scope}[]

\draw  (-3,-3) rectangle (3,3);
\draw[red, fuzz] (-3,-1.5) -- (3,-3) (-3,0) -- (3,-1.5) (-3,1.5) -- (3,0) (-3,3) -- (3,1.5);
\draw[blue, fuzz] (-3,3) -- (-3,-3);

\begin{scope}[shift={(-3,0)}]
\foreach \i in {0,...,5}
{
        \pgfmathtruncatemacro{\y}{15*\i}
        \draw[orange] (0,0)-- (\y:.2) ;
}

\node at (0,0) {};
\end{scope}

\begin{scope}[shift={(-3,-3)}]
\foreach \i in {0,...,5}
{
        \pgfmathtruncatemacro{\y}{15*\i}
        \draw[orange] (0,0)-- (\y:.2) ;
}

\node at (0,0) {};
\end{scope}

\begin{scope}[shift={(-3,1.5)}]
\foreach \i in {0,...,5}
{
        \pgfmathtruncatemacro{\y}{15*\i}
        \draw[orange] (0,0)-- (\y:.2) ;
}

\node at (0,0) {};
\end{scope}

\begin{scope}[shift={(-3,-1.5)}]
\foreach \i in {0,...,5}
{
        \pgfmathtruncatemacro{\y}{15*\i}
        \draw[orange] (0,0)-- (\y:.2) ;
}

\node at (0,0) {};
\end{scope}

\end{scope}

\end{tikzpicture}     \caption{The Lagrangian projection of the FLTZ stop $\stp_4$ for the fan of the $A_3$ singularity.}
    \label{fig:FLTZSkeleton}
\end{figure}
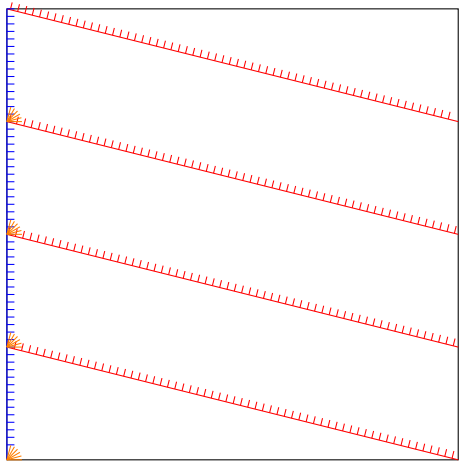

\begin{figure}
\centering
\scalebox{.35}{
\begin{tikzpicture}
\begin{scope}[]

\draw  (-3,-3) rectangle (3,3);
\draw[red, fuzz] (-3,-1.5) -- (3,-3) (-3,0) -- (3,-1.5) (-3,1.5) -- (3,0) (-3,3) -- (3,1.5);
\draw[blue, fuzz] (-3,3) -- (-3,-3);

\begin{scope}[shift={(-3,0)}]
\foreach \i in {0,...,5}
{
        \pgfmathtruncatemacro{\y}{15*\i}
        \draw[orange] (0,0)-- (\y:.2) ;
}

\node at (0,0) {};
\end{scope}

\begin{scope}[shift={(-3,-3)}]
\foreach \i in {0,...,5}
{
        \pgfmathtruncatemacro{\y}{15*\i}
        \draw[orange] (0,0)-- (\y:.2) ;
}

\node at (0,0) {};
\end{scope}

\begin{scope}[shift={(-3,1.5)}]
\foreach \i in {0,...,5}
{
        \pgfmathtruncatemacro{\y}{15*\i}
        \draw[orange] (0,0)-- (\y:.2) ;
}

\node at (0,0) {};
\end{scope}

\begin{scope}[shift={(-3,-1.5)}]
\foreach \i in {0,...,5}
{
        \pgfmathtruncatemacro{\y}{15*\i}
        \draw[orange] (0,0)-- (\y:.2) ;
}

\node at (0,0) {};
\end{scope}

\end{scope}

\end{tikzpicture} \begin{tikzpicture}
\begin{scope}[]

\draw  (-3,-3) rectangle (3,3);
\draw[red, fuzz] (-3,-1.5) -- (3,-3) (-3,0) -- (3,-1.5) (-3,1.5) -- (3,0) (-3,3) -- (3,1.5);
\draw[blue, fuzz] (-3,3) -- (-3,-3);

\begin{scope}[shift={(-3,0)}]
\foreach \i in {0,...,5}
{
        \pgfmathtruncatemacro{\y}{15*\i}
        \draw[orange] (0,0)-- (\y:.2) ;
}

\node at (0,0) {};
\end{scope}

\begin{scope}[shift={(-3,-3)}]
\foreach \i in {0,...,5}
{
        \pgfmathtruncatemacro{\y}{15*\i}
        \draw[orange] (0,0)-- (\y:.2) ;
}

\node at (0,0) {};
\end{scope}

\begin{scope}[shift={(-3,1.5)}]
\foreach \i in {0,...,5}
{
        \pgfmathtruncatemacro{\y}{15*\i}
        \draw[orange] (0,0)-- (\y:.2) ;
}

\node at (0,0) {};
\end{scope}

\end{scope}

\draw[orange, fuzz] (-3,-0.5) .. controls (-3,-1) and (-3,-1.5) .. (-1,-2);
\end{tikzpicture} \begin{tikzpicture}
\begin{scope}[]

\draw  (-3,-3) rectangle (3,3);
\draw[red, fuzz] (-3,-1.5) -- (3,-3) (-3,0) -- (3,-1.5) (-3,1.5) -- (3,0) (-3,3) -- (3,1.5);
\draw[blue, fuzz] (-3,3) -- (-3,-3);

\begin{scope}[shift={(-3,0)}]
\foreach \i in {0,...,5}
{
        \pgfmathtruncatemacro{\y}{15*\i}
        \draw[orange] (0,0)-- (\y:.2) ;
}

\node at (0,0) {};
\end{scope}

\begin{scope}[shift={(-3,-3)}]
\foreach \i in {0,...,5}
{
        \pgfmathtruncatemacro{\y}{15*\i}
        \draw[orange] (0,0)-- (\y:.2) ;
}

\node at (0,0) {};
\end{scope}

\begin{scope}[shift={(-3,1.5)}]
\foreach \i in {0,...,5}
{
        \pgfmathtruncatemacro{\y}{15*\i}
        \draw[orange] (0,0)-- (\y:.2) ;
}

\node at (0,0) {};
\end{scope}

\end{scope}

\draw[orange, fuzz] (-3,0) .. controls (-3,-0.5) and (1,-2.5) .. (3,-3);
\end{tikzpicture} 

\begin{tikzpicture}
\begin{scope}[]

\draw  (-3,-3) rectangle (3,3);
\draw[red, fuzz] (-3,-1.5) -- (3,-3) (-3,0) -- (3,-1.5) (-3,1.5) -- (3,0) (-3,3) -- (3,1.5);
\draw[blue, fuzz] (-3,3) -- (-3,-3);

\begin{scope}[shift={(-3,0)}]
\foreach \i in {0,...,5}
{
        \pgfmathtruncatemacro{\y}{15*\i}
        \draw[orange] (0,0)-- (\y:.2) ;
}

\node at (0,0) {};
\end{scope}

\begin{scope}[shift={(-3,-3)}]
\foreach \i in {0,...,5}
{
        \pgfmathtruncatemacro{\y}{15*\i}
        \draw[orange] (0,0)-- (\y:.2) ;
}

\node at (0,0) {};
\end{scope}

\begin{scope}[shift={(-3,1.5)}]
\foreach \i in {0,...,5}
{
        \pgfmathtruncatemacro{\y}{15*\i}
        \draw[orange] (0,0)-- (\y:.2) ;
}

\node at (0,0) {};
\end{scope}

\end{scope}

\draw[orange, fuzz] (-3,0) .. controls (0,-1.5) and (0,-1.5) .. (3,-3);
\end{tikzpicture} \begin{tikzpicture}
\begin{scope}[]

\draw  (-3,-3) rectangle (3,3);
\draw[red, fuzz] (-3,-1.5) -- (3,-3) (-3,0) -- (3,-1.5) (-3,1.5) -- (3,0) (-3,3) -- (3,1.5);
\draw[blue, fuzz] (-3,3) -- (-3,-3);

\begin{scope}[shift={(-3,0)}]
\foreach \i in {0,...,5}
{
        \pgfmathtruncatemacro{\y}{15*\i}
        \draw[orange] (0,0)-- (\y:.2) ;
}

\node at (0,0) {};
\end{scope}

\begin{scope}[shift={(-3,-3)}]
\foreach \i in {0,...,5}
{
        \pgfmathtruncatemacro{\y}{15*\i}
        \draw[orange] (0,0)-- (\y:.2) ;
}

\node at (0,0) {};
\end{scope}

\begin{scope}[shift={(-3,1.5)}]
\foreach \i in {0,...,5}
{
        \pgfmathtruncatemacro{\y}{15*\i}
        \draw[orange] (0,0)-- (\y:.2) ;
}

\node at (0,0) {};
\end{scope}

\end{scope}

\draw[orange, fuzz] (-3,0) .. controls (-1,-0.5) and (3,-2) .. (3,-3);
\end{tikzpicture} \begin{tikzpicture}
\begin{scope}[]

\draw  (-3,-3) rectangle (3,3);
\draw[red, fuzz] (-3,-1.5) -- (3,-3) (-3,0) -- (3,-1.5) (-3,1.5) -- (3,0) (-3,3) -- (3,1.5);
\draw[blue, fuzz] (-3,3) -- (-3,-3);

\begin{scope}[shift={(-3,0)}]
\foreach \i in {0,...,5}
{
        \pgfmathtruncatemacro{\y}{15*\i}
        \draw[orange] (0,0)-- (\y:.2) ;
}

\node at (0,0) {};
\end{scope}

\begin{scope}[shift={(-3,-3)}]
\foreach \i in {0,...,5}
{
        \pgfmathtruncatemacro{\y}{15*\i}
        \draw[orange] (0,0)-- (\y:.2) ;
}

\node at (0,0) {};
\end{scope}

\begin{scope}[shift={(-3,1.5)}]
\foreach \i in {0,...,5}
{
        \pgfmathtruncatemacro{\y}{15*\i}
        \draw[orange] (0,0)-- (\y:.2) ;
}

\node at (0,0) {};
\end{scope}

\end{scope}

\draw[orange, fuzz] (1,-1) .. controls (3,-1.5) and (3,-2) .. (3,-2.5);
\end{tikzpicture} \begin{tikzpicture}
\begin{scope}[]

\draw  (-3,-3) rectangle (3,3);
\draw[red, fuzz] (-3,-1.5) -- (3,-3) (-3,0) -- (3,-1.5) (-3,1.5) -- (3,0) (-3,3) -- (3,1.5);
\draw[blue, fuzz] (-3,3) -- (-3,-3);

\begin{scope}[shift={(-3,0)}]
\foreach \i in {0,...,5}
{
        \pgfmathtruncatemacro{\y}{15*\i}
        \draw[orange] (0,0)-- (\y:.2) ;
}

\node at (0,0) {};
\end{scope}

\begin{scope}[shift={(-3,-3)}]
\foreach \i in {0,...,5}
{
        \pgfmathtruncatemacro{\y}{15*\i}
        \draw[orange] (0,0)-- (\y:.2) ;
}

\node at (0,0) {};
\end{scope}

\begin{scope}[shift={(-3,1.5)}]
\foreach \i in {0,...,5}
{
        \pgfmathtruncatemacro{\y}{15*\i}
        \draw[orange] (0,0)-- (\y:.2) ;
}

\node at (0,0) {};
\end{scope}

\begin{scope}[shift={(-3,-1.5)}]
\foreach \i in {0,...,5}
{
        \pgfmathtruncatemacro{\y}{15*\i}
        \draw[orange] (0,0)-- (\y:.2) ;
}

\node at (0,0) {};
\end{scope}

\end{scope}

\end{tikzpicture} }
\caption{A loop of mostly Legendrians starting at the FLTZ Legendrian $\stp_{4}$.}
\label{fig:twistFrontProjection}
\end{figure}
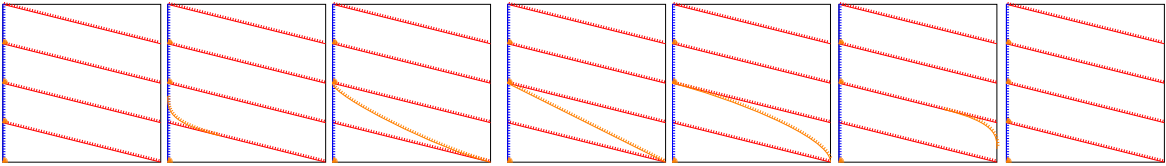
Combining \cref{thm:monodromyAction,cor:braidLegendrian} gives a new proof of the following result (see \cref{subsec:previousResults} for other approaches).
\begin{cor}
There exists an action of $\mathcal B_{\nn,1}$ on the triangulated category $H^0\mathcal W(\XA, \stp_\nn)$ by exact autoequivalences (well-defined up to natural isomorphism).
\label{cor:action}
\end{cor}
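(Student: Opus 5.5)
The action will be obtained by composing the group homomorphism of \cref{cor:braidLegendrian} with the monodromy homomorphism produced by \cref{thm:monodromyAction} and its corollary. Set $\mathcal N:=\mathcal N^{\Area,\mathrm{lift}}_{\phi_\nn,1}(M)$ with basepoint $\phi_\nn$.

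First I will check that the Legendrian lift of the basepoint is the FLTZ stop, that is, $\Leg_{\phi_\nn}=\stp_\nn$, possibly up to a contact isotopy fixed once and for all. Every face of the graph $G_\nn$ has area one, and the blue cycle has zero period, so \cref{lem:liftingGraph} applies. The example following it identifies the front projection of the lift with \cref{fig:frontProjectionII}, and this agrees with the Lagrangian projection of $\stp_\nn$ drawn in \cref{fig:FLTZSkeleton}. Consequently $\mathcal W(\XA,\Leg_{\phi_\nn})=\mathcal W(\XA,\stp_\nn)$.

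Next I apply the corollary of \cref{thm:monodromyAction} at the basepoint $\phi_\nn$. This gives a homomorphism $\pi_1(\mathcal N,\phi_\nn)\to\Aut(H^0\mathcal W(\XA,\stp_\nn))$, well defined up to natural isomorphism. Its values are functors given by geometric composition with the exact cylindrical correspondences $\Corr_{A,\rho}(\phi^t)$, taken with $A$ close to $1$ and $\rho$ small. These functors are exact, since convolution with an object of the product sector is an $A_\infty$ functor and hence induces a triangulated functor on $H^0$.

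Each value is invertible, which I will see from compatibility with concatenation in \cref{item:composition}. The image of $\phi^t\cdot(\phi^t)^{-1}$ is naturally isomorphic to the composite of the two functors. Since that loop is null-homotopic relative to its endpoints, \cref{item:hamiltonianIsotopy} says it is sent to the image of the constant path. The constant path has correspondence $\Gamma(w^\rho)$, and this acts as the identity on the stopped category because $\stp_\nn\in\mathcal L^\rho$ for small $\rho$, so the wrapping $w^\rho$ is a stop-avoiding positive pushoff.

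Finally I precompose with the inclusion $\nu\colon\mathcal B_{\nn,1}\into\pi_1(\mathcal N,\phi_\nn)$ of \cref{cor:braidLegendrian}. This is legitimate because the generators $\tau_i$ and $\rho$ are explicit smooth loops in $\mathcal N^I$: each is a piecewise-linear isotopy of a single combinatorial type, apart from finitely many vertex collisions. So each generator lies in the subgroupoid on which the corollary is stated.

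The main obstacle is that the thresholds for $A$ and $\rho$ depend on the path. The relations of $\mathcal B_{\nn,1}$ involve only finitely many loops and finitely many relation homotopies, so the parameter space I need is compact. I will choose a single pair $(A,\rho)$ that works for all of them, using exhaustiveness (\cref{item:exhaust}). By \cref{item:preferredContinuation}, changing $(A,\rho)$ within a range where everything stays valid changes each functor only by a natural isomorphism. Hence the braid relations, which hold in $\pi_1(\mathcal N)$, descend to relations among the functors up to natural isomorphism, and the result is an action of $\mathcal B_{\nn,1}$ by exact autoequivalences.
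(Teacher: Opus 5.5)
Your proposal is correct and matches the paper's argument, which composes the inclusion $\nu\colon\mathcal B_{\nn,1}\into\pi_1(\mathcal N,\phi_\nn)$ of \cref{cor:braidLegendrian} with the monodromy homomorphism $\pi_1(\mathcal N,\phi_\nn)\to\Aut(H^0\mathcal W(\XA,\Leg_{\phi_\nn}))$ from the corollary to \cref{thm:monodromyAction}. Your additional checks are details the paper leaves packaged in that corollary and in the example following \cref{lem:liftingGraph}: that $\Leg_{\phi_\nn}=\stp_\nn$, that the functors are invertible via $\phi^t\cdot(\phi^t)^{-1}$, and that a single $(A,\rho)$ works for the finitely many generators and relators.
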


In fact, we can go further and intrinsically identify these autoequivalences of the partially wrapped Fukaya category as geometric operations.
Recall that we have preferred generators $\tau_1, \ldots, \tau_{\nn-1}$ and $\rho$ of $\mathcal B_{\nn,1}$ and a cyclic translate $\tau_n$ inside the moduli space of degenerate graphs that are identified in \cref{prop:Bn1AsConf}.
We let $\mathcal F_{\tau_i}$ and $\mathcal{F}_\rho$ be the corresponding autoequivalences from \cref{cor:action}. 
We first describe the $\mathcal F_{\tau_i}$. 

\begin{thm}
For each $i\in\{1,\dots,\n\}$, let $K_i$ denote the linking disk of the Legendrian component of $\stp_{\nn}$ whose projection under $p_{lag}$ is the horizontal line separating the two bounded faces where $\tau_i$ is supported. Then
\[
\mathcal F_{\tau_i}\cong T_{K_i}.
\]
Here $T_{K_i}$ is the spherical twist on $K_i$.
\label{thm:sphericalTwists}
\end{thm}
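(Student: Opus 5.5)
Our plan is to localize the monodromy of $\tau_i$ near the bounded faces it moves, and to recognize the resulting stop isotopy as the known stop-swapping model for the spherical twist on a linking disk. By the conjugation formula $\tau_i=\rho^{i-1}\tau_1\rho^{-(i-1)}$ and the equivariance of the construction under the rigid $1/n$-rotation (which preserves $\stp_\nn$ and moves the $i$-th horizontal component to the first), it suffices to treat $i=1$. Rotation conjugates $T_{K_1}$ to $T_{K_i}$, since it carries $K_1$ to $K_i$.

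\textbf{Step 1: localize $\tau_1$.} The loop $\tau_1$ in \cref{fig:graphLoop} is supported in a neighborhood $D$ of the union of the two unit faces adjacent to the horizontal edge $e$, where $e$ is the edge whose Legendrian lift has linking disk $K_1$. Choose the controlled thickenings of \cref{lem:controlledThickenings} to be stationary outside $D$; the Hamiltonians $H_A^t$ of \cref{prop:controlledHamiltonianExtension} then vanish outside $D$. The contact lift $\tilde\Psi_A^t$ is therefore supported over $p_{lag}^{-1}(D)$, and so is the exact symplectomorphism $F_A$. The functor $\mathcal F_{\tau_1}$ is geometric composition with $\Gamma^\rho(F_A)$. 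For $\rho\to0$ and $A\to1$ this functor is represented by the wrapping-corrected symplectomorphism $w^\rho\circ F_A$, which acts on objects by pushing them through a stop isotopy supported near $\Leg_e$.

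\textbf{Step 2: identify the local isotopy.} Over $D$, the stop consists of the lift of $e$ together with the neighboring vertical and horizontal arcs. During $\tau_1$ the edge $e$ rotates by a half turn through the vertical position and returns: its Legendrian lift is pushed once around the Reeb direction, relative to the rest of the stop. This is precisely the local model of a positive (or negative) full loop of a Legendrian component around the Reeb flow. In the sense of the stop-removal and wrap-once results of Ganatra--Pardon--Shende, such a loop acts on $\mathcal W$ by the twist along the linking disk, with $T_{K}$ arising as a cone on the wrap-once natural transformation. We will cite that result, after checking that $K_1$ is a spherical object (its endomorphisms are $H^*(S^2)$, matching $\Os_0(1)$ under \cref{thm:hms}). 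Concretely, we verify the identification on generators. Take the cotangent-fiber generators, the pushoffs corresponding to Bondal--Thomsen line bundles, and compute their images under $w^\rho\circ F_A$ as Lagrangians: a fiber whose conormal direction crosses the swept region is changed by a surgery with $K_1$, and the other fibers are unchanged. Both observations match $\mathrm{Cone}(\Hom(K_1,L)\otimes K_1\to L)$.

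\textbf{Step 3: assemble.} Agreement on a generating collection, together with compatibility on morphisms, gives a natural isomorphism $\mathcal F_{\tau_1}\cong T_{K_1}$. The compatibility on morphisms is automatic because both functors are induced by geometric operations: the correspondence on one side, and the spherical twist realized as a Dehn-twist-type correspondence on the other. We can upgrade this to an isomorphism of functors by exhibiting a Hamiltonian isotopy, avoiding the product stop, between $\Gamma^\rho(F_A)$ and the standard correspondence representing $T_{K_1}$ (the graph of a wrapping once around $\Leg_e$). For this we use \cref{item:hamiltonianIsotopy}-style arguments.

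\textbf{Main obstacle.} The hard step is Step 2: matching the geometric half-turn of the edge in the Lagrangian projection with a single Reeb loop of the component $\Leg_e$, with the correct sign. This requires tracking the $q_1$-coordinate via \cref{lem:variationLegendrianLift}. We would first compute the flux of $\tau_1$ through the boundary of $D$ and verify that it equals the unit face area, so that the lift of $e$ winds exactly once in $q_1$. Only after that would we invoke the wrap-once/twist identification.
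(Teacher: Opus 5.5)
Your Steps 2--3 do not prove the natural isomorphism. The key input you cite is that a Reeb loop of a stop component ``acts by the twist along the linking disk'' via the Ganatra--Pardon--Shende wrap-once results. In the functor-level form you need, that is essentially the theorem itself. What is available off the shelf is the wrapping exact triangle, which is an object-level statement: it describes what happens to one Lagrangian pushed once through the stop.

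Your promotion from objects to functors does not work:
\begin{itemize}
\item Agreement of $\mathcal F_{\tau_1}$ and $T_{K_1}$ on a generating set of objects gives no natural transformation.
\item Compatibility on morphisms is not ``automatic because both functors are geometric''. Here $K_1$ is a linking disk, not a Lagrangian sphere, so there is no Dehn-twist correspondence on the other side.
\item The ``standard correspondence representing $T_{K_1}$ (the graph of a wrapping once around $\Leg_e$)'' that you propose to isotope to is only known to represent $T_{K_1}$ once the theorem is already proved.
\end{itemize}
Step 2 is also inaccurate as a description. The lift of $e$ is not Reeb-translated: its endpoints slide along the blue circle, the neighbouring spoke and the red circle, staying on the stop, while the interior advances one $q_1$-period. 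Moreover, $w^\rho\circ F_A$ does not preserve $\stp_\nn$, so it does not act on $\mathcal W(\XA,\stp_\nn)$ by pushing Lagrangians forward. Only geometric composition with the correspondence does.

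The missing idea is to apply the wrapping exact triangle one level up, to the correspondence itself in $\XA^-\times\XA$ with the product stop.
\begin{itemize}
\item The restriction of $\Gamma^\rho(\Phi_{\tau_i})$ to $\XA^-_{\mathrm{int}}\times\XA_{\mathrm{int}}$ is a pushoff of the diagonal.
\item Hence, as in \cite[Proposition 3.5]{hanlon2023relatingcategoricaldimensionstopology}, $\Gamma^\rho(\Phi_{\tau_i})\cong[\mathcal E_i\to\Delta_{\XA_{\mathrm{int}}}]$. Here $\mathcal E_i$ is built from products of linking disks indexed by the points of $\Phi^\rho(\tilde\Psi^1_{\tau_i}(\stp_\nn))\cap\stp_\nn$.
\item One checks there is exactly one such point, on the lift of the spoke separating the two braided faces. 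So $\mathcal E_i=K_i\times K_i$, and the cone is the kernel of $T_{K_i}$.
\end{itemize}
This gives the isomorphism of functors directly. It needs no generator-by-generator check and works uniformly in $i$. Your reduction to $i=1$ is therefore unnecessary; it would in any case also require identifying $\mathcal F_\rho$ with the functor of the rigid $1/\nn$-translation. Your $q_1$-winding computation, via the variation formula for the canonical lift and the unit face areas, is the right ingredient. Its proper role is to justify that single intersection point, not to feed a citation of a wrap-once theorem.
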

\begin{proof}
Fix $i$, and consider the Lagrangian correspondence $\Corr_{A, \rho}(\tau_i)$.
Let $\tilde\Psi_{\tau_i}^t$ denote the corresponding contact isotopy, and let $\Phi_{\tau_i}\colon \XA\to \XA$ be the associated exact symplectomorphism from \cref{subsubsec:buildingGamma}.
Observe that the restriction of this correspondence to $\XA^-_{\mathrm{int}}\times \XA_{\mathrm{int}}$ has cylindrical boundary; in the notation of \cref{subsubsec:buildingGamma},
\[
\Gamma^\rho(\Phi_{\tau_i})|_{\XA^-_{\mathrm{int}}\times \XA_{\mathrm{int}}}=\Delta^+_{\XA_{\mathrm{int}}}\subset \XA^-_{\mathrm{int}}\times \XA_{\mathrm{int}}
\]
is a pushoff of the diagonal Lagrangian.
Hence \cite[Proposition 3.5]{hanlon2023relatingcategoricaldimensionstopology} (see also \cite[Proposition 1.37]{ganatra2024sectorial}) gives an isomorphism in the Fukaya category
\[
\Gamma^\rho(\Phi_{\tau_i})\cong [\mathcal E_i \to \Delta_{\XA_{\mathrm{int}}}],
\]
where the term $\mathcal E_i$ corresponds to the intersection between $\Gamma^\rho(\Phi_{\tau_i})$ and the skeleton of $\XA^-\times \XA$ outside of $\XA^-_{\mathrm{int}}\times \XA_{\mathrm{int}}$.
By the same argument as in \cite[Proposition 4.7]{hanlon2023relatingcategoricaldimensionstopology}, this intersection corresponds to the unique intersection between $\Phi^\rho(\tilde \Psi_{\tau_i}^t(\stp_\nn))$ and $\stp_\nn$. For the loop $\tau_i$, there is exactly one such intersection, namely the one indicated in \Cref{fig:stopIntersection}. By construction of the loop $\tau_i$, this intersection lies on the horizontal stop component separating the two bounded faces braided by $\tau_i$. 
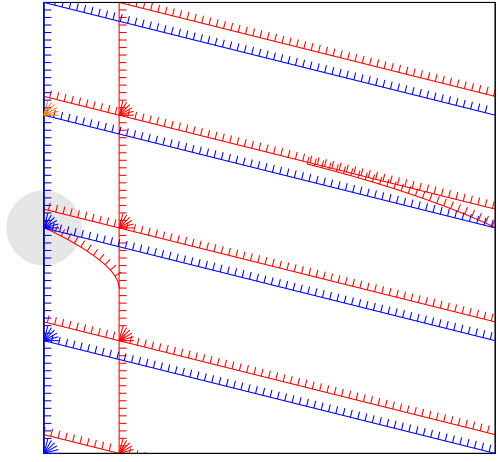
\begin{figure}
        \centering

\begin{tikzpicture}

\fill[gray!20]  (2,-1.5) ellipse (0.5 and 0.5);
\draw  (2,1.5) rectangle (8,-4.5);

\clip  (2,1.5) rectangle (8,-4.5);

\begin{scope}[]

\begin{scope}[]

\draw[red, fuzz] (-3,-1.5) -- (3,-3) (-3,0) -- (3,-1.5) (-3,1.5) -- (3,0) (-3,3) -- (3,1.5);
\draw[red, fuzz] (-3,3) -- (-3,-3);

\begin{scope}[shift={(-3,0)}]
\foreach \i in {0,...,5}
{
        \pgfmathtruncatemacro{\y}{15*\i}
        \draw[red] (0,0)-- (\y:.2) ;
}

\node at (0,0) {};
\end{scope}

\begin{scope}[shift={(-3,-3)}]
\foreach \i in {0,...,5}
{
        \pgfmathtruncatemacro{\y}{15*\i}
        \draw[red] (0,0)-- (\y:.2) ;
}

\node at (0,0) {};
\end{scope}

\begin{scope}[shift={(-3,1.5)}]
\foreach \i in {0,...,5}
{
        \pgfmathtruncatemacro{\y}{15*\i}
        \draw[red] (0,0)-- (\y:.2) ;
}

\node at (0,0) {};
\end{scope}

\begin{scope}[shift={(-3,-1.5)}]
\foreach \i in {0,...,5}
{
        \pgfmathtruncatemacro{\y}{15*\i}
        \draw[red] (0,0)-- (\y:.2) ;
}

\node at (0,0) {};
\end{scope}

\draw[red, fuzz] (-0.5,-0.65) .. controls (1.5,-1.15) and (3,-1.85) .. (3,-2.3);
\end{scope}

\begin{scope}[shift={(6,-6)}]

\draw[red, fuzz] (-3,-1.5) -- (3,-3) (-3,0) -- (3,-1.5) (-3,1.5) -- (3,0) (-3,3) -- (3,1.5);
\draw[red, fuzz] (-3,3) -- (-3,-3);

\begin{scope}[shift={(-3,0)}]
\foreach \i in {0,...,5}
{
        \pgfmathtruncatemacro{\y}{15*\i}
        \draw[red] (0,0)-- (\y:.2) ;
}

\node at (0,0) {};
\end{scope}

\begin{scope}[shift={(-3,-3)}]
\foreach \i in {0,...,5}
{
        \pgfmathtruncatemacro{\y}{15*\i}
        \draw[red] (0,0)-- (\y:.2) ;
}

\node at (0,0) {};
\end{scope}

\begin{scope}[shift={(-3,1.5)}]
\foreach \i in {0,...,5}
{
        \pgfmathtruncatemacro{\y}{15*\i}
        \draw[red] (0,0)-- (\y:.2) ;
}

\node at (0,0) {};
\end{scope}

\begin{scope}[shift={(-3,-1.5)}]
\foreach \i in {0,...,5}
{
        \pgfmathtruncatemacro{\y}{15*\i}
        \draw[red] (0,0)-- (\y:.2) ;
}

\node at (0,0) {};
\end{scope}

\end{scope}

\begin{scope}[shift={(0,-6)}]

\draw[red, fuzz] (-3,-1.5) -- (3,-3) (-3,0) -- (3,-1.5) (-3,1.5) -- (3,0) (-3,3) -- (3,1.5);
\draw[red, fuzz] (-3,3) -- (-3,-3);

\begin{scope}[shift={(-3,0)}]
\foreach \i in {0,...,5}
{
        \pgfmathtruncatemacro{\y}{15*\i}
        \draw[red] (0,0)-- (\y:.2) ;
}

\node at (0,0) {};
\end{scope}

\begin{scope}[shift={(-3,-3)}]
\foreach \i in {0,...,5}
{
        \pgfmathtruncatemacro{\y}{15*\i}
        \draw[red] (0,0)-- (\y:.2) ;
}

\node at (0,0) {};
\end{scope}

\begin{scope}[shift={(-3,1.5)}]
\foreach \i in {0,...,5}
{
        \pgfmathtruncatemacro{\y}{15*\i}
        \draw[red] (0,0)-- (\y:.2) ;
}

\node at (0,0) {};
\end{scope}

\begin{scope}[shift={(-3,-1.5)}]
\foreach \i in {0,...,5}
{
        \pgfmathtruncatemacro{\y}{15*\i}
        \draw[red] (0,0)-- (\y:.2) ;
}

\node at (0,0) {};
\end{scope}

\end{scope}
\begin{scope}[shift={(6,0)}]

\draw[red, fuzz] (-3,-1.5) -- (3,-3) (-3,0) -- (3,-1.5) (-3,1.5) -- (3,0) (-3,3) -- (3,1.5);
\draw[red, fuzz] (-3,3) -- (-3,-3);

\draw[red, fuzz] (-0.5,-0.65) .. controls (1.5,-1.15) and (3,-1.85) .. (3,-2.3);

\begin{scope}[shift={(-3,0)}]
\foreach \i in {0,...,5}
{
        \pgfmathtruncatemacro{\y}{15*\i}
        \draw[red] (0,0)-- (\y:.2) ;
}

\node at (0,0) {};
\end{scope}

\begin{scope}[shift={(-3,-3)}]
\foreach \i in {0,...,5}
{
        \pgfmathtruncatemacro{\y}{15*\i}
        \draw[red] (0,0)-- (\y:.2) ;
}

\node at (0,0) {};

\end{scope}

\begin{scope}[shift={(-3,1.5)}]
\foreach \i in {0,...,5}
{
        \pgfmathtruncatemacro{\y}{15*\i}
        \draw[red] (0,0)-- (\y:.2) ;
}

\node at (0,0) {};
\end{scope}

\begin{scope}[shift={(-3,-1.5)}]
\foreach \i in {0,...,5}
{
        \pgfmathtruncatemacro{\y}{15*\i}
        \draw[red] (0,0)-- (\y:.2) ;
}

\node at (0,0) {};
\end{scope}

\end{scope}

\end{scope}

\begin{scope}[]
\begin{scope}[shift={(5,-1.5)}]

\draw  (-3,-3) rectangle (3,3);
\draw[blue, fuzz] (-3,-1.5) -- (3,-3) (-3,0) -- (3,-1.5) (-3,1.5) -- (3,0) (-3,3) -- (3,1.5);
\draw[blue, fuzz] (-3,3) -- (-3,-3);

\begin{scope}[shift={(-3,0)}]
\foreach \i in {0,...,5}
{
        \pgfmathtruncatemacro{\y}{15*\i}
        \draw[blue] (0,0)-- (\y:.2) ;
}

\node at (0,0) {};
\end{scope}

\begin{scope}[shift={(-3,-3)}]
\foreach \i in {0,...,5}
{
        \pgfmathtruncatemacro{\y}{15*\i}
        \draw[blue] (0,0)-- (\y:.2) ;
}

\node at (0,0) {};
\end{scope}

\begin{scope}[shift={(-3,1.5)}]
\foreach \i in {0,...,5}
{
        \pgfmathtruncatemacro{\y}{15*\i}
        \draw[orange] (0,0)-- (\y:.2) ;
}

\node at (0,0) {};
\end{scope}

\begin{scope}[shift={(-3,-1.5)}]
\foreach \i in {0,...,5}
{
        \pgfmathtruncatemacro{\y}{15*\i}
        \draw[blue] (0,0)-- (\y:.2) ;
}

\node at (0,0) {};
\end{scope}

\end{scope}

\end{scope}

\end{tikzpicture}
         \caption{The unique intersection between the FLTZ stop $\stp_\nn$ and the positive pushoff $\Phi^\rho(\tilde\Psi_{\tau_i}^t(\stp_\nn))$ is circled in the figure.}
        \label{fig:stopIntersection}
    \end{figure}
Therefore $\mathcal E_i = K_i \times K_i$. Since a linking disk is spherical, \cref{sphericalobjecttwist} identifies the kernel $[K_i \times K_i \to \Delta_{\XA_{\mathrm{int}}}]$ with the spherical twist $T_{K_i}$. Hence
\[
\mathcal F_{\tau_i}\cong T_{K_i}. \qedhere
\]
\end{proof}

We will show in \cref{prop:horizontalLinkingDisks} that $K_i$ corresponds under \cref{thm:hms} to the twist $\Os_0 (-i)$ of the structure sheaf of the origin.
However, we do not do so immediately as our argument will use the Bondal--Thomsen generators and a local analysis of the FLTZ stop from \cref{subsec:BTcollection}.

Instead, we now turn to the simpler $\mathcal{F}_\rho$. 
From \cref{prop:Bn1AsConf}, the underlying graph isotopy is given by a $1/n$ translation in the $q_2$ direction and hence lifts to an isotopy of $\stp_\nn$ that moves the component corresponding to the ray generated by $(1,n)$ according to $q_1 + n q_2 = t$ for $t \in [0,1]$. 
On Lagrangians wrapped up to the stop, it follows that $\Phi_\rho$ acts by ``dragging'' the end of the Lagrangian along this isotopy. 
However, we have another description of such a Hamiltonian in \cref{subsec:troplag}, namely, $H_{cyl}^F$ where $F$ is a support function for the divisor $-D_{(1,n)}$. 
While this observation gives a rather explicit geometric description of the autoequivalence, it also allows us to identify its mirror.

\begin{prop}
\label{prop:rhoCompat}
Let $\mathcal F_\rho$ denote the autoequivalence induced by the annular generator $\rho$.
Under the HMS equivalence of \cref{thm:hms}, one has
\[
\mathcal F_\rho \longleftrightarrow -\otimes \mathcal O(-1).
\]
\end{prop}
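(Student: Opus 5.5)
The approach is to identify the correspondence $\Corr_{A,\rho}(\rho)$ with the graph of the time-one flow of a cylindrical Hamiltonian of the type used in the line-bundle monodromy construction. We then invoke the appendix HMS statement that this flow is mirror to tensoring by the corresponding line bundle.

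First, I would make the contact isotopy $\tilde\Psi^t_\rho$ attached to the loop $\rho$ explicit. The graph loop is the rigid translation $q_2\mapsto q_2+t/\nn$, so the graph flux primitive $\underline H^t$ from \cref{lem:graphFluxPrimitiveWellDefined} is a function of $r$ alone. Rigid translation preserves every face, so the thickenings of \cref{lem:controlledThickenings} can be taken translation-equivariant, and the extension $H_A^t$ can be chosen to depend only on $r$, away from the collar. Its strict contact lift rotates the $q_2$ coordinate and shifts $q_1$ through \eqref{eq:q1EvolutionContact}. On the vertical (red) components of $\stp_\nn$ this reproduces the motion $q_1+\nn q_2=t$ described before the proposition, and at $t=1$ it returns $\stp_\nn$ to itself exactly. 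In particular no $A\to1$ limiting is needed for this loop. By \cref{item:preferredContinuation} and \cref{item:hamiltonianIsotopy}, we may replace $H_A^t$ by any Hamiltonian inducing the same stop isotopy without leaving the stopped category.

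Second, I would compare this flow with $H^F_{cyl}$ from \cref{subsec:troplag}, where $F$ is a support function for $-D_{(1,\nn)}$. This Hamiltonian is linear in the fibre coordinates on the cone over the ray $(1,\nn)$ and vanishes on the other ray. Its time-one flow drags the end of each Lagrangian asymptotic to the $(1,\nn)$-component of $\stp_\nn$ through one full unit of the $q_1+\nn q_2$ direction. It fixes the component attached to $(1,0)$. I would build an explicit homotopy of contact Hamiltonians between $h_t$ (the one from $\tilde\Psi^t_\rho$) and the contact Hamiltonian of $H^F_{cyl}$. Both families move $\stp_\nn$ by isotopies that are homotopic through isotopies of stops. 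The stop $\stp_\nn$ lies in $\mathcal L^{2\rho}$ for small $\rho$, so the positive Reeb pushoff stays disjoint along the homotopy, and \cref{lem:wrappedGraphCriterion} shows that every intermediate graph avoids $\stp_\nn\times\stp_\nn$. Hence the two correspondences are isomorphic objects of $\mathcal W(\XA^-\times\XA,\stp_\nn\times\stp_\nn)$, and the induced functors agree.

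Finally, I would apply the stack version of the line-bundle monodromy result from \cref{app:HMS}, generalizing \cite{hanlon2019monodromy,hanlon2022aspects}. It states that the time-one flow of $H^F_{cyl}$ corresponds under \cref{thm:hms} to tensoring with $\mathcal O(\text{divisor of }F)=\mathcal O(-D_{(1,\nn)})=\mathcal O(-1)$. As a consistency check, I would track a Bondal--Thomsen generator, i.e.\ a cotangent fibre pushoff. It should move to the cotangent fibre mirror to the next line bundle down, and applying $\rho^\nn$ should give the identity, matching $\Pic\cong\ZZ/\nn$.

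The main obstacle is the second step. The correspondence is built from the smoothed, thickened Hamiltonian $H_A$, not directly from the clean cylindrical $H^F_{cyl}$. Sign and orientation conventions (the direction of rotation, the Reeb pushoff, and the order of composition noted in \cref{item:composition}) decide whether the answer is $\mathcal O(-1)$ or $\mathcal O(1)$. I would first settle these conventions on the case $\nn=1$, where the stop is simplest, and then verify that the homotopy of Hamiltonians is never forced to cross the stop.
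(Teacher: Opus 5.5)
Your proposal is correct and follows the paper's proof. The paper likewise identifies the monodromy of the translation loop $\rho$ with the time-one flow of $H^F_{cyl}$, where $F$ is a support function of $-D_{(1,n)}$; your flux computation and your stop-avoiding homotopy of contact Hamiltonians spell out what the paper leaves to the discussion preceding the proposition. It then applies the stacky extension, via \cref{thm:hms}, of the line-bundle monodromy result of \cite{hanlon2019monodromy,hanlon2022aspects} to obtain $\mathcal O(-D_{(1,n)})=\mathcal O(-1)$.

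One caution concerns your plan to fix the sign on the case $n=1$. That case cannot decide it, since $\mathcal O(1)\cong\mathcal O(-1)$ whenever $n\le 2$. The sign has to be read off from the direction of the $q_2$-translation over the cone $\RR_{\ge 0}(1,0)+\RR_{\ge 0}(1,n)$, tracked through the conventions of \cref{thm:hms}; that direction is what decides between $F$ and $-F$.
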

\begin{proof}
From the discussion preceding the proposition, we are computing the autoequivalence induced by the Hamiltonian $H^F_{cyl}$ in the notation of \cref{subsec:troplag}, where $F$ is the support function for the divisor $-D_{(1,n)}$.
If we were working with a toric variety, \cite[Theorem 3.13]{hanlon2022aspects}, which is a translation of \cite[Theorem 4.5]{hanlon2019monodromy} to the partially wrapped setting, would imply that this is mirror to tensoring by $\Os (-D_{(1,n)})$. 
However, \cref{thm:hms} extends the HMS equivalence used in \cite{hanlon2019monodromy,hanlon2022aspects} to toric Deligne--Mumford stacks following the same proof scheme. 
In particular, tracing morphism spaces between tropical Lagrangian sections as in the proof of \cite[Theorem 4.5]{hanlon2019monodromy} shows the functor induced by the time-$1$ flow of $H^F_{cyl}$ corresponds under \cref{thm:hms} to tensoring by the line bundle corresponding to $F$.

Therefore, in our setting, we indeed obtain that $\mathcal F_\rho$ is mirror to $-\otimes \Os(-D_{(1,n)})$, and under our conventions, $\Os(-D_{(1,n)}) = \Os(-1)$.
\end{proof}

\subsection{The Bondal--Thomsen collection and a local model for mapping cones}
\label{subsec:BTcollection}
We next turn to the Bondal--Thomsen collection, which we will use to identify the linking disks in \cref{thm:sphericalTwists} on the coherent side.
The underlying stratification of the torus induced by the FLTZ skeleton is called the Bondal stratification \cite{Bondal}.
One can combinatorially assign a line bundle to every stratum of the Bondal stratification following \cite[Section 2.3]{hanlon2024resolutions} or \cite{favero2023rouquier}.
In our example of interest, this assignment is depicted on the far left image of \Cref{fig:tcollection} and described in \cref{example:tcollection}.

This subsection records a local 2-dimensional mutation rule rather than a global theorem: an algorithm for more general stratifications would be desirable, but here we only explain how the Bondal--Thomsen collection changes under a single local modification.
It would be interesting to relate this move (and possible generalizations) to the theory developed in \cite{berkesch2025cellularfreeresolutionsnormalizations}.

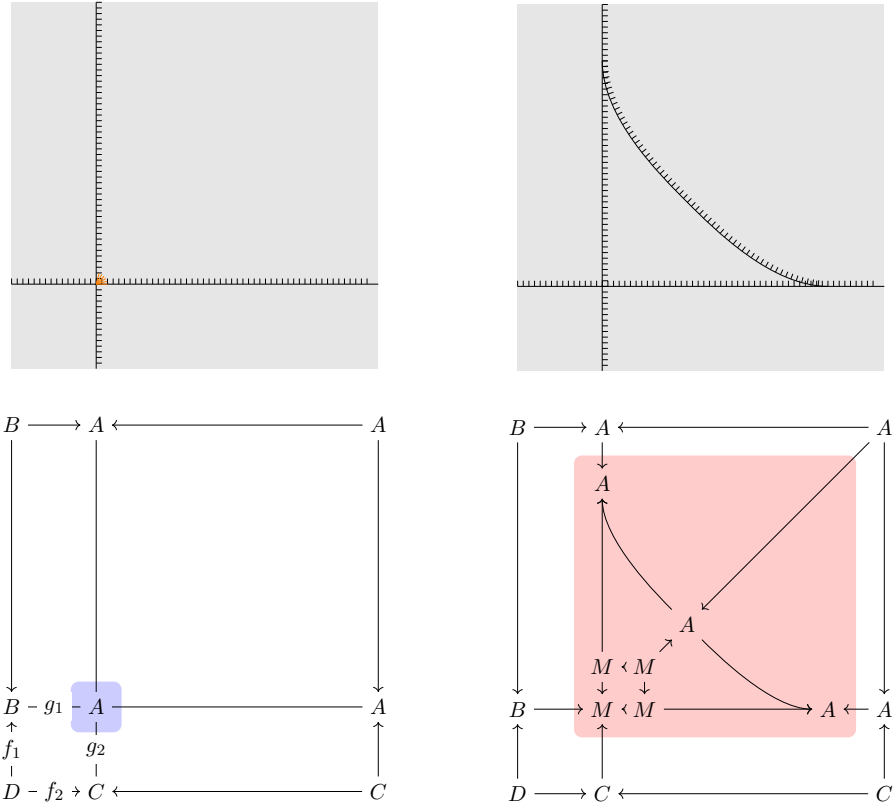
\begin{figure}[h]
    \centering
    \begin{subfigure}{.4\linewidth}
        \centering
        \scalebox{.75}{\begin{tikzpicture}
    
\fill[rounded corners, blue!20]  (0.55,-1.55) rectangle (1.45,-2.45);
\node (v2) at (-0.5,-2) {$B$};
\node (v1) at (-0.5,3) {$B$};
\node (v3) at (1,3) {$A$};
\node (v5) at (6,3) {$A$};
\node (v6) at (6,-2) {$A$};
\node (v7) at (6,-3.5) {$C$};
\node (v8) at (1,-3.5) {$C$};
\node (v9) at (-0.5,-3.5) {$D$};
\draw[->]  (v1) edge (v2);
\draw[->]  (v5) edge (v6);
\draw[->]  (v7) edge (v6);
\draw[->]  (v9) edge node[midway,fill=white]{$f_1$} (v2);
\draw[->]  (v9) edge node[midway,fill=white]{$f_2$} (v8);
\draw[->]  (v1) edge (v3);
\draw[->]  (v5) edge (v3);
\draw[->]  (v7) edge (v8);

\node (v16) at (1,-2) {$A$};
\draw  (v3) edge (v16);
\draw  (v6) edge (v16);
\draw  (v8) edge node[midway,fill=white]{$g_2$} (v16);
\draw  (v2) edge node[midway,fill=white]{$g_1$} (v16);

\begin{scope}[shift={(8,7.5)}]

    \fill[fill=gray!20]  (-8.5,3) rectangle (-2,-3.5);
    \draw[fuzz] (-7,3) -- (-7,-3.5) (-8.5,-2) -- (-2,-2);

    \begin{scope}[shift={(-7,-2)}]
    \foreach \i in {0,...,5}
    {
            \pgfmathtruncatemacro{\y}{15*\i}
            \draw[orange] (0,0)-- (\y:.2) ;
    }
    
    \node at (0,0) {};
    \end{scope}

\end{scope}

\end{tikzpicture} }
        \caption{The local model for the resolution $C_\bullet(S^\phi, O^\phi)$ at a 0-stratum.}
        \label{fig:beforeMutation}
    \end{subfigure}
    \begin{subfigure}{.4\linewidth}
        \centering
        \scalebox{.75}{\begin{tikzpicture}
    
\fill[red!20, rounded corners]  (0.5,2.5) rectangle (5.5,-2.5);
\node (v2) at (-0.5,-2) {$B$};
\node (v1) at (-0.5,3) {$B$};
\node (v3) at (1,3) {$A$};
\node (v5) at (6,3) {$A$};
\node (v6) at (6,-2) {$A$};
\node (v7) at (6,-3.5) {$C$};
\node (v8) at (1,-3.5) {$C$};
\node (v9) at (-0.5,-3.5) {$D$};
\draw[->]  (v1) edge (v2);
\draw[->]  (v5) edge (v6);
\draw[->]  (v7) edge (v6);
\draw[->]  (v9) edge (v2);
\draw[->]  (v9) edge (v8);
\draw[->]  (v1) edge (v3);
\draw[->]  (v5) edge (v3);
\draw[->]  (v7) edge (v8);

\node (v13) at (1,2) {$A$};
\node (v14)at (2.5,-0.5) {$A$};
\node (v10) at (5,-2) {$A$};
\node (v11) at (1.75,-2) {$M$};
\node (v4) at (1,-2) {$M$};
\node (v12) at (1,-1.25) {$M$};
\node (v15) at (1.75,-1.25) {$M$};
\draw[->]  (v2) edge (v4);
\draw[->]  (v8) edge (v4);
\draw[->]  (v6) edge (v10);
\draw[->]  (v11) edge (v10);
\draw[->]  (v11) edge (v4);
\draw[->]  (v12) edge (v4);
\draw[->]  (v12) edge (v13);
\draw[->]  (v3) edge (v13);
\draw[->] (v14) .. controls (2,0) and (1,1) .. (v13);
\draw[->] (v14) .. controls (3,-1) and (4,-2) .. (v10);
\draw[->]  (v15) edge (v14);
\draw[->]  (v15) edge (v11);
\draw[->]  (v15) edge (v12);
\draw[->] (v5) edge (v14);

\begin{scope}[shift={(8,7.5)}]

    \fill[fill=gray!20]  (-8.5,3) rectangle (-2,-3.5);
    \draw[fuzz] (-7,3) -- (-7,-3.5) (-8.5,-2) -- (-2,-2);
    
    \draw[fuzz] (-7,2) .. controls (-7,1) and (-6,0) .. (-5.5,-0.5) .. controls (-5,-1) and (-4,-2) .. (-3,-2);
    
    \end{scope}
\end{tikzpicture} }
        \caption{Modifying the stratification and labeling.}
        \label{fig:afterMutation}
    \end{subfigure}
 
   \caption{Variation of the Legendrian skeleton can be reinterpreted as a variation of stratification for $C_\bullet(S^\phi, O^\phi)$.}
\end{figure}
Suppose that we are in the situation where the dimension of our torus is two and we have a zero-dimensional stratum whose FLTZ stop locally looks like that for affine space, so that there is a resolution $C_\bullet(S^\phi, O^\phi)$ that has the shape as drawn in \Cref{fig:beforeMutation}.
Consider the object of the derived category given by the mapping cone 
\[M:=  \cone(B\oplus C \xleftarrow{f_1\oplus f_2} D)\]
\begin{rem}
    We make a quick remark about the existence of a quasi-isomorphism between $M$ and $A$, which is equivalent to having a short exact sequence
    \[A\xleftarrow{g_1\oplus g_2}  B\oplus C \xleftarrow{f_1\oplus f_2} D.\]
    This occurs when we modify our local model so that the two 1-dimensional strata do not share a higher-dimensional cone (i.e., we are at a $0$-stratum that does not correspond to a 2-cone of $\Sigma$ in the FLTZ construction). This differs from the local model drawn in \Cref{fig:beforeMutation} via removal of the orange stratum.
\end{rem}
Consider the chain maps  
\begin{align*}
    g\colon M\to A \text{ induced by } g_1\oplus g_2 && \iota_B: B\to M && \iota_C: C\to M.
\end{align*}
Then the following sequence (read from right to left) is exact (with chain nullhomotopy indicated from left to right):
\[
\begin{tikzpicture}

    \fill[blue!20, rounded corners]  (-10.5,2) rectangle (-9.5,1);
    \fill[red!20, rounded corners]  (-7.2581,2) rectangle (3,1);
    \node (v2) at (-6,1.5) {$A^{\oplus 2}\oplus M$};
    \node (v1) at (-2,1.5) {$A\oplus M^{\oplus 2}$};
    \node (v3) at (2.5,1.5) {$M$};
    \node (v4) at (-10,1.5) {$A$};
    \draw  (v1) edge[bend right, ->] node[midway, above]{$d^1=\begin{pmatrix} 1 & 0 &- g \\ 1 & g & 0\\ 0& 1& 1 \end{pmatrix}$}(v2);
    \draw  (v3) edge[bend right, ->] node[midway, above]{$d^2=\begin{pmatrix}g\\-1\\ 1\end{pmatrix}$}(v1);
    
    \draw  (v2) edge[bend right, ->] node[midway, below] {$h^1=\begin{pmatrix}0&1 &0\\ 0& 0& 0\\ 0& 0& 1 \end{pmatrix}$} (v1);
    \draw  (v1) edge[bend right, ->] node[midway, below]{$h^2=\begin{pmatrix} 0& -1& 0\end{pmatrix}$} (v3);
    \draw  (v2) edge[bend right, ->] node[midway, above]{$d^0=\begin{pmatrix}1 & -1 & g\end{pmatrix}$} (v4);
    \draw  (v4) edge[bend right, ->] node[midway, below]{$h^0=\begin{pmatrix}1 \\ 0 \\ 0\end{pmatrix}$} (v2);
    \end{tikzpicture}
\]
As a result, we may replace $C_\bullet(S^\phi, O^\phi)$ with the homotopic resolution drawn in \cref{fig:afterMutation}.

\begin{example}
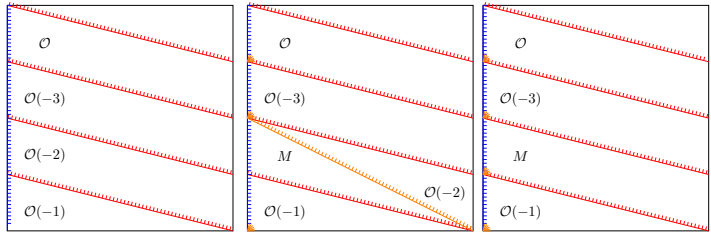
\begin{figure}[h]
        \centering
        \scalebox{.5}{

\begin{tikzpicture}
\begin{scope}[]

\draw  (-3,-3) rectangle (3,3);
\draw[red, fuzz] (-3,-1.5) -- (3,-3) node (v23) {} (-3,0) -- (3,-1.5) node (v25) {} (-3,1.5) -- (3,0) node (v20) {} (-3,3) -- (3,1.5) node (v19) {};
\draw[blue, fuzz] (-3,3) -- (-3,-3);

\end{scope}

\node (v13) at (-2,-2.5) {$\mathcal O(-1)$};
\node (v9) at (-2,-1) {$\mathcal O(-2)$};
\node (v6) at (-2,0.5) {$\mathcal O(-3)$};
\node (v1) at (-2,2) {$\mathcal O$};

\end{tikzpicture}         

\begin{tikzpicture}
\begin{scope}[]

\draw  (-3,-3) rectangle (3,3);
\draw[red, fuzz] (-3,-1.5) -- (3,-3) (-3,0) -- (3,-1.5) (-3,1.5) -- (3,0) (-3,3) -- (3,1.5);
\draw[blue, fuzz] (-3,3) -- (-3,-3);

\begin{scope}[shift={(-3,0)}]
\foreach \i in {0,...,5}
{
        \pgfmathtruncatemacro{\y}{15*\i}
        \draw[orange] (0,0)-- (\y:.2) ;
}

\node at (0,0) {};
\end{scope}

\begin{scope}[shift={(-3,-3)}]
\foreach \i in {0,...,5}
{
        \pgfmathtruncatemacro{\y}{15*\i}
        \draw[orange] (0,0)-- (\y:.2) ;
}

\node at (0,0) {};
\end{scope}

\begin{scope}[shift={(-3,1.5)}]
\foreach \i in {0,...,5}
{
        \pgfmathtruncatemacro{\y}{15*\i}
        \draw[orange] (0,0)-- (\y:.2) ;
}

\node at (0,0) {};
\end{scope}

\end{scope}

\draw[orange, fuzz] (-3,0) .. controls (0,-1.5) and (0,-1.5) .. (3,-3);

\node at (-2,-2.5) {$\mathcal O(-1)$};
\node at (2.25,-2) {$\mathcal O(-2)$};
\node at (-2,0.5) {$\mathcal O(-3)$};
\node at (-2,2) {$\mathcal O$};
\node at (-2,-1) {$M$};
\end{tikzpicture}         
\begin{tikzpicture}
\begin{scope}[]

\draw  (-3,-3) rectangle (3,3);
\draw[red, fuzz] (-3,-1.5) -- (3,-3) (-3,0) -- (3,-1.5) (-3,1.5) -- (3,0) (-3,3) -- (3,1.5);
\draw[blue, fuzz] (-3,3) -- (-3,-3);

\begin{scope}[shift={(-3,0)}]
\foreach \i in {0,...,5}
{
        \pgfmathtruncatemacro{\y}{15*\i}
        \draw[orange] (0,0)-- (\y:.2) ;
}

\node at (0,0) {};
\end{scope}

\begin{scope}[shift={(-3,-3)}]
\foreach \i in {0,...,5}
{
        \pgfmathtruncatemacro{\y}{15*\i}
        \draw[orange] (0,0)-- (\y:.2) ;
}

\node at (0,0) {};
\end{scope}

\begin{scope}[shift={(-3,1.5)}]
\foreach \i in {0,...,5}
{
        \pgfmathtruncatemacro{\y}{15*\i}
        \draw[orange] (0,0)-- (\y:.2) ;
}

\node at (0,0) {};
\end{scope}

\begin{scope}[shift={(-3,-1.5)}]
\foreach \i in {0,...,5}
{
        \pgfmathtruncatemacro{\y}{15*\i}
        \draw[orange] (0,0)-- (\y:.2) ;
}

\node at (0,0) {};
\end{scope}

\end{scope}

\node at (-2,-2.5) {$\mathcal O(-1)$};
\node at (-2,-1) {$M$};
\node at (-2,0.5) {$\mathcal O(-3)$};
\node at (-2,2) {$\mathcal O$};

\end{tikzpicture}         }
        \caption{The effect of a local modification on the Bondal--Thomsen collection.}
        \label{fig:tcollection}
\end{figure}
For the example of $\XB_{\Sigma_\nn}$, we have this local model at each $0$-dimensional stratum of the FLTZ skeleton.
For a fixed $0$-stratum (indexed by $i\in \{0, \ldots, \n\}$), the sheaves labeling the resolution $C_\bullet(S^\phi, O^\phi)$ from \cite{hanlon2024resolutions} are:
    \begin{align*}
    A=\mathcal O(-i)&& B=\mathcal O(-i+1) && C=\mathcal O(-i+1) && D=\mathcal O(-i).
    \end{align*}
    By comparison with \cref{eq:Koszul}, we see that the complex $M$ is quasi-isomorphic to the ideal sheaf $I_0(-i)$.
    The variation of skeleton therefore corresponds to the variation of stratification and labeling drawn in \Cref{fig:tcollection}.
    \label{example:tcollection}
\end{example}

With an understanding of these local models in hand, we are now prepared to identify the linking disks appearing in \cref{thm:sphericalTwists} and prove that they correspond to the expected spherical twists.

\begin{prop}
\label{prop:horizontalLinkingDisks}
For $i\in\{1,\dots,\n\}$, let $K_i$ be the linking disk of the horizontal stop component of $p_{lag}(\stp_\nn)$ indexed by the $0$-stratum $i$.
Under the HMS equivalence of \cref{thm:hms}, one has
\[
K_i \longleftrightarrow \mathcal O_0(-i).
\]
Equivalently, after reversing the cyclic order of the horizontal components, the corresponding linking disk is mirror to $\mathcal O_0(i)$.
\end{prop}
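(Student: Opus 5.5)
We identify the linking disk $K_i$ through the Bondal--Thomsen collection rather than by a direct Floer computation. Under \cref{thm:hms}, the cotangent-fiber generators supported in the strata of the Bondal stratification correspond to the line bundles labelling those strata. The labelling is the one in the far left of \Cref{fig:tcollection} and in \cref{example:tcollection}, taken from \cite{hanlon2024resolutions}. A linking disk of a smooth Legendrian component can be written in the partially wrapped category as a cone between the two cotangent-fiber generators on either side of that component. This is the standard surgery exact triangle: the linking disk is the cone of the continuation morphism obtained by pushing a fiber across the stop. So the plan is to express $K_i$ as such a cone, compute the mirror cone on the $B$-side, and match it with $\Os_0(-i)$ using the Koszul resolution \eqref{eq:Koszul}.

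\textbf{Step 1.} I would fix the $0$-stratum indexed by $i$ and work in the local model of \Cref{fig:beforeMutation}. Here the horizontal component separating the two bounded faces has the labels $A=\Os(-i)$ and $B=C=\Os(-i+1)$ on the adjacent strata, and $D=\Os(-i)$.

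\textbf{Step 2.} Near a generic point of the horizontal stop component, I would write $K_i$ as the cone of the continuation map between the fibers in the two adjacent strata. Rather than using only the bare component, I would use the resolution $C_\bullet(S^\phi,O^\phi)$ together with the local mutation of \Cref{fig:afterMutation}. The mutated stratification is exactly the one produced by the isotopy $\tau_i$ (compare \Cref{fig:tcollection}), and it replaces the label $\Os(-i+1)$ on the relevant stratum by $M\simeq I_0(-i)$. Then $K_i$ is identified with the difference between the old and new generators of that stratum. Equivalently, $K_i$ is the cone of the natural map $I_0(-i)\to \Os(-i)$, whose cone is $\Os_0(-i)$. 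The needed compatibility, that the fiber generator transported across a stop becomes the cone of the inclusion, follows from the exact sequence with the chain homotopies $h^0,h^1,h^2$ displayed above. It also uses the fact that the continuation map corresponds under HMS to the morphism $g$.

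\textbf{Step 3.} I would check the answer against \cref{thm:sphericalTwists}. The Hom computation following \eqref{eq:Koszul} shows that the $\Os_0(-i)$ form an $(A_{\n})$-configuration, which is consistent with $K_i$ being spherical and with adjacent linking disks meeting once. The ``equivalently'' clause then follows by reversing the cyclic order of the horizontal components, which replaces $i$ by $\nn-i$. Since $\Pic\cong\ZZ/\nn$, this gives $\Os_0(i)$.

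The main obstacle is Step 2: making precise that the linking disk of the horizontal component is the cone between the two fiber generators, and that this cone corresponds, with the right direction and grading shift, to the cone of $g\colon M\to A$ rather than to a mixed cone involving $B$ and $C$. First I would try to verify this by computing morphisms from every Bondal--Thomsen generator into $K_i$. On the $A$-side, these are intersections of a pushed fiber with the disk, which are nonzero only for the generator whose stratum is adjacent to the component. On the $B$-side, these are $\Hom(\Os(-j),\Os_0(-i))$, which is nonzero only for $j\equiv i \pmod{\nn}$. Matching the two computations identifies the mirror object up to shift, and the shift is then fixed by the grading conventions of \cref{thm:hms}.
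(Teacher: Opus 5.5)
Your proposal follows the paper's argument: it uses the local labels at the $0$-stratum, obtains $M_i\simeq I_0(-i)$ from the Koszul resolution, identifies the linking disk of the rotated component with $\cone(M_i\to A)$ via the wrapping exact triangle \cite[Proposition 1.37]{ganatra2024sectorial}, and concludes with the short exact sequence $0\to I_0(-i)\to\mathcal O(-i)\to\mathcal O_0(-i)\to 0$. One slip needs fixing: the region relabeled by $M$ is carved out of the stratum labeled $A=\mathcal O(-i)$ (the strip the spokes point into; compare $\mathcal O(-2)\mapsto M$ in \Cref{fig:tcollection}), not the one labeled $\mathcal O(-i+1)$, and only with this correction is your ``difference of old and new generators'' the cone of $I_0(-i)\to\mathcal O(-i)$.
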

\begin{proof}
Fix $i$. In the local model at the $0$-stratum indexed by $i$, the labels are
\[
A=D=\mathcal O(-i),\qquad B=C=\mathcal O(-i+1),
\]
and these agree with the line bundles assigned to small negative pushoffs of the cotangent fibers at these points under \cref{thm:hms}. 
The mutated object is
\[
M_i:=\cone(B\oplus C \xleftarrow{f_1\oplus f_2} D).
\]

Comparing with the Koszul resolution \cref{eq:Koszul}, we obtain
\[
M_i\simeq I_0(-i),
\]
the ideal sheaf of the origin twisted by $\mathcal O(-i)$.

Reading off the homotopic resolution after the local modification, the corresponding horizontal $1$-stratum is represented by the cone of the canonical map $M_i\to A$. Under the mirror functor, its linking disk is therefore identified with
\[
\cone\bigl(I_0(-i)\to \mathcal O(-i)\bigr)
\]
by \cite[Proposition 1.37]{ganatra2024sectorial}.
The short exact sequence
\[
0\to I_0(-i)\to \mathcal O(-i)\to \mathcal O_0(-i)\to 0
\]
shows that this cone is $\mathcal O_0(-i)$.
\end{proof}

\begin{cor}
\label{cor:SeidelThomasComparison}
Under the HMS equivalence, one has
\[
\mathcal F_{\tau_i}\longleftrightarrow T_{\mathcal O_0(-i)}.
\]
After the cyclic relabeling $i\mapsto -i$, this is precisely the Seidel--Thomas action by twists about
\[
\mathcal O_0(1),\dots,\mathcal O_0(\n).
\]
In particular, the subgroup generated by the $\tau_i$ acts by the Seidel--Thomas spherical twists.
\end{cor}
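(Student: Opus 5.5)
This follows by combining \cref{thm:sphericalTwists} with \cref{prop:horizontalLinkingDisks}, plus the fact that exact equivalences intertwine spherical twists. First, by \cref{thm:sphericalTwists}, $\mathcal F_{\tau_i}\cong T_{K_i}$ as autoequivalences of $H^0\mathcal W(\XA,\stp_\nn)$, where $K_i$ is the linking disk of the horizontal stop component separating the two faces braided by $\tau_i$. Next, let $\Theta$ denote the HMS equivalence of \cref{thm:hms}. Since $\Theta$ is exact, it preserves derived Hom complexes and cones of evaluation maps, so $\Theta\circ T_{E}\circ\Theta^{-1}\cong T_{\Theta(E)}$ for any spherical object $E$. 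In particular, $\Theta(K_i)$ is spherical, and the twist is defined by \eqref{sphericalobjecttwist} on both sides. On the $B$-side one can instead argue via the Fourier--Mukai kernel, but the cone formula on $H^0$ is all the statement needs, since the autoequivalences are only claimed up to natural isomorphism. Then \cref{prop:horizontalLinkingDisks} gives $\Theta(K_i)\cong\mathcal O_0(-i)$, and hence $\mathcal F_{\tau_i}\leftrightarrow T_{\mathcal O_0(-i)}$.

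The one point needing care is the indexing. The linking disk in \cref{thm:sphericalTwists} is specified by the pair of faces that $\tau_i$ braids, whereas the one in \cref{prop:horizontalLinkingDisks} is specified by the $0$-stratum $i$. So I would check, using the explicit picture of $\phi_\nn$ and the loop of \cref{fig:graphLoop} conjugated by $\rho^{i-1}$, that these are the same horizontal component. I expect this bookkeeping, including the compatibility of the marked point and orientation conventions, to be the main potential obstacle. If the conventions are off by a shift, the conjugation $\tau_k=\rho^{k-1}\tau_1\rho^{-(k-1)}$ together with \cref{prop:rhoCompat} absorbs the discrepancy. Indeed, $(-\otimes\mathcal O(-1))\circ T_E\circ(-\otimes\mathcal O(1))\cong T_{E(-1)}$, so conjugating by $\rho$ shifts the twisted object by $\mathcal O(-1)$, which is consistent with the labels $\mathcal O_0(-i)$.

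Finally, since $\Pic(\XB_\nn)\cong\ZZ/\nn\ZZ$, the set $\{\mathcal O_0(-i)\}_{i=1}^{\nn-1}$ equals $\{\mathcal O_0(j)\}_{j=1}^{\nn-1}$ under $j\equiv -i \pmod \nn$. This relabeling reverses the linear order of the chain, and it preserves the $(A_{\n})$-configuration computed in \cref{subsec:braidAn}, since adjacency $|i-j|=1$ is symmetric. The standard generators are therefore sent to the Seidel--Thomas twists $T_{\mathcal O_0(1)},\dots,T_{\mathcal O_0(\n)}$ up to the diagram automorphism $\tau_i\mapsto\tau_{\nn-i}$ of $\mathcal B_\nn$. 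Hence the subgroup generated by $\tau_1,\dots,\tau_{\n}$, which is identified with $\mathcal B_\nn$ by \cref{prop:Bn1AsConf}, acts by the Seidel--Thomas braid action.
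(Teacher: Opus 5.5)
Your proposal is correct and follows the paper's proof, which simply combines \cref{thm:sphericalTwists}, \cref{prop:horizontalLinkingDisks}, and \cref{prop:Bn1AsConf} with the cyclic relabeling coming from $\Pic(\XB_\nn)\cong\ZZ/\nn\ZZ$. Your additional points are details the paper's one-line argument leaves implicit: the HMS equivalence conjugates $T_{K_i}$ to $T_{\Theta(K_i)}$; the face-indexed and stratum-indexed horizontal components must be matched; and $i\mapsto -i$ reverses the chain, so the generators agree with the Seidel--Thomas ones only up to $\tau_i\mapsto\tau_{\nn-i}$. One caveat on your aside: conjugating by $\rho$ only checks consistency between successive labels, so an overall indexing offset would not be ``absorbed'' — it would give the action only up to a line-bundle twist, not the precise labels $\mathcal O_0(-i)$.
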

\begin{proof}
Combine \cref{thm:sphericalTwists,prop:horizontalLinkingDisks} with \cref{prop:Bn1AsConf}. Since $\Pic(\XB_{\Sigma_\nn})\cong \ZZ_\nn$, the collections $\{\mathcal O_0(-i)\}$ and $\{\mathcal O_0(i)\}$ differ only by a cyclic relabeling.
\end{proof}

\subsection{Equivalences between VGIT chambers}
\label{subsec:VGITEquivalence}
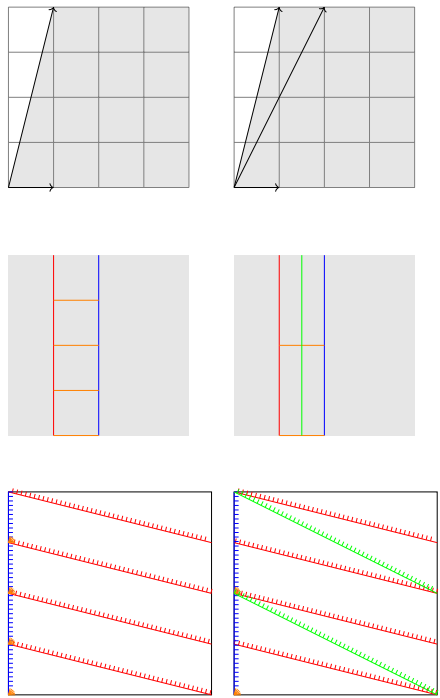
\begin{figure}[tbp]
    \centering
    \scalebox{.6}{
\begin{tikzpicture}

\begin{scope}[shift={(-5,3.5)}]
\fill[gray!20] (-1,2) -- (-2,-2) -- (2,-2) -- (2,2) -- (-1,2);
\draw[help lines, step=1cm] (-2,-2) grid (2,2);
\draw[->] (-2,-2) -- (-1,-2);
\draw[->] (-2,-2) -- (-1,2);
\end{scope}

\begin{scope}[shift={(0,3.5)}]
\fill[gray!20] (-2,-2) rectangle (2,2);
\draw[red] (-1,-2) -- (-1,2);
\draw[blue] (0,-2) -- (0,2);
\draw[orange] (-1,-2) -- (0,-2) (-1,-1) -- (0,-1) (-1,0) -- (0,0) (-1,1) -- (0,1);
\end{scope}

\begin{scope}[shift={(5,3.5)}]
\draw (-2,-2) rectangle (2,2);
\draw[red, fuzz] (-2,-1) -- (2,-2) (-2,0) -- (2,-1) (-2,1) -- (2,0) (-2,2) -- (2,1);
\draw[blue, fuzz] (-2,2) -- (-2,-2);
\begin{scope}[shift={(-2,0)}]
\foreach \i in {0,...,5}
{
        \pgfmathtruncatemacro{\y}{15*\i}
        \draw[orange] (0,0) -- (\y:.13);
}
\node at (0,0) {};
\end{scope}
\begin{scope}[shift={(-2,-2)}]
\foreach \i in {0,...,5}
{
        \pgfmathtruncatemacro{\y}{15*\i}
        \draw[orange] (0,0) -- (\y:.13);
}
\node at (0,0) {};
\end{scope}
\begin{scope}[shift={(-2,1)}]
\foreach \i in {0,...,5}
{
        \pgfmathtruncatemacro{\y}{15*\i}
        \draw[orange] (0,0) -- (\y:.13);
}
\node at (0,0) {};
\end{scope}
\begin{scope}[shift={(-2,-1)}]
\foreach \i in {0,...,5}
{
        \pgfmathtruncatemacro{\y}{15*\i}
        \draw[orange] (0,0) -- (\y:.13);
}
\node at (0,0) {};
\end{scope}
\end{scope}

\begin{scope}[shift={(-5,-3.5)}]
\fill[gray!20] (-1,2) -- (-2,-2) -- (2,-2) -- (2,2) -- (-1,2);
\draw[help lines, step=1cm] (-2,-2) grid (2,2);
\draw[->] (-2,-2) -- (-1,-2);
\draw[->] (-2,-2) -- (-1,2);
\draw[->] (-2,-2) -- (0,2);
\end{scope}

\begin{scope}[shift={(0,-3.5)}]
\fill[gray!20] (-2,-2) rectangle (2,2);
\draw[red] (-1,-2) -- (-1,2);
\draw[green] (-.5,-2) -- (-.5,2);
\draw[blue] (0,-2) -- (0,2);
\draw[orange] (-1,-2) -- (0,-2) (-1,0) -- (0,0);
\end{scope}

\begin{scope}[shift={(5,-3.5)}]
\draw (-2,-2) rectangle (2,2);
\draw[red, fuzz] (-2,-1) -- (2,-2) (-2,0) -- (2,-1) (-2,1) -- (2,0) (-2,2) -- (2,1);
\draw[blue, fuzz] (-2,2) -- (-2,-2);
\begin{scope}[shift={(-2,0)}]
\foreach \i in {0,...,5}
{
        \pgfmathtruncatemacro{\y}{15*\i}
        \draw[orange] (0,0) -- (\y:.13);
}
\node at (0,0) {};
\end{scope}
\begin{scope}[shift={(-2,-2)}]
\foreach \i in {0,...,5}
{
        \pgfmathtruncatemacro{\y}{15*\i}
        \draw[orange] (0,0) -- (\y:.13);
}
\node at (0,0) {};
\end{scope}
\draw[fuzz,green] (-2,0) -- (2,-2) (-2,2) -- (2,0);
\end{scope}

\end{tikzpicture} }
    \caption{Fan, Lagrangian, and front projections for different VGIT chambers.}
    \label{fig:VGITChambers}
\end{figure}
    Given $I=\{i_0<\cdots<i_k\}\subset \{0, 1, 2, 3, \cdots, \nn\}$ with $i_0=0$ and $i_k=\nn$, let $(\Sigma^I_\nn,\beta^I_\nn)$ be the stacky fan in $N=\ZZ^2$ whose underlying fan $\Sigma^I_\nn$ has rays $\rho_i=\RR_{\geq 0}\cdot \langle 1, i\rangle$ for $i\in I$ and 2-dimensional cones $\rho_{i_{a-1}}+\rho_{i_a}$ for $a=1,\ldots,k$, and whose stacky map is
    \[
        \beta^I_\nn \colon \ZZ^I \to N,\qquad e_i\mapsto \langle 1,i\rangle.
    \]
    We abbreviate $\XB_{\Sigma^I_\nn,\beta^I_\nn}$ and $\stp_{\Sigma^I_\nn,\beta^I_\nn}$ by $\XB_{\Sigma^I_\nn}$ and $\stp_{\Sigma^I_\nn}$, respectively.

    Whenever $I\subset I'$, we have that $\XB_{\Sigma^{I'}_\nn}\to \XB_{\Sigma^I_\nn}$ is a crepant partial resolution of singularities. The categories $\Db(\XB_{\Sigma^{I'}_\nn}), \Db(\XB_{\Sigma^I_\nn})$ are (noncanonically) equivalent. In the language of \cref{subsec:Bside} these fans correspond to different chambers of the GIT parameter space.
    
    This derived equivalence can be observed from the perspective of variation of skeleta. In the Lagrangian projection, the associated stop $\stp_{\Sigma_\nn^I}$ consists of horizontal circles at heights $i\in I$, and $i-j$ equally spaced vertical line segments connecting the circles corresponding to adjacent indices $i> j$. The associated graph has compact faces of area 1.
     
   For fixed $\nn$, all such Lagrangian projections of $\stp_{\Sigma_\nn^I}$ are isotopic via area-preserving isotopy. For example, in \Cref{fig:VGITChambers} we draw two such fans and the Lagrangian projections of the FLTZ stops. One isotopy realizing their derived equivalence comes from rotating the 2nd and 4th vertical line segments by 90 degrees (although there are many non-equivalent isotopies).

    By applying the variation of the Bondal--Thomsen collection to the front projection of this isotopy (following \cref{subsec:BTcollection}), we can compute the derived equivalence between these categories at the level of generating objects.

    \begin{rem}
    We could consider the ``Cox Skeleton'' for the $A_1$ singularity, as drawn in \Cref{fig:coxSkeleton}. 
    The partially wrapped Fukaya category associated to this skeleton agrees with the Cox category (as defined in \cite{ballard2025king}) for the resolved $A_1$ singularity; it also matches the skeleton from \cite{donovan2021mirror} at the discriminant point of the schober.
    While this Cox Legendrian is embedded, the isotopy of non-embedded graphs in the plane corresponding to a braid twist would cause a self-intersection to form in the Legendrian lift, and we thus expect its autoequivalence group to be smaller.
    \end{rem}

\begin{figure}[tbp]
    \centering
    \scalebox{.6}{
\begin{tikzpicture}

\begin{scope}[shift={(-5,0)}]
\fill[gray!20] (0,2) -- (-2,-2) -- (2,-2) -- (2,2) -- (0,2);
\draw[help lines, step=1cm] (-2,-2) grid (2,2);
\draw[->] (-2,-2) -- (-1,-2);
\draw[->] (-2,-2) -- (0,2);
\end{scope}

\begin{scope}[shift={(0,0)}]
\fill[gray!20] (-2,-2) rectangle (2,2);
\draw[blue] (-.5,-2) -- (-.5,2);
\draw[red] (0,-2) -- (0,2);
\draw[green] (.5,-2) -- (.5,2);
\draw[orange] (-.5,-1) -- (.5,-1) (-.5,0) -- (.5,0);
\node[circle, fill=gray!20, inner sep=1.5pt] at (0,0) {};
\end{scope}

\begin{scope}[shift={(5,0)}]
\draw (-2,-2) rectangle (2,2);
\draw[blue, fuzz] (-2,2) -- (-2,-2);
\begin{scope}[shift={(-2,0)}]
\foreach \i in {0,...,5}
{
        \pgfmathtruncatemacro{\y}{15*\i}
        \draw[orange] (0,0) -- (\y:.13);
}
\node at (0,0) {};
\end{scope}
\begin{scope}[shift={(-2,-2)}]
\foreach \i in {0,...,5}
{
        \pgfmathtruncatemacro{\y}{15*\i}
        \draw[orange] (0,0) -- (\y:.13);
}
\node at (0,0) {};
\end{scope}
\draw[fuzz,green] (-2,2) -- (2,0) (-2,0) -- (2,-2);
\draw[fuzz,red] (-2,2) -- (2,-2);
\end{scope}

\end{tikzpicture} }
    \caption{The Cox Skeleton and its Lagrangian and front projections}
    \label{fig:coxSkeleton}
\end{figure}

\subsection{Partial Resolutions of the \texorpdfstring{$A_{\n}$}{An-1} singularity}
\label{subsec:partialResolutions}
As another consequence, we can recover a result of \cite{donovan2015mixed} on partial resolutions of the $A_{\n}$ singularity.
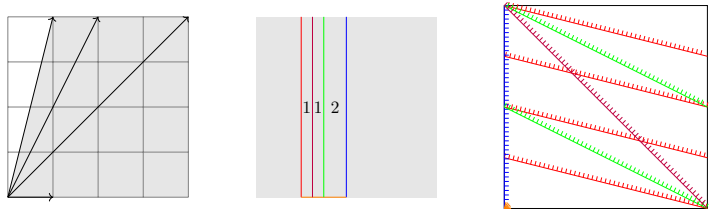
\begin{figure}[tbp]
    \centering
    \scalebox{.6}{

\begin{tikzpicture}

\begin{scope}[shift={(-5,0)}]
\fill[gray!20] (-1,2) -- (-2,-2) -- (2,-2) -- (2,2) -- (-1,2);
\draw[help lines, step=1cm] (-2,-2) grid (2,2);
\draw[->] (-2,-2) -- (-1,-2);
\draw[->] (-2,-2) -- (-1,2);
\draw[->] (-2,-2) -- (0,2);
\draw[->] (-2,-2) -- (2,2);
\end{scope}

\begin{scope}[shift={(0,0)}]
\fill[gray!20] (-2,-2) rectangle (2,2);
\draw[red] (-.75,-2) -- (-.75,2);
\draw[purple] (-.25,-2) -- (-.25,2);
\draw[green] (.25,-2) -- (.25,2);
\draw[blue] (.75,-2) -- (.75,2);
\draw[orange] (-.75,-2) -- (.75,-2);
\node at (-.6,0) {$1$};
\node at (-.25,0) {$1$};
\node at (.25,0) {$2$};
\end{scope}

\begin{scope}[shift={(5,0)}]
\draw (-2,-2) rectangle (2,2);
\draw[red, fuzz] (-2,-1) -- (2,-2) (-2,0) -- (2,-1) (-2,1) -- (2,0) (-2,2) -- (2,1);
\draw[blue, fuzz] (-2,2) -- (-2,-2);
\begin{scope}[shift={(-2,-2)}]
\foreach \i in {0,...,5}
{
        \pgfmathtruncatemacro{\y}{15*\i}
        \draw[orange] (0,0) -- (\y:.13);
}
\node at (0,0) {};
\end{scope}
\draw[fuzz,green] (-2,0) -- (2,-2) (-2,2) -- (2,0);
\draw[fuzz,purple] (-2,2) -- (2,-2);
\end{scope}

\end{tikzpicture} }
    \caption{Partial resolution of $A_3$ singularity. The regions can be braided around each other provided that the permutation sends the regions labeled $1$ to each other.}
\end{figure}
As before, let $\XB_{\Sigma^I_\nn}$ be the toric stack associated to a partial resolution of the $A_{\n}$ singularity. Denote by $\underline{X}_{\Sigma^I_\nn}$ the underlying space of this stack. By \cite{donovan2015mixed}, there is an equivalence of categories
\[D^b\Coh(\XB_{\Sigma_\nn^I})/\langle \mathcal O_{z_i}(j)\rangle_{i\in \Sigma_\nn^I(2),\mathcal O_{z_i}(j)\not\cong \mathcal O_{z_i} }\to D^b\Coh(\underline{X}_{\Sigma^I_\nn}),\]
where $\mathcal O_{z_i}(j)$ are the twists of the skyscraper sheaf at the stacky points of $\XB_{\Sigma_{\nn}^I}$. 
The objects $\mathcal O_{z_i}(j)$ are mirror to the linking disks of $\stp_\nn^I$ corresponding to the $q_2\neq 0$ horizontal line segments in the Lagrangian projection of $\stp_\nn^I$ by the same argument as in \cref{prop:horizontalLinkingDisks}.
By \cref{thm:hms}, we therefore have an equivalence of categories $D^b\Coh(\underline{X}_{\Sigma^I_\nn})\cong \mathcal W(\XA,\underline{\stp}^I_\nn)$ where $\underline{\stp}^I_\nn$ is the Legendrian lift of the graph $\underline{\phi}_\nn^I \colon (G,\bullet)\to M$ consisting of the segment $[0, \nn]\times \{0\}$ and cycles $r=i$ for each $i\in I\setminus\{\nn\}$ (where we deduce HMS at the intermediate stack from quotienting the derived category of the smooth stack / localizing the $A$-side via stop removal).

We now record the braid-group actions that arise for these partial resolutions.
The graph $\underline{\phi}_\nn^I$ has faces of integer areas $i_1-i_0, i_2-i_1, \ldots, i_k-i_{k-1}$ where $I=\{i_0, i_1, \ldots, i_k\}$ with $0=i_0<\cdots<i_k=\nn$.

Let $\pi:\mathcal B_k\to S_k$ denote the homomorphism recording the induced permutation of the strands. Given a coloring function $C:\{1, \ldots, k\}\to \NN$, we define the $C$-partitioned braid group \[\mathcal B_{k, C}:=\{\sigma\in \mathcal B_{k} \st C(\pi(\sigma)(j))=C(j)\text{ for all }j\}\]
to be those braids whose induced permutation fixes the coloring. Given $I$, we produce a coloring function by $C^I(j)=i_j-i_{j-1}$. 
By the same argument as \cref{cor:braidLegendrian}, we have a natural inclusion
\[\mathcal B_{k, C^I}\into \pi_1(\mathcal L^{emb}_{\underline{\stp}^I_\nn}(Y)).\]
This gives an action of the $C$-partitioned braid group\footnote{In \cite{donovan2015mixed}, this is called the mixed braid group.} on $\mathcal W(\XA, \underline{\stp}_\nn^I)$. In a similar fashion, whenever $I, I'\subset \{0, \ldots, \nn\}$ satisfy the property that there exists $\sigma\in S_k$ with $C^I\circ \sigma = C^{I'}$, it follows that $\underline{\stp}^{I'}_\nn\in \mathcal L^{emb}_{\underline{\stp}^I_\nn}(Y)$ and therefore we obtain equivalences
\[D^b\Coh(\underline{X}_{\Sigma^{I'}_\nn})\cong \mathcal W(\XA,\underline{\stp}^{I'}_\nn)\cong \mathcal W(\XA,\underline{\stp}^I_\nn) \cong D^b\Coh(\underline{X}_{\Sigma^I_\nn}).\]
\appendix

\section{Homological mirror symmetry for semiprojective toric Deligne--Mumford stacks}
\label{app:HMS}
As noted in \cref{subsec:Aside}, there are various approaches to toric homological mirror symmetry. 
In this appendix, we outline how to extend the direct geometric approach \cite{abouzaid2009morse} to semiprojective toric Deligne--Mumford stacks over an arbitrary field $\Bbbk$ (or even over $\ZZ$ with respect to the split torus) in the sense of \cite{borisov2005orbifold}. 
We expect that the Floer-theoretic HMS equivalence can be extended to a broader class of toric stacks, namely those satisfying \cite[Condition 1.1]{kuwagaki2020nonequivariant} for which HMS is proved sheaf-theoretically, but doing so would require technical work beyond the scope of this appendix, where we rely on what has been done for toric varieties to the largest extent possible. 

More precisely, recall that a smooth toric Deligne--Mumford stack $\X_{\Sigma, \beta}$ from \cite{borisov2005orbifold} is encoded by the data of a simplicial fan $\Sigma$ in $N_{\RR} = N \otimes \RR$ where $N$ is a lattice and a homomorphism $\beta \colon \ZZ^{\Sigma(1)} \to N$ such that for all $\rho \in \Sigma(1)$ we have $\beta(e_\rho) = b_\rho u_\rho$ for some $b_\rho > 0$ where $e_\rho$ is the standard basis vector of $\ZZ^{\Sigma(1)}$ corresponding to $\rho$ and $u_\rho$ is the primitive generator of $\rho$.
We say that $\mathcal X_{\Sigma, \beta}$ is semiprojective if the underlying coarse moduli space $X_\Sigma$ is a semiprojective toric variety.
Combinatorially, semiprojectivity means that $\Sigma$ has full-dimensional convex support and there is a convex piecewise-linear function on $\Sigma$. 
Equivalently, $\Sigma$ is the normal fan to a full-dimensional lattice polyhedron. 
Throughout this appendix, we will at times abbreviate $\X_{\Sigma, \beta}$ to $\X$.

\begin{df}[{\cite{fang2014coherent}}] \label{def:FLTZ}
The \emph{FLTZ skeleton} mirror to a toric stack is a cylindrical singular Lagrangian in $T^*(M_{\RR}/M) = M_{\RR}/M \times N_{\RR}$ where $M$ is the dual lattice to $N$ given by
\begin{align} \label{eq:fltzskel1}
\LL_{\Sigma, \beta} &= \bigcup_{\sigma \in \Sigma} \{ (m,u) \mid u \in \sigma \text{ and } b_\rho \langle m, u_\rho \rangle \in \ZZ \text{ for all } \rho \in \sigma(1) \} \\ \label{eq:fltzskel2}
& = \bigcup_{\sigma \in \Sigma} \pi(\sigma^{\perp_\beta}) \times \sigma
\end{align}
where $\pi \colon M_\RR \to M_\RR/M$ is the natural projection and
\[ \sigma^{\perp_\beta} = \left\{ m \in M_\RR \mid \text{ there exists } \ell \in \left(\ZZ^{\Sigma(1)}\right)^* \text{ such that } \langle \beta_\RR^* m + \ell , a \rangle = 0 \text{ for all } a \in \langle e_\rho \mid \rho \in \sigma \rangle \right\}.  \]
Further, we set the \emph{FLTZ stop} to be $\stp_{\Sigma,\beta} = \partial_\infty \LL_{\Sigma, \beta}$. 
\end{df}

\begin{rem} In \cite{fang2014coherent} and much of the literature on the coherent-constructible correspondence, \eqref{eq:fltzskel2} differs by a minus sign on the $\sigma$ factor. 
However, the category of sheaves with microsupport in $\LL_{\Sigma, \beta}$ is most naturally identified in \cite{ganatra2024microlocal} with the opposite of the wrapped Fukaya stopped at $\stp_{\Sigma, \beta}$, $\cW(T^*M_\RR/M, \stp_{\Sigma, \beta})$.
Following \cite[Remark 1.2]{ganatra2024microlocal}, one can simply negate the skeleton in order to avoid taking opposite categories, and thus it is natural to work with $\cW(T^*M_\RR/M, \stp_{\Sigma, \beta})$.
\end{rem}

With our notation defined, we can now state the theorem that this appendix is concerned with.

\begin{thm} \label{thm:hms}
    If $\X_{\Sigma, \beta}$ is a smooth semiprojective toric Deligne--Mumford stack, there is a pre-triangulated equivalence
    \[ D^b_{dg}(\X_{\Sigma, \beta}) \simeq \cW( T^*M_{\RR}/M, \stp_{\Sigma, \beta})  \]
    sending a line bundle to the corresponding tropical Lagrangian section.
\end{thm}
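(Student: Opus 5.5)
I will follow the scheme of \cite{abouzaid2009morse,hanlon2019monodromy,hanlon2022aspects}: build a functor from line bundles to tropical Lagrangian sections, check it is fully faithful on a generating collection, and check that the images split-generate the stopped category. The only new feature compared with the toric variety case is the stacky datum $\beta$. On the $B$-side, $\Pic(\X_{\Sigma,\beta})$ is identified with $\beta$-twisted piecewise linear functions, namely functions $F$ on $|\Sigma|$ that are linear on each cone $\sigma$ and take integer values on the vectors $\beta(e_\rho)=b_\rho u_\rho$, modulo global linear functions in $M$. On the $A$-side, this matches the integrality condition $b_\rho\langle m,u_\rho\rangle\in\ZZ$ in \eqref{eq:fltzskel1}.

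\textbf{Objects.} To such an $F$ I associate the tropical Lagrangian section $L(F)$, the time-one image of the zero section under the cylindrical Hamiltonian $H^F_{cyl}$ of \cref{subsec:troplag}. First I will check that $L(F)$ avoids $\stp_{\Sigma,\beta}$ at infinity. This is a cone-by-cone computation: over the cone $\sigma$, $L(F)$ is asymptotically the graph of $dF|_\sigma$ translated into $M_\RR/M$. It misses $\pi(\sigma^{\perp_\beta})$ exactly when a small pushoff is taken, and the $\beta$-integrality of $F$ is what guarantees compatibility across walls.

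\textbf{Morphisms.} Next I compute $\Hom_{\cW}(L(F),L(G))$ by the Morse-theoretic/tropical count of \cite{abouzaid2009morse,hanlon2019monodromy}. The intersection points of $L(F)$ with the wrapped $L(G)$ correspond to lattice points $m\in M$ weighted by the conditions $\langle m, b_\rho u_\rho\rangle \ge (G-F)(b_\rho u_\rho)$ on each cone. These are precisely the torus weights of sections of the line bundle $\mathcal O(G-F)$ on the stack; this uses the description of $H^0$ on $[\AA^{\Sigma(1)}\setminus Z/G_\beta]$ via the Cox ring. Higher cohomology should match through the cellular/Čech comparison with the Bondal stratification, as in \cite{hanlon2022aspects}. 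Compatibility of products then follows as in the variety case, since holomorphic triangles degenerate to the tropical/Morse flow-tree count, which is additive in weights. This gives a fully faithful $A_\infty$-functor from the dg category of line bundles, defined over $\ZZ$, so any $\Bbbk$ is allowed.

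\textbf{Generation, the main obstacle.} On the $B$-side I expect to use the resolution of the diagonal, or the resolutions $C_\bullet(S^\phi,O^\phi)$ of \cite{hanlon2024resolutions,favero2023rouquier}, to see that line bundles generate $D^b(\X)$ when $\X$ is semiprojective. The stacky version needs the Bondal--Thomsen collection to be adapted to $\beta$. On the $A$-side, I must show that the $L(F)$ generate $\cW(T^*M_\RR/M,\stp_{\Sigma,\beta})$. I would first reduce, by the stop-removal and sectorial descent results of \cite{ganatra2024sectorial}, to showing that cotangent fibers and linking disks lie in the subcategory generated by the $L(F)$. Then I would resolve each cotangent fiber by tropical sections as in \cite{hanlon2022aspects}, which is the mirror of the $B$-side resolution.

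Semiprojectivity supplies a convex support function. Wrapping by its flow makes the $L(kF_{conv})$ for $k\gg0$ approximate cotangent fibers, as in the Abouzaid ampleness argument. I expect this step to be the main difficulty in the stacky setting: the torsion in $\Pic$ produces several sections over each stratum, and I must verify that the Koszul-type complexes, such as \eqref{eq:Koszul}, close up on the $A$-side. Finally, pretriangulated closure and Karoubi completeness on both sides upgrade the embedding to the stated equivalence.
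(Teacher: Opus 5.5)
\textbf{Where the gap is.} Your outline matches the paper in two places. On the $B$-side you use the same input: line bundles generate $D^b(\X)$ via the resolution of the diagonal of \cite{hanlon2024resolutions}. On morphisms you follow the same Abouzaid-style reduction of Floer theory between tropical sections to Morse theory and then to \v{C}ech cochains. The genuine gap is $A$-side generation, which you correctly identify as the crux but never supply.

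The mechanism you propose does not work. $L(kF_{conv})$ is the graph of $k\,dH^{F_{conv}}$. This is constant over the interior of each maximal cone, and near each wall where $F_{conv}$ bends it sweeps $k$ times around the corresponding circle direction of $M_\RR/M$. So as $k\to\infty$ it does not approach a cotangent fiber.

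Your fallback, resolving cotangent fibers by tropical sections ``as in \cite{hanlon2022aspects}'', is the inductive restriction-to-toric-boundary-divisors argument. You do not adapt it to the stacky case. You flag, but leave open, whether the Koszul-type complexes close up when $\Pic$ has torsion.

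\textbf{The missing idea.} No resolution is needed, because the standard generators already are tropical sections up to stop-avoiding isotopy. Write $T^*(M_\RR/M)=M_\RR/M\times N_\RR$ and let $L_\theta$ be the outward conormal of a small ball around $\theta\in M_\RR/M$. Then $L_\theta$ is the graph of $df$ for a function $f$ on $N_\RR$ with $df(u_\rho)=\langle\theta,u_\rho\rangle+g$ on $U^\eps_\rho$, where $g$ is small and positive. By \cite[Proposition 5.22 and Remark 5.23]{ganatra2024microlocal} together with stop removal, the $L_\theta$ generate $\cW(T^*M_\RR/M,\stp_{\Sigma,\beta})$. Now take $F\in\mathrm{SF}(\Sigma,\beta)$ with
\[ F(u_\rho)=\frac{\lfloor\langle\theta,b_\rho u_\rho\rangle\rfloor+1}{b_\rho}. \]
Near infinity on the regions $U^\eps_\rho$, the following hold:
\begin{itemize}
\item $b_\rho\,df(u_\rho)$ lies in the open interval $\bigl(\lfloor\langle\theta,b_\rho u_\rho\rangle\rfloor,\lfloor\langle\theta,b_\rho u_\rho\rangle\rfloor+1\bigr)$;
\item by \eqref{eq:closetostop} and $a<1/\max_\rho b_\rho$, so does $b_\rho\,dH^F_{cyl}(u_\rho)$;
\item hence so does $b_\rho\,df_t(u_\rho)$ for every $f_t=(1-t)f+tH^F_{cyl}$.
\end{itemize}
The linear isotopy therefore never meets the stop, and $L_\theta\cong L_{cyl}(F)$. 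The $\beta$-twisting is absorbed by the floor formula; on the $B$-side these $F$ are the summands of the toric Frobenius pushforward of the canonical bundle. So torsion in $\Pic$ causes no difficulty.

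\textbf{A secondary weakness.} Your morphism paragraph only matches $H^0$ weights. It also does not address the non-proper semiprojective case, which is the one that matters here, e.g.\ $[\CC^2/\ZZ_\nn]$. The paper handles this in three steps:
\begin{itemize}
\item it passes from cylindrical to monomially admissible sections (\cref{prop:pwismon});
\item it then passes to Morse trees (\cref{prop:monismorse}) and to relative simplicial cochains on a compact polytope $Q$;
\item for incomplete $\Sigma$, it identifies the result with the localization of $\check{C}(\X_Q)$ at a section of the added boundary divisor $D$, using $\X=\X_Q\setminus D$ \cite{seidel2008subalgebras}.
\end{itemize}
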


Note that we use the term pre-triangulated equivalence to mean that the functor is fully faithful and its image generates (cf. \cite{ganatra2024sectorial}). 

\subsection{The dg-category of line bundles on a toric stack} \label{subsec:dgline}

We now fix a smooth semiprojective toric stack $\X = \X_{\Sigma, \beta}$. In this section, we discuss line bundles on $\X$ and the dg-enhancement of the derived category of $\X$ that we need in \cref{thm:hms}.

Since $\X$ is smooth, the Picard group of $\X$ is isomorphic to the cokernel of the map $\beta^* \colon M \to \ZZ^{\Sigma(1)}$ dual to $\beta$ and given explicitly by $m \mapsto ( \langle m, \beta(e_\rho) \rangle )_{\rho \in \Sigma(1)}$.
That is, any divisor $\sum a_\rho D_\rho$ with $a_\rho \in \ZZ$ gives rise to a line bundle $\Os_\X \left(\sum a_\rho D_\rho\right)$ and
$\Os_\X \left( \sum a_\rho D_\rho\right) \cong \Os_\X \left(\sum c_\rho D_\rho\right)$ if and only if there is an $m \in M$ such that $a_\rho = c_\rho + \langle m, \beta(e_\rho)\rangle$ for all $\rho \in \Sigma(1)$. 
Note that these coincide with equivariant line bundles on $\Bbbk^{\Sigma(1)}$ with respect to the action by the kernel of the map of tori induced by $\beta$. 
For the purposes of homological mirror symmetry, it is convenient to combinatorially encode these line bundles as support functions.
Let
\[ \mathrm{SF}(\Sigma, \beta) = \{ F \colon | \Sigma | \to \RR \mid F \text{ is piecewise linear on } \Sigma \text{ and } b_\rho F(u_\rho) \in \ZZ \text{ for all } \rho \in \Sigma(1) \} .\]
Using that $\Sigma$ is simplicial, we can equivalently (and more canonically) characterize $\mathrm{SF}(\Sigma, \beta)$ as the set of piecewise linear functions on $\Sigma$ that lift to integral piecewise linear functions on the smooth fan $\widehat{\Sigma}$ in $\RR^{\Sigma(1)}$ with cones $\widehat{\sigma} = \langle e_\rho \mid \rho \in \sigma \rangle$ for $\sigma \in \Sigma$.
With that observation, it is natural to define $\iota \colon M \to \mathrm{SF}(\Sigma, \beta)$ by setting $\iota(m)$ to be the support function on $\Sigma$ corresponding to the linear function $\langle \beta^* m, \cdot \rangle$ on $\widehat{\Sigma}$. 
Given this setup, the following is straightforward to verify.

\begin{prop} \label{prop:picardgroup}
    The Picard group of $\X$ is isomorphic to $\mathrm{SF}(\Sigma, \beta)/\iota(M)$.
\end{prop}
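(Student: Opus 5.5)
Since $\Pic(\X)\cong \operatorname{coker}(\beta^*\colon M\to \ZZ^{\Sigma(1)})$, it suffices to build a surjection from $\mathrm{SF}(\Sigma,\beta)$ onto $\ZZ^{\Sigma(1)}$ that identifies $\iota(M)$ with $\beta^*(M)$.

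\textbf{The evaluation map.} Define
\[
\mathrm{ev}\colon \mathrm{SF}(\Sigma,\beta)\to \ZZ^{\Sigma(1)},\qquad F\mapsto \bigl(b_\rho F(u_\rho)\bigr)_{\rho\in\Sigma(1)} = \bigl(F(\beta(e_\rho))\bigr)_{\rho},
\]
which lands in $\ZZ^{\Sigma(1)}$ by the integrality condition defining $\mathrm{SF}(\Sigma,\beta)$.

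\textbf{Bijectivity.} Because $\Sigma$ is simplicial, each cone $\sigma$ is spanned by the linearly independent vectors $\{u_\rho\}_{\rho\in\sigma(1)}$. A piecewise linear function on $\Sigma$ is therefore uniquely and freely determined by its values on the ray generators; compatibility on faces is automatic because faces are spanned by subsets of these generators. So any prescribed tuple $(a_\rho)\in\ZZ^{\Sigma(1)}$ determines a unique $F$ with $F(u_\rho)=a_\rho/b_\rho$, and this $F$ lies in $\mathrm{SF}(\Sigma,\beta)$. Hence $\mathrm{ev}$ is a group isomorphism. Equivalently, in the $\widehat\Sigma$ description, $F$ corresponds to the unique integral piecewise linear function on $\widehat\Sigma$ taking value $a_\rho$ on $e_\rho$, and $\mathrm{ev}$ is evaluation on the $e_\rho$.

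\textbf{Matching $\iota(M)$ with $\beta^*(M)$.} For $m\in M$, the function $\iota(m)$ corresponds to the linear function $\langle\beta^*m,\cdot\rangle$ on $\widehat\Sigma$. Its image is
\[
\mathrm{ev}(\iota(m))=\bigl(\langle \beta^* m, e_\rho\rangle\bigr)_\rho=\bigl(\langle m,\beta(e_\rho)\rangle\bigr)_\rho=\beta^*(m).
\]
Thus $\mathrm{ev}$ carries $\iota(M)$ onto $\beta^*(M)$ and descends to an isomorphism
\[
\mathrm{SF}(\Sigma,\beta)/\iota(M)\cong \ZZ^{\Sigma(1)}/\beta^*(M)\cong\Pic(\X).
\]
Under this isomorphism $F$ corresponds to $\Os_\X\bigl(\sum_\rho b_\rho F(u_\rho)D_\rho\bigr)$, up to the paper's sign convention $\pm$.

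The main point needing care is the first isomorphism $\Pic(\X)\cong\operatorname{coker}\beta^*$. I would justify it via the stack presentation $\X=[U/G]$ with $G=\ker\bigl((\Bbbk^*)^{\Sigma(1)}\to N\otimes\Bbbk^*\bigr)$, as in \cite{borisov2005orbifold}. Line bundles on $\X$ are $G$-equivariant line bundles on $U$, and these are given by characters of $G$ because $\Pic(U)=0$ and $\Os(U)^*=\Bbbk^*$, the unstable locus having codimension at least $2$. The character group of $G$ is $\operatorname{coker}\beta^*$, using the surjectivity/finite-cokernel properties of $\beta$. The paper has already recorded this description, so I would cite it there.
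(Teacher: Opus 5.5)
Your argument is correct and is essentially the verification the paper leaves implicit when it calls the statement straightforward. Given the recorded identification $\Pic(\X)\cong\operatorname{coker}(\beta^*\colon M\to\ZZ^{\Sigma(1)})$ and the $\widehat\Sigma$ description of $\mathrm{SF}(\Sigma,\beta)$, the evaluation map $F\mapsto (b_\rho F(u_\rho))_\rho$ is an isomorphism onto $\ZZ^{\Sigma(1)}$ by simpliciality and carries $\iota(M)$ onto $\beta^*(M)$; as for the sign you left open, with the convention in \eqref{eq:localglobsec} $F$ corresponds to $\Os_\X\bigl(-\sum_\rho b_\rho F(u_\rho)D_\rho\bigr)$, which does not affect the isomorphism.
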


Given $F \in \mathrm{SF}(\Sigma, \beta)$, we will denote the associated line bundle by $\Os_{\X} (F)$. 
One can compute the cohomology of $\Os_{\X}(F)$ by a \v{C}ech complex in essentially the same way as for a line bundle on a toric variety.
Namely, the toric stack $\X_{(\Sigma, \beta)}$ with chosen stacky fan $(\Sigma, \beta)$ has a canonical \v{C}ech cover.
For each $\sigma \in \Sigma$ there is an associated toric stack $U_{(\sigma, \beta|_{\widehat{\sigma}})}$.
We note that 
\begin{equation} \label{eq:localglobsec}
    \Gamma\left(U_{(\sigma, \beta|_{\widehat{\sigma}})}, \Os_{\X}(F) \right) = \bigoplus_{m \in M \, \mid \, \langle m , u_\rho \rangle \geq F(u_\rho) \text{ for all } \rho \in \sigma(1)} \Bbbk 
\end{equation}
respecting the natural $M$ grading on the left-hand side and the higher cohomology of $\Os_{\X}(F)$ on $U_{(\sigma, \beta|_{\widehat{\sigma}})}$ vanishes by the identification of $\Os_{\X}(F)$ restricted to $U_{(\sigma, \beta|_{\widehat{\sigma}})}$ with an equivariant line bundle on the affine toric variety associated to $\widehat{\sigma}$.
We choose an ordering $\{ \sigma_i \}_{i = 1, \ldots, |\Sigma(\dim(M))|}$ of the maximal cones of $\Sigma$.
Then, the \v{C}ech complex
\[
\check{C}_m^d(\Os_{\X}(F))=
\bigoplus_{i_0<i_1<\cdots<i_d}
\Gamma\left(
U_{(\sigma_{i_0},\beta|_{\widehat{\sigma}_{i_0}})}\cap\cdots\cap
U_{(\sigma_{i_d},\beta|_{\widehat{\sigma}_{i_d}})}
\right)_m
\]
computes the degree $m \in M$ part of $H^d(\Os_{\X}(F))$ for any $F \in \mathrm{SF}(\Sigma, \beta)$, where the $m$ subscript on the global sections denotes the degree $m$ summand.
We note that
\[
U_{(\sigma_{i_0},\beta|_{\widehat{\sigma}_{i_0}})}\cap\cdots\cap
U_{(\sigma_{i_d},\beta|_{\widehat{\sigma}_{i_d}})}
= U_{(\tau,\beta|_{\widehat{\tau}})}
\]
for the cone $\tau=\sigma_{i_0}\cap\cdots\cap\sigma_{i_d}\in\Sigma$, so \eqref{eq:localglobsec} implies that
\[
\Gamma\left(
U_{(\sigma_{i_0},\beta|_{\widehat{\sigma}_{i_0}})}\cap\cdots\cap
U_{(\sigma_{i_d},\beta|_{\widehat{\sigma}_{i_d}})}
\right)_m
=
\begin{cases}
\Bbbk & \langle m,u_\rho\rangle\geq F(u_\rho) \text{ for all } \rho\in\tau(1),\\
0 & \text{otherwise}.
\end{cases}
\]

Following \cite[Section 6.3]{abouzaid2009morse}, we now define a dg category $\check{C}(\X)$ whose objects are the line bundles $\Os_\X(F)$ and
\[ \hom(\Os_\X(F_1),\Os_\X(F_2)) = \bigoplus_d \bigoplus_{m \in M} \check{C}^d_m(\Os_\X(F_2 - F_1)) \]
equipped with the \v{C}ech differential and cup product. 
We let $D^b_{dg}(\X)$ be the pre-triangulated closure of $\check{C}(\X)$. 

\begin{prop} \label{prop:dbislinebundles}
    $D^b_{dg}(\X)$ is a dg enhancement of $D^b(\X)$.
\end{prop}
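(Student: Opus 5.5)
We need to show that $D^b_{dg}(\X)$, the pre-triangulated closure of $\check C(\X)$, enhances $D^b(\X)$. The plan is the standard two-step argument used for toric varieties in \cite[Section 6.3]{abouzaid2009morse}, transported to the stack. First we show that $\check C(\X)$ computes the correct morphism complexes. Then we show that line bundles generate $D^b(\X)$.

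\textbf{Step 1: full faithfulness.} We construct a dg functor from $\check C(\X)$ to a standard dg enhancement of $D^b(\X)$. Natural choices are the \v{C}ech-resolution enhancement of quasi-coherent sheaves on $\X$ with respect to the canonical cover $\{U_{(\sigma,\beta|_{\widehat\sigma})}\}$, or injective complexes. The functor sends $\Os_\X(F)$ to its \v{C}ech resolution.

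The key inputs are as follows.
\begin{itemize}
\item Each $U_{(\sigma,\beta|_{\widehat\sigma})}$ is a quotient of an affine space by a diagonalizable group. Because diagonalizable groups are linearly reductive even over an arbitrary field or over $\ZZ$, higher cohomology of quasi-coherent sheaves vanishes on these charts.
\item The \v{C}ech complex therefore computes $\mathrm{R}\Gamma$, and hence
\[
\mathrm{RHom}(\Os_\X(F_1),\Os_\X(F_2))\simeq \mathrm{R}\Gamma(\Os_\X(F_2-F_1)).
\]
\item The graded pieces of this complex are exactly the $\check C^d_m$ computed above via \eqref{eq:localglobsec}.
\item Compatibility with composition follows because the cup product on \v{C}ech cochains models the Yoneda product, as in the variety case.
\end{itemize}
Together these show the functor is quasi-fully faithful on line bundles, and hence on the pre-triangulated closure.

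\textbf{Step 2: generation.} We must show that line bundles split-generate, and in fact generate, $D^b(\X)$. For a semiprojective toric DM stack, I would present $\X=[(\Bbbk^{\Sigma(1)}\setminus Z)/G]$ with $G=\ker$ of the map of tori induced by $\beta$. Coherent sheaves on $\X$ are then $G$-equivariant modules on the open set $\Bbbk^{\Sigma(1)}\setminus Z$. Every such coherent sheaf is the restriction of a finitely generated $G$-graded module over the Cox ring $S=\Bbbk[x_\rho]$.

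By Hilbert's syzygy theorem in the graded setting, that module has a finite free resolution by sums of $S(\chi)$. These restrict to sums of line bundles $\Os_\X(F)$. So every coherent sheaf, and hence every bounded complex, lies in the triangulated hull of line bundles; no idempotent completion is needed. Semiprojectivity enters by guaranteeing that the GIT presentation exists, with the coarse space projective over an affine base, so that this quotient description of $\Coh(\X)$ holds. Over $\ZZ$, I would additionally use that $S$ is regular of finite global dimension.

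\textbf{Main obstacle.} The step I expect to be the main obstacle is Step 1 in the stacky setting: justifying that the \v{C}ech cover by the $U_{(\sigma,\cdot)}$ is acyclic for all quasi-coherent sheaves, and that the resulting model is a genuine dg enhancement rather than merely an isomorphism on cohomology. I would handle this by working $G$-equivariantly on the smooth fan $\widehat\Sigma$. There the charts are $G$-stable affine opens, and exactness of $G$-invariants reduces everything to the toric-variety argument of \cite{abouzaid2009morse}, applied degree by degree in the $M$-grading.
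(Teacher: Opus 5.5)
\textbf{Verdict: there is a gap in Step 2.} Your two-step structure matches the paper's: the \v{C}ech complexes compute derived morphisms, and line bundles generate. Step 1 is fine; the paper just says ``by construction'', and your acyclicity argument via diagonalizable stabilizers is the right justification. The gap is the sentence ``by Hilbert's syzygy theorem in the graded setting, that module has a finite free resolution by sums of $S(\chi)$.''

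The graded syzygy theorem in that form relies on graded Nakayama, which needs a positive grading ($S_0=\Bbbk$). For the non-complete stacks at hand the Cox grading is not positive. For $[\CC^2/\ZZ_\nn]$, the group $G=\mu_\nn$ acts on $S=\Bbbk[x_1,x_2]$ with weights $(-1,1)$. So $S_0=S^{\mu_\nn}$ contains $x_1x_2$, $x_1^\nn$, $x_2^\nn$, and $S/(x_1x_2-1)$ is a nonzero graded module not supported at the origin.

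What the standard argument actually gives is that a high syzygy $P$ is graded projective, hence a graded direct summand of a finite sum of $S(\chi)$'s. That only shows that line bundles \emph{split}-generate, i.e.\ that the idempotent completion of $D^b_{dg}(\X)$ enhances $D^b(\X)$. This is exactly what your remark ``no idempotent completion is needed'' must rule out, since $D^b_{dg}(\X)$ is defined as the pre-triangulated closure.

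To close the gap you need that finitely generated graded projective $S$-modules are stably graded free. One route is homotopy invariance of equivariant $K_0$ for diagonalizable group actions (Thomason); over $\CC$, another is the solution of the equivariant Serre problem for abelian groups. Then $P\oplus F'\cong F$ with $F,F'$ graded free, and $0\to F'\to F\to P\to 0$ completes a finite resolution.

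\textbf{How the paper avoids this.} It uses the resolution of the diagonal by sums of exterior products of line bundles from \cite{hanlon2024resolutions}. In the proper case, a Beilinson-type convolution writes any $E$ as an iterated cone of objects $\mathrm{R}\Gamma(E\otimes L)\otimes L'$. Each of these is a finite sum of shifted line bundles, so one gets honest generation with no summands. The non-proper case follows by extending objects to a toric compactification and restricting.

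So the real subtlety is in Step 2, not in Step 1 as you expected. A minor point: the quotient presentation $[(\Bbbk^{\Sigma(1)}\setminus Z)/G]$ exists for every toric Deligne--Mumford stack, so semiprojectivity is not what makes your Step 2 work.
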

\begin{proof}
    By construction and \cref{prop:picardgroup}, the morphism complexes in $\check{C}(\X)$ compute the derived morphisms between all line bundles on $\X$.
    Thus, the claim follows from the fact that line bundles generate $D^b(\X)$.
    Although this generation statement holds more generally for smooth quasi-projective Deligne--Mumford stacks, we use the following concrete reference in the toric setting.
    In the case of interest here, \cite{hanlon2024resolutions} shows that there is a resolution of the diagonal by direct sums of line bundles, which in the proper case implies generation, and in the non-proper case also implies generation after compactifying.
\end{proof}

\subsection{Tropical Lagrangian sections to Morse theory} \label{subsec:troplag}

In this section, we construct the mirror tropical Lagrangian sections to line bundles on $\X$ and analyze their Floer theory analogously to \cite{abouzaid2009morse,hanlon2019monodromy,hanlon2022aspects}. 
Given a piecewise linear function $F$ on $|\Sigma|$, we will extend $F$ continuously to a piecewise linear function on $N_\RR$ (if necessary) and consider smoothings of the form
\[ H^{F, g}(u) = \int_{N_\RR} F(u - x) \eta_{g(r)}(x) \, dx = \int_{N_\RR} F(u - g(r) x) \eta(x) \, dx \]
where $g\colon \RR_{\geq 0} \to \RR_{\geq 0}$ is a smooth function that is constant near $0$, $\eta \colon N_\RR \to \RR$ is a smooth symmetric mollifier function supported on the ball of radius $1$, we set $\eta_t(u) = t^{-n} \eta(u/t)$, and $r = | u |$ is the length of $u$ with respect to a Euclidean metric on $N_\RR$. 

Calculating as in \cite[Lemma 3.1]{hanlon2022aspects}, we have
\begin{equation} \label{eq:convolvederivative}
    dH^{F,g}_u (v) = \int_{N_\RR} \left[ dF_{u - g(r)x} (v) - \frac{g'(r)}{r} dF_{u-g(r)x}(x) u \cdot v \right] \eta(x) \, dx 
\end{equation}
where $\cdot$ is the Euclidean inner product and we have identified $N_\RR$ with its tangent space. 
If we assume that $g(r) = \eps r$ outside of a compact set, then $H^{F,g}$ is $1$-homogeneous outside of a compact set, that is, it is a cylindrical Hamiltonian on $T^*(M_\RR/M) = M_\RR/M \times N_\RR$.
Given $F \in \mathrm{SF}(\Sigma, \beta)$ and $0< a \ll 1$, we set $F_a = F - F_K^a$ where $F_K^a$ is the support function of the (real) scaling of the canonical divisor $-\sum a D_\rho$ and, if necessary, we extend this function so that $F_a(u)$ is a positive piecewise linear function with $\lim_{a \to 0} F_a (u) = + \infty$ monotonically for $u$ outside of $|\Sigma|$. 
We define $H_{cyl}^{F,a,\eps} = H^{F_a, g}$ where $g(r) = \eps r$ outside of a compact set. 
From \cref{eq:convolvederivative} and again calculating as in \cite[Lemma 3.1]{hanlon2022aspects}, we have that
\begin{equation} \label{eq:closetostop} 
\left| dH_{cyl}^{F,a,\eps}(u_\rho) - F(u_\rho) + a\right| \leq \eps C_F 
\end{equation}
for some constant $C_F$ on a complement of a compact subset in the cone 
\[ U^\eps_\rho = \mathrm{cone} \{ u \in N_\RR \mid |u| = 1, u \in \mathrm{star}(\rho), \text{ and } d(u, \sigma) > \eps \text{ for all } \sigma \in \Sigma \text{ such that } \rho \not\in \sigma \} \]
for all $\rho \in \Sigma(1)$. For any $a < 1/(\max_\rho b_\rho)$ we can choose $\eps$ small enough so that $b_\rho dH_{cyl}^{F,a,\eps}(u_\rho) \not \in \ZZ$ on the complement of a compact subset of $U^\eps_\rho$ since $b_\rho F(u_\rho) \in \ZZ$.
Thus, for any $a < 1/(\max_\rho b_\rho)$, it follows that the cylindrical Lagrangian
\[ L_{cyl}^{a, \eps} (F) = \mathrm{graph} \left(dH_{cyl}^{F,a,\eps} \right)\]
satisfies $\partial_\infty L_{cyl}^{a, \eps} (F) \cap \stp_{\Sigma, \beta} = \emptyset$ for all sufficiently small $\eps$ and hence determines an object of $\cW( T^*M_{\RR}/M, \stp_{\Sigma, \beta})$.
Note that $L_{cyl}^{a, \eps}$ has canonical grading and orientation data by \cite{ganatra2024microlocal}, and we always work with that data when viewing $L_{cyl}^{a, \eps}(F)$ as an object of $\cW( T^*M_{\RR}/M, \stp_{\Sigma, \beta})$.
Further, all $L_{cyl}^{a, \eps}(F)$ for small $a, \eps$ are Hamiltonian isotopic through an isotopy avoiding $\stp_{\Sigma, \beta}$ at infinity and hence the object $L_{cyl}^{a, \eps}(F)$ in $\cW( T^*M_{\RR}/M, \stp_{\Sigma, \beta})$ is independent of $a, \eps$ up to quasi-isomorphism.
Therefore, we may at times drop $\eps$ and/or $a$ from the notation. 

Moreover, we can construct explicit cofinal wrappings of these objects by taking $a, \eps$ to zero. 

\begin{prop}
    For $0 < a < 1/ \max_\rho b_\rho$ and $\eps > 0$ sufficiently small as above, taking $a, \eps$ monotonically to zero gives a cofinal wrapping of $L^{a, \eps}_{cyl}(F)$.
\end{prop}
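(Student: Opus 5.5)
We need to show that as $a,\eps$ decrease monotonically to zero, $L^{a,\eps}_{cyl}(F)$ is obtained by a positive isotopy avoiding the stop, and that its boundary at infinity eventually enters every neighborhood of the stop. We follow the criterion for cofinal wrapping in \cite{ganatra2024sectorial}. A positive isotopy of cylindrical Lagrangians away from $\stp_{\Sigma,\beta}$ is cofinal if the induced Legendrian isotopy at infinity has contact Hamiltonian whose integral diverges. This must hold on every region of $\partial_\infty L$ away from the part that accumulates on the stop. Concretely, we also check that the Legendrians $\partial_\infty L^{a,\eps}_{cyl}(F)$ converge to the portion of $\stp_{\Sigma,\beta}$ they can approach, and that the wrapping is unbounded elsewhere. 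This is the same template as \cite[Proposition 3.2]{hanlon2022aspects}, which we adapt to the stacky integrality condition $b_\rho F(u_\rho)\in\ZZ$.

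\emph{Step 1: positivity.} Since $\partial_\infty L^{a,\eps}_{cyl}(F)=\partial_\infty\mathrm{graph}(dH^{F,a,\eps}_{cyl})$, the contact Hamiltonian of the family is the derivative in the parameter of the 1-homogeneous function $H^{F_a,g}$ restricted to the sphere at infinity. We compute this derivative in $a$ and in $\eps$ from the convolution formula and \eqref{eq:convolvederivative}.
\begin{itemize}
\item Decreasing $a$ lowers $F_a=F+\sum aD_\rho$ on $|\Sigma|$. With our sign conventions this is the positive direction; it is precisely why $-F_K^a$ was chosen.
\item Outside $|\Sigma|$, $F_a\to+\infty$ monotonically, which is again positive.
\item Decreasing $\eps$ sharpens the smoothing. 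Since $F_a$ is convex near infinity in the semiprojective case, after adding a large multiple of the convex support function, $\partial_\eps H^{F_a,g}$ has a definite sign by Jensen-type monotonicity of mollified convex functions.
\end{itemize}
We would first try to reduce to the convex case by tensoring with an ample line bundle, which is a Hamiltonian symplectomorphism preserving the stop up to isotopy, as in \cref{prop:rhoCompat}. Combining the two parameters along a monotone path then gives a nonnegative contact Hamiltonian.

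\emph{Step 2: the isotopy avoids the stop.} This is exactly \eqref{eq:closetostop}. On each cone $U^\eps_\rho$ we have $b_\rho\,dH(u_\rho)\in b_\rho(F(u_\rho)-a)+O(\eps)$. This never lies in $\ZZ$ as long as $0<b_\rho a<1$ and $\eps\ll a$, so we take the monotone path with $\eps=\eps(a)$ chosen small relative to $a$. Near the walls, in the complement of $\bigcup U^\eps_\rho$, the stop component is cut out by integrality for all rays of a higher cone. The same estimate applied ray by ray excludes intersections there.

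\emph{Step 3: cofinality.} This is the main obstacle. As $a,\eps\to0$ the covector $dH(u_\rho)$ tends to $F(u_\rho)$, which lies on the stop, so the Legendrian converges to the stop along each $U_\rho$; this is the portion the isotopy is allowed to accumulate on. Outside $|\Sigma|$, the divergence $F_a\to\infty$ makes the wrapping unbounded. Near the walls between cones, the derivative $dH$ sweeps through an interval of covectors that approaches the full set $\pi(\sigma^{\perp_\beta})$. The hardest part is the uniform control near lower-dimensional cones, where the mollifier scale $g(r)=\eps r$ and the gap $a$ interact. We must show that any Legendrian $K$ disjoint from the stop is eventually passed, so that morphisms stabilize. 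We would prove this locally cone by cone using the explicit formula \eqref{eq:convolvederivative}, mirroring \cite[Lemma 3.3]{hanlon2022aspects}. The stacky factors $b_\rho$ only rescale the lattice of allowed values, so the toric-variety argument should transfer directly.
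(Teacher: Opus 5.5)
Your architecture matches the paper's: positivity from \eqref{eq:convolvederivative}, convergence to the stop over $|\Sigma|$ from \eqref{eq:closetostop}, divergence of $F_a$ off $|\Sigma|$, then \cite[Lemma 2.2]{ganatra2024sectorial}. The gap is in Step~1.

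\emph{Sign convention.} For $1$-homogeneous $H_t$, Euler's relation shows that the contact Hamiltonian of $t\mapsto\partial_\infty\mathrm{graph}(dH_t)$, for the contact form $\sum_j p_j\,dq_j$, is $\partial_tH_t$ on the unit sphere. So positive means $H_t$ increasing. Since $F_a(u_\rho)=F(u_\rho)-a$, decreasing $a$ \emph{raises} $F_a$ on $|\Sigma|$. Your first bullet asserts the opposite and contradicts your second.

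\emph{The $\eps$-direction.} On the unit sphere, $\partial_\eps H^{F_a,g}(u)=-\int dF_a(u-\eps x)(x)\,\eta(x)\,dx$ is supported within $\eps$ of the walls. Jensen gives it a sign only when $F_a$ bends the same way across every wall; shrinking the mollifier lowers an analytically convex function, which is a \emph{negative} isotopy. An arbitrary $F\in\mathrm{SF}(\Sigma,\beta)$ bends both ways, and semiprojectivity only supplies \emph{some} convex support function, not convexity of $F$.

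\emph{The ample twist.} This reduction does not help. A fixed Hamiltonian symplectomorphism sends $L^{a,\eps}_{cyl}(F)$ to $\mathrm{graph}\,d(H^{F_a,g}+H^{G,g_0})$, with $G$ smoothed at a fixed scale. That family has the same contact Hamiltonian $\partial_tH^{F_a,g}$, so nothing is gained. The family $L^{a,\eps}_{cyl}(F+kG)$ instead has contact Hamiltonian $\partial_tH^{F_a,g}+k\,\partial_tH^{G,g}$. The added term has exactly the favorable sign you want to exploit, so positivity of that family says nothing about $F$.

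\emph{What is missing.} The needed idea is domination, which is what the calculation in \cite[Lemma 3.3]{hanlon2022aspects} supplies. Along $a\downarrow 0$, the $a$-contribution is $|\dot a|$ times a mollification of $-\partial_aF_a$. This function equals $F_K^a/a>0$ on $|\Sigma|$ and is positive off it, so it is bounded below on the unit sphere. The $\eps$-contribution is bounded by a multiple of $|\dot\eps|\sup|dF_a|$. Let $\eps=\eps(a)$ shrink slowly relative to $a$, while keeping $\eps\ll a$ for stop avoidance (e.g.\ $\eps=ca^2$ with $c$ small). Then the isotopy is positive.

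Step~3, by contrast, is not an obstacle; the paper closes it in one line. The criterion \cite[Lemma 2.2]{ganatra2024sectorial} needs only a positive isotopy in the complement of the stop, each of whose points either converges to $\stp_{\Sigma,\beta}$ or is wrapped off to infinity. No argument that every test Legendrian $K$ is eventually passed is required, and no separate uniform control at lower-dimensional cones. Over $|\Sigma|$, convergence to the stop is exactly what \eqref{eq:closetostop} gives on the cones $U^\eps_\rho$ as $a,\eps\to 0$. Off $|\Sigma|$, the slope of $F_a$ diverges. In particular, you do not need $dH$ to sweep out all of $\pi(\sigma^{\perp_\beta})$ near the walls.
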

\begin{proof}
    Over $|\Sigma|$, we can check that the isotopy is positive at infinity by the same calculation as in \cite[Lemma 3.3]{hanlon2022aspects} using \eqref{eq:convolvederivative}. 
    Moreover, \eqref{eq:closetostop} shows that every point in $L_{cyl}^{a, \eps}(F)$ over $|\Sigma|$ approaches $\stp_{\Sigma, \beta}$ as $a, \eps$ go to zero. 
    Away from $|\Sigma|$, we have taken $F_a = u/a$ to be a linear function with monotonically increasing slope approaching $\infty$ as $a$ goes to zero. 
    We thus have a cofinal wrapping by \cite[Lemma 2.2]{ganatra2024sectorial}. 
\end{proof}

Note that the upper bound $a < 1/ \max_\rho b_\rho$ is not relevant for the claim that taking $a$ to $0$ gives a cofinal wrapping.
However, this bound prevents the Lagrangian from intersecting the stop as $a$ changes and thus (potentially) changing $L^{a,\eps}_{cyl}(F)$ as an object of $\cW( T^*M_{\RR}/M, \stp_{\Sigma, \beta})$. 
As such, we define $B := 1/ \max_{\rho \in \Sigma(1)} b_\rho$ and use this upper bound in later constructions.

Now, we will compute the subcategory of $\cW( T^*M_{\RR}/M, \stp_{\Sigma, \beta})$ consisting of the $L_{cyl}(F)$ by passing to an equivalent category that is better suited for a direct analysis of Floer theory.
Namely, we will work in the context of monomial admissibility in the sense of \cite{hanlon2019monodromy}. 
We will identify $T^* M_\RR/M$ with $M \otimes \CC^*$ by choosing a strictly convex function $\varphi \colon M_\RR \to \RR$ and taking the Legendre transform.
More explicitly, $M_\RR$ is the base of the $\mathrm{Log}$ map and we use $\varphi$ to make the standard symplectic form on $T^*M_\RR/M$ into the K\"{a}hler form 
\[ \omega = \sum_j dp_j \wedge dq_j = \sum_j d(\partial \varphi / \partial \log|z_j|) \wedge dq_j , \]
where $p_j$ are coordinates on $N_\RR$ and $q_j$ are the corresponding dual coordinates on $M_\RR/M$.
For $\rho \in \Sigma(1)$, we set 
\[ C_\rho^\delta = \{ u \in \mathrm{star}(\rho) \mid d(u, \sigma) \geq \delta \text{ for all } \sigma \in \Sigma \text{ such that } \rho \not \in \sigma \} \]
for any $\delta > 0$. 
The following is a slight generalization of \cite[Corollary 2.40]{hanlon2019monodromy}.

\begin{prop} \label{prop:mondivisionexists}
    For sufficiently small $\delta$, there exist $k_\rho \in \RR_{>0}$ and a strictly convex $\varphi$ such that for all $z = (p,q)$ outside of a compact set, the maximum
    \begin{equation} \label{eq:mondivfunction}
    \max_{\rho \in \Sigma(1)} |z^{b_\rho u_\rho}|^{k_\rho} 
    \end{equation}
    is achieved by a $\rho$ such that $p \in C_\rho^\delta$. 
\end{prop}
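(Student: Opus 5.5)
The plan is to reduce the statement to the analogous result for the smooth fan $\widehat{\Sigma}$, or more directly to \cite[Corollary 2.40]{hanlon2019monodromy}. The monomials $z^{b_\rho u_\rho}$ differ from $z^{u_\rho}$ only by the positive power $b_\rho$. Hence $|z^{b_\rho u_\rho}|^{k_\rho} = |z^{u_\rho}|^{b_\rho k_\rho}$, and replacing $k_\rho$ by $k'_\rho := b_\rho k_\rho$ turns the stacky problem into the problem for the underlying simplicial fan $\Sigma$ with primitive generators $u_\rho$. So it suffices to find $k'_\rho>0$ and a strictly convex $\varphi$ that work for $\Sigma$, and then set $k_\rho = k'_\rho/b_\rho$.

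For $\Sigma$ itself, I would follow the construction in \cite{hanlon2019monodromy}. Write $\log|z^{u_\rho}| = \langle \log|z|, u_\rho\rangle$, where $\log|z| = \nabla\varphi^*(p)$ is the Legendre dual coordinate to $p\in N_\RR$. Maximizing \eqref{eq:mondivfunction} then amounts to maximizing the function $\rho\mapsto k'_\rho\langle \nabla\varphi^*(p),u_\rho\rangle$. I would choose $\varphi$ to be quadratic, say $\varphi = \tfrac12|x|^2$ (possibly smoothed near a compact set), so that $\nabla\varphi^*(p)$ is identified with $p$ itself outside a compact set. The problem then becomes purely combinatorial: find $k'_\rho$ such that, for $p$ of large norm, the maximum of $k'_\rho\langle p,u_\rho\rangle$ over $\rho$ is attained at a $\rho$ with $p\in C^\delta_\rho$.

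The key combinatorial input is semiprojectivity. Because $\Sigma$ has a strictly convex piecewise linear support function, I can take its negative $\psi$, which is piecewise linear, convex on $|\Sigma|$, and has $\psi(u_\rho)$ of a fixed sign for every ray. I would then set $k'_\rho = 1/|\psi(u_\rho)|$ (after rescaling). With these weights, $\max_\rho k'_\rho\langle p,u_\rho\rangle$ is the support function of the polytope with vertices $k'_\rho u_\rho$, whose normal fan refines to $\Sigma$. For $p\in\sigma$, the maximizing $\rho$ must then lie in $\sigma(1)$, and among the rays of $\sigma$ the maximizer is the one that $p$ is closest to. This puts $p$ in $\mathrm{star}(\rho)$ at positive angular distance from the cones not containing $\rho$. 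Taking $\delta$ smaller than that uniform angular margin gives $p\in C_\rho^\delta$.

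The main obstacle is this last step. Points $p$ near walls between cones, or outside $|\Sigma|$ when the support is not all of $N_\RR$, need a uniform angular margin, and I must check that the maximizer's cone genuinely avoids those $\sigma\not\ni\rho$ rather than merely being adjacent. I would treat it exactly as in \cite[Proposition 2.39, Corollary 2.40]{hanlon2019monodromy}, by compactness on the unit sphere intersected with $|\Sigma|$. If needed, I would perturb the $k'_\rho$ generically to break ties, and then pass from this conic statement to all $z$ outside a compact set, using that the error between $\nabla\varphi^*(p)$ and $p$ is bounded.
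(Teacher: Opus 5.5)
Your first step is correct and matches the paper's observation. Since $|z^{b_\rho u_\rho}|^{k_\rho}=|z^{u_\rho}|^{b_\rho k_\rho}$, the stacky data can be absorbed into the weights. What remains is a statement about the simplicial fan $\Sigma$. To quote \cite[Corollary 2.40]{hanlon2019monodromy} you still need, as the paper remarks, that its proof never uses smoothness.

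\textbf{The gap.} The gap is in your explicit construction. With $\varphi$ quadratic, the locus where $\rho$ maximizes $k'_\rho\langle p,u_\rho\rangle$ is the normal cone of $P=\mathrm{conv}\{k'_\rho u_\rho\}$ at the vertex $k'_\rho u_\rho$. With $k'_\rho=1/|\psi(u_\rho)|$, $P$ is the polar of $-Q$ for a moment polytope $Q$ of $\Sigma$. It is the \emph{face} fan of $P$ that equals $\Sigma$. The \emph{normal} fan of $P$ instead consists of the cones over the facets of $-Q$, transported by your inner product, and these need not lie in the stars.

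Here is a concrete counterexample. Take $\mathbb{F}_1$ with $u_1=(1,0)$, $u_2=(0,1)$, $u_3=(-1,1)$, $u_4=(0,-1)$, and $Q=\{m_1\ge -1,\ m_2\ge -1,\ -m_1+m_2\ge -3,\ m_2\le 1\}$, so $k'=(1,1,\tfrac13,1)$. At $p=R(-9,5)$ the four values are $-9R$, $5R$, $\tfrac{14}{3}R$, $-5R$, so $u_2$ is the unique maximizer. Yet $p=9R\,u_3+4R\,u_4$ lies in the interior of $\mathrm{cone}(u_3,u_4)$, so $p\notin \mathrm{star}(u_2)\supset C^\delta_{u_2}$ for every $R$. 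Thus your claim that ``for $p\in\sigma$ the maximizer lies in $\sigma(1)$'' is false. No compactness on the sphere or tie-breaking perturbation of the $k'_\rho$ repairs it.

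\textbf{What is missing.} The missing ingredient is exactly what the paper invokes: the freedom to choose a non-standard strictly convex $\varphi$ adapted to $Q$. This is Zhou's adapted K\"ahler form \cite{zhou2018lagrangian}, which the paper notes exists whenever the normal fan is simplicial. Its gradient sends the region where $\rho$ dominates into $\mathrm{star}(\rho)$.

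In the complete case one can, for instance, take $\varphi=\tfrac12 h_s^2+\tfrac{\epsilon}{2}|x|^2$, where $h_s$ is a mollification of the gauge $h=\max_\rho k'_\rho\langle\cdot,u_\rho\rangle$ of $-Q$ and $\epsilon$ is small. For $|x|$ large, the rays that are maximal somewhere in the smoothing ball span a cone of $\Sigma$ that contains the maximizer $\rho$. One then checks that $\nabla\varphi$ lies in that cone up to an error $O(\epsilon|x|)$, with weight on $u_\rho$ bounded below. Together with homogeneity, this gives the $\delta$-margin.

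Separately, the case $p\notin|\Sigma|$ cannot be settled by any angular margin, since $C^\delta_\rho\subset\mathrm{star}(\rho)\subset|\Sigma|$. In the non-complete semiprojective case the statement has to be read for a simplicial completion, as the paper does later when choosing $Q$.
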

\begin{proof}
    The claim follows from the same argument as \cite[Corollary 2.40]{hanlon2019monodromy}, which does not use smoothness of the fan.
    This is perhaps more easily seen by noting that, in the language of \cite{hanlon2019monodromy}, our proposition is asserting the existence of a monomial division adapted to the stacky fan $(\Sigma, \beta)$. 
    In particular, the construction of a K\"{a}hler form adapted to a polytope from \cite{zhou2018lagrangian} holds when the normal fan is simplicial.
\end{proof}

Now, we proceed to construct monomially admissible Lagrangian sections in the sense of \cite{hanlon2019monodromy} and a category of these objects. 
For $F \in \mathrm{SF}(\Sigma, \beta)$, we define $H_{mon}^{F, \eps} = H^{F,g}$ where $g(r) = \eps$ and
\[ L_{mon}^\eps(F) = \mathrm{graph} \left(dH_{mon}^{F, \eps}\right). \]
By \cref{eq:convolvederivative}, we have that
\begin{equation} \label{eq:monsectioncondition}
    dH_{mon}^{F, \eps}(u_\rho) = F(u_\rho)
\end{equation}  
over $C_\rho^\delta$ for all $\eps < \delta$. 
As in the cylindrical case, we will at times assume $\eps$ to be sufficiently small, drop it from the notation, and simply write $L_{mon}(F)$. 

Then, following \cite[Section 3]{hanlon2019monodromy} and using \cref{prop:mondivisionexists} to prove that Floer theory is well-defined, we can construct a category $\Fs_{mon}(\Sigma, \beta)$ by localization.
The objects of the ordered category are pairs $(L_{mon}(F), a)$ with $a \in (0,B)$ and ``right-way'' morphisms are Floer cochain complexes.
That is, we define $\hom\Big( (L_{mon}(F_1) ,a_1), (L_{mon}(F_2), a_2)\Big)$ to be 
\[ \begin{cases} CF( L_{mon}(F_1 - F_K^{a_1}), L_{mon}(F_2 - F_K^{a_2}) ) & a_1 < a_2 \\
\Bbbk \cdot e_{(L_{mon}(F_1),a_1)}  & (L_{mon}(F_1) ,a_1) = (L_{mon}(F_2) ,a_2) \\ 
0 & \text{otherwise} \end{cases} 
\]
where $e_{(L_{mon}(F_1) ,a_1)}$ is a formal degree zero identity element.
The Floer chain complexes and higher operations are computed with respect to the canonical brane structures on these sections and (domain dependent) almost complex structures making the functions $z^{b_\rho u_\rho}$ holomorphic over the complement of a compact subset of $C_\rho^\delta$ for all $\rho \in \Sigma(1)$. 
Then, $\Fs_{mon}(\Sigma, \beta)$ is defined as the localization at natural continuation elements in $CF(L_{mon}(F-F_K^{a_1}), L_{mon}(F-F_K^{a_2}))$ when $a_1 < a_2$. 

To make computations in $\cW(T^*M_\RR/M, \stp_{\Sigma, \beta})$ and prove \cref{thm:hms}, we can work instead with $\Fs_{mon}(\Sigma, \beta)$ thanks to the following. 

\begin{prop} \label{prop:pwismon}
There is a pre-triangulated equivalence $\Fs_{mon}(\Sigma, \beta) \xrightarrow{\sim} \cW(T^*M_\RR/M, \stp_{\Sigma, \beta})$ taking $(L_{mon}(F), a)$ to $L_{cyl}^a(F)$ for every $F \in \mathrm{SF}(\Sigma, \beta)$.
\end{prop}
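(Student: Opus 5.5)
The plan is to follow the proof of the analogous statement for toric varieties in \cite[Section 4]{hanlon2022aspects} (building on \cite{hanlon2019monodromy}), checking that each step only needs a monomial division (\cref{prop:mondivisionexists}) and the stop-avoidance estimate \eqref{eq:closetostop}, not smoothness of the fan.

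\textbf{Step 1: the functor.} I would define a functor on the directed category underlying $\Fs_{mon}(\Sigma,\beta)$ before localization. The first task is to interpolate between $L_{mon}^\eps(F-F_K^a)$ and $L_{cyl}^{a,\eps}(F)$. Both are graphs of $dH^{F_a,g}$, differing only in the choice of $g$ ($g=\eps$ versus $g(r)=\eps r$ at infinity). I would choose a family $g_s$ whose members are constant near $0$ on a growing ball and linear outside it.

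By \eqref{eq:monsectioncondition} and \eqref{eq:closetostop}, the value $b_\rho\, dH(u_\rho)$ stays within $\eps C_F$ of $b_\rho(F(u_\rho)-a)\notin\ZZ$ on each $C_\rho^\delta$. It follows that these Lagrangians never meet the stop in the relevant region. Using the monomial division, the holomorphic-function/maximum-principle argument of \cite[Section 3]{hanlon2019monodromy} then shows that, for $a_1<a_2$, the monomially admissible Floer complex $CF(L_{mon}(F_1-F_K^{a_1}),L_{mon}(F_2-F_K^{a_2}))$ is quasi-isomorphic to the partially wrapped Floer complex $CF(L^{a_1}_{cyl}(F_1),L^{a_2}_{cyl}(F_2))$. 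Since $a_1<a_2$ is a positive push, the latter is computed in $\cW(T^*M_\RR/M,\stp_{\Sigma,\beta})$ without further wrapping. The identification should respect $A_\infty$ products, via the same compactness argument applied to families of discs. This yields an $A_\infty$ functor from the directed category.

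\textbf{Step 2: localization.} Continuation elements for $a_1<a_2$ with fixed $F$ go to continuation maps along a positive isotopy that avoids the stop. Because $a<B$ ensures stop avoidance, these are isomorphisms in $\cW$, so the functor descends to $\Fs_{mon}(\Sigma,\beta)$.

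\textbf{Step 3: fully faithful.} Morphisms in the localization are colimits over $a_2\to 0$. By the cofinal wrapping proposition, these compute the wrapped Floer cohomology in $\cW$, which gives full faithfulness.

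\textbf{Step 4: generation.} It remains to show that the $L_{cyl}(F)$ generate $\cW(T^*M_\RR/M,\stp_{\Sigma,\beta})$. I would use the generation of partially wrapped categories of cotangent bundles by cotangent fibers and linking disks \cite{ganatra2024sectorial}, together with the stratification argument. Concretely, the resolution of each fiber or linking disk by tropical sections, as in \cite{hanlon2024resolutions} and the Bondal--Thomsen cellular resolutions (compare \cref{subsec:BTcollection}), is adapted to the stacky setting. An alternative is to deduce generation from the sheaf-theoretic equivalence \cite{kuwagaki2020nonequivariant,ganatra2024microlocal}, which sends $L_{cyl}(F)$ to the images of line bundles, and then invoke \cref{prop:dbislinebundles}.

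\textbf{Main obstacle.} I expect Step 1 to be the hardest: establishing the $C^0$ confinement of holomorphic discs when $\Sigma$ is only simplicial and the relevant monomials are $z^{b_\rho u_\rho}$. My first attempt would be to verify that the open-mapping/maximum-principle argument of \cite{hanlon2019monodromy} uses only the conditions \eqref{eq:mondivfunction} and \eqref{eq:monsectioncondition}. Those conditions are exactly what \cref{prop:mondivisionexists} provides.
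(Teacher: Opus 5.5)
\textbf{Steps 1--3.} These follow the paper's route. The paper gets the quasi-equivalence of $\Fs_{mon}(\Sigma,\beta)$ with the full subcategory on the $L_{cyl}(F)$ from the argument of \cite[Theorem 3.5]{hanlon2022aspects}, with \eqref{eq:closetostop} supplying stop avoidance. Two adjustments are needed.

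First, the paper's interpolating $g$ is linear on a long interval and then \emph{constant}. So the interpolating Lagrangians are monomially admissible at infinity but conical on a large Liouville subdomain. It is the integrated maximum principle on that subdomain that identifies their Floer theory with that of $L_{cyl}(F)$. Then \eqref{eq:closetostop} rules out new intersections while interpolating to $L_{mon}(F)$. Your family is instead conical at infinity, so the monomial open-mapping argument you cite does not by itself confine the discs: their escape route runs through the conical region.

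Second, right-way morphisms require $a_1<a_2$. The localization therefore sends the source parameter $a_1$ to $0$, not $a_2$.

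\textbf{Step 4 is a genuine gap.} Generation by cotangent fibers and linking disks is fine. But the step carrying all the content is only asserted ("adapted to the stacky setting"): that each of these generators lies in the triangulated hull of the $L_{cyl}(F)$.

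The resolutions of \cite{hanlon2024resolutions} are $B$-side statements. Turning them into exact triangles in $\cW$ whose cones are the linking disks needs an independent $A$-side computation. That computation is the more involved inductive argument of \cite{hanlon2022aspects}, which the paper deliberately avoids for stacks.

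Your alternative through \cite{kuwagaki2020nonequivariant,ganatra2024microlocal} needs a compatibility you have not justified: that the composite equivalence sends $L_{cyl}(F)$ to $\Os_{\X}(F)$. It would also make the Floer-theoretic proof depend on the sheaf-theoretic one.

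\textbf{The missing idea} is to choose generators that are \emph{directly} Hamiltonian isotopic to tropical sections.
\begin{enumerate}
\item For $\theta\in M_\RR/M$, let $L_\theta$ be the outward conormal to a small ball around $\theta$. These generate $\cW(T^*M_\RR/M,\stp_{\Sigma,\beta})$ by \cite[Proposition 5.22 and Remark 5.23]{ganatra2024microlocal} and stop removal.
\item Write $L_\theta=\mathrm{graph}(df)$ with $df(u_\rho)=\langle\theta,u_\rho\rangle+g(u)$ on $U^\eps_\rho$, for a small positive $g$.
\item Set $k_\rho:=\lfloor\langle\theta,b_\rho u_\rho\rangle\rfloor$ and take $F\in\mathrm{SF}(\Sigma,\beta)$ with $F(u_\rho)=(k_\rho+1)/b_\rho$.
\item Both $b_\rho\,df(u_\rho)$ and, by \eqref{eq:closetostop}, $b_\rho\,dH^F_{cyl}(u_\rho)$ lie in the open interval $(k_\rho,k_\rho+1)$.
\item By convexity of that interval, $f_t=(1-t)f+tH^F_{cyl}$ gives a Hamiltonian isotopy from $L_\theta$ to $L_{cyl}(F)$ that never meets $\stp_{\Sigma,\beta}$.
\end{enumerate}
Generation follows immediately, with no resolutions needed.
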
 
\begin{proof}
    First, we note that there is a quasi-equivalence between $\Fs_{mon}(\Sigma,\beta)$ and the subcategory of $\cW(T^*M_\RR/M, \stp_{\Sigma, \beta})$ consisting of the Lagrangian sections $L_{cyl}(F)$.
    This follows from the same argument as in \cite[Theorem 3.5]{hanlon2022aspects} where the main idea is to use Lagrangians which are the graphs of $dH^{F,g}$ where $g$ is a monotonic function that is linear on a large interval after which it becomes constant.
    If the interval is large enough, the integrated maximum principle implies that the Floer theory is that of $L_{cyl}(F)$ in a Liouville subdomain while \eqref{eq:closetostop} precludes the introduction of additional intersections while interpolating to a $L_{mon}(F)$. The details are exactly the same as in \cite[Theorem 3.5]{hanlon2022aspects}.

    It remains to verify that $\cW(T^*M_\RR/M, \stp_{\Sigma, \beta})$ is generated by the $L_{cyl}(F)$. 
    To see this, for any $\theta \in M_\RR/M$, we consider the Lagrangian $L_\theta$ given by the outward conormal to a small ball around $\theta \in M_\RR/M$.\footnote{The Lagrangians $L_\theta$ are introduced and studied in a more general context in \cite[Section 5.2]{ganatra2024microlocal}.}
    By \cite[Proposition 5.22 and Remark 5.23]{ganatra2024microlocal} and stop removal, the Lagrangians $L_\theta$ generate $\cW(T^*M_\RR/M, \stp_{\Sigma, \beta})$.
    We conclude by showing that each $L_\theta$ is Hamiltonian isotopic to some $L_{cyl}(F)$ in the complement of $\stp_{\Sigma, \beta}$, which is a generalization of \cite[Lemma 3.14]{hanlon2022aspects}. 
    By definition, the Lagrangian $L_\theta$ can be written as the graph of the differential of a function $f \colon N_\RR \to \RR$ that satisfies
    \[ df(u_\rho) = \langle \theta, u_\rho \rangle + g(u) \]
    on $U^\eps_{\rho}$ for all $\rho \in \Sigma(1)$ where $g(u)$ is an arbitrarily small positive function. 
    Let $F \in \mathrm{SF}(\Sigma, \beta)$ be such that 
    \begin{equation}
    \label{eq:BTcansupport} F(u_\rho) = \frac{\lfloor \langle \theta, b_\rho u_\rho \rangle \rfloor + 1}{b_\rho}.
    \end{equation}
    For sufficiently small $g(u)$, it then follows from \eqref{eq:closetostop} that $b_\rho df_t(u_\rho) \not \in \ZZ$ where $f_t = (1-t)f + tH_{cyl}^F$. 
    That is, we have produced a Hamiltonian isotopy from $L_\theta$ to $L_{cyl}(F)$ in the complement of $\stp_{\Sigma, \beta}$.
\end{proof}

Note that the proof of generation by tropical Lagrangian sections in \cite{hanlon2022aspects} is more involved and uses an inductive argument by restriction to the toric boundary divisors. 
There should be an analogous approach for stacks as well, but we take a more direct path in \cref{prop:pwismon} using natural generators of the derived category that have received recent interest. 
Namely, the support functions defined by \eqref{eq:BTcansupport} correspond to the line bundles that are summands of the pushforward of the canonical bundle under the toric Frobenius morphism.
To obtain mirror Lagrangians to the summands of the pushforward of the structure sheaf under toric Frobenius, which have been dubbed the Bondal--Thomsen or Thomsen collection \cite{thomsen2000frobenius,Bondal, hanlon2024resolutions,ballard2025king}, one takes the inward conormal to a small ball around $\theta$ rather than the outward conormal $L_\theta$. 
See \cref{subsec:BTcollection}.

\begin{rem}
    It should also be possible to avoid passing to $\Fs_{mon}(\Sigma, \beta)$ altogether and carry out the Morse theory arguments that follow directly in the cylindrical setting. 
    While such an approach would be more direct when working from scratch, we have opted to instead rely on work already in the literature to the extent that is possible.
    The more direct approach would be interesting to pursue in another venue and could also allow one to work with more general fans.
\end{rem}

With \cref{prop:pwismon} in hand, we can now carry out the strategy in \cite{abouzaid2009morse} (also outlined in \cite[Section 3.5]{hanlon2019monodromy}) and pass to Morse theory.

First, we need to carry out the Morse theory analogue of the construction of $\Fs_{mon}(\Sigma, \beta)$. 
We start with an ordered category whose objects are pairs $(H_{mon}^F, a)$ with $a \in (0,B)$ and we define $\hom\Big( (H_{mon}^{F_1} ,a_1), (H_{mon}^{F_2}, a_2)\Big)$ to be
\[
\begin{cases} \bigoplus_{m \in M} CM\left( H_{mon}^{F_2} - H_{mon}^{F_1} - (H_{mon}^{F_K^{a_2}} - H_{mon}^{F_K^{a_1}}) + m \right) & a_1 < a_2 \\
\Bbbk \cdot e_{(H_{mon}^{F_1},a_1)}  & (H_{mon}^{F_1} ,a_1) = (H_{mon}^{F_2} ,a_2) \\ 
0 & \text{otherwise} \end{cases} 
\]
where $e_{(H_{mon}^{F_1} ,a_1)}$ is a formal degree zero identity element and $CM$ denotes the Morse complex with respect to a Riemannian metric on $N_\RR$ inducing a complex structure on $T^*M_\RR/M$ satisfying the property required in the construction of $\Fs_{mon}(\Sigma, \beta)$ above, which is well-defined by a slight generalization of \cite[Proposition 3.28]{hanlon2019monodromy} with an essentially identical proof. 
Higher $A_\infty$ operations are defined by counts of Morse trees.
Then, we define $\Ms(\Sigma, \beta)$ to be the localization at any chain-level representative of a generator of $HM\left(- H_{mon}^{F_K^{a_2}} + H_{mon}^{F_K^{a_1}} \right) \cong \Bbbk$. 
The concluding result of this section reduces the computation of $\Fs_{mon}(\Sigma, \beta)$ to $\Ms(\Sigma,\beta)$ by applying \cite[Theorem 2.3]{fukaya1997zero}.

\begin{prop} \label{prop:monismorse}
    The natural identifications between objects and morphisms induce a quasi-equivalence
    \[ \Fs_{mon}(\Sigma, \beta) \cong \Ms(\Sigma, \beta) \]
\end{prop}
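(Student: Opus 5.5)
The plan is to follow the Abouzaid strategy for toric varieties \cite{abouzaid2009morse} (see also \cite[Section 3.5]{hanlon2019monodromy}). The identification of objects and morphisms is the obvious one: $(L_{mon}(F),a)\mapsto (H^F_{mon},a)$, and intersection points of two Lagrangian sections match critical points of the difference of primitives. The stacky data $\beta$ enters only through the monomials $z^{b_\rho u_\rho}$ used to define admissibility. It therefore suffices to check that the analytic inputs of \cite[Theorem 2.3]{fukaya1997zero} hold uniformly in this non-compact setting. The result then follows from the Fukaya--Oh adiabatic limit, which identifies holomorphic polygons bounded by graphs of $d(tH_i)$ with gradient flow trees of the differences $H_i-H_j$ as $t\to 0$.

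\textbf{Step 1 (generators).} Show that the intersection $L_{mon}(F_1-F_K^{a_1})\cap L_{mon}(F_2-F_K^{a_2})$ in $T^*(M_\RR/M)$ is in natural bijection, with matching gradings, with $\bigsqcup_{m\in M}\mathrm{Crit}\bigl(H^{F_2}_{mon}-H^{F_1}_{mon}-(H^{F_K^{a_2}}_{mon}-H^{F_K^{a_1}}_{mon})+m\bigr)$. The lattice translates $m$ account for the lifts of the fiber torus $M_\RR/M$. The Maslov index then matches the Morse index, computed with canonical gradings.

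\textbf{Step 2 (compactness).} Bound both holomorphic polygons and gradient trees to a compact region of $N_\RR$, uniformly in the rescaling parameter. On the complement of a compact subset of each $C^\delta_\rho$, the functions $z^{b_\rho u_\rho}$ are holomorphic. By \eqref{eq:monsectioncondition}, the boundary Lagrangians map under $z^{b_\rho u_\rho}$ to radial rays whose arguments are fixed by $b_\rho F(u_\rho)\in\ZZ$ and perturbed by $a$. An open mapping/maximum principle argument, as in \cite[Proposition 3.28]{hanlon2019monodromy}, then confines the polygons. On the Morse side, the monotonicity of the differences along $u_\rho$ on $C_\rho^\delta$, which again comes from \eqref{eq:monsectioncondition} and the sign of $a_2-a_1$, confines the flow trees.

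\textbf{Step 3 (comparison and localization).} Apply \cite[Theorem 2.3]{fukaya1997zero}, using that after Step 2 the relevant moduli live over a compact region. Use the metric-compatible complex structure chosen above, so the adiabatic limit identifies the $A_\infty$ structures on the ordered categories. Finally, check that the continuation elements in $CF(L_{mon}(F-F_K^{a_1}),L_{mon}(F-F_K^{a_2}))$ go to generators of $HM(-H^{F_K^{a_2}}_{mon}+H^{F_K^{a_1}}_{mon})\cong\Bbbk$. Localization at corresponding morphisms then gives the quasi-equivalence.

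I expect Step 2 to be the main obstacle, specifically making the confinement uniform near the boundaries of the regions $C_\rho^\delta$ where different monomials compete. This is exactly what \cref{prop:mondivisionexists} is designed to handle: it guarantees that the maximizing monomial in \eqref{eq:mondivfunction} always sits over its own region $C^\delta_\rho$. I would mimic the monomial-division argument of \cite{hanlon2019monodromy} verbatim, replacing $u_\rho$ with $b_\rho u_\rho$ throughout.
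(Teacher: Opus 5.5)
Your proposal is correct and takes the same route as the paper. The paper's proof is the one-line observation that the argument of \cite[Corollary 3.31]{hanlon2019monodromy} (itself modeled on Abouzaid's Floer-to-Morse comparison via Fukaya--Oh) goes through verbatim once each monomial $z^{u_\rho}$ is replaced by $z^{b_\rho u_\rho}$; your Steps 1--3 simply unpack what that cited argument does, with the same substitution.
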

\begin{proof}
    The proof is identical to that of \cite[Corollary 3.31]{hanlon2019monodromy}, which is itself modeled on \cite[Theorem 5.8]{abouzaid2009morse}.
    The only modification is to replace every instance of the monomial $z^{u_\rho}$ for some $\rho \in \Sigma(1)$ with $z^{b_\rho u_\rho}$.
\end{proof}

\subsection{Proof of \cref{thm:hms}} 
We will now identify the structures developed in \cref{subsec:dgline} and \cref{subsec:troplag} to prove \cref{thm:hms}. 
Namely, \cref{thm:hms} follows immediately from \cref{prop:dbislinebundles}, \cref{prop:pwismon}, \cref{prop:monismorse} and the following.

\begin{prop} \label{prop:morseiscech}
There is a quasi-equivalence 
\[ \Ms(\Sigma, \beta) \xrightarrow{\sim} \check{C}(\X) \]
taking $(H^F_{mon}, a)$ to $\Os_{\X}(F)$ for all $F \in \mathrm{SF}(\Sigma, \beta)$ and $a \in (0,B)$.
\end{prop}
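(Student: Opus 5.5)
The plan is to follow \cite[Section 6]{abouzaid2009morse} and \cite[Section 3.5]{hanlon2019monodromy}, matching generators of the Morse complexes with \v{C}ech generators degree by degree in $m\in M$, and then checking that the $A_\infty$ structures agree.

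Fix $F_1,F_2\in\mathrm{SF}(\Sigma,\beta)$ and $a_1<a_2$. Set $F=F_2-F_1$, and write $G=F-(F_K^{a_2}-F_K^{a_1})$, which is a small positive perturbation of $F$ on each ray. The summand indexed by $m$ is the Morse complex of $h_m:=H^{G}_{mon}+m$ on $N_\RR$. By \eqref{eq:monsectioncondition}, $dh_m(u_\rho)=G(u_\rho)-\langle m,u_\rho\rangle$ on $C^\delta_\rho$ outside a compact set. Hence $h_m$ is increasing in the $u_\rho$ direction exactly when $\langle m,u_\rho\rangle< G(u_\rho)$. Since $b_\rho F(u_\rho)\in\ZZ$, $0<a_2-a_1<B$ and $\langle m, b_\rho u_\rho\rangle\in\ZZ$, this condition is equivalent to $\langle m,u_\rho\rangle\ge F(u_\rho)$ failing.

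I then choose $h_m$ in Morse--Bott normal form adapted to the stratification of $N_\RR$ by cones of $\Sigma$, as in \cite[Proposition 6.4]{abouzaid2009morse}. Its critical points are placed one per cone $\tau$, at the barycenter of $\tau$ pushed into the stratum. The critical point for $\tau$ survives exactly when $h_m$ is decreasing inward along the directions of $\tau$, which happens when $\langle m,u_\rho\rangle\ge F(u_\rho)$ for all $\rho\in\tau(1)$. Its Morse index should be the codimension-type count matching the \v{C}ech degree. The complete fan is not required: semiprojectivity gives convex support, and outside $|\Sigma|$ the linear growth of $F_a$ forces flow lines to escape.

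This identifies each graded piece of $\hom$ with $\check C_m^\bullet(\Os_\X(F))$ generator by generator, with \eqref{eq:localglobsec} providing the \v{C}ech side. The differentials are compared by counting gradient flow lines between adjacent strata. Each count is $\pm1$, since it is a single flow line along a face. This reduces to the smooth case because the stacky data enters only through the integrality lattice, i.e.\ replacing $u_\rho$ by $b_\rho u_\rho$.

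For products and higher operations, I would argue as in \cite[Proposition 6.6]{abouzaid2009morse}. A homological perturbation and degree argument shows that the Morse tree products realize the \v{C}ech cup product up to homotopy, and that higher products can be made to vanish or be homotopic to zero. Localization also needs to match. The continuation element in $HM(-H^{F_K^{a_2}}_{mon}+H^{F_K^{a_1}}_{mon})\cong\Bbbk$ corresponds to the $m=0$ degree-zero section $1\in H^0(\Os_\X)$. Localizing at it therefore removes the ordering dependence, and the result is the dg category $\check C(\X)$.

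The main obstacle I expect is the $A_\infty$ comparison: verifying that Morse trees compute the cup product, with controlled higher operations, in the non-complete, stacky setting. My first attempt would be to reduce it to the toric-variety statement via the smooth fan $\widehat\Sigma$ in $\RR^{\Sigma(1)}$, pulling back support functions along $\beta$.
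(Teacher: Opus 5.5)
Your proposal has two genuine gaps. The first is the comparison step itself. The \v{C}ech complex $\check C^\bullet_m(\Os_\X(F))$ is indexed by increasing tuples of maximal cones, not by cones. For $\PP^1\times\PP^1$ it has $15$ generators, one of them in degree $3$, whereas there are only $9$ cones and no critical point of a function on a $2$-dimensional region has index $3$. So a Morse function with one critical point per cone (really, per face of the dual polytope $Q$) yields at best a complex quasi-isomorphic to $\check C_m$, and ``generator by generator'' matching of differentials is not available.

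For a quasi-equivalence of categories, this quasi-isomorphism must also be the linear part of an $A_\infty$ functor. The ``homological perturbation and degree argument'' does not produce one. Morphism spaces between line bundles have cohomology spread over degrees $0,\dots,n$, so nothing forces the higher Morse-tree operations to be trivial, and ``higher products homotopic to zero'' is not the statement needed. Your suggested reduction via $\widehat\Sigma\subset\RR^{\Sigma(1)}$ does not obviously help either: the Morse functions live on $N_\RR$, and $\beta$ induces no map of Morse categories.

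The paper never builds this functor by hand. It first restricts to a compact region $Y$ (a large moment polytope, of a simplicial completion if $\Sigma$ is not complete) containing all critical points and flow trees. It then applies Abouzaid's Morse-to-simplicial quasi-equivalence \cite[Corollary 4.33]{abouzaid2009morse}, which compares Morse trees with cup products on relative simplicial cochains of the barycentric subdivision. Next it removes the $F_K$ push-offs, and finally it uses Abouzaid's identification of relative simplicial cochains on $Q$ with \v{C}ech cochains. Stackiness enters only through integrality ($u_\rho\mapsto b_\rho u_\rho$), so no reduction to $\widehat\Sigma$ is needed.

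The second gap is the non-complete case, which is the case the paper actually uses ($[\CC^2/\ZZ_n]$). The large slope outside $|\Sigma|$ does not make flow lines escape. For fixed $a_1<a_2$ it behaves like a boundary facet dual to an added ray of a completion $\Sigma_Q$, and it truncates the range of $m$. Already for $\X=\AA^1$ and $F=0$, $h_m$ is monotone unless $m$ lies in a bounded window determined by the slopes of $F_K^{a_1}$ and $F_K^{a_2}$ on $u<0$. Hence $\hom((H^{F}_{mon},a_1),(H^{F}_{mon},a_2))$ is finite-dimensional, while $\Gamma(\AA^1,\Os)=\Bbbk[x]$ is not.

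So before localization, the morphism complexes compute cohomology on the compactification $\X_Q$ of line bundles twisted by finite multiples of $D=\sum_{\rho\in\Sigma_Q(1)\setminus\Sigma(1)}D_\rho$. The continuation elements correspond to powers of the defining section of $\Os_{\X_Q}(D)$, not to the unit of $\Os_\X$. In the complete case your description of localization is harmless, but here localization is what produces the open stack. The missing idea is the paper's final step: identify the pre-localized category with $\check C(\X_Q)$ localized at the section of $\Os(D)$, then use Seidel's observation that this localization is $\check C(\X_Q\setminus D)=\check C(\X)$.

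Two minor slips. First, $G(u_\rho)=F(u_\rho)-(a_2-a_1)$ is a negative shift, which is what your integrality equivalence actually uses. Second, by your own computation the surviving critical points are those where $h_m$ decreases \emph{outward}, not inward.
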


Since we have described $\Ms(\Sigma, \beta)$ and $\check{C}(\X)$ entirely in terms of support functions, the proof of \cref{prop:morseiscech} is essentially identical to that of the non-stacky case in \cite{abouzaid2009morse} and its reproduction in \cite[Section 3.5]{hanlon2019monodromy} with some minor adjustments allowing for $\Sigma$ to not be complete.
Nevertheless, the remainder is dedicated to outlining the proof of \cref{prop:morseiscech} in our notation closely following the aforementioned references.

In order to proceed in the most efficient manner and take advantage of the literature, it will be useful to pass to Morse theory on a compact set (see \cite[Remark 3.33]{hanlon2019monodromy} though we note that we expect the argument to be considerably more complicated outside of the semiprojective and simplicial setting).
To that end, if $\Sigma$ is complete, we can assume without loss of generality that all $H^F_{mon}$ that appear in objects of $\Ms(\Sigma, \beta)$ satisfy \eqref{eq:monsectioncondition} on $C_\rho^\delta$ outside the interior of a compact set $Y \subset N_\RR$ whose boundary is a large level set of the function on $N_\RR$ induced by \eqref{eq:mondivfunction}.
In other words, when $\Sigma$ is complete, $Y = \nabla \varphi (Q)$ where $Q \subset M_\RR$ is given by
\[ k_\rho \langle m, b_\rho u_\rho \rangle \leq y \]
for all $\rho$ and some large constant $y$, and \cref{prop:mondivisionexists} implies that $Q$ is a moment polytope of $X_\Sigma$. 
If $\Sigma$ is not complete, we simply take $Y = \nabla \varphi(Q)$ for some large moment polytope $Q$ of a simplicial completion of $\Sigma$ and additionally assume that all $H^F_{mon}$ are linear away from a neighborhood of the images of the facets of $Q$ that are dual to rays of $\Sigma$.
In general, the result is that all critical points and Morse flow trees involved in computing morphisms in $\Ms(\Sigma, \beta)$ lie in the interior of $Y$ as in \cite[Proposition 3.28]{hanlon2019monodromy}.  

Now, we pass from Morse theory on $Y$ to simplicial chains on $Y$. 
We let $Y_b$ be the triangulation obtained as the image under $\nabla \varphi$ of the barycentric subdivision $Q_b$ of $Q$. 
Given a function $h$ on $Y$ such that $h(u_\rho)$ is constant on $Y_\rho := \nabla \varphi(Q_\rho)$ for every facet $Q_\rho$ with primitive normal vector $u_\rho$, we define $\partial_h^+ Y_b$ to be the closure of the maximal simplices $\tau$ of $Y_b$ where $\partial \tau \cap \partial Y \subset Y_\rho$ and $h(u_\rho)$ is positive.
To define the simplicial category, we again start with an ordered category whose objects are pairs $(F, a)$ with $F \in \mathrm{SF}(\Sigma, \beta)$ and $a \in (0,B)$ and define
\[ \hom\Big( (F_1 ,a_1), (F_2, a_2)\Big) = 
\begin{cases} \bigoplus_{m \in M} C\left( Y_b, \partial^+_{F_2 - F_1 -(F_K^{a_2} - F_K^{a_1}) +m} Y_b \right) & a_1 < a_2 \\
\Bbbk \cdot e_{(F_1,a_1)}  & (F_1 ,a_1) = (F_2 ,a_2) \\ 
0 & \text{otherwise} \end{cases} 
\]
where $e_{(F_1 ,a_1)}$ is a formal degree zero identity element and $C$ denotes the relative simplicial cochains on a simplicial pair. 
This is a dg-category with product induced by the cup product and the inclusion 
\[ \partial^+_{F_3 - F_1 -(F_K^{a_3} - F_K^{a_1}) +m} Y_b \subset \partial^+_{F_2 - F_1 -(F_K^{a_2} - F_K^{a_1}) +m} Y_b \cup \partial^+_{F_3 - F_2 -(F_K^{a_3} - F_K^{a_2}) +m} Y_b \]
as defined more generally in \cite[Definition E.2]{abouzaid2009morse}.  
We define $\mathrm{Simp}_{mon}(Y)$ to be the localization at any chain-level representative of a generator of $H\left(Y_b, \partial^+_{-F_K^{a_2} + F_K^{a_1}}\right) \cong \Bbbk$ lying in the degree $0 \in M$ piece of $\hom((F, a_1), (F, a_2))$ with $a_2 > a_1$.
We then have the following categorical version of the usual identification, used by \citeauthor{abouzaid2009morse}, of the Morse cohomology of boundary convex functions with relative cohomology.

\begin{prop} \label{prop:morseissimp}
There is a quasi-equivalence 
\[ \Ms(\Sigma, \beta) \xrightarrow{\sim} \mathrm{Simp}_{mon}(Y) \]
taking $(H^F_{mon}, a)$ to $(F, a)$ for all $F \in \mathrm{SF}(\Sigma, \beta)$ and $a \in (0,B)$.
\end{prop}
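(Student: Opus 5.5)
This follows the Abouzaid identification of Morse cohomology of boundary-convex functions with relative cohomology \cite[Appendix E and Section 5]{abouzaid2009morse}, in the form reproduced in \cite[Section 3.5]{hanlon2019monodromy}. We first construct a functor on the ordered (pre-localization) categories, then check that it sends the localizing morphisms to localizing morphisms, and finally pass to localizations.

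\textbf{Step 1: the ordered categories.} For $a_1<a_2$ and each $m\in M$, write
\[
h = H_{mon}^{F_2}-H_{mon}^{F_1}-(H_{mon}^{F_K^{a_2}}-H_{mon}^{F_K^{a_1}})+m .
\]
By \eqref{eq:monsectioncondition}, $dh(u_\rho)$ is the constant $F_2(u_\rho)-F_1(u_\rho)-(a_2-a_1)+\langle m,u_\rho\rangle$ on $Y_\rho$, up to the stacky rescaling by $b_\rho$. Two facts about this constant matter. It is never zero, because $b_\rho F_i(u_\rho)\in\ZZ$ while $0<a_2-a_1<B$. Its sign is exactly the sign that defines $\partial^+_{F_2-F_1-(F_K^{a_2}-F_K^{a_1})+m}Y_b$. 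Hence $h$ has no critical points near $\partial Y$, and its gradient points outward precisely along the faces making up $\partial^+Y_b$. The standard comparison \cite[Appendix E]{abouzaid2009morse} then gives a quasi-isomorphism
\[
CM(h)\simeq C\bigl(Y_b,\partial^+_h Y_b\bigr),
\]
obtained by comparing both sides with the Morse complex of a function adapted to the barycentric triangulation.

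To upgrade this to an $A_\infty$ functor, we use Abouzaid's construction, which compares counts of Morse flow trees with cup products via a homotopy transfer. The construction is local on $Y$ and depends only on these boundary sign patterns. It is compatible with the inclusion of relative boundary pieces recorded before \cref{prop:morseissimp}, because the sum of the slope constants of $h_{12}$ and $h_{23}$ equals that of $h_{13}$. Summing over $m\in M$, and using \cref{prop:mondivisionexists} to ensure that only finitely many $m$ contribute in each bounded energy range, yields an $A_\infty$ functor between the ordered categories. On objects it sends $(H^F_{mon},a)\mapsto(F,a)$, and it is a quasi-isomorphism on every hom complex. The formal identity morphisms match by definition.

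\textbf{Step 2: localizing morphisms.} For $F_1=F_2$ and $m=0$, the function is $h=-(H_{mon}^{F_K^{a_2}}-H_{mon}^{F_K^{a_1}})$, whose slope is positive on every boundary facet. Its Morse cohomology is $\Bbbk$, and Step 1 identifies it with the generator of $H(Y_b,\partial^+Y_b)\cong\Bbbk$. So the functor sends a representative of the localizing class in $\Ms(\Sigma,\beta)$ to a representative of the localizing class in $\mathrm{Simp}_{mon}(Y)$. Since localization is functorial, and a functor of ordered categories that is a quasi-isomorphism on homs and matches the localizing classes induces a quasi-equivalence of localizations (as in \cite[Section 3]{hanlon2019monodromy}), we obtain the quasi-equivalence claimed in \cref{prop:morseissimp}.

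\textbf{Main obstacle.} I expect the main difficulty to be the non-complete case. There, $Y$ comes from the polytope of a simplicial completion, and the Hamiltonians are linear near the facets that are not dual to rays of $\Sigma$. One must check that the boundary-convexity and sign dichotomy still hold on those extra facets, so that each one lies consistently in or out of $\partial^+$, and that no flow tree escapes through them. The first approach I would try is to choose the linear extensions so that their slope on every extra facet is strictly positive or strictly negative, independently of $m$ in the relevant range. Then the integrated maximum principle argument of \cite[Proposition 3.28]{hanlon2019monodromy} applies verbatim, and the extra facets contribute no cohomology because of the $\bigoplus_m$ summation.
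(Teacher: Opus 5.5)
This is essentially the paper's argument. The paper's proof is just a citation of Abouzaid's Morse-to-simplicial comparison for boundary-convex functions (\cite[Corollary 4.33]{abouzaid2009morse}, applied as in the last paragraph of the proof of his Proposition 6.7; cf.\ \cite[Proposition 3.38]{hanlon2019monodromy}), and your Steps 1--2 unpack exactly that comparison together with the passage to localizations.

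One small slip in Step 2: by your own Step 1 formula, the localizing function $-(H_{mon}^{F_K^{a_2}}-H_{mon}^{F_K^{a_1}})$ has \emph{negative} slope on every facet (equal to $-(a_2-a_1)$ along each $Y_\rho$), not positive. So $\partial^+Y_b=\emptyset$, and the localizing class is the degree-zero generator of $H^*(Y_b)\cong\Bbbk$ rather than a class relative to the whole boundary. This does not affect your conclusion.
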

\begin{proof}
    The claim follows from \cite[Corollary 4.33]{abouzaid2009morse} as described in the last paragraph of the proof of Proposition 6.7 of \cite{abouzaid2009morse} (see also \cite[Proposition 3.38]{hanlon2019monodromy}).
\end{proof}

Before comparing the simplicial category to the \v{C}ech category of line bundles, it is helpful to simplify the former slightly by removing the push-offs by $F_K$ from the stop that were needed for wrapping in the Fukaya and Morse categories and working more directly on $Q$.
Since we need to keep the large wrapping if $\Sigma$ is not complete, we define a new family of piecewise-linear functions $F_0^a$ such that $F_0^a$ is $0$ on $|\Sigma|$ and $F_0^a$ is positive away from $|\Sigma|$ and tends monotonically to $+\infty$ as $a$ tends to $0$.
For example, we can take $F_0^a$ to be a support function for $- \sum_{\rho \in \Sigma_Q(1) \setminus \Sigma(1)} \frac{1}{a} D_\rho$ where $\Sigma_Q$ is the normal fan to $Q$.
Now, we define $\mathrm{Simp}_{mon}(Q)$ exactly as $\mathrm{Simp}_{mon}(Y)$ except replacing all $F_K^a$ with $F_0^a$, $Y_b$ with $Q_b$, and defining $\partial^+_h Q_b$ to be the closure of the maximal simplices $\tau$ of $Q_b$ where $\partial \tau \cap \partial Q \subset Q_\rho$ and $h\circ \nabla \varphi(u_\rho)$ is positive.
Note that if $\Sigma$ is complete, $F_0$ is identically zero and $\mathrm{Simp}_{mon}(Q)$ can be defined without the parameter $a$ and any localizations by simply setting the hom spaces to be direct sums of $C\left( Q_b, \partial^+_{F_2 - F_1 +m} Q_b \right)$.
The proof of the following is identical to that of \cite[Proposition 3.39]{hanlon2019monodromy}.

\begin{prop} \label{prop:simpyissimpq}
There is a quasi-equivalence
\[\mathrm{Simp}_{mon}(Y) \simeq \mathrm{Simp}_{mon}(Q) \]
which is the identity on objects.
\end{prop}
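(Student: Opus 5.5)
The plan is to follow the proof of \cite[Proposition 3.39]{hanlon2019monodromy}, which compares the simplicial categories on $Y$ and on $Q$ through the homeomorphism $\nabla\varphi\colon Q\to Y$. First I will check that $\nabla\varphi$ carries the barycentric triangulation $Q_b$ simplicially onto $Y_b$; this holds by the definition $Y_b=\nabla\varphi(Q_b)$. I will also check that it sends each facet $Q_\rho$ to $Y_\rho$. Consequently, for any function $h$ that is constant on the facets, relative cochains $C(Y_b,\partial^+_hY_b)$ are identified with $C(Q_b,\partial^+_{h}Q_b)$. This identification is compatible with cup products and with the inclusions of boundary pieces used to define composition, since all of these are defined purely combinatorially from the simplicial pair.

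The remaining difference between the two categories is the replacement of $F_K^a$ by $F_0^a$. For every pair $(F_1,a_1)$, $(F_2,a_2)$ with $a_1<a_2$ and every $m\in M$, I will compare the positive boundaries $\partial^+_{F_2-F_1-(F_K^{a_2}-F_K^{a_1})+m}$ and $\partial^+_{F_2-F_1-(F_0^{a_2}-F_0^{a_1})+m}$.

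On a facet dual to a ray $\rho\in\Sigma(1)$, the value $b_\rho(F_2-F_1+m)(u_\rho)$ is an integer. The $F_K$ correction contributes $(a_2-a_1)/b_\rho\cdot b_\rho$ up to normalization, and since $0<a_2-a_1<B$ this is a nonzero amount smaller than $1$ in the $b_\rho$-scaled units. So the sign of the twisted value is positive exactly when $(F_2-F_1+m)(u_\rho)\geq 0$. The $F_0$ correction vanishes on $|\Sigma|$, so there the sign is positive exactly when $(F_2-F_1+m)(u_\rho)>0$. These two sign rules disagree precisely when $(F_2-F_1+m)(u_\rho)=0$, which means I cannot simply claim the boundaries coincide. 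On facets not dual to rays of $\Sigma$, both corrections tend to $+\infty$, so both boundary pieces contain the facet once the wrapping is large enough.

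To handle the disagreement, I will not try to match the two categories on the nose. Instead I will use the localizations. I will define the functor as the identity on objects and, on morphisms, as the composite of the simplicial identification with the restriction map between relative cochain complexes; this map exists in one direction because one of the two boundary sets contains the other. Next I will check that it sends the localizing elements, namely the generators of $H(Y_b,\partial^+_{-F_K^{a_2}+F_K^{a_1}})\cong\Bbbk$, to the corresponding generators on the $Q$ side. Finally I will show it induces isomorphisms on localized morphism spaces. Passing to the colimit as $a_2\to a_1$, or via the telescope description of localization, both localized morphism spaces compute $\bigoplus_m H^*(Q_b,\partial^+_{F_2-F_1+m}Q_b)$. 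On the $F_K$ side, the pushoff by the canonical divisor only changes the boundary by a deformation retraction near facets where the value is zero.

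I expect this last step to be the main obstacle: showing that the mismatch at facets with $(F_2-F_1+m)(u_\rho)=0$ does not change relative cohomology after localization. I would argue it by the star-shapedness of barycentric subdivision. Adding or removing the closed stars of facets with zero value changes $\partial^+$ by subcomplexes that deformation retract appropriately, exactly as in the toric-variety case of \cite{hanlon2019monodromy}; the stacky weights $b_\rho$ enter only through the bound $a<B$ ensuring that no integer crossing occurs.
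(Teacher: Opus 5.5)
Your overall frame is the right one: identify $Q_b$ with $Y_b$ simplicially via $\nabla\varphi$ (carrying $Q_\rho$ to $Y_\rho$), then compare the positive boundary pieces. But the comparison step has a sign error, and the ``main obstacle'' around which you build the rest of the proof does not exist.

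In the paper's conventions $F_K^a(u_\rho)=a$. Indeed $H_{cyl}^{F,a,\eps}=H^{F-F_K^a,g}$, and \eqref{eq:closetostop} says $dH_{cyl}^{F,a,\eps}(u_\rho)\approx F(u_\rho)-a$. Hence for $a_1<a_2$ and $h=F_2-F_1-(F_K^{a_2}-F_K^{a_1})+m$ one has $h(u_\rho)=(F_2-F_1+m)(u_\rho)-(a_2-a_1)$, which is a \emph{downward} shift. We have $b_\rho(F_2-F_1+m)(u_\rho)\in\ZZ$, using $\langle m,b_\rho u_\rho\rangle\in\ZZ$, and $0<b_\rho(a_2-a_1)<1$ because $a_2<B$. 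Therefore $h(u_\rho)>0$ if and only if $(F_2-F_1+m)(u_\rho)>0$. This is exactly the rule on the $F_0$ side, where the correction vanishes on $|\Sigma|$.

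So the two positive boundaries coincide on every facet dual to a ray of $\Sigma$. After applying $\nabla\varphi$, the hom complexes are literally the same relative cochain complexes, compatible with cup products and with the inclusions defining composition. The localizing classes correspond, so the identity-on-objects functor is an isomorphism before localization and hence a quasi-equivalence after. This is precisely what the bound $a<B$ is for.

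A consistency check exposes your sign. With an upward shift, in the complete case the localizing class would live in $H^*(Y_b,\partial Y_b)$, concentrated in degree $\dim N_\RR$. It would then not be a degree-zero continuation element.

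The repair you propose would fail even if the mismatch were real. The rule ``positive iff $(F_2-F_1+m)(u_\rho)\ge 0$'' computes the cohomology of $\mathcal O(F_2-F_1)$ twisted by the canonical bundle, not of $\mathcal O(F_2-F_1)$. For example, with a complete fan, $F_1=F_2$ and $m=0$, it gives $H^*(Y_b,\partial Y_b)$ (top degree) rather than $H^*(Q_b)\cong\Bbbk$ (degree zero). Enlarging the relative boundary by whole facets is not a deformation retraction and genuinely changes relative cohomology. Localizing at continuation elements cannot repair that, so your star-shapedness argument would be trying to prove a false statement.

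Separately, for non-complete $\Sigma$, the phrase ``once the wrapping is large enough'' needs justification. For a fixed pair $a_1<a_2$, the two corrections on rays of $\Sigma_Q(1)\setminus\Sigma(1)$ need not agree. You should either choose $F_0^a$ to take the same values as the chosen extension of $F_K^a$ on those rays, so that the boundaries agree on the nose, or invoke cofinality of the wrapping in the localization explicitly.
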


The final step of identifying $\mathrm{Simp}_{mon}(Q)$ with $\check{C}(\X)$ is almost entirely topological: one identifies simplicial cochains with \v{C}ech cochains and observes that the toric \v{C}ech cover on $\X$ indexes a topological \v{C}ech cover of $Q$ after dualizing the index set from $\Sigma_Q$.

\begin{prop} \label{prop:simpqiscech} 
There is a quasi-equivalence
\[ \mathrm{Simp}_{mon}(Q) \xrightarrow{\sim} \check{C}(\X)  \]
taking $(F,a)$ to $\Os_{\X}(F)$ for all $F \in \mathrm{SF}(\Sigma, \beta)$ and $a \in (0, B)$.
\end{prop}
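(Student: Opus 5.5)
The plan is to follow the strategy of \cite[Section 6]{abouzaid2009morse} (reproduced in \cite[Proposition 3.40]{hanlon2019monodromy}). I will first build a chain-level identification of morphism complexes compatible with compositions. Then I will check that the localizations on the two sides match. The \v{C}ech side needs no localization when $\Sigma$ is complete, and in general the relevant continuation elements will correspond to identities.

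The first step is a dual cover. For each maximal cone $\sigma_i$ of $\Sigma$, let $W_i\subset Q$ be the open star, in the barycentric subdivision $Q_b$, of the vertex (or face) of $Q$ dual to $\sigma_i$. If $\Sigma$ is not complete, I use the face of $Q$ dual to $\sigma_i$ inside the completion $\Sigma_Q$. The $W_i$ form a good cover of $Q$. A finite intersection $W_{i_0}\cap\dots\cap W_{i_d}$ is nonempty and contractible, and it meets exactly those facets $Q_\rho$ with $\rho\in\tau(1)$, where $\tau=\sigma_{i_0}\cap\dots\cap\sigma_{i_d}$.

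The second step is the local computation. Fix $m\in M$ and set $h=F_2-F_1+m$, with $F_0^a$ included when $\Sigma$ is incomplete. The relative cohomology $H^*(W_{i_0\cdots i_d},W_{i_0\cdots i_d}\cap\partial_h^+Q_b)$ should be $\Bbbk$ in degree $0$ exactly when $h(u_\rho)\le 0$ for all $\rho\in\tau(1)$, and $0$ otherwise. The reason is that the positive boundary piece is then either empty or a nonempty contractible subset of a contractible set. The $F_0^a$ term only makes facets outside $|\Sigma|$ positive, and these are irrelevant because they meet no $W_i$ in a way that changes contractibility. The condition $h(u_\rho)\le 0$ is precisely the condition $\langle m',u_\rho\rangle\ge (F_2-F_1)(u_\rho)$ in \eqref{eq:localglobsec}, after matching signs of $m$. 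One has to be careful here with the stacky condition $b_\rho F(u_\rho)\in\ZZ$. Since the degree-$m$ piece only tests the real inequalities, the same comparison goes through, with the integrality handled by $\mathrm{SF}(\Sigma,\beta)$.

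The third step applies the standard double-complex argument (Mayer--Vietoris, or simplicial versus \v{C}ech for a good cover). This gives a natural quasi-isomorphism between relative simplicial cochains and the \v{C}ech complex $\check C^\bullet_m(\Os_\X(F_2-F_1))$. I expect the \textbf{main obstacle} to be upgrading this to a dg or $A_\infty$ functor. Cup products on the simplicial side use the inclusion of positive boundaries from \cite[Definition E.2]{abouzaid2009morse}, while the \v{C}ech side uses the \v{C}ech cup product. I would handle this as Abouzaid does. Both sides are quasi-isomorphic, through a zig-zag of multiplicative maps, to the \v{C}ech-simplicial bicomplex of the pairs $(W_I,W_I\cap\partial^+_hQ_b)$, where the product is the Alexander--Whitney cup product in both directions. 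This yields an $A_\infty$ quasi-equivalence on the ordered categories, sending $(F,a)$ to $\Os_\X(F)$. Finally, the localizing elements, namely generators of $H(Q_b,\partial^+_{F_0^{a_1}-F_0^{a_2}})\cong\Bbbk$ in degree $0$, map to identities of $\Os_\X(F)$. So the functor descends to the localization $\mathrm{Simp}_{mon}(Q)$ and is fully faithful and essentially surjective onto $\check C(\X)$.
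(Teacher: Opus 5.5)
Your plan has a genuine gap in the non-complete case. That is exactly the case the paper needs, since $[\CC^2/\ZZ_\nn]$ is not proper.

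\textbf{The cover does not cover $Q$.} Your sets $W_i$ are indexed only by the maximal cones of $\Sigma$. Open stars of vertices of $Q$ in $Q_b$ miss the barycenter of $Q$, so suppose you replace them by unions of relative interiors of faces containing the vertex. Even then, the $W_i$ cover only the union of relative interiors of faces of $Q$ dual to cones of $\Sigma$, not all of $Q$. Your Mayer--Vietoris argument therefore computes the relative cohomology of that open subset, not of $(Q_b,\partial^+_hQ_b)$.

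\textbf{The extra facets cannot be discarded.} The facets $Q_\rho$ with $\rho\notin\Sigma(1)$ still matter. Whether such a facet is positive depends on $m$, because only the finite shift $F_0^{a_1}-F_0^{a_2}$ is added. Even when it is positive, $H^*(X,A)\neq H^*(X\setminus A)$, so the facet does not simply disappear.

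\textbf{A concrete failure.} Take $\X=\AA^1\subset \X_Q=\PP^1$. Each pre-localization space $\hom((0,a_1),(0,a_2))$ is nonzero in only finitely many degrees $m$; it is the cohomology of a line bundle on $\PP^1$ determined by $a_1,a_2$. By contrast, $\hom_{\check C(\X)}(\Os,\Os)=\Bbbk[x]$. So there is no quasi-isomorphism of morphism complexes on the ordered category. Your final step (continuation elements go to identities, hence the functor descends and is an equivalence) therefore does not follow. In the non-complete case, all the content lies in computing the localization.

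\textbf{The missing idea, which is what the paper does.}
\begin{enumerate}
\item Run Abouzaid's argument on the compactification $\X_Q$, extending $\beta$ by $b_\rho=1$ on the new rays. Use the cover indexed by all maximal cones of $\Sigma_Q$.
\item This identifies the ordered simplicial category with \v{C}ech complexes on $\X_Q$ of line bundles twisted by multiples of $D=\sum_{\rho\in\Sigma_Q(1)\setminus\Sigma(1)}D_\rho$. Under this identification, the continuation elements become multiplication by the defining section of $D$.
\item Localizing at that section yields $\check C(\X_Q\setminus D)$. This is Seidel's observation, i.e.\ $\operatorname{colim}_k H^*(\X_Q,\mathcal L(kD))\cong H^*(\X_Q\setminus D,\mathcal L)$.
\item Finally, $\X_Q\setminus D=\X$ by construction.
\end{enumerate}
If you prefer to keep your cover, you still have to show that the colimit over $a_1\to 0$ of $H^*(Q_b,\partial^+_hQ_b)$ equals what your cover computes, which is the same statement.

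In the complete case, your plan (with the cover corrected as above) is essentially the argument the paper cites from Abouzaid.
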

\begin{proof} In the complete case, the claim follows from Proposition 6.9 and the claim in the proof of Proposition 6.7 of \cite{abouzaid2009morse} (see also \cite[Proposition 3.40]{hanlon2019monodromy}).

If $\Sigma$ is not complete, let $\X_{Q}$ be the smooth toric Deligne--Mumford stack associated to $\Sigma_Q$ with stacky fan structure extending that of $\Sigma$ by setting $b_\rho = 1$ for all $\rho \in \Sigma_Q(1) \setminus \Sigma(1)$.
We choose $F_0^a$ to be the support function of $- \sum_{\rho \in \Sigma_Q(1) \setminus \Sigma(1)} \frac{1}{a} D_\rho$ as suggested above.
In that case, Abouzaid's results (and considering the $M$ grading) identify $\mathrm{Simp}_{mon}(Q)$ with the localization of $\check{C}(\X_Q)$ at the natural transformation given by tensoring by a defining section of the divisor $D := \sum_{\rho \in \Sigma_Q(1) \setminus \Sigma(1)} D_\rho$ (cf. \cite[Theorems 4.5 and 4.9]{hanlon2019monodromy}). 
As observed in \cite{seidel2008subalgebras}\footnote{In Seidel's paper, this observation is made in the context of varieties, but it applies equally to toric Deligne--Mumford stacks by, for instance, thinking of the objects as equivariant sheaves on affine space.}, this localization is $\check{C}(\X_Q \setminus D)$.
Moreover, by construction, we have $\X = \X_Q \setminus D$. 
\end{proof}

We end by noting that chaining together \cref{prop:morseissimp}, \cref{prop:simpyissimpq}, and \cref{prop:simpqiscech} gives \cref{prop:morseiscech}, which was the goal of this section.  Together with \cref{prop:dbislinebundles,prop:pwismon,prop:monismorse}, this proves \cref{thm:hms}.

\printbibliography 
\Addresses

\end{document}